\numberwithin{equation}{section}
\theoremstyle{plain}
\newtheorem{theo}{Theorem}[section]
\newtheorem{lemm}[theo]{Lemma}
\newtheorem{coro}[theo]{Corollary}
\newtheorem{prop}[theo]{Proposition}
\newtheorem{assumption}[theo]{Assumption}
\theoremstyle{remark}
\newtheorem{rema}[theo]{Remark}
\theoremstyle{definition}
\numberwithin{equation}{section}
\newcommand{\R}{{\mathbb R}}
\renewcommand{\H}{{\mathbb H}}
\newcommand{\E}{{\mathbb E}}
\newcommand{\Ee}{{\mathbf E}}
\newcommand{\Hh}{{\mathbf H}}
\newcommand{\dd}{{\rm d}}
\begin{document}
\title[ergodic approximations for stochastic Maxwell equations]{Ergodic numerical approximations for stochastic Maxwell equations}
\subjclass[2020]{Primary 60H35, 60H15, 37M25, 35Q61}

\author{Chuchu Chen}
\address{LSEC, ICMSEC, Academy of Mathematics and Systems Science, Chinese Academy of Sciences, Beijing 100190, China; School of Mathematical Sciences, University of Chinese Academy of Sciences, Beijing 100049, China}
\curraddr{}
\email{chenchuchu@lsec.cc.ac.cn}
\thanks{}

\author{Jialin Hong}
\address{LSEC, ICMSEC, Academy of Mathematics and Systems Science, Chinese Academy of Sciences, Beijing 100190, China; School of Mathematical Sciences, University of Chinese Academy of Sciences, Beijing 100049, China}
\curraddr{}
\email{hjl@lsec.cc.ac.cn}
\thanks{}

\author{Lihai Ji}
\address{Institute of Applied Physics and Computational Mathematics, Beijing 100094, China}
\email{jilihai@lsec.cc.ac.cn}

\author{Ge Liang}
\address{LSEC, ICMSEC, Academy of Mathematics and Systems Science, Chinese Academy of Sciences, Beijing 100190, China; School of Mathematical Sciences, University of Chinese Academy of Sciences, Beijing 100049, China}
\email{liangge2020@lsec.cc.ac.cn}
	\thanks{The research of this work was supported by the National key R\&D Program of China under Grant NO. 2020YFA0713701, the National Natural Science Foundation of China (NOs. 11971470, 11871068, 12022118, 12031020, 12171047, 11971458) and the Youth Innovation Promotion Association CAS}
\begin{abstract}
 
 In this paper, we propose a novel kind of numerical approximations to inherit the ergodicity of stochastic Maxwell equations.
 The key to proving the ergodicity lies in the uniform regularity estimates of the numerical solutions with respect to time, which are established by analyzing some important physical quantities. By introducing an auxiliary process, we show that the mean-square convergence order of the ergodic discontinuous Galerkin full discretization is $\frac{1}{2}$ in the temporal direction and $\frac{1}{2}$ in the spatial direction, which provides the convergence order of the numerical invariant measure to the exact one in $L^2$-Wasserstein distance.
 
\end{abstract}
\keywords {Stochastic Maxwell equations, invariant measure, ergodic numerical approximations, mean-square convergence order}
\maketitle
\section{introduction}
Stochastic Maxwell equations play an important role in many fields, including statistical radiophysics and stochastic electromagnetism, whose stochasticity may come from the random medium or the stochastic source (see e.g. \cite{RKT1989}).  
In this paper, we consider the following stochastic Maxwell equations in an isotropic conductive medium
{\small\begin{equation}\label{SMEs with PML}
		\begin{cases}
			\dd\Ee(t)={\rm curl\,} \Hh(t)\dd t -\sigma\Ee(t)\dd t-\lambda_1\Hh(t)\circ\dd W_1(t)
			+{\bm\lambda_2}\dd W_2(t)
			, &(t,{\bf x})\in\R_+\times D,\\
			\dd\Hh(t)=-{\rm curl\,} \Ee(t)\dd t-\sigma\Hh(t)\dd t +\lambda_1\Ee(t)\circ\dd W_1(t)
			+{\bm\lambda_2}\dd W_2(t), &(t,{\bf x})\in\R_+\times D,\\
			\Ee(0,{\bf x})=\Ee_0({\bf x}),~\Hh(0,{\bf x})=\Hh_0({\bf x}), &{\bf x}\in D,\\
			{\bf n}\times \Ee=0, ~{\bf n}\cdot \Hh=0,  &(t,{\bf x})\in\R_+\times \partial D,\\
		\end{cases}
\end{equation}	}
where $D=(x_L,x_R)\times(y_L,y_R)\times(z_L,z_R)\subset\R^3$ is a cuboid, the notation $\circ$ means Stratonovich integral, $\Ee=(E_1,E_2,E_3)^{\top}$ and $\Hh=(H_1,H_2,H_3)^{\top}$ are the electromagnetic field,  $\lambda_{1}\in\mathbb{R}$ and  ${\bm\lambda_2}=(\lambda_2^{(1)}, \lambda_2^{(2)},\lambda_2^{(3)})^{\top}\in\mathbb{R}^3$. 
Here, $\{W_1(t)\}_{t\geq 0}$ and $\{W_2(t)\}_{t\geq 0}$ are two independent Wiener processes with respect to a filtered probability space $(\Omega,\mathcal{F},\{\mathcal{F}_t\}_{t\geq 0},\mathbb{P})$, which characterize the randomness from the medium and the source, respectively.
The damping terms $\sigma\Ee$ and $\sigma\Hh$ may be induced by conductivity of the medium or by the perfectly matched layer technique (see e.g. \cite{JCL2019,SHL2016}).
It is assumed that $\sigma\in W^{1,\infty}(D)$ and $\sigma\geq\sigma_0>0$ for a constant $\sigma_0$.  	 
Due to the existence of these damping terms, the properties of stochastic Maxwell equations change tremendously. 
The main aim of this paper is to construct and analyze numerical approximations for \eqref{SMEs with PML} to inherit the intrinsic properties of the original system.

As we all know, ergodicity is an important longtime property of stochastic partial differential equations (SPDEs). There have been many works concentrating on the study of their ergodicity and ergodic numerical approximations; see e.g., \cite{HW2019} for the stochastic nonlinear Schr\"odinger equation and \cite{CHS2021} for parabolic SPDEs. However, to our best knowledge, there is no result on the ergodicity of stochastic Maxwell equations. The main difficulty lies in the uniform estimates of the exact solution with respect to time.
By analyzing some important physical quantities of \eqref{SMEs with PML}, we prove that the solution is bounded uniformly in $L^2(\Omega,H^1(D)^6)$ in time.
This result ensures that the $H^1(D)^6$-norm of the solution is a proper choice for Lyapunov function, which leads to the existence of the invariant measure for \eqref{SMEs with PML}. Furthermore, we obtain the continuous dependence of the solution on the initial data with exponential decay rate. As a consequence, \eqref{SMEs with PML} possesses a unique invariant measure $\pi^*$ which is ergodic and exponentially mixing. Moreover, we show that the phase flow of \eqref{SMEs with PML} possesses the stochastic conformal multi-symplectic structure.

It is meaningful and important to design structure-preserving numerical discretizations since they have remarkable superiority in the longtime computation. In recent years, various symplectic and multi-symplectic numerical methods have been proposed and analyzed for stochastic Maxwell equations without damping terms.
We refer the interested readers to \cite{CHZ2016,HJZ2014,HJZC2017} for the stochastic multi-symplectic methods, to \cite{CHJ2019b} for the symplectic Runge--Kutta method, to \cite{CCC2021,sun2022discontinuous,SQW2022} for the symplectic and multi-symplectic discontinuous Galerkin methods, to \cite{CHJ2019a,CHCS2020} for the semi-implicit and exponential Euler methods. For stochastic Maxwell equations with damping terms, we do not find any relative works on constructing numerical discretizations to inherit the ergodicity up to now.

To this end, we first propose a novel temporal semi-discretization for \eqref{SMEs with PML} which is a modification of the midpoint method. This temporal semi-discretization is specially constructed to preserve the ergodicity and the stochastic conformal multi-symplecticity simultaneously.
Compared with the continuous case, the proof of the ergodicity of the temporal semi-discretization is more complicated since the curl and divergence of the numerical solution must be estimated together. By establishing the uniform boundedness of the numerical solution in $L^2(\Omega,H^1(D)^6)$ with respect to time, we show that the temporal semi-discretization is ergodic with a unique invariant measure $\pi^{\Delta t}$.        
The mean-square convergence order of the temporal semi-discretization is shown to be $\frac{1}{2}$, which provides the convergence order of the numerical invariant measure $\pi^{\Delta t}$  to the exact one $\pi^*$ in $L^2$-Wasserstein distance.

Further, we apply the discontinuous Galerkin (dG) method to discretize the temporal semi-discretization in space. The ergodicity of the dG numerical solution is obtained by establishing the uniform boundedness of the numerical solution in $L^2(\Omega,\H)$. The mean-square convergence analysis of the dG full discretization is more challenging due to the low regularity of the numerical solution. To solve this problem, we introduce an auxiliary process in our convergence analysis, which allows us to take full advantage of the $ H^1(D)^6$-regularity of the exact solution.
We show that the mean-square convergence order of the dG full discretization is $\frac{1}{2}$ both in the temporal and spatial directions. As a byproduct, the $L^2$-Wasserstein distance between the numerical invariant measure $\pi^{\Delta t,h}$ and the exact one $\pi^*$ is estimated.   
We remark that there exist many alternative choices of the spatial discretization. For example, we also use a finite difference method to discretize the temporal semi-discretization in space. 
We prove that this full discretization preserves the ergodicity by deriving the uniform boundedness of the averaged discrete energy, and meanwhile it possesses the discrete stochastic conformal multi-symplectic conservation law. 

The rest of this paper is organized as follows. In Section 2, we first introduce some notations and focus on studying the uniformly boundedness, ergodicity and stochastic conformal multi-symplecticity of \eqref{SMEs with PML}. In Section 3, an ergodic modified midpoint temporal semi-discretization of \eqref{SMEs with PML} is proposed. We establish the mean-square convergence of the numerical scheme. Section 4 is devoted to designing ergodic full discretizations.
The details of the proof to a prior estimates of some operators are provided in Appendices.

\section{Properties of stochastic Maxwell equations}
In this section, we investigate the regularities, ergodicity and the stochastic conformal multi-symplecticity of the solution of \eqref{SMEs with PML}, which make preparations for the numerical approximations in the rest sections. 
Throughout this paper, $C$ will be used to denote a generic positive constant independent of time. 

Let $W^{k,p}(D)$ be the standard Sobolev space. Especially, we denote $H^k(D):=W^{k,2}(D)$. Denote the Euclidean norm in $\mathbb{R}^6$ by $|\cdot|$. We define the Maxwell operator by
\begin{equation*}
	M=\begin{pmatrix}
		0 & \nabla\times\\
		-\nabla\times &0
	\end{pmatrix}
\end{equation*}
with ${\mathcal D}(M):=H_{0}({\rm curl},D)\times H({\rm curl},D)$, which is skew-adjoint on $\H:=L^2(D)^3\times L^2(D)^3$ (see e.g., \cite{CHJ2019a}).
Denote by $HS(U,H)$ the Banach space of all Hilbert--Schmidt operators from one separable Hilbert space $U$ to another separable Hilbert space $H$, equipped with the norm
$$\|\Gamma\|_{HS(U,H)}=\Big(\sum_{j=1}^{\infty}\|\Gamma q_j\|^2_{H}\Big)^{\frac{1}{2}}\quad \forall\,\Gamma\in HS(U,H),$$
where $\{q_j\}_{j\in\mathbb{N}_+}$ is an orthonormal basis of $U$.
For the Wiener processes, we give the following assumption.
\begin{assumption}\label{ass}
	
	For $i=1,2$, assume that $W_i(t)$ is a $Q_i$-Wiener process on the filtered probability space $(\Omega,\mathcal{F},\{\mathcal{F}_t\}_{t\geq0},\mathbb{P})$, which can be represented as  $$W_i(t)=\sum_{k=1}^{\infty}\sqrt{\eta_k^{(i)}}q_k\beta^{(i)}_k(t),\quad t\geq0,$$ 
	where $Q_iq_k=\eta_{k}^{(i)} q_k$ with $\eta_{k}^{(i)}\geq 0$ and $\{q_k\}_{k\in\mathbb{N}_+}$ being the orthonormal basis of $L^2(D)$.
	In addition, assume that $Q_{i}^{\frac12}\in HS(L^2(D),H^{\gamma_{i}}(D))=:\mathcal{L}_2^{\gamma_{i}}$ for some $\gamma_{i}\geq 0$.
	
\end{assumption}

Let $u=(\Ee^{\top},\Hh^{\top})^{\top}$ and  $u_0=(\Ee_0^{\top},\Hh_0^{\top})^{\top}$. We can rewrite  \eqref{SMEs with PML} as a stochastic evolution equation
\begin{equation}\label{SEE_S}
	\left\{
	\begin{split}
		&\dd u(t)=\Big(M u(t)-\sigma u(t)\Big)\dd t+\lambda_1 J u(t)\circ \dd W_1(t)+\widetilde{\bm{\lambda}}_2\dd W_2(t),\quad t>0,\\
		&u(0)=u_0,
	\end{split}\right.
\end{equation}
where $\widetilde{\bm{\lambda}}_2=(\bm{\lambda}_2^{\top},\bm{\lambda}_2^{\top})^{\top}$, $J=\begin{pmatrix} 0&-I_3\\I_3 &0 \end{pmatrix}$ with $I_3$ being the identity matrix on $\mathbb{R}^{3\times3}$.
The equivalent It\^o formulation of  \eqref{SEE_S} reads as
\begin{equation}\label{SEE_I}
	\dd u(t)=\Big(M u(t)-\sigma u(t)-\frac12\lambda_1^2F_{Q_1} u(t)\Big)\dd t+\lambda_1 J u(t)\dd W_1(t)+\widetilde{\bm{\lambda}}_2\dd W_2(t)
\end{equation}
for $t>0$, where $F_{Q_1}=\sum_{k=1}^{\infty}\eta_k^{(1)} (q_k)^2$.

The following proposition states the well-posedness and the uniform boundedness in $L^2(\Omega,\H)$ of the solution of \eqref{SEE_I}.
\begin{prop}\label{H-boundedness}
	Let Assumption \ref{ass} hold with $\gamma_1\geq0$ and $\gamma_2\geq 0$, and let $u_0\in L^2(\Omega,\H)$. Then  the system \eqref{SEE_I} is well-posed.
	Moreover, there exists a positive constant $C_1:=C_1(\sigma_0,\widetilde{{\bm{\lambda}}}_2,{\rm tr}(Q_2))$  such that  
	\begin{equation*}
		\E\big[\|u(t)\|_{\H}^2\big]\leq e^{-2\sigma_0t}\mathbb{E}\big[\|u_0\|_{\H}^2\big]+C_1\big(1-e^{-2\sigma_0t}\big).
	\end{equation*}
\end{prop}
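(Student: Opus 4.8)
The plan is to obtain the estimate from an energy identity produced by applying the It\^o formula to the functional $u\mapsto\|u\|_{\H}^2$ and then closing it with Gronwall's inequality. Well-posedness is the standard part: $M$ generates a unitary $C_0$-group on $\H$ (being skew-adjoint), the zeroth-order terms $-\sigma u$ and $-\tfrac12\lambda_1^2 F_{Q_1}u$ are bounded perturbations, and both diffusion coefficients $u\mapsto\lambda_1 Ju$ and the constant $\widetilde{\bm{\lambda}}_2$ are globally Lipschitz from $\H$ into $HS(L^2(D),\H)$ under Assumption \ref{ass}; existence and uniqueness of a mild solution then follow from the usual fixed-point argument for semilinear SPDEs, as in \cite{CHJ2019a}.

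For the uniform bound I would apply the It\^o formula to $\|u(t)\|_{\H}^2$ and identify each contribution. First, $\langle Mu,u\rangle_{\H}=0$ by skew-adjointness of $M$. Second, the martingale driven by $W_1$ vanishes pointwise, since $u^{\top}Ju=0$ by skew-symmetry of $J$. Third, and crucially, the It\^o correction generated by the multiplicative noise $\lambda_1 Ju\,\dd W_1$ equals $\lambda_1^2\langle F_{Q_1}u,u\rangle_{\H}\,\dd t$ (using $|Ju|=|u|$ and $F_{Q_1}=\sum_k\eta_k^{(1)}(q_k)^2$), and is exactly cancelled by the drift term $-\lambda_1^2\langle F_{Q_1}u,u\rangle_{\H}\,\dd t$ coming from $-\tfrac12\lambda_1^2 F_{Q_1}u$ — this is precisely the energy-conserving structure of the original Stratonovich noise. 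Fourth, the additive noise $\widetilde{\bm{\lambda}}_2\,\dd W_2$ contributes the constant $|\widetilde{\bm{\lambda}}_2|^2\,{\rm tr}(Q_2)\,\dd t$. After taking expectations, so that the remaining martingale term drops, one is left with
\begin{equation*}
	\frac{\dd}{\dd t}\E\big[\|u(t)\|_{\H}^2\big]=-2\,\E\big[\langle\sigma u,u\rangle_{\H}\big]+|\widetilde{\bm{\lambda}}_2|^2\,{\rm tr}(Q_2).
\end{equation*}
The lower bound $\langle\sigma u,u\rangle_{\H}\geq\sigma_0\|u\|_{\H}^2$ then yields the differential inequality $\frac{\dd}{\dd t}\E[\|u\|_{\H}^2]\leq-2\sigma_0\E[\|u\|_{\H}^2]+|\widetilde{\bm{\lambda}}_2|^2\,{\rm tr}(Q_2)$, and Gronwall's lemma gives the stated decay with $C_1=|\widetilde{\bm{\lambda}}_2|^2\,{\rm tr}(Q_2)/(2\sigma_0)$, which indeed depends only on $\sigma_0$, $\widetilde{\bm{\lambda}}_2$ and ${\rm tr}(Q_2)$.

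The main technical obstacle is that the It\^o formula cannot be applied directly, because a mild solution need not take values in $\mathcal{D}(M)$, so $\langle Mu,u\rangle_{\H}$ is only formally defined. I would circumvent this by replacing $M$ with its Yosida approximation $M_n=nM(nI-M)^{-1}$, which is bounded and remains skew-adjoint; applying the It\^o formula to the regularized equation, where every term is well-defined, and then passing to the limit $n\to\infty$ using the strong convergence $M_n\to M$ on $\mathcal{D}(M)$ together with the uniform-in-$n$ a priori bounds. Since the identity $\langle M_n v,v\rangle_{\H}=0$ is preserved along the approximation, the limiting energy identity—and hence the estimate—carries over to the mild solution.
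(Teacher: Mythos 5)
Your proposal is correct and follows essentially the same route as the paper: Itô's formula applied to $\|u\|_{\H}^2$, with the $M$-term killed by skew-adjointness, the $W_1$-martingale and its Itô correction cancelling against the Stratonovich drift $-\tfrac12\lambda_1^2F_{Q_1}u$, the additive noise contributing $|\widetilde{\bm{\lambda}}_2|^2\,{\rm tr}(Q_2)$, and Gronwall yielding $C_1=|\widetilde{\bm{\lambda}}_2|^2\,{\rm tr}(Q_2)/(2\sigma_0)$. Your Yosida-approximation remark simply makes rigorous a step the paper leaves implicit, so no substantive difference remains.
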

\begin{proof}
	The well-posedness of \eqref{SEE_I} follows similarly to \cite[Theorem 2.1]{CHJ2019a}.
	By applying the It\^o formula to $\|u\|_{\H}^2$, we have
	\begin{align}\label{qwe}
		\dd\|u(t)\|_{\H}^2
		=&-2\langle u(t), \sigma u(t)\rangle_{\H}\dd t+|\widetilde{{\bm{\lambda}}}_2|^2{\rm tr}(Q_2)\dd t+2\langle u(t),  \widetilde{{\bm{\lambda}}}_2\dd W_2(t)\rangle_{\H}\notag\\
		\leq&-2\sigma_0\|u(t)\|^2_{\H}\dd t+|\widetilde{{\bm{\lambda}}}_2|^2{\rm tr}(Q_2)\dd t+2\langle u(t),  \widetilde{{\bm{\lambda}}}_2\dd W_2(t)\rangle_{\H}
	\end{align}
	due to the skew-adjointness of $M$.
	By taking the expectation on both sides of \eqref{qwe} and using the Gronwall inequality, we have
	\begin{align*}
		\E\big[\|u(t)\|_{\H}^2\big]&\leq e^{-2\sigma_0 t} \mathbb{E}\big[\|u_0\|_{\H}^2\big]+(1-e^{-2\sigma_0t})\frac{|\widetilde{{\bm{\lambda}}}_2|^2 {\rm tr}(Q_2)}{2\sigma_0}\\
		&=: e^{-2\sigma_0t}\mathbb{E}\big[\|u_0\|_{\H}^2\big]+(1-e^{-2\sigma_0t})C_1.
	\end{align*}
	
	Thus we finish the proof.
\end{proof}

\subsection{Ergodicity}
In this part, we investigate the ergodicity of \eqref{SMEs with PML}, that is, the existence and uniqueness of the invariant measure.
Let $P_t\varphi(x):=\mathbb{E}[\varphi(u(t))],t\geq 0$ which is the Markov transition semigroup associated to the solution $u$ of \eqref{SMEs with PML}. Denote by $\mathcal{P}(\H)$ the space of all Borel probability measures on $\H$. A probability measure $\pi\in\mathcal{P}(\H)$ is said to be invariant for $u$ if
$
\int_{\H}P_t\varphi(x)\pi(\dd x)=\int_{\H}\varphi\dd \pi=:\pi(\varphi)$
for any Borel bounded mapping $\varphi$ and $t\geq0$. Further, $u$ is said to be ergodic on $\H$ if 
$
\lim_{T\rightarrow\infty}\frac{1}{T}\int_{0}^{T}\mathbb{E}[\varphi(u(t))]\dd t=\pi(\varphi)\,\text{in }L^2(\H,\pi)
$
for all $\varphi\in L^2(\H,\pi)$; $u$ is said to be exponentially mixing on $\H$ if there exist a positive constant $\rho$ and a positive function $C(\cdot)$ such that for any bounded Lipschitz continuous function $\varphi$ on $\H$, all $t>0$ and all $u_0\in \H$,
$\big|P_t\varphi(u_0)-\pi(\varphi)\big|\leq C(u_0)L_\varphi e^{-\rho t}$
with $L_{\varphi}$ being the Lipschitz constant of $\varphi$.

In order to obtain the ergodicity, we first derive the uniform boundedness of the solution of \eqref{SMEs with PML} in $L^2(\Omega,H^1(D)^6)$ in the following lemma.
\begin{lemm}\label{regularity in H1}
	Let Assumption \ref{ass} hold with $\gamma_1:=1+\gamma>\frac52$ and $\gamma_2\geq1$, let $u_0\in L^2(\Omega,H^1(D)^6)$ and $F_{Q_1}\in W^{1,\infty}(D)$. Then the solution of \eqref{SMEs with PML} is bounded uniformly in time, i.e.,
	\begin{align}\label{H1bdd}
		\E\big[\|u(t)\|_{H^1(D)^6}^2\big]\leq C_2e^{-\sigma_0t}	\E\big[\|u_0\|_{H^1(D)^6}^2\big]+C_3,
	\end{align}
	where the positive constants $C_2$ depends on $|D|$, and $C_3$ depends on  $\sigma_0, \|\sigma\|_{W^{1,\infty}(D)}$, $\|F_{Q_1}\|_{W^{1,\infty}(D)}$,
	$C_1,$
	$\mathbb{E}\big[\|u_0\|_{\H}^2\big],$
	$ \lambda_{1}$ ,$\widetilde{{\bm{\lambda}}}_2$ ,$\|Q_1^{\frac{1}{2}}\|_{\mathcal{L}_2^{\gamma_1}}$ and $\|Q_2^{\frac{1}{2}}\|_{\mathcal{L}_2^{\gamma_2}}$. 
\end{lemm}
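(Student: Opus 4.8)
The plan is to reduce control of the full $H^1(D)^6$-norm to control of the curl and divergence of the field, and then to propagate the latter in time by an Itô energy estimate in the spirit of \eqref{qwe}. The first ingredient is a Gaffney-type (elliptic regularity) inequality: under the boundary conditions ${\bf n}\times\Ee=0$ and ${\bf n}\cdot\Hh=0$ one has
\[
\|u\|_{H^1(D)^6}^2\le C_2\Big(\|u\|_{\H}^2+\|\nabla\times\Ee\|_{L^2}^2+\|\nabla\cdot\Ee\|_{L^2}^2+\|\nabla\times\Hh\|_{L^2}^2+\|\nabla\cdot\Hh\|_{L^2}^2\Big),
\]
with $C_2$ depending only on $|D|$. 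Since $\E[\|u(t)\|_{\H}^2]$ is already bounded uniformly in time by Proposition \ref{H-boundedness}, it remains to bound the four curl/divergence quantities uniformly in $L^2(\Omega,L^2(D))$.

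To this end I would apply $\nabla\times$ and $\nabla\cdot$ to \eqref{SEE_I}. Two structural facts are decisive. First, $\nabla\times$ commutes with $M$ blockwise, so that $\nabla\times u:=(\nabla\times\Ee,\nabla\times\Hh)$ again solves a Maxwell-type system driven by the same skew-adjoint $M$. Second, $\nabla\cdot(Mu)=0$, since the divergence of a curl vanishes; hence the pair $(\nabla\cdot\Ee,\nabla\cdot\Hh)$ satisfies a system with no Maxwell term at all. Differentiation produces only zeroth-order commutators, e.g. $\nabla\times(\sigma u)=\sigma\,\nabla\times u+\nabla\sigma\times u$ and $\nabla\times(F_{Q_1}u)=F_{Q_1}\,\nabla\times u+\nabla F_{Q_1}\times u$, together with the analogous divergence identities and the terms obtained by differentiating the noises $\lambda_1 Ju\,\dd W_1$ and $\widetilde{\bm{\lambda}}_2\dd W_2$. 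All of these remainders are of order zero in the derivatives of $u$, hence controlled by $\|u\|_{\H}$, the norms $\|\sigma\|_{W^{1,\infty}(D)}$ and $\|F_{Q_1}\|_{W^{1,\infty}(D)}$, and the Hilbert--Schmidt norms $\|Q_i^{1/2}\|_{\mathcal{L}_2^{\gamma_i}}$.

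Next I would apply the Itô formula to the combined energies $\|\nabla\times u\|_{\H}^2=\|\nabla\times\Ee\|_{L^2}^2+\|\nabla\times\Hh\|_{L^2}^2$ and $\|\nabla\cdot\Ee\|_{L^2}^2+\|\nabla\cdot\Hh\|_{L^2}^2$; the $M$-contribution to the former cancels by skew-adjointness exactly as in \eqref{qwe}, and is absent from the latter. The leading dissipation is $-2\langle\nabla\times u,\sigma\,\nabla\times u\rangle_{\H}\le-2\sigma_0\|\nabla\times u\|_{\H}^2$, while the leading part of the multiplicative-noise quadratic variation cancels against the Itô-correction drift $-\tfrac12\lambda_1^2 F_{Q_1}(\nabla\times u)$ precisely as the $\lambda_1$-terms disappear from \eqref{qwe}. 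The surviving commutator cross terms, e.g. $2\langle\nabla\times u,\nabla\sigma\times u\rangle_{\H}$, are split by Young's inequality so as to consume part of the dissipation and leave an effective rate $\sigma_0$ (this is the source of the factor $e^{-\sigma_0 t}$ rather than $e^{-2\sigma_0 t}$ in \eqref{H1bdd}), while the remaining noise contributions are time-independent constants proportional to $\|Q_1^{1/2}\|_{\mathcal{L}_2^{\gamma_1}}^2\,\E\|u\|_{\H}^2$ and $\|Q_2^{1/2}\|_{\mathcal{L}_2^{\gamma_2}}^2$. Taking expectations removes the martingale parts and yields $\tfrac{\dd}{\dd t}\E[\Phi(t)]\le-\sigma_0\,\E[\Phi(t)]+C$ for $\Phi=\|\nabla\times u\|_{\H}^2+\|\nabla\cdot\Ee\|_{L^2}^2+\|\nabla\cdot\Hh\|_{L^2}^2$; Gronwall's inequality together with the reduction of Step 1 and Proposition \ref{H-boundedness} then gives \eqref{H1bdd}.

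I expect the main obstacle to be the differentiated multiplicative noise. Applying $\nabla\times$ to $\lambda_1 Ju\,\dd W_1$ produces, besides the benign term $\lambda_1 J(\nabla\times u)\,\dd W_1$, a commutator of the form $\lambda_1\,\nabla W_1\times(Ju)$; to keep its contribution to the quadratic variation bounded in $L^2(D)$ uniformly in time one must estimate a quantity of the type $\|u\|_{L^2}^2\sum_k\eta_k^{(1)}\|\nabla q_k\|_{L^\infty}^2$, which demands the embedding $H^{\gamma_1}(D)\hookrightarrow W^{1,\infty}(D)$; in three dimensions this forces $\gamma_1>\tfrac52$, exactly the hypothesis $\gamma_1=1+\gamma>\tfrac52$. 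Analogously, the differentiated source $\widetilde{\bm{\lambda}}_2\dd W_2$ requires $\nabla W_2\in L^2(D)$, i.e. $\gamma_2\ge1$. Ensuring that these (non-sign-definite) commutator terms can be absorbed into the $-2\sigma_0$ dissipation while leaving only bounded, time-independent constants is the delicate point of the argument; a secondary technical issue is the validity of the Gaffney inequality of Step 1 on the cuboid $D$, which is Lipschitz but not smooth, under the mixed boundary conditions ${\bf n}\times\Ee=0$ and ${\bf n}\cdot\Hh=0$.
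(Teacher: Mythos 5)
Your proposal is correct and follows essentially the same route as the paper's proof: reduce the $H^1(D)^6$-bound to curl and divergence bounds via the embedding $H({\rm curl},D)\cap H({\rm div},D)\hookrightarrow H^1(D)^3$ under the given boundary conditions, then derive It\^o energy estimates for $\|Mu\|_{\H}^2$ and for the combined divergence energy in which the It\^o correction cancels the leading quadratic variation, the commutator terms ($\nabla\sigma$, $\nabla F_{Q_1}$, $\nabla Q_i^{1/2}q_k$ — whence $\gamma_1>\tfrac52$, $\gamma_2\geq 1$) are absorbed by Young's inequality into the $\sigma_0$-dissipation, and Gronwall concludes. The one point worth making explicit is that for the divergence part the cancellation is crossed — the quadratic variation of the $\Ee$-equation produces the $F_{Q_1}$-weighted $|\nabla\cdot\Hh|^2$ term and vice versa, because $J$ swaps the fields — so it occurs only after summing the two divergence energies, which your combined-energy formulation (and the paper's Step 2) indeed does.
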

\begin{proof}
	{\em Step 1. Uniform boundedness of the curl of the solution.}
	
	Applying the It\^o formula to $\|Mu\|_{\H}^2$, we have
	\begin{align}\label{ito1}
		\dd\mathbb{E}\big[\|Mu(t)\|_{\H}^2\big]=&\,-2\mathbb{E}\big[\langle Mu(t),M\big((\sigma+\frac12\lambda_1^2F_{Q_1}) u(t)\big)\rangle_{\H}\big]\dd t\notag\\
		&+ \lambda_1^2\mathbb{E}\Big[\sum_{k=1}^{\infty}\big\|M(Ju(t)Q_1^{\frac12}q_k)\big\|_{\H}^2\Big]\dd t+\sum_{k=1}^{\infty}\big\|M(\widetilde{{\bm{\lambda}}}_2Q_2^{\frac12}q_k)\big\|_{\H}^2\dd t.
	\end{align}
	We note that 
	\begin{align}\label{yuy}
		-2\E\Big[\Big\langle Mu,M\big((\sigma+\frac12\lambda_1^2F_{Q_1}) u\big)\Big\rangle_{\H}\Big]
		&=-2\E\Big[\Big\langle Mu,(\sigma+\frac12\lambda_1^2F_{Q_1}) Mu\Big\rangle_{\H}\Big]\notag\\
		&\quad-2\E\Big[\Big\langle Mu, \begin{pmatrix} \nabla(\sigma+\frac12\lambda_1^2F_{Q_1})\times\Hh \\
			-\nabla(\sigma+\frac12\lambda_1^2F_{Q_1})\times\Ee \end{pmatrix} \Big\rangle_{\H}\Big]\\
		&\leq -\frac{3}{2}\sigma_0\E\big[\|Mu\|^2_{\H}\big]-\lambda_1^2\E\big[\langle Mu,F_{Q_1}Mu\rangle_{\H}\big]+C,\notag
	\end{align}
	where we use the fact that
	\begin{equation*}\label{yuuu}
		\begin{split}
			&-2\E\Big[\Big\langle Mu, \begin{pmatrix} \nabla(\sigma+\frac12\lambda_1^2F_{Q_1})\times\Hh \\
				-\nabla(\sigma+\frac12\lambda_1^2F_{Q_1})\times\Ee \end{pmatrix} \Big\rangle_{\H}\Big]\\
			&\leq 
			4\E\big[\|Mu\|_{\H}\|u\|_{\H}\big]\|\nabla(\sigma+\frac12\lambda_1^2F_{Q_1})\|_{L^\infty(D)^3}\leq \frac{1}{2}\sigma_0\E[\|Mu\|^2_{\H}]+C
		\end{split}
	\end{equation*}
	due to the Young inequality, the assumption $\sigma\in W^{1,\infty}(D,\mathbb{R})$ and Proposition \ref{H-boundedness}.

	By using the Sobolev embedding $H^{\gamma}(D)\hookrightarrow L^{\infty}(D)$ for $\gamma>\frac{3}{2}$, the H\"older inequality, the Young inequality and Proposition \ref{H-boundedness}, it holds that
	\begin{align}\label{poi}
		&\lambda_1^2\E\Big[\sum_{k=1}^{\infty}\|M(JuQ_1^{\frac12}q_k)\|_{\H}^2\Big]+\sum_{k=1}^{\infty}\|M(\widetilde{{\bm{\lambda}}}_2Q_2^{\frac12}q_k)\|_{\H}^2\notag\\
		&=\lambda_1^2\E[\langle Mu,F_{Q_1}Mu\rangle_{\H}]
		+\lambda_{1}^2\E\Bigg[\sum_{k=1}^{\infty}\Bigg{\|}
		\left(          
		\begin{array}{ccc}  
			\nabla(Q_1^\frac{1}{2}q_k)\times\Ee\\
			\nabla(Q_1^\frac{1}{2}q_k)\times\Hh\\
		\end{array}
		\right)
		\Bigg{\|}^2_{\H}\Bigg]\notag
		\\
		&\quad+2\lambda_{1}^2\E\Bigg[\sum_{k=1}^{\infty}\Bigg{\langle}
		\left(          
		\begin{array}{ccc}  
			(Q_1^\frac{1}{2}q_k)\nabla\times\Ee\\
			(Q_1^\frac{1}{2}q_k)\nabla\times\Hh\\
		\end{array}
		\right),
		\left(          
		\begin{array}{ccc}  
			\nabla(Q_1^\frac{1}{2}q_k)\times\Ee\\
			\nabla(Q_1^\frac{1}{2}q_k)\times\Hh\\
		\end{array}
		\right)
		\Bigg{\rangle}_\mathbb{H}\Bigg]	\\
		&\quad+2\sum_{k=1}^{\infty}\|
		{\bm\lambda_2}\times\nabla(Q_2^{\frac{1}{2}}q_k)
		\|^2_{L^2(D)^3}\notag\\
		&\leq \lambda_1^2\E\big[\langle Mu,F_{Q_1}Mu\rangle_{\H}\big]
		+\frac12 \sigma_0\E\big[\|Mu\|^2_{\H}\big]+C.\notag
	\end{align}
	
	Substituting \eqref{yuy} and \eqref{poi} into \eqref{ito1}, we obtain
	\begin{equation*}\label{ito2}
		\begin{split}
			\dd\mathbb{E}\big[\|Mu(t)\|_{\H}^2\big]\leq -\sigma_0\E\big[\|Mu(t)\|^2_{\H}\big]\dd t+C\dd t,
		\end{split}
	\end{equation*}
	which combining the Gronwall inequality yields
	$$\mathbb{E}\big[\|Mu(t)\|_{\H}^2\big]\leq e^{-\sigma_0t}\mathbb{E}\big[\|Mu_0\|_{\H}^2\big]+C.$$

	{\em Step 2. Uniform boundedness of the divergence of the solution.}
	
	Applying the It\^o formula to $\|\nabla\cdot\Ee\|^2_{L^2(D)}$ and taking the expectation, we arrive at 
	\begin{equation}\label{uyu2}
		\begin{split}
			\dd \mathbb{E}[\|\nabla\cdot\Ee(t)\|^2_{L^2(D)}]&=-2\mathbb{E}\Big[\Big\langle \nabla\cdot\Ee(t), \nabla\cdot\Big((\sigma+\frac12\lambda_1^2 F_{Q_1})\Ee(t)\Big)\Big\rangle_{L^2(D)}\Big]\dd t\\
			&\quad+\lambda_1^2\mathbb{E}\Big[\sum_{k=1}^{\infty}\|\nabla\cdot(\Hh(t) Q_1^{\frac12}q_k)\|_{L^2(D)}^2\Big]\dd t\\
			&\quad+\sum_{k=1}^{\infty}\|\nabla\cdot({\bm \lambda}_2Q_2^{\frac12}q_k)\|_{L^2(D)}^2\dd t.
		\end{split}
	\end{equation}
	By the Sobolev embedding $H^{\gamma}(D)\hookrightarrow L^{\infty}(D)$ for $\gamma>\frac{3}{2}$, the H\"older inequality, the Young inequality, the assumption $\sigma,F_{Q_1}\in W^{1,\infty}(D)$ and Proposition \ref{H-boundedness}, we have
	\begin{align}\label{uyu1}
		&-2\E\Big[\Big\langle \nabla\cdot\Ee, \nabla\cdot\big((\sigma+\frac12\lambda_1^2 F_{Q_1})\Ee\big)\Big\rangle_{L^2(D)}\Big]\notag\\
		&=-2\E\Big[\Big\langle \nabla\cdot\Ee, \nabla\big(\sigma+\frac12\lambda_1^2 F_{Q_1}\big)\cdot\Ee\Big\rangle_{L^2(D)}\Big]\notag\\
		&\quad-2\E\Big[\Big\langle \nabla\cdot\Ee, (\sigma+\frac12\lambda_1^2 F_{Q_1})\nabla\cdot\Ee\Big\rangle_{L^2(D)}\Big]\\
		&\leq -\frac32 \sigma_0 \E[ \|\nabla\cdot\Ee\|^2_{L^2(D)}]-\lambda_1^2\E\Big[\langle \nabla\cdot\Ee, F_{Q_1}\nabla\cdot\Ee\rangle_{L^2(D)}\Big]+C\notag
	\end{align}
	and
	\begin{equation}\label{uyu}
		\begin{split}
			&\lambda_{1}^2\E\Big[\sum_{k=1}^{\infty}\|\nabla\cdot(\Hh Q_1^{\frac12}q_k)\|_{L^2(D)}^2\Big]\\
			&=\lambda_{1}^2\E\Big[\Big\langle \nabla\cdot\Hh, F_{Q_1}\nabla\cdot\Hh\Big\rangle_{L^2(D)}\Big]+\lambda_{1}^2\E\Big[\sum_{k=1}^{\infty}\|\Hh\cdot\nabla(Q_1^{\frac12} q_k)\|^2_{L^2(D)}\Big]
			\\
			&\quad+2\lambda_{1}^2\E\Big[\sum_{k=1}^{\infty}\Big\langle Q_1^{\frac12} q_k\nabla\cdot\Hh, \Hh\cdot \nabla(Q_1^{\frac12} q_k)\Big\rangle_{L^2(D)}\Big]\\
			&\leq \lambda_{1}^2\E\big[\langle \nabla\cdot\Hh, F_{Q_1}\nabla\cdot\Hh\rangle_{L^2(D)}\big]
			+\frac12\sigma_0\E\big[\|\nabla\cdot\Hh\|^2_{L^2(D)}\big]+C.
		\end{split}
	\end{equation}
	
	Using the fact that $\sum_{k=1}^{\infty}\|\nabla\cdot({\bm \lambda}_2Q_2^{\frac12}q_k)\|_{L^2(D)}^2\leq C(|{\bm\lambda_2}|,\|Q_2^{\frac{1}{2}}\|_{\mathcal{L}_2^{\gamma_2}})$ and substituting \eqref{uyu1}--\eqref{uyu} into \eqref{uyu2}, we have
	\begin{align*}
		\dd \E[\|\nabla\cdot\Ee(t)\|^2_{L^2(D)}]
		\leq&\, -\frac32 \sigma_0 \E [\|\nabla\cdot\Ee(t)\|^2_{L^2(D)}]\dd t
		+\frac12\sigma_0\E[\|\nabla\cdot\Hh(t)\|^2_{L^2(D)}]\dd t\\
		&-\lambda_1^2\E\Big[\Big\langle \nabla\cdot\Ee(t), F_{Q_1}\nabla\cdot\Ee(t)\Big\rangle_{L^2(D)}\Big]\dd t\\
		&+\lambda_1^2\E\Big[\Big\langle \nabla\cdot\Hh(t), F_{Q_1}\nabla\cdot\Hh(t)\Big\rangle_{L^2(D)}\Big]\dd t
		+C\dd t.
	\end{align*}
	Similarly, one gets
	\begin{align*}
		\dd \E[\|\nabla\cdot\Hh(t)\|^2_{L^2(D)}]
		\leq&\,-\frac32 \sigma_0 \E[\|\nabla\cdot\Hh(t)\|^2_{L^2(D)}]\dd t
		+\frac12\sigma_0\E[\|\nabla\cdot\Ee(t)\|^2_{L^2(D)}]\dd t\\
		&-\lambda_1^2\E\Big[\Big\langle \nabla\cdot\Hh(t), F_{Q_1}\nabla\cdot\Hh(t)\Big\rangle_{L^2(D)}\Big]\dd t
		\\
		&+\lambda_1^2\E\Big[\Big\langle \nabla\cdot\Ee(t), F_{Q_1}\nabla\cdot\Ee(t)\Big\rangle_{L^2(D)}\Big]\dd t
		+C\dd t.
	\end{align*}
	Combining them together, we have
	\begin{align*}
		&\dd\E\Big[\|\nabla\cdot\Ee(t)\|^2_{L^2(D)}+\|\nabla\cdot\Hh(t)\|^2_{L^2(D)}\Big]\\
		&\leq\, -\sigma_0\E\Big[\|\nabla\cdot\Ee(t)\|^2_{L^2(D)}+\|\nabla\cdot\Hh(t)\|^2_{L^2(D)}\Big]\dd t+C\dd t,
	\end{align*}
	which by the Gronwall inequality implies
	$$\E\Big[\|\nabla\cdot\Ee(t)\|^2_{L^2(D)}+\|\nabla\cdot\Hh(t)\|^2_{L^2(D)}\Big]\leq e^{-\sigma_0t}\mathbb{E}\Big[\|\nabla\cdot\Ee_0\|^2_{L^2(D)}+\|\nabla\cdot\Hh_0\|^2_{L^2(D)}\Big]+C.$$
	
	{\em Step 3. Proof of }\eqref{H1bdd}.
	
	By utilizing the fact that $v\in H({\rm curl},D)\cap H({\rm div},D)$ belongs to $H^1(D)^3$ if ${\bf n}\times v|_{\partial D}=0$ or ${\bf n}\cdot v|_{\partial D}=0$, we can get the time-independent $H^1(D)^6$-regularity of the solution of \eqref{SMEs with PML}. Combining {\em Step 1}, {\em Step 2} and Proposition \ref{H-boundedness}, we obtain
	\begin{align*}
		\E\big[\|u(t)\|_{H^1(D)^6}^2\big]&\leq C\Big(\E\big[\|u(t)\|_{\H}^2\big]+\E\big[\|\nabla\times\Ee(t)\|^2_{L^2(D)^3}\big]+\E\big[\|\nabla\cdot\Ee(t)\|^2_{L^2(D)}\big]\\
		&\quad\quad\quad+\E\big[\|\nabla\times\Hh(t)\|^2_{L^2(D)^3}\big]+\E\big[\|\nabla\cdot\Hh(t)\|^2_{L^2(D)}\big]\Big)\\
		&\leq C_2e^{-\sigma_0t}	\E\big[\|u_0\|_{H^1(D)^6}^2\big]+C_3.
	\end{align*}
	Thus we finish the proof.
\end{proof}

By applying the It\^o formula to $\|u(t)\|^{2p}_{\H}$ and $\|Mu(t)\|^{2p}_{\H}$, respectively, we can similarly obtain the uniform boundedness of $\mathbb{E}[\|u(t)\|^{2p}_{\mathcal{D}(M)}]$ in the following proposition. 
\begin{prop}\label{obv}
	Set $p\in\mathbb{N}_+$. Let Assumption \ref{ass} hold with $\gamma_1:=1+\gamma>\frac52$ and $\gamma_2\geq1$, let $u_0\in L^{2p}(\Omega,\mathcal{D}(M))$ and $F_{Q_1}\in W^{1,\infty}(D)$.
	Then the solution of \eqref{SMEs with PML} is bounded uniformly in time, i.e.,
	\begin{align}\label{H1bdd}
		\E\big[\|u(t)\|_{\mathcal{D}(M)}^{2p}\big]\leq \widetilde{C}_{2,p}e^{-\sigma_0t}	\E\big[\|u_0\|_{\mathcal{D}(M)}^{2p}\big]+\widetilde{C}_{3,p},
	\end{align}
	where the positive constants $\widetilde{C}_{2,p}$ depends on $p,|D|$, and $\widetilde{C}_{3,p}$ depends on  $p,\sigma_0$, $\|\sigma\|_{W^{1,\infty}(D)}$, $\|F_{Q_1}\|_{W^{1,\infty}(D)}$,
	$C_1,$
	$\mathbb{E}\big[\|u_0\|_{\H}^{2p}\big],$
	$ \lambda_{1}$ ,$\widetilde{{\bm{\lambda}}}_2$ ,$\|Q_1^{\frac{1}{2}}\|_{\mathcal{L}_2^{\gamma_1}}$ and $\|Q_2^{\frac{1}{2}}\|_{\mathcal{L}_2^{\gamma_2}}$. 
\end{prop}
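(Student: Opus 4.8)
The plan is to control the two pieces of the graph norm $\|u\|_{\mathcal{D}(M)}^{2}=\|u\|_{\H}^{2}+\|Mu\|_{\H}^{2}$ separately, by raising to the $p$-th power the two It\^o computations already carried out in Proposition \ref{H-boundedness} and in \emph{Step 1}--\emph{Step 2} of Lemma \ref{regularity in H1}, and then combining them through $\|u\|_{\mathcal{D}(M)}^{2p}\le 2^{p-1}\big(\|u\|_{\H}^{2p}+\|Mu\|_{\H}^{2p}\big)$. In every It\^o argument below the interchange of $\tfrac{\dd}{\dd t}$ and $\E$ is legitimized by stopping at $\tau_n=\inf\{t:\|u(t)\|_{\mathcal{D}(M)}>n\}$, discarding the resulting genuine martingale, and letting $n\to\infty$ via Fatou's lemma; I suppress this localization.

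First I would treat the $\H$-part. Applying It\^o to $(\|u\|_{\H}^{2})^{p}$ and using \eqref{qwe}, the $W_1$-noise contributes nothing to the martingale since $\langle u,\lambda_1 Ju\,Q_1^{1/2}q_k\rangle=0$ by the skew-symmetry of $J$, so the quadratic variation of $\|u\|_{\H}^{2}$ is $4\sum_k\langle u,\widetilde{\bm{\lambda}}_2 Q_2^{1/2}q_k\rangle^2\le 4|\widetilde{\bm{\lambda}}_2|^2\,\mathrm{tr}(Q_2)\,\|u\|_{\H}^{2}$. The drift and the new It\^o correction then produce $p(\|u\|_{\H}^2)^{p-1}(-2\sigma_0\|u\|_{\H}^2+C)+2p(p-1)|\widetilde{\bm{\lambda}}_2|^2\mathrm{tr}(Q_2)(\|u\|_{\H}^2)^{p-1}$; a Young inequality on the factor $(\|u\|_{\H}^2)^{p-1}$, followed by expectation and Gronwall, gives $\E[\|u(t)\|_{\H}^{2p}]\le e^{-p\sigma_0 t}\E[\|u_0\|_{\H}^{2p}]+C$, hence a uniform-in-time bound.

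The harder part is the $Mu$-estimate; I apply It\^o to $(\|Mu\|_{\H}^2)^p$. The drift is $p(\|Mu\|_{\H}^2)^{p-1}b$, where $b$ is the pathwise integrand behind \eqref{ito1}: from \eqref{yuy}--\eqref{poi} the two copies of $\lambda_1^2\langle Mu,F_{Q_1}Mu\rangle$ cancel, and, keeping $\|u\|_{\H}^2$ explicit instead of absorbing it via Proposition \ref{H-boundedness}, one has $b\le -\tfrac32\sigma_0\|Mu\|_{\H}^2+C\|u\|_{\H}^2+C$, the hypotheses $\gamma_1>\tfrac52$ and $\gamma_2\ge1$ entering through $H^{\gamma}\hookrightarrow L^{\infty}$ and $\sum_k\|\nabla(Q_i^{1/2}q_k)\|$-type sums bounded by $\|Q_i^{1/2}\|_{\mathcal{L}_2^{\gamma_i}}$. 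The genuinely delicate point, and the step I expect to be the main obstacle, is the new It\^o correction $\tfrac12 p(p-1)(\|Mu\|_{\H}^2)^{p-2}\sum_k g_k^2$, with $g_k$ the martingale coefficient $2\lambda_1\langle Mu,M(Ju\,Q_1^{1/2}q_k)\rangle$ (resp.\ $2\langle Mu,M(\widetilde{\bm{\lambda}}_2 Q_2^{1/2}q_k)\rangle$). Writing $M(Ju\,Q_1^{1/2}q_k)=Q_1^{1/2}q_k\,JMu+R_k$ (using $MJ=JM$, with $R_k$ the commutator carrying $\nabla(Q_1^{1/2}q_k)$), the would-be leading contribution $\langle Mu,Q_1^{1/2}q_k\,JMu\rangle=\int Q_1^{1/2}q_k\,(Mu)^{\top}J(Mu)=0$ vanishes pointwise by skew-symmetry of $J$. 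Hence $|g_k|\le C\|Mu\|_{\H}\|u\|_{\H}\|\nabla(Q_1^{1/2}q_k)\|_{L^\infty}$, so $\sum_k g_k^2\le C\|Mu\|_{\H}^2(\|u\|_{\H}^2+1)$ with \emph{no} $\|Mu\|_{\H}^4$ term. This is essential: a quartic contribution would feed $\tfrac12 p(p-1)C(\|Mu\|_{\H}^2)^p$ into the drift with a positive coefficient that for large $p$ overwhelms the dissipation $p\sigma_0$ and destroys the estimate.

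With the cancellation secured, the correction is only $C(\|Mu\|_{\H}^2)^{p-1}(\|u\|_{\H}^2+1)$. Collecting the drift and the correction and applying Young's inequality to $(\|Mu\|_{\H}^2)^{p-1}(\|u\|_{\H}^2+1)$ (absorbing a small multiple of $(\|Mu\|_{\H}^2)^p$ and retaining, since $p\ge1$, overall coefficient $\le -p\sigma_0$), I obtain after taking expectations
\[
\frac{\dd}{\dd t}\E\big[\|Mu(t)\|_{\H}^{2p}\big]\le -\,p\sigma_0\,\E\big[\|Mu(t)\|_{\H}^{2p}\big]+C\,\E\big[\|u(t)\|_{\H}^{2p}\big]+C .
\]
Inserting the uniform bound for $\E[\|u\|_{\H}^{2p}]$ from the $\H$-part, which is a constant depending on $\E[\|u_0\|_{\H}^{2p}]$ (precisely where that quantity enters $\widetilde{C}_{3,p}$), and applying Gronwall yields $\E[\|Mu(t)\|_{\H}^{2p}]\le e^{-p\sigma_0 t}\E[\|Mu_0\|_{\H}^{2p}]+\widetilde{C}_{3,p}$. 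Finally, adding the two bounds, using $e^{-p\sigma_0 t}\le e^{-\sigma_0 t}$ for $p\ge1$ together with $\|u_0\|_{\H}^{2p}+\|Mu_0\|_{\H}^{2p}\le C\|u_0\|_{\mathcal{D}(M)}^{2p}$, gives the desired estimate.
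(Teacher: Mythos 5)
Your proposal is correct and follows exactly the route the paper intends (its "proof" is the one-line remark preceding the proposition: apply It\^o to $\|u(t)\|_{\H}^{2p}$ and $\|Mu(t)\|_{\H}^{2p}$ and argue as in Proposition \ref{H-boundedness} and Step 1 of Lemma \ref{regularity in H1}); you correctly supply the detail the paper leaves implicit, namely the pointwise cancellation $\langle Mu,\phi\,JMu\rangle_{\H}=0$ coming from $MJ=JM$ and the skew-symmetry of $J$, which removes the would-be $(\|Mu\|_{\H}^{2})^{p}$ contribution from the It\^o correction and is what makes the bound uniform for every $p$. One harmless bookkeeping slip: after the $F_{Q_1}$-cancellation between \eqref{yuy} and \eqref{poi} the pathwise drift bound is $-\sigma_0\|Mu\|_{\H}^{2}+C\|u\|_{\H}^{2}+C$ rather than $-\tfrac32\sigma_0\|Mu\|_{\H}^{2}+C\|u\|_{\H}^{2}+C$ (the $+\tfrac12\sigma_0\|Mu\|_{\H}^{2}$ from \eqref{poi} survives), which affects nothing downstream.
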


Let $\pi\in\mathcal{P}(\H)$ and denote the transpose operator of $P_t$ by $P_t^*$. Based on Lemma \ref{regularity in H1}, we obtain the ergodicity of stochastic Maxwell equations and the convergence of $P_t^*\pi$ towards the invariant measure in the $L^2$-Wasserstein distance as $t\rightarrow\infty$ in the following proposition.

\begin{prop}\label{SMEErgo}
	Under the conditions in Lemma \ref{regularity in H1}, the following statements hold.\\
	{\rm (i)} The solution $u$ of \eqref{SMEs with PML} possesses a unique invariant measure $\pi^{*}\in\mathcal{P}_2(\H)$, where $\mathcal{P}_2(\H)=\{\mu\in\mathcal{P}(\H):\int_{\H}\|x\|_{\H}^2\mu(\dd x)<\infty\}$. Thus $u$ is ergodic. Moreover, $u$ is exponentially mixing.\\
	{\rm (ii)} For any distribution $\pi\in\mathcal{P}_2(\H)$, 
	\begin{equation*}
		\mathcal{W}_2(P^*_t\pi,\pi^{*})\leq e^{-\sigma_0t}\mathcal{W}_2(\pi,\pi^*).
	\end{equation*}
\end{prop}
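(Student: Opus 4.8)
The plan is to obtain existence of an invariant measure by the Krylov--Bogoliubov procedure, and then to derive uniqueness, ergodicity, exponential mixing, and the contraction estimate (ii) all from a single pathwise coupling estimate: two solutions driven by the \emph{same} noise converge to each other in $L^2(\Omega,\H)$ at the exponential rate $e^{-2\sigma_0 t}$.

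For existence I would fix $u_0\in\H$ and consider the time-averaged measures $\mu_T:=\frac1T\int_0^T P_t^*\delta_{u_0}\,\dd t$. Lemma \ref{regularity in H1} bounds $\E[\|u(t)\|_{H^1(D)^6}^2]$ uniformly in $t$, so $\sup_{t\geq0}\int_{\H}\|x\|_{H^1(D)^6}^2\,(P_t^*\delta_{u_0})(\dd x)<\infty$. Since the embedding $H^1(D)^6\hookrightarrow\H$ is compact (Rellich--Kondrachov), the sublevel sets $\{\|x\|_{H^1(D)^6}\leq R\}$ are relatively compact in $\H$, and Chebyshev's inequality together with this uniform bound yields tightness of $\{\mu_T\}_{T>0}$. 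By Prokhorov's theorem a subsequence converges weakly to some $\pi^*\in\mathcal P(\H)$, and the Feller property of $P_t$ (which follows from the continuous dependence estimate below, as $\E\|u(t)-v(t)\|_{\H}^2\to0$ when $u_0\to v_0$) makes $\pi^*$ invariant. The uniform $\H$-bound of Proposition \ref{H-boundedness} shows $\pi^*\in\mathcal P_2(\H)$.

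The crux is the contraction estimate. Let $u,v$ solve \eqref{SEE_I} with data $u_0,v_0$ and the same Wiener processes, and set $w:=u-v$. Then $\dd w=(Mw-\sigma w-\frac12\lambda_1^2 F_{Q_1}w)\,\dd t+\lambda_1 Jw\,\dd W_1$, the additive noise $\widetilde{\bm\lambda}_2\dd W_2$ having cancelled. Applying the It\^o formula to $\|w\|_{\H}^2$ and taking expectation, the transport term vanishes by skew-adjointness of $M$, the damping gives $-2\langle w,\sigma w\rangle_{\H}\leq-2\sigma_0\|w\|_{\H}^2$, and the It\^o correction $\lambda_1^2\sum_{k=1}^{\infty}\|Jw\,Q_1^{1/2}q_k\|_{\H}^2=\lambda_1^2\langle w,F_{Q_1}w\rangle_{\H}$ exactly cancels the term $-\lambda_1^2\langle w,F_{Q_1}w\rangle_{\H}$ produced by the Stratonovich-to-It\^o drift. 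Hence $\frac{\dd}{\dd t}\E[\|w(t)\|_{\H}^2]\leq-2\sigma_0\E[\|w(t)\|_{\H}^2]$, and Gronwall's inequality gives $\E[\|u(t)-v(t)\|_{\H}^2]\leq e^{-2\sigma_0 t}\|u_0-v_0\|_{\H}^2$.

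From here everything follows. Coupling $u_0\sim\pi$ and $v_0\sim\pi^*$ optimally for $\mathcal W_2$ and driving both solutions by the same noise produces a coupling of $P_t^*\pi$ and $P_t^*\pi^*=\pi^*$, whence $\mathcal W_2(P_t^*\pi,\pi^*)^2\leq\E[\|u(t)-v(t)\|_{\H}^2]\leq e^{-2\sigma_0 t}\mathcal W_2(\pi,\pi^*)^2$, which is (ii). Applying the same bound to any two invariant measures and letting $t\to\infty$ forces them to coincide, giving uniqueness, and uniqueness of the invariant measure is equivalent to ergodicity. For exponential mixing, write $\pi^*(\varphi)=\int_{\H} P_t\varphi(y)\,\pi^*(\dd y)$ and use Cauchy--Schwarz on the coupling to get $|P_t\varphi(u_0)-P_t\varphi(y)|\leq L_\varphi e^{-\sigma_0 t}\|u_0-y\|_{\H}$; integrating against $\pi^*$ and using $\pi^*\in\mathcal P_2(\H)$ yields $|P_t\varphi(u_0)-\pi^*(\varphi)|\leq C(u_0)L_\varphi e^{-\sigma_0 t}$. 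The main obstacle is the contraction estimate, whose success rests entirely on the two structural cancellations above (skew-adjointness of $M$ and the exact cancellation of the multiplicative-noise correction against the Stratonovich drift); without them the rate $\sigma_0$ would be degraded or lost.
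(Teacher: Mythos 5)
Your proposal is correct, and its skeleton coincides with the paper's: existence via Krylov--Bogoliubov using the uniform $H^1(D)^6$ bound of Lemma \ref{regularity in H1} together with the compact embedding $H^1(D)^6\hookrightarrow\H$, the synchronous-coupling contraction $\E\big[\|u(t)-\widetilde u(t)\|_{\H}^2\big]\leq e^{-2\sigma_0 t}\E\big[\|u_0-\widetilde u_0\|_{\H}^2\big]$ as the engine (the paper derives it ``similarly to Proposition \ref{H-boundedness}'', i.e.\ exactly your two cancellations: skew-adjointness of $M$ and the It\^o correction $\lambda_1^2\langle w,F_{Q_1}w\rangle_{\H}$ cancelling the Stratonovich drift), and the identical computation for exponential mixing. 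You diverge from the paper in two places. First, for uniqueness the paper invokes the general Harris theorem of Hairer--Mattingly, whereas you deduce it directly from the $\mathcal W_2$-contraction applied to two invariant measures; your route is more elementary and self-contained, but it only rules out multiple invariant measures \emph{inside} $\mathcal P_2(\H)$, so to match the paper's conclusion you should add the standard observation (the paper cites \cite[Proposition 4.24]{hairer2006ergodic} for it) that the dissipative bound of Proposition \ref{H-boundedness} forces \emph{every} invariant measure to satisfy $\int_{\H}\|x\|_{\H}^2\,\pi(\dd x)\leq C_1$, hence to lie in $\mathcal P_2(\H)$; with that remark your uniqueness argument is complete. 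Second, for part (ii) the paper first proves the contraction for Dirac masses and then integrates using the convexity of the $L^2$-Wasserstein distance, while you push an optimal coupling of $(\pi,\pi^*)$ through the dynamics with common noise and read off the bound directly; both are valid, and yours avoids the appeal to the convexity theorem from optimal transport at the cost of being slightly less modular (the paper's Dirac-plus-convexity step is reused verbatim for the discrete semigroups in Theorems \ref{hbf} and the dG analogue).
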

\begin{proof}
	(i)	Let $u$ and $\widetilde{u}$ be solutions of \eqref{SMEs with PML} with initial data $u_0$ and $\widetilde{u}_0$, respectively. Similarly to Proposition \ref{H-boundedness},
	we get
	$$ \mathbb{E}\big[\|u(t)-\widetilde{u}(t)\|^2_{\H}\big]\leq\mathbb{E}\big[\|u(0)-\widetilde{u}(0)\|^2_{\H}\big]-2\sigma_0\int_{0}^{t}\mathbb{E}\big[\|u(s)-\widetilde{u}(s)\|^2_{\H}\big]\dd s$$
	which by the Gronwall inequality yields
	\begin{equation}\label{mixing}
		\E\big[\|u(t)-\widetilde{u}(t)\|^2_{\H}\big]\leq e^{-2\sigma_0t}\E\big[\|u_0-\widetilde{u}_0\|^2_{\H}\big].
	\end{equation}
	By Lemma \ref{regularity in H1}, we can choose $\|\cdot\|_{H^1(D)^6}$ as a proper Lyapunov function. 
	We note that $H^1(D)^6$ is compactly embedded in $\H$. Therefore, the level set $K_{\alpha}:=\{v\in\H:\|v\|_{H^1(D)^6}\leq \alpha\}$ is compact for any constant $\alpha>0$. By using the Krylov--Bogoliubov theorem (see \cite[Proposition 7.10]{DaPrato2008}), the general Harris' theorem (see \cite[Theorem 4.8]{MartingHairer2011}) and \eqref{mixing}, we get the existence and the uniqueness of invariant measure $\pi^*$. By Proposition \ref{H-boundedness} and \cite[Proposition 4.24]{hairer2006ergodic}, it holds that
	\begin{equation}\label{ert}
		\int_{\H}\|\textbf{r}\|^2_{\H}\pi^*(\dd \textbf{r})<C_1,
	\end{equation}
	which implies that $\pi^*\in\mathcal{P}_2(\H).$
	
	For any bounded Lipschitz continuous function $\varphi$ on $\H$ and all $u_0\in \H$,
	\begin{align*}
		&\big|P_t\varphi(u_0)-\pi^*(\varphi)\big|
		=\Big|\int_{\H}\mathbb{E}\big[\varphi(u(t,u_0))-\varphi(u(t,\textbf{r}))\big]\pi^*(\dd \textbf{r})\Big|\\
		&\leq L_{\varphi}\int_{\H}\mathbb{E}\big[\|u(t,u_0)-u(t,\textbf{r})\|\big]\pi^*(\dd \textbf{r})
		\leq L_{\varphi}\int_{\H}\big(\mathbb{E}\big[\|u(t,u_0)-u(t,\textbf{r})\|^2_{\H}\big]\big)^{\frac{1}{2}}\pi^*(\dd \textbf{r})\\
		&\leq L_{\varphi}e^{-\sigma_0t}\int_{\H}\|u_0-\textbf{r}\|_{\H}\pi^*(\dd\textbf{r})
		\leq \Big(\|u_0\|_{\H}+\int_{\H}\|\textbf{r}\|_{\H}\pi^*(\dd \textbf{r})\Big)L_{\varphi}e^{-\sigma_0t}\\
		&\leq (\|u_0\|_{\H}+C_1)L_{\varphi}e^{-\sigma_0t}
	\end{align*}
	due to the H\"older inequality, \eqref{mixing} and \eqref{ert}. Hence the exponentially mixing property is proved.
	
	(ii) We fix initial values $(u_0,\widetilde{u}_0)\in\H\times\H$ and denote initial distributions by $\delta_{u_0}$ and $\delta_{\widetilde{u}_0}$.  Let $u$ and $\widetilde{u}$ be solutions of \eqref{SMEs with PML} with initial data $u_0$ and $\widetilde{u}_0$, respectively. Denote the joint distribution of $(u(t),\widetilde{u}(t))$ by $\mathcal{J}(P^*_t\delta_{u_0},P^*_t\delta_{\widetilde{u}_0})$. By \eqref{mixing}, we have
	\begin{align*}\label{tgb}
		&\mathcal{W}_2(P^*_t\delta_{u_0},P_t^*\delta_{\widetilde{u}_0})
		\leq \Bigg(\int_{\H\times\H}\|\textbf{r}_1-\textbf{r}_2\|_{\H}^2\mathcal{J}(P_t^*\delta_{u_0},P_t^*\delta_{\widetilde{u}_0})(\dd\textbf{r}_2,\dd\textbf{r}_2)\Bigg)^{\frac{1}{2}}\notag\\
		&=\Bigg(\int_{\Omega}\|u(t)-\widetilde{u}(t)\|_{\H}^2\dd \mathbb{P}\Bigg)^{\frac{1}{2}}\leq e^{-\sigma_0t}\|u_0-\widetilde{u}_0\|_{\H}=e^{-\sigma_0t}\mathcal{W}_2(\delta_{u_0},\delta_{\widetilde{u}_0}).
	\end{align*}
	Combining with the convexity of $L^2$-Wasserstein distance (see e.g., \cite[Theorem 4.8]{optimaltransport}) and the H\"older inequality, we obtain that for any coupling $\gamma$ of $\pi$ and $\pi^*$,
	\begin{align*}
		&\mathcal{W}_2(P^*_t\pi,P^*_t\pi^*)\leq \int_{\H\times\H}\mathcal{W}_2(P^*_t\delta_{\textbf{r}_1},P^*_t\delta_{\textbf{r}_2})\gamma(\dd \textbf{r}_1,\dd \textbf{r}_2)\\
		&\leq e^{-\sigma_0t}\int_{\H\times\H}\mathcal{W}_2(\delta_{\textbf{r}_1},\delta_{\textbf{r}_2})\gamma(\dd \textbf{r}_1,\dd \textbf{r}_2)
		\leq e^{-\sigma_0t}\Big(\int_{\H\times\H}\|\textbf{r}_1-\textbf{r}_2\|_{\H}^2\gamma(\dd\textbf{r}_1,\dd \textbf{r}_2)\Big)^{\frac{1}{2}}.
	\end{align*} 
	Thus we finish the proof.
\end{proof}

\subsection{Stochastic conformal multi-symplecticity}
This part is devoted to studying the stochastic conformal multi-symplecticity of \eqref{SMEs with PML}.
We set
$S_1(u)=\frac{\lambda_1}{2}\big(|\Ee|^2+|\Hh|^2\big)$, $S_2(u)={\bm\lambda_2}\cdot \Ee-{\bm\lambda_2}\cdot\Hh$ and
$$
F=\begin{pmatrix}
	0 & I_3 \\ -I_3 &0
\end{pmatrix},\quad
K_s=\begin{pmatrix}
	{\mathcal D}_s & 0 \\ 0 & {\mathcal D}_s
\end{pmatrix},~
s=1,2,3
$$
with
\[
{\mathcal D}_1=\begin{pmatrix}
	0 & 0&0\\ 0&0&-1\\ 0&1&0
\end{pmatrix},\quad
{\mathcal D}_2=\begin{pmatrix}
	0 & 0&1\\ 0&0&0\\ -1&0&0
\end{pmatrix},\quad
{\mathcal D}_3=\begin{pmatrix}
	0 & -1&0\\1&0&0\\ 0&0&0
\end{pmatrix}.
\]	
Then,  \eqref{SMEs with PML} can be rewritten into the following form of damped stochastic Hamiltonian PDE:
\begin{equation*}
	F\dd u+K_1 \partial_x u\dd t+K_2\partial_y u\dd t+K_3\partial_z u \dd t=-\sigma Fu\dd t+\nabla_u S_1(u)\circ \dd W_1(t)+\nabla_u S_2(u)\circ \dd W_2(t).
\end{equation*}

\begin{theo}
	The system \eqref{SMEs with PML} possesses the stochastic conformal multi-symplectic conservation law
	\begin{equation*}\label{eq_1.8}
		\dd w+\partial_x\kappa_1\dd t+\partial_y\kappa_2\dd t+\partial_z\kappa_3\dd t=-2\sigma w\dd t,\qquad\mathbb{P}\text{-}a.s.,
	\end{equation*}
	i.e.,
	\begin{align*}
		&\int_{z_{0}}^{z_{1}}\int_{y_{0}}^{y_{1}}\int_{x_0}^{x_1}\omega(t_{1},x,y,z){\rm d}x{\rm d}y{\rm d}z+
		\int_{z_{0}}^{z_{1}}\int_{y_{0}}^{y_{1}}\int_{t_0}^{t_1}\kappa_{1}(t,x_{1},y,z){\rm d}t{\rm d}y{\rm d}z\\
		&+\int_{z_{0}}^{z_{1}}\int_{x_{0}}^{x_{1}}\int_{t_0}^{t_1}\kappa_{2}(t,x,y_{1},z){\rm d}t{\rm d}x{\rm d}z+
		\int_{y_{0}}^{y_{1}}\int_{x_{0}}^{x_{1}}\int_{t_0}^{t_1}\kappa_{3}(t,x,y,z_{1}){\rm d}t{\rm d}x{\rm d}y\\
		&-\int_{z_{0}}^{z_{1}}\int_{y_{0}}^{y_{1}}\int_{x_0}^{x_1}\omega(t_{0},x,y,z){\rm d}x{\rm d}y{\rm d}z-
		\int_{z_{0}}^{z_{1}}\int_{y_{0}}^{y_{1}}\int_{t_0}^{t_1}\kappa_{1}(t,x_{0},y,z){\rm d}t{\rm d}y{\rm d}z\\
		&-\int_{z_{0}}^{z_{1}}\int_{x_{0}}^{x_{1}}\int_{t_0}^{t_1}\kappa_{2}(t,x,y_{0},z){\rm d}t{\rm d}x{\rm d}z
		-\int_{y_{0}}^{y_{1}}\int_{x_{0}}^{x_{1}}\int_{t_0}^{t_1}\kappa_{3}(t,x,y,z_{0}){\rm d}t{\rm d}x{\rm d}y\\
		&=-2\int_{t_0}^{t_1}\int_{z_{0}}^{z_{1}}\int_{y_{0}}^{y_{1}}\int_{x_0}^{x_1}\sigma\omega(t,x,y,z)\dd x\dd y\dd z\dd t,
	\end{align*}
	where $\omega(t,x,y,z)=\frac{1}{2}du\wedge
	Fdu$, $\kappa_{s}(t,x,y,z)=\frac{1}{2}du\wedge
	K_{s}du$ $(s=1,2,3)$ are the differential 2-forms
	associated with the skew-symmetric matrices $F$ and
	$K_{s}$, respectively, and
	$(t_{0},t_{1})\times(x_{0},x_{1})\times(y_{0},y_{1})\times(z_{0},z_{1})$
	is the local domain of $u(t,x,y,z)$.
\end{theo}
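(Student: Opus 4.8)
The plan is to derive the pointwise (differential) conformal multi-symplectic law by the variational-equation method and then integrate it over the space-time box to recover the stated integral identity. Throughout I write $d$ for the exterior differentiation in phase space (the variation), to be kept strictly apart from the temporal stochastic differential $\dd$; with this convention $\omega=\tfrac12 du\wedge Fdu$ and $\kappa_s=\tfrac12 du\wedge K_sdu$ as in the statement. First I would apply $d$ to the damped stochastic Hamiltonian form of \eqref{SMEs with PML}, which yields the variational equation
\[
F\dd(du)+K_1\partial_x(du)\dd t+K_2\partial_y(du)\dd t+K_3\partial_z(du)\dd t=-\sigma F(du)\dd t+\nabla_u^2 S_1(u)\,du\circ\dd W_1(t)+\nabla_u^2 S_2(u)\,du\circ\dd W_2(t),
\]
where $du$ now denotes the vector of variations and $\nabla_u^2 S_i$ the Hessian of $S_i$. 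The algebraic facts that drive the proof are that $F$ and $K_1,K_2,K_3$ are skew-symmetric, whereas $\nabla_u^2 S_1(u)=\lambda_1 I$ (the identity on $\mathbb{R}^6$) and $\nabla_u^2 S_2(u)\equiv 0$ are symmetric.

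Next I would take the wedge product of the variational equation with $du$ from the left and rewrite each term. For any constant skew-symmetric matrix $A$ the Leibniz rule together with $du_i\wedge \dd(du_j)=-\dd(du_j)\wedge du_i$ gives $du\wedge A\,\dd(du)=\dd(du)\wedge A\,du$, whence $du\wedge F\dd(du)=\dd\omega$ and, identically in the spatial variables, $du\wedge K_s\partial_{x_s}(du)=\partial_{x_s}\kappa_s$. The damping contribution becomes $-\sigma\,du\wedge Fdu\,\dd t=-2\sigma\omega\,\dd t$ because $du\wedge Fdu=2\omega$. The two stochastic terms vanish by symmetry of the Hessians: $du\wedge\nabla_u^2 S_1(u)\,du=\lambda_1\,du\wedge du=0$ and $\nabla_u^2 S_2(u)\equiv0$. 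Collecting the surviving terms yields exactly the local law $\dd\omega+\partial_x\kappa_1\dd t+\partial_y\kappa_2\dd t+\partial_z\kappa_3\dd t=-2\sigma\omega\,\dd t$ holding $\mathbb P$-a.s.

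The integral identity is then obtained by integrating this pointwise relation over $(t_0,t_1)\times(x_0,x_1)\times(y_0,y_1)\times(z_0,z_1)$ and applying the fundamental theorem of calculus in each of the four directions: integrating $\dd\omega$ in $t$ produces the difference of the $\omega$-integrals over the spatial box at $t_1$ and $t_0$, integrating $\partial_{x_s}\kappa_s$ in $x_s$ produces the difference of the $\kappa_s$-flux integrals over the two opposite faces, and the right-hand side integrates to the full space-time integral of $-2\sigma\omega$. This reproduces the displayed eight boundary integrals together with the volume term $-2\int_{t_0}^{t_1}\int\sigma\omega$.

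The step I expect to demand the most care is the justification that the ordinary Leibniz/chain rule may be applied to the wedge of $du$ with the Stratonovich differential $\dd(du)$. This is precisely why the Stratonovich formulation is used: under $\circ\,\dd W_i$ no It\^o--Stratonovich correction term appears, so the manipulation $\dd(du\wedge Fdu)=\dd(du)\wedge Fdu+du\wedge F\dd(du)$ is legitimate; had the equation been written in It\^o form, the quadratic-variation corrections involving $\nabla_u^2 S_i$ would have to be tracked and the clean symmetry cancellation would be obscured. I would make this rigorous by interpreting $du$ as the derivative of the stochastic flow with respect to the initial data, so that all wedge identities reduce pathwise to finite-dimensional computations in which the stochastic terms cancel by symmetry of $\nabla_u^2 S_i(u)$.
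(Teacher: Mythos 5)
Your proposal is correct and follows essentially the same route as the paper, which omits the proof by deferring to Theorem 1 of its cited stochastic Klein--Gordon reference: that argument is exactly your variational-equation/wedge-product computation (differentiate the damped Hamiltonian form, wedge with $du$, exploit skew-symmetry of $F,K_1,K_2,K_3$ and symmetry of $\nabla_u^2S_1=\lambda_1 I$, $\nabla_u^2S_2=0$ so the Stratonovich noise terms cancel, then integrate over the local space--time domain). Your closing remark on why the Stratonovich form is essential for the chain rule, with $du$ read as the derivative of the stochastic flow, is precisely the right point of rigor.
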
	
\begin{proof}
	The proof is similar to that of \cite[Theorem 1]{stochasticKGequation}, thus we omit it here.
\end{proof}

\section{Ergodic temporal semi-discretization}
In this section, we propose a temporal semi-discretization to inherit the ergodicity and stochastic conformal multi-symplecticity of \eqref{SMEs with PML}. Moreover, the mean-square convergence order of the temporal semi-discretization is derived.

By introducing a uniform partition in time interval $[0,+\infty)$ with time step-size $\Delta t$, we propose a modified midpoint method to discretize \eqref{SEE_S} in the temporal direction
\begin{equation}\label{ns}
	u^{n+1}-e^{-\sigma\Delta t}u^n=\Delta tM\frac{u^{n+1}+e^{-\sigma\Delta t}u^n}{2}+\lambda_{1}J\frac{u^{n+1}+e^{-\sigma\Delta t}u^n}{2}\Delta \overline{W}_1^n+\widetilde{{\bm{\lambda}}}_2\Delta W_2^n
\end{equation}
for $n\in\mathbb{N}$. Here, $\Delta W_2^n:=W_2(t_{n+1})-W_2(t_n)$. Since the diffusion term of \eqref{ns} is implicit and the noise could be unbounded for arbitrary small time step-size, we truncate the noise $\Delta W_1^n$ by another random variable $$\Delta\overline{W}_1^n:=\sqrt{\Delta t}\sum_{i=1}^{\infty}\zeta^{(1),n}_iQ_1^{\frac{1}{2}}q_i,$$
where
\begin{center}
	$\zeta^{(1),n}_i=\left\{
	\begin{aligned}
		A_{\Delta t}&\quad&\xi_i^{(1),n}>A_{\Delta t},\\
		\xi_i^{(1),n}&\quad&|\xi_i^{(1),n}|\leq A_{\Delta t},\\
		-A_{\Delta t}&\quad&\xi_i^{(1),n}<-A_{\Delta t}
	\end{aligned}
	\right.$
\end{center}
with $\{\xi_i^{(1),n}\}_{i\in\mathbb{N}_+}$ being a family of independent standard normal random variables and $A_{\Delta t}=\sqrt{2b|\ln\Delta t|},\hphantom{=}b\geq 4$.
Similarly to \cite{CH2016}, it holds that 
\begin{align*}
	\mathbb{E}\big[|\zeta_i^{(1),n}-\xi_i^{(1),n}|^{2p}\big]\leq \mathbb{E}\big[|\xi_i^{(1),n}|^{2p}\big]\Delta t^b\qquad\forall p\in\mathbb{N}_+,
\end{align*}
which implies that for $Q_1^{\frac{1}{2}}\in\mathcal{L}_2^{\gamma_1}$, 
\begin{align}\label{2ptruncate}
	\mathbb{E}\big[\|\Delta\overline{W}_1^n-\Delta W_1^n\|^{2p}_{H^{\gamma_1}(D)}\big]\leq C\Delta t^{b+p}\qquad \forall p\in\mathbb{N}_+
\end{align}
and 
\begin{align}\label{wfzq}
	\mathbb{E}\big[\|(\Delta\overline{W}_1^k)^2-(\Delta W_1^k)^2\|^2_{H^{\gamma_1}(D)}\big]\leq C\Delta t^{b+1}.
\end{align}

The well-posedness and the uniform boundedness of the numerical solution of \eqref{ns} in $L^p(\Omega,\H)$ are stated in the following proposition. 
\begin{prop}\label{uniformH}
	Set $p\geq 1$. Let Assumption \ref{ass} hold with $\gamma_1\geq1$ and $\gamma_2\geq 1$, and let $u_0\in L^{2p}(\Omega,\H)$. Then for sufficiently small $\Delta t>0$, there uniquely exists a family of $\H$-valued and $\{\mathcal{F}_{t_n}\}_{n\in\mathbb{N}}$-adapted solution $\{u^n\}_{n\in\mathbb{N}}$ of \eqref{ns}, which satisfies 
	\begin{equation}\label{unH}
		\mathbb{E}\big[\|u^n\|_{\H}^{2p}\big]\leq e^{-(2p-1)\sigma_0n\Delta t}\mathbb{E}\big[\|u_0\|_{\H}^{2p}\big]+C_{4,p}\quad\forall\, n\in\mathbb{N},
	\end{equation}
	where the positive constant $C_{4,p}:=C\big(p,\sigma_0,\lambda_{1},\widetilde{{\bm{\lambda}}}_2,\mathbb{E}\big[\|u_0\|_{\H}^{2p}\big]$,
	$\|Q_1^{\frac{1}{2}}\|_{\mathcal{L}_2^{\gamma_1}},
	\|Q_2^{\frac{1}{2}}\|_{\mathcal{L}_2^{\gamma_2}}\big)$.
\end{prop}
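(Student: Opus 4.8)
The plan is to separate the unique solvability of the implicit step from the moment recursion. Rewriting \eqref{ns} as
\[
(I-A^n)u^{n+1}=(I+A^n)\big(e^{-\sigma\Delta t}u^n\big)+\widetilde{\bm{\lambda}}_2\Delta W_2^n,\qquad A^n:=\tfrac{\Delta t}{2}M+\tfrac{\lambda_1}{2}\Delta\overline{W}_1^n J,
\]
I first observe that $A^n$ is skew-symmetric on $\mathcal D(M)$: $M$ is skew-adjoint, and $\langle\Delta\overline{W}_1^n Jv,v\rangle_{\H}=\int_D\Delta\overline{W}_1^n\,(Jv)^{\top}v\,\dd x=0$ pointwise since $J$ is skew-symmetric. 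Hence ${\rm Re}\,\langle(I-A^n)v,v\rangle_{\H}=\|v\|_{\H}^2$, which gives the coercive bound $\|v\|_{\H}\le\|(I-A^n)v\|_{\H}$ and therefore $\|(I-A^n)^{-1}\|\le1$ once the inverse is available. Unique solvability of the step for sufficiently small $\Delta t$ I would obtain by factoring $I-A^n=(I-\tfrac{\Delta t}{2}M)\big(I-(I-\tfrac{\Delta t}{2}M)^{-1}\tfrac{\lambda_1}{2}\Delta\overline{W}_1^n J\big)$ and inverting the second factor by a contraction/Neumann argument; this is exactly where the smallness of $\Delta t$ and the logarithmic truncation $A_{\Delta t}=\sqrt{2b|\ln\Delta t|}$ are used, since they keep $\Delta\overline{W}_1^n$ of size $\mathcal O(\sqrt{\Delta t}\,|\ln\Delta t|^{1/2})$ and make the perturbation small. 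Adaptedness of $u^{n+1}$ to $\mathcal F_{t_{n+1}}$ is immediate from the construction.

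For the bound \eqref{unH} I would work from the decomposition
\[
u^{n+1}=S^n\big(e^{-\sigma\Delta t}u^n\big)+R^n\widetilde{\bm{\lambda}}_2\Delta W_2^n,\qquad S^n:=(I-A^n)^{-1}(I+A^n),\quad R^n:=(I-A^n)^{-1}.
\]
Skew-symmetry of $A^n$ makes the Cayley transform $S^n$ an $\H$-isometry and gives $\|R^n\|\le1$; together with $\sigma\ge\sigma_0$ this yields $\|S^n(e^{-\sigma\Delta t}u^n)\|_{\H}\le e^{-\sigma_0\Delta t}\|u^n\|_{\H}$. The decisive structural point is that $S^n,R^n,u^n$ are all measurable with respect to $\mathcal G_n:=\mathcal F_{t_n}\vee\sigma(\Delta\overline{W}_1^n)$, whereas $\Delta W_2^n$ is a centred Gaussian independent of $\mathcal G_n$ because $W_1\perp W_2$. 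Writing $a^2:=\|e^{-\sigma\Delta t}u^n\|_{\H}^2$, $b:=R^n\widetilde{\bm{\lambda}}_2\Delta W_2^n$ and $c:=2\langle S^n(e^{-\sigma\Delta t}u^n),b\rangle_{\H}$, the energy identity becomes $\|u^{n+1}\|_{\H}^2=a^2+c+\|b\|_{\H}^2$ with $\E[c\mid\mathcal G_n]=0$ and $\E[\|b\|_{\H}^2\mid\mathcal G_n]\le|\widetilde{\bm{\lambda}}_2|^2{\rm tr}(Q_2)\,\Delta t$. For $p=1$ this immediately gives $\E[\|u^{n+1}\|_{\H}^2\mid\mathcal G_n]\le e^{-2\sigma_0\Delta t}\|u^n\|_{\H}^2+C\Delta t$, and iterating with the geometric factor $e^{-2\sigma_0\Delta t}$ proves the claim.

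For general $p$ I would expand $(a^2+c+\|b\|_{\H}^2)^p$ using the elementary inequality $(x+z)^p\le x^p+p\,x^{p-1}z+C_p\big(x^{p-2}z^2+|z|^p\big)$ with $x=a^2$, $z=c+\|b\|_{\H}^2$, then take $\E[\cdot\mid\mathcal G_n]$ and estimate the Gaussian conditional moments via $c^2\le4a^2\|b\|_{\H}^2$ and $\E[\|\Delta W_2^n\|^{2k}\mid\mathcal G_n]\le C_k(\Delta t\,{\rm tr}(Q_2))^k$. Every term other than $a^{2p}$ then carries at least one power of $\Delta t$; I would split the ``energy-level'' contributions $\sim a^{2(p-1)}\Delta t$ through $\delta$-Young inequalities with $\delta\sim\Delta t$, choosing the aggregate absorbed coefficient to be at most $\sigma_0\Delta t$, so that these terms become a factor $(1+\sigma_0\Delta t)a^{2p}$ while the purely additive remainders stay of order $\Delta t$. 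Since $a^{2p}\le e^{-2p\sigma_0\Delta t}\|u^n\|_{\H}^{2p}$ and $(1+\sigma_0\Delta t)e^{-2p\sigma_0\Delta t}\le e^{-(2p-1)\sigma_0\Delta t}$ for small $\Delta t$, I obtain
\[
\E\big[\|u^{n+1}\|_{\H}^{2p}\mid\mathcal G_n\big]\le e^{-(2p-1)\sigma_0\Delta t}\|u^n\|_{\H}^{2p}+C\Delta t,
\]
and taking expectations and summing the geometric series $\Delta t\sum_{k}e^{-(2p-1)\sigma_0k\Delta t}\le\Delta t\,(1-e^{-(2p-1)\sigma_0\Delta t})^{-1}\le C_{4,p}$ gives the uniform-in-$n$ estimate \eqref{unH}.

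The main obstacle is the general-$p$ moment bound, not the $p=1$ case. A naive estimate through Minkowski's inequality in $L^{2p}(\Omega)$ discards the cancellation $\E[c\mid\mathcal G_n]=0$ and leaves an additive term of order $\sqrt{\Delta t}$ in the $L^{2p}$-recursion, whose geometric sum behaves like $\Delta t^{-1/2}$ and ruins the uniform bound. One is thus forced to expand the $p$-th power, exploit the first-order vanishing of the noise, and perform the $\delta\sim\Delta t$ Young balancing precisely so that the contraction factor is promoted from the ``wrong'' $e^{-2p\sigma_0\Delta t}$-type control to the stated $e^{-(2p-1)\sigma_0n\Delta t}$ while the additive constant $C_{4,p}$ remains independent of $n$ and $\Delta t$.
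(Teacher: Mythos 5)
Your moment bounds are correct and follow a genuinely different route from the paper's. The paper never inverts the one-step map: it pairs \eqref{ns} with $u^{n+1}+e^{-\sigma\Delta t}u^n$, so the problematic cross term $\mathbb{E}\big[\langle u^{n+1},\widetilde{{\bm{\lambda}}}_2\Delta W_2^n\rangle_{\H}\big]$ survives (because $u^{n+1}$ depends on $\Delta W_2^n$); it is then treated by substituting the scheme back into that term, estimating the resulting $Mu^{n+1}$- and $Ju^{n+1}\Delta\overline{W}_1^n$-contributions with the Young inequality and the Sobolev embedding $H^{1}(D)\hookrightarrow L^{4}(D)$, absorbing $\frac{\sigma_0}{4}\Delta t\,\mathbb{E}\big[\|u^{n+1}\|_{\H}^2\big]$ into the left-hand side, and concluding with a discrete Gronwall argument; for $p=2$ the same machinery is repeated after multiplying the energy inequality by $\|u^{n+1}\|_{\H}^2$, and $p>2$ is declared similar. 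Your representation $u^{n+1}=S^n(e^{-\sigma\Delta t}u^n)+R^n\widetilde{{\bm{\lambda}}}_2\Delta W_2^n$ with the random Cayley isometry $S^n$ and contraction $R^n$, combined with conditioning on $\mathcal{G}_n=\mathcal{F}_{t_n}\vee\sigma(\Delta\overline{W}_1^n)$, makes the cross term vanish exactly, dispenses with the Sobolev embeddings and the absorption step, treats all $p$ uniformly, and produces the exponent $(2p-1)\sigma_0$ transparently (for $p=1$ your recursion even gives the sharper per-step factor $e^{-2\sigma_0\Delta t}$). Note that both proofs rest on the same probabilistic fact, namely that $\Delta W_2^n$ is independent of $\mathcal{F}_{t_n}\vee\sigma(\Delta\overline{W}_1^n)$; the paper uses it implicitly when it discards the expectation of the terms involving $e^{-\sigma\Delta t}u^n$ and $\Delta\overline{W}_1^n$.

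The weak point is your solvability argument, measured against the stated hypotheses $\gamma_1\geq1$, $\gamma_2\geq 1$. The Neumann inversion of $I-(I-\frac{\Delta t}{2}M)^{-1}\frac{\lambda_1}{2}\Delta\overline{W}_1^nJ$ requires the multiplication operator $v\mapsto\Delta\overline{W}_1^nJv$ to be bounded on $\H$ with small norm, i.e.\ it requires $\|\Delta\overline{W}_1^n\|_{L^\infty(D)}$ to be small. The truncation only gives the coefficientwise bound $|\zeta_i^{(1),n}|\leq A_{\Delta t}$; converting this into an $L^\infty(D)$ bound needs $\sum_{i}\|Q_1^{\frac12}q_i\|_{L^\infty(D)}<\infty$, i.e.\ a summability condition of the type $\sum_{i}\|Q_1^{\frac{1}{2}}q_i\|_{H^{1+\gamma}(D)}<\infty$ with $\gamma>\frac32$, which the paper assumes only later, in Lemma \ref{uniformdivM}, and not in Proposition \ref{uniformH}. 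Under $\gamma_1\geq1$ alone one merely has square-summability of $\|Q_1^{\frac12}q_i\|_{H^{1}(D)}$, and since $H^1(D)\not\hookrightarrow L^\infty(D)$ in three dimensions, $\Delta\overline{W}_1^n$ need not be essentially bounded, so the contraction argument does not close; note also that the isometry of $S^n$ uses surjectivity of $I-A^n$, which is exactly what solvability must supply and is not a consequence of skew-symmetry alone. This gap is forgivable in the sense that the paper itself does not prove well-posedness under these hypotheses either, deferring to \cite{CHJ2019a}. If you strengthen the noise assumption as in Lemma \ref{uniformdivM} — or observe that for $\gamma_1>\frac32$ the operator $A^n$ is almost surely a bounded skew-symmetric perturbation of $\frac{\Delta t}{2}M$, hence skew-adjoint, so that $I-A^n$ is invertible with $\|(I-A^n)^{-1}\|_{\mathcal{L}(\H,\H)}\leq1$ without any smallness of $\Delta t$ — then your moment argument goes through verbatim.
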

\begin{proof}
	The proof of the existence and uniqueness of the numerical solution is similar to that of \cite[Lemma 4.1]{CHJ2019a}, so we omit it here. In the following we focus on proving the assertion \eqref{unH}.
	\par {\em Step 1: Case $p=1$.} 
	
	We apply $\langle\cdot,u^{n+1}+e^{-\sigma\Delta t}u^n\rangle_{\H}$ on both sides of \eqref{ns} to get
	\begin{align}\label{p2}
		\|u^{n+1}\|_{\H}^2-e^{-2\sigma_0\Delta t}\|u^n\|_{\H}^2\leq \langle u^{n+1}+e^{-\sigma\Delta t}u^n,\widetilde{{\bm{\lambda}}}_2\Delta W_2^n\rangle_{\H}
	\end{align}
	due to the skew-adjointness of the Maxwell operator $M$.
	By taking the expectation and using the fact that $\Delta W_2^n$ is independent of $\mathcal{F}_{t_n}$, we have	
	\begin{equation}\label{tyt}
		\begin{split}
			\mathbb{E}\big[\|u^{n+1}\|_{\H}^2\big]&\leq 
			e^{-2\sigma_0\Delta t}\mathbb{E}\big[\|u^n\|^2_{\H}\big]+\mathbb{E}\big[\langle u^{n+1}-e^{-\sigma\Delta t}u^n,\widetilde{{\bm{\lambda}}}_2\Delta W_2\rangle_{\H}\big].
		\end{split}
	\end{equation}
	Substituting \eqref{ns} into the second term of the right side of \eqref{tyt} leads to
	\begin{align}\label{yyy}
		\mathbb{E}\big[\|u^{n+1}\|_{\H}^2\big]
		&\leq e^{-2\sigma_0\Delta t}\mathbb{E}\big[\|u^n\|^2_{\H}\big]+\frac{\Delta t}{2}\mathbb{E}\big[\langle Mu^{n+1},\widetilde{\bm{\lambda}}_2\Delta W_2^n\rangle_{\H}\big]\notag\\
		&\hphantom{=}+\frac{\lambda_{1}}{2}\mathbb{E}\big[\langle Ju^{n+1}\Delta \overline{W}_1^n,\widetilde{{\bm{\lambda}}}_2\Delta W_2^n\rangle_{\H}\big]+\mathbb{E}\big[\|\widetilde{{\bm{\lambda}}}_2\Delta W_2^n\|^2_{\H}\big].
	\end{align}
	For the second term on the right side of \eqref{yyy}, using the skew-adjointness of $M$ and the Young inequality, we have
	\begin{equation*}
		\begin{split}
			\frac{\Delta t}{2}\mathbb{E}\big[\langle Mu^{n+1},\widetilde{\bm{\lambda}}_2\Delta W_2^n\rangle_{\H}\big]&\leq 
			\frac{\sigma_0}{8}\Delta t\mathbb{E}\big[\|u^{n+1}\|^2_{\H}\big]
			+C(\sigma_0)\Delta t\mathbb{E}\big[\|M(\widetilde{{\bm{\lambda}}}_2\Delta W_2^n)\|_{\H}^2\big]\\
			&\leq \frac{\sigma_0}{8}\Delta t\mathbb{E}\big[\|u^{n+1}\|^2_{\H}\big]+C\Delta t^2.
		\end{split}
	\end{equation*}
	Based on the H\"older inequality, the Young inequality and the Sobolev embedding $H^{1}(D)\hookrightarrow L^{4}(D)$, the third term of the right side of \eqref{yyy} is estimated as follows
	\begin{equation}\label{yyy2}
		\begin{split}
			&\frac{\lambda_{1}}{2}\mathbb{E}\big[\langle Ju^{n+1}\Delta \overline{W}_1^n,\widetilde{{\bm{\lambda}}}_2\Delta W_2^n\rangle_{\H}\big]\\
			&\leq\frac{\sigma_0}{8}\Delta t\mathbb{E}\big[\|u^{n+1}\|^2_{\H}\big]+\frac{C(\sigma_0,\lambda_1,\widetilde{{\bm{\lambda}}}_2)}{\Delta t}\mathbb{E}\big[\|\Delta \overline{W}_1^n\|^2_{H^{1}(D)}\|\Delta W_2^n\|^2_{H^1(D)}\big]\\
			&\leq \frac{\sigma_0}{8}\Delta t\mathbb{E}\big[\|u^{n+1}\|_{\H}^2\big]+C\Delta t.
		\end{split}
	\end{equation}
	Noting that $\mathbb{E}[||\widetilde{{\bm{\lambda}}}_2\Delta W_2^n||^2_{\H}]\leq C(\widetilde{{\bm{\lambda}}}_2,\text{tr}(Q_2))\Delta t$ and combining \eqref{yyy}--\eqref{yyy2}, we get
	\begin{align*}
		&\mathbb{E}\big[\|u^{n+1}\|_{\H}^2\big]
		\leq e^{-2\sigma_0\Delta t}\mathbb{E}\big[\|u^n\|^2_{\H}\big]+\frac{\sigma_0}{4}\Delta t\mathbb{E}\big[\|u^{n+1}\|_{\H}^2\big]+C\Delta t. 
	\end{align*}
	There exists a $\Delta t^*>0$ (e.g., $\Delta t^*=\frac{3}{\sigma_0}$) ,
	such that for any $\Delta t\in(0, \Delta t^*]$, 
	\begin{equation*}
		\frac{1}{1-\sigma_0\Delta t/4}\leq 1+\sigma_0\Delta t\leq e^{\sigma_0\Delta t}.
	\end{equation*}
	Then the Gronwall inequality implies the assertion for the case $p=1$.

	\par {\em Step 2: Case $p\geq 2$.}
	
	We give the proof of the case $p=2$ since the proofs for the cases $p>2$ are similar. By multiplying $\|u^{n+1}\|^2_{\H}$ on both sides of \eqref{p2} and taking the expectation, it yields
	\begin{align}\label{uuu}
		&\frac{1}{2}\mathbb{E}\big[\|u^{n+1}\|^4_{\H}\big]-\frac{1}{2}e^{-4\sigma_0\Delta t}\mathbb{E}\big[\|u^n\|_{\H}^4\big]+\frac{1}{2}\mathbb{E}\big[(\|u^{n+1}\|_{\H}^2-e^{-2\sigma_0\Delta t}\|u^n\|^2_{\H})^2\big]\notag\\
		&\leq\mathbb{E}\big[\langle u^{n+1}-e^{-\sigma\Delta t}u^n,\widetilde{{\bm{\lambda}}}_2\Delta W_2^n\rangle_{\H}\|u^{n+1}\|_{\H}^2\big]\\
		&\quad+2\mathbb{E}\big[\langle e^{-\sigma\Delta t}u^n,\widetilde{{\bm{\lambda}}}_2\Delta W_2^n\rangle_{\H}\|u^{n+1}\|_{\H}^2\big]\notag\\
		&=:I+II.\notag
	\end{align}	
	For the first term $I$, we substitute \eqref{ns} into it, and use the H\"older inequality and the Young inequality to get
	\begin{align*}
		I&=\frac{\Delta t}{2}\mathbb{E}\big[\langle M(u^{n+1}+e^{-\sigma\Delta t}u^n),\widetilde{{\bm{\lambda}}}_2\Delta W_2^n\rangle_{\H}\|u^{n+1}\|^2_{\H}\big]\notag\\
		&\quad+\frac{\lambda_{1}}{2}\mathbb{E}\big[\langle J(u^{n+1}+e^{-\sigma\Delta t}u^n)\Delta\overline{W}_1^n,\widetilde{{\bm{\lambda}}}_2\Delta W_2^n\rangle_{\H}\|u^{n+1}\|^2_{\H}\big]+\mathbb{E}\big[\|\widetilde{{\bm{\lambda}}}_2\Delta W_2^n\|^2_{\H}\|u^{n+1}\|^2_{\H}\big]\notag\\
		&\leq \frac{\Delta t}{2}\mathbb{E}\big[\|u^{n+1}+e^{-\sigma\Delta t}u^n\|_{\H}\|M(\widetilde{{\bm{\lambda}}}_2\Delta W_2^n)\|_{\H}\|u^{n+1}\|^2_{\H}\big]\notag\\
		&\quad+C\mathbb{E}\big[\|u^{n+1}+e^{-\sigma\Delta t}u^n\|_{\H}\|\Delta\overline{W}_1^n\|_{L^4(D)}\|\Delta W^n_2\|_{L^4(D)}\|u^{n+1}\|^2_{\H}\big]\\
		&\quad+\mathbb{E}\big[\|\widetilde{{\bm{\lambda}}}_2\Delta W_2^n\|^2_{\H}\|u^{n+1}\|^2_{\H}\big]\notag\\
		&\leq \frac{\sigma_0}{8}\Delta t\mathbb{E}\big[\|u^{n+1}\|_{\H}^4\big]+\frac{\sigma_0}{16}\Delta te^{-4\sigma_0\Delta t}\mathbb{E}\big[\|u^n\|_{\H}^4\big]+C\Delta t,
	\end{align*}	
	where in the last step we use the Sobolev embedding  $H^{1}(D)\hookrightarrow L^{4}(D)$.
	For the second term $II$, it yields
	\begin{equation*}\label{00}
		\begin{split}
			II&=2\mathbb{E}\big[\langle e^{-\sigma\Delta t}u^n,\widetilde{{\bm{\lambda}}}_2\Delta W_2^n\rangle_{\H}(\|u^{n+1}\|_{\H}^2-e^{-2\sigma_0\Delta t}\|u^n\|_{\H}^2)\big]\\
			&\leq\frac{1}{2}\mathbb{E}\big[(\|u^{n+1}\|^2_{\H}-e^{-2\sigma_0\Delta t}\|u^n\|_{\H}^2)^2\big]+\frac{\sigma_0}{16}\Delta te^{-4\sigma_0\Delta t}\mathbb{E}\big[\|u^n\|^4_{\H}\big]+C\Delta t.
		\end{split}
	\end{equation*}	
	Combining \eqref{uuu} and the estimates of $I$ and $II$, we obtain   
	\begin{align*}
		\mathbb{E}\big[\|u^{n+1}\|_{\H}^4\big]\leq& e^{-4\sigma_0\Delta t}\mathbb{E}\big[\|u^n\|_{\H}^4\big]+\frac{\sigma_0}{4}\Delta t\mathbb{E}\big[\|u^{n+1}\|_{\H}^4\big]+\frac{\sigma_0}{4}\Delta te^{-4\sigma_0\Delta t}\mathbb{E}\big[\|u^n\|_{\H}^4\big]+C\Delta t.
	\end{align*}
	There exists a $\Delta t^*>0$ (e.g., $\Delta t^*=\frac{2}{\sigma_0})$, 
	such that for any $\Delta t\in(0, \Delta t^*]$, 
	\begin{equation*}
		\frac{1+\sigma_0\Delta t/4}{1-\sigma_0\Delta t/4}\leq 1+\sigma_0\Delta t\leq e^{\sigma_0\Delta t}.
	\end{equation*}
	By the Gronwall inequality, \eqref{unH} holds for the case $p=2$.
	Thus we complete the proof.
\end{proof}

\subsection{Properties of the temporal semi-discretization}
We are now in the position to study the ergodicity and stochastic multi-symplecticity of \eqref{ns}.
\subsubsection{Ergodicity} In order to show the ergodicity of \eqref{ns}, we first give the uniform boundedness of the numerical solution in $L^2(\Omega,H^1(D)^6)$.
\begin{lemm}\label{uniformdivM}
	Set $\gamma>\frac{3}{2}$. Let Assumption \ref{ass} hold with $\gamma_1:=2+\gamma$ and $\gamma_2\geq 2$ and let $\sum_{i=1}^{\infty}\|Q_1^{\frac{1}{2}}e_i\|_{H^{1+\gamma}(D)}<\infty$, and $u_0\in L^2(\Omega,H^1(D)^6)$. Then there exists positive constants $C_5$ and $C_6$ such that for sufficiently small $\Delta t>0$,
	\begin{equation}\label{div}
		\sup_{n\geq 0}\mathbb{E}\big[\|u^n\|^2_{H^1(D)^6}\big]\leq C_5e^{-\sigma_0n\Delta t}\mathbb{E}\big[\|u_0\|^2_{H^1(D)^6}\big]+C_6,
	\end{equation}	
	where $C_5$ depends on $|D|$, and $C_6$ depends on $\sigma_0,C_{4,1},C_{4,2},\lambda_{1},\widetilde{{\bm{\lambda}}}_2,\mathbb{E}\big[\|u_0\|^4_{\H}\big],\|Q_1^{\frac{1}{2}}\|_{\mathcal{L}_2^{\gamma_1}}$ and $\|Q_2^{\frac{1}{2}}\|_{\mathcal{L}_2^{\gamma_2}}$.
\end{lemm}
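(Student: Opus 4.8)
The plan is to reproduce, at the discrete level, the three-step structure of the continuous estimate in Lemma \ref{regularity in H1}: first control the curl $\E[\|Mu^n\|_{\H}^2]$, then the divergence $\E[\|\nabla\cdot\Ee^n\|_{L^2(D)}^2+\|\nabla\cdot\Hh^n\|_{L^2(D)}^2]$, and finally invoke the regularity fact that a field in $H(\mathrm{curl},D)\cap H(\mathrm{div},D)$ satisfying $\mathbf{n}\times\Ee=0$ or $\mathbf{n}\cdot\Hh=0$ lies in $H^1(D)^3$, so that these bounds together with the $\H$-bound of Proposition \ref{uniformH} yield \eqref{div}. The structural novelty compared with the continuous case is that the modified-midpoint factor $e^{-\sigma\Delta t}$ commutes with neither $M$ nor $\nabla\cdot$: applying either operator to $e^{-\sigma\Delta t}u^n$ produces a commutator carrying $\nabla(e^{-\sigma\Delta t})=-\Delta t\,(\nabla\sigma)e^{-\sigma\Delta t}=\bigo{\Delta t}$, which I would absorb by Young's inequality at the cost of lower-order norms supplied by Proposition \ref{uniformH}. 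Moreover, since the truncated midpoint noise $\Delta\overline{W}_1^n$ multiplies the midpoint value $\tfrac12(u^{n+1}+e^{-\sigma\Delta t}u^n)$, the curl and divergence recursions do not close separately, so I would track the single combined quantity $b_n:=\E[\|Mu^n\|_{\H}^2+\|\nabla\cdot\Ee^n\|_{L^2(D)}^2+\|\nabla\cdot\Hh^n\|_{L^2(D)}^2]$.

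For the curl estimate I would apply $M$ to \eqref{ns} and test with $M(u^{n+1}+e^{-\sigma\Delta t}u^n)$. The polarization identity turns the left side into $\|Mu^{n+1}\|_{\H}^2-\|M(e^{-\sigma\Delta t}u^n)\|_{\H}^2$, and the leading drift contribution $\tfrac{\Delta t}{2}\langle M^2(u^{n+1}+e^{-\sigma\Delta t}u^n),M(u^{n+1}+e^{-\sigma\Delta t}u^n)\rangle_{\H}$ vanishes by skew-adjointness of $M$, exactly as the term $2\langle Mu,M^2u\rangle_{\H}=0$ disappears in \eqref{ito1}. I would then bound $\|M(e^{-\sigma\Delta t}u^n)\|_{\H}^2\le e^{-2\sigma_0\Delta t}\|Mu^n\|_{\H}^2+\bigo{\Delta t}(\cdots)$ using $\sigma\ge\sigma_0$ and the $\bigo{\Delta t}$ commutator. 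The genuinely discrete work is the noise: after taking expectations the cross term $\lambda_1\langle M(Ju^{n+1}\Delta\overline{W}_1^n),M(u^{n+1}+e^{-\sigma\Delta t}u^n)\rangle_{\H}$ is not mean-zero, so I would substitute \eqref{ns} for $u^{n+1}$ to extract the It\^o-type correction $\lambda_1^2\E[\langle Mu,F_{Q_1}Mu\rangle_{\H}]$, which cancels against the term generated by $\E[\|M(Ju^{n+1}\Delta\overline{W}_1^n)\|_{\H}^2]$, precisely mirroring the cancellation between \eqref{yuy} and \eqref{poi}. The surviving pieces are remainders controlled by the truncation bounds \eqref{2ptruncate}--\eqref{wfzq}, the embeddings $H^{\gamma_1}(D)\hookrightarrow W^{1,\infty}(D)$ (valid since $\gamma_1=2+\gamma>\tfrac52$, whence $F_{Q_1}\in W^{1,\infty}(D)$) and $H^1(D)\hookrightarrow L^4(D)$, and the moment bounds $C_{4,1},C_{4,2}$ of Proposition \ref{uniformH}.

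The divergence estimate uses the same testing applied to $\nabla\cdot$ of the $\Ee$- and $\Hh$-components of \eqref{ns}. As in \eqref{uyu1}--\eqref{uyu}, the terms $\mp\lambda_1^2\langle\nabla\cdot\Ee,F_{Q_1}\nabla\cdot\Ee\rangle_{L^2(D)}$ and $\pm\lambda_1^2\langle\nabla\cdot\Hh,F_{Q_1}\nabla\cdot\Hh\rangle_{L^2(D)}$ do not cancel within a single component but only after summing the two identities, which is exactly why both divergences must be carried together in $b_n$. Collecting all estimates, I expect a recursion of the form $b_{n+1}\le e^{-\sigma_0\Delta t}b_n+C\Delta t$ valid for sufficiently small $\Delta t$, obtained after moving the implicit $\tfrac{\sigma_0}{8}\Delta t\,b_{n+1}$-type contributions to the left and using the bound $\tfrac{1}{1-\sigma_0\Delta t/4}\le e^{\sigma_0\Delta t}$ exactly as in the proof of Proposition \ref{uniformH}. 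The discrete Gronwall inequality then gives $b_n\le C_5 e^{-\sigma_0 n\Delta t}b_0+C_6$, and combining this with $\sup_n\E[\|u^n\|_{\H}^2]$ from Proposition \ref{uniformH} and the $H(\mathrm{curl})\cap H(\mathrm{div})$ regularity establishes \eqref{div}.

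The main obstacle will be the noise analysis. In contrast to the continuous It\^o calculus, the implicit-midpoint, truncated-noise structure forces repeated substitution of \eqref{ns} into the non-martingale cross terms, generating a hierarchy of remainders that are cubic and quartic in $(u^{n+1},u^n,\Delta\overline{W}_1^n,\Delta W_2^n)$. Controlling these uniformly in $n$ is what necessitates the fourth-moment bound $C_{4,2}$, the higher-order truncation estimate \eqref{wfzq}, and the summability hypothesis $\sum_i\|Q_1^{\frac12}e_i\|_{H^{1+\gamma}(D)}<\infty$ with $\gamma>\tfrac32$, which places $F_{Q_1}$ and the products $\nabla(Q_1^{\frac12}q_k)$ in $W^{1,\infty}(D)$/$L^\infty(D)$ so that the $L^\infty$-type bounds in the commutator and remainder terms close. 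The delicate point is to verify that every $\bigo{\Delta t}$-commutator and every remainder can be absorbed into a term of the form $\tfrac{\sigma_0}{8}\Delta t\,b_{n+1}$ without consuming the strict dissipativity $e^{-2\sigma_0\Delta t}$ needed to retain the geometric decay rate $e^{-\sigma_0 n\Delta t}$.
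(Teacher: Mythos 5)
Your overall architecture is the same as the paper's: derive coupled recursions for $\E\big[\|Mu^n\|^2_{\H}\big]$ and $\E\big[\|\nabla\cdot\Ee^n\|^2_{L^2(D)}+\|\nabla\cdot\Hh^n\|^2_{L^2(D)}\big]$ (the paper does divergence first, curl second, but the order is immaterial), track them together precisely because each recursion leaks into the other, absorb the implicit terms, apply the discrete Gronwall inequality, and conclude via the $H({\rm curl},D)\cap H({\rm div},D)$ regularity fact and Proposition \ref{uniformH}. Your testing identity is exactly the paper's \eqref{ytu}, and your handling of the $\nabla e^{-\sigma\Delta t}=\mathcal{O}(\Delta t)$ commutators matches the paper's $R^{\Delta t}_\sigma$ terms.

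The gap is in the central noise step. The scheme \eqref{ns} is a Stratonovich-type midpoint discretization with no $F_{Q_1}$ drift, so there is no It\^o correction in the scheme to be ``extracted'' and cancelled in the way \eqref{yuy} cancels against \eqref{poi}. If you substitute \eqref{ns} into the cross term $2\lambda_1\E\big[\langle M(JA_t^{\sigma}u^n\Delta\overline{W}_1^n),MA_t^{\sigma}u^n\rangle_{\H}\big]$, you indeed generate the positive term $\lambda_1^2\E\big[\|M(JA_t^{\sigma}u^n\Delta\overline{W}_1^n)\|^2_{\H}\big]$ and, via $J^2=-I$, the signed term $-\lambda_1^2\E\big[\langle M\big(A_t^{\sigma}u^n(\Delta\overline{W}_1^n)^2\big),M(e^{-\sigma\Delta t}u^n)\rangle_{\H}\big]$; but their leading parts do \emph{not} cancel: since $A_t^{\sigma}u^n-e^{-\sigma\Delta t}u^n=\frac12(u^{n+1}-e^{-\sigma\Delta t}u^n)$, the mismatch is $\frac{\lambda_1^2}{2}\E\big[\langle(\Delta\overline{W}_1^n)^2MA_t^{\sigma}u^n,M(u^{n+1}-e^{-\sigma\Delta t}u^n)\rangle_{\H}\big]$ plus commutator terms. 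This residual defeats your absorption strategy: the truncation only gives $\|\Delta\overline{W}_1^n\|^2_{L^\infty(D)}\leq C\Delta tA_{\Delta t}^2=2Cb\,\Delta t|\ln\Delta t|$, so the crude bound produces a coefficient of $\E\big[\|Mu^{n+1}\|^2_{\H}+\|Mu^n\|^2_{\H}\big]$ that exceeds the dissipation budget $\sigma_0\Delta t$ by the factor $|\ln\Delta t|\to\infty$. Closing it requires machinery your plan does not identify: a further substitution of \eqref{ns}, integration by parts through the skew-adjointness of $M$ to avoid the uncontrollable $M^2A_t^{\sigma}u^n$ terms (no second-order regularity of $u^n$ is available), and replacement of $(\Delta\overline{W}_1^n)^2$ by its conditional mean $\approx\Delta t F_{Q_1}$ rather than its $L^\infty$ bound.

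The missing idea that lets the paper bypass all of this is a \emph{pathwise} algebraic cancellation: writing $M(Jv\Delta\overline{W}_1^n)=\Delta\overline{W}_1^nJMv+H_W^1v$, skew-symmetry of $J$ against the scalar multiplier gives $\langle\Delta\overline{W}_1^nJMv,Mv\rangle_{\H}=0$ identically, so the quadratic-in-curl, quadratic-in-noise term never appears; only commutator terms carrying one factor $\nabla(\Delta\overline{W}_1^n)$ survive (the paper's $B_{3,1}$, $B_{3,2}$), and it is the integration by parts inside $B_{3,2}$, producing $M(H_W^1u^n)$, that both forces the hypothesis $\gamma_1=2+\gamma$ and injects $\nabla\cdot\Ee^n$, $\nabla\cdot\Hh^n$ into the curl recursion. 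The same remark applies to your divergence step: in the discrete proof no $F_{Q_1}$ terms arise at all; the $\Ee$- and $\Hh$-identities in \eqref{uioo} must be summed because the terms $\mp\frac{\lambda_1}{2}\langle\Delta\overline{W}_1^n\,\nabla\cdot(\cdot),\nabla\cdot(\cdot)\rangle_{L^2(D)}$ in $A_5$ and $A_6$ cancel pathwise against each other, not by an It\^o-correction bookkeeping.
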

\begin{proof}
	By utilizing the fact that $v\in H({\rm curl},D)\cap H({\rm div},D)$ belongs to $H^1(D)^3$ if ${\bf n}\times v|_{\partial D}=0$ or ${\bf n}\cdot v|_{\partial D}=0$, it is sufficient to show
	\begin{align}\label{div13}
		&\mathbb{E}\big[\|Mu^n\|^{2}_{\H}+\|\nabla\cdot \Ee^n\|^{2}_{L^2(D)}+\|\nabla\cdot \Hh^n\|^{2}_{L^2(D)}\big]\notag\\
		&\leq e^{-\sigma_0t}\mathbb{E}\big[\|Mu_0\|^{2}_{\H}+\|\nabla\cdot \Ee_0\|^{2}_{L^2(D)}+\|\nabla\cdot \Hh_0\|^{2}_{L^2(D)}\big]+C.
	\end{align}
	We prove \eqref{div13} in the following two steps.
	
	{\em Step 1. Estimate of $\,\mathbb{E}\big[\|\nabla\cdot \Ee^n\|^{2}_{L^2(D)}+\|\nabla\cdot \Hh^n\|^{2}_{L^2(D)}\big]$.}
	
	It follows from \eqref{ns} that
	\begin{align}
		\Ee^{n+1}-e^{-\sigma\Delta t}\Ee^n&=\frac{\Delta t}{2}\nabla\times\big(\Hh^{n+1}+e^{-\sigma\Delta t}\Hh^n\big)\notag\\
		&\qquad-\frac{\lambda_{1}}{2}\big(\Hh^{n+1}+e^{-\sigma\Delta t}\Hh^n\big)\Delta\overline{W}_1^n+\bm{\lambda}_2\Delta W^n_2,\label{div1}\\
		\Hh^{n+1}-e^{-\sigma\Delta t}\Hh^n&=-\frac{\Delta t}{2}\nabla\times\big(\Ee^{n+1}+e^{-\sigma\Delta t}\Ee^n\big)\notag\\
		&\qquad+\frac{\lambda_{1}}{2}\big(\Ee^{n+1}+e^{-\sigma\Delta t}\Ee^n\big)\Delta\overline{W}_1^n+\bm{\lambda}_2\Delta W_2^n.\label{div2}
	\end{align}
	After applying $\nabla\cdot$ on both sides of \eqref{div1} and \eqref{div2}, we have
	\begin{align}
		\nabla\cdot \Ee^{n+1}-e^{-\sigma\Delta t}\big(\nabla\cdot \Ee^n\big)&=\big(\nabla e^{-\sigma\Delta t}\big)\cdot\Ee^n-\frac{\lambda_{1}}{2}\nabla\cdot\big[\big(\Hh^{n+1}+e^{-\sigma\Delta t }\Hh^n\big)\Delta\overline{W}_1^n\big]\notag\\
		&\qquad+\bm{\lambda}_2\cdot\nabla(\Delta W_2^n),\label{div11}\\
		\nabla\cdot \Hh^{n+1}-e^{-\sigma\Delta t}\big(\nabla\cdot \Hh^n\big)&=\big(\nabla e^{-\sigma\Delta t}\big)\cdot\Hh^n+\frac{\lambda_{1}}{2}\nabla\cdot\big[\big(\Ee^{n+1}+e^{-\sigma\Delta t }\Ee^n\big)\Delta\overline{W}_1^n\big]\notag\\
		&\qquad+\bm{\lambda}_2\cdot\nabla(\Delta W_2^n).\label{div22}
	\end{align}
	Next we apply $\langle\cdot,\nabla\cdot \Ee^{n+1}+e^{-\sigma\Delta t}\nabla\cdot \Ee^n\rangle_{L^2(D)}$ and $\langle\cdot,\nabla\cdot \Hh^{n+1}+e^{-\sigma\Delta t}\nabla\cdot \Hh^n\rangle_{L^2(D)}$ on both sides of \eqref{div11} and \eqref{div22}, respectively, to get
	\begin{align}\label{uioo}
		&\|\nabla\cdot \Ee^{n+1}\|^2_{L^2(D)}+\|\nabla\cdot \Hh^{n+1}\|^2_{L^2(D)}\notag\\
		&-\|e^{-\sigma\Delta t}(\nabla\cdot \Ee^n)\|^2_{L^2(D)}-\|e^{-\sigma\Delta t}(\nabla\cdot \Hh^n)\|^2_{L^2(D)}\notag\\
		&=\langle (\nabla e^{-\sigma\Delta t})\cdot\Ee^n,\nabla\cdot \Ee^{n+1}+e^{-\sigma\Delta t}\nabla\cdot \Ee^n\rangle_{L^2(D)}\notag\\
		&\quad+\langle(\nabla e^{-\sigma\Delta t})\cdot\Hh^n,\nabla\cdot \Hh^{n+1}+e^{-\sigma\Delta t}\nabla\cdot \Hh^n\rangle_{L^2(D)}\notag\\
		&\quad+\langle\bm{\lambda}_2\cdot\nabla(\Delta W_2^n),\nabla\cdot \Ee^{n+1}+e^{-\sigma\Delta t}\nabla\cdot \Ee^n\rangle_{L^2(D)}\\
		&\quad+\langle\bm{\lambda}_2\cdot\nabla(\Delta W_2^n),\nabla\cdot \Hh^{n+1}+e^{-\sigma\Delta t}\nabla\cdot \Hh^n\rangle_{L^2(D)}\notag\\
		&\quad-\frac{\lambda_{1}}{2}\langle\nabla\cdot[(\Hh^{n+1}+e^{-\sigma\Delta t }\Hh^n)\Delta\overline{W}_1^n],\nabla\cdot \Ee^{n+1}+e^{-\sigma\Delta t}\nabla\cdot \Ee^n\rangle_{L^2(D)}\notag\\
		&\quad+\frac{\lambda_{1}}{2}\langle\nabla\cdot[(\Ee^{n+1}+e^{-\sigma\Delta t }\Ee^n)\Delta\overline{W}_1^n],\nabla\cdot \Hh^{n+1}+e^{-\sigma\Delta t}\nabla\cdot \Hh^n\rangle_{L^2(D)}\notag\\
		&=:\sum_{k=1}^{6}A_k.\notag
	\end{align}
	For terms $A_1$ and $A_2$, by the Young inequality, Proposition \ref{uniformH} and $\nabla e^{-\sigma\Delta t}=-\Delta te^{-\sigma\Delta t}\nabla\sigma$, we have
	\begin{align*}
		&\mathbb{E}\big[A_{1}+A_2\big]\\
		&\leq \mathbb{E}\big[\|(\nabla e^{-\sigma\Delta t})\cdot \Ee^n\|_{L^2(D)}(\|\nabla\cdot \Ee^{n+1}\|_{L^2(D)}+\|e^{-\sigma\Delta t}(\nabla\cdot \Ee^n)\|_{L^2(D)})\big]\\
		&\quad+\mathbb{E}\big[\|(\nabla e^{-\sigma\Delta t})\cdot \Hh^n\|_{L^2(D)}(\|\nabla\cdot \Hh^{n+1}\|_{L^2(D)}+\|e^{-\sigma\Delta t}(\nabla\cdot \Hh^n)\|_{L^2(D)})\big]\\
		&\leq\frac{\sigma_0}{16}\Delta t\mathbb{E}\big[\|\nabla\cdot \Ee^{n+1}\|_{L^2(D)}^2+\|\nabla\cdot \Hh^{n+1}\|_{L^2(D)}^2\big]+C\Delta t\\
		&\quad+\frac{\sigma_0}{16} e^{-2\sigma_0\Delta t}\Delta t\mathbb{E}\big[\|\nabla\cdot \Ee^n\|^2_{L^2(D)}+\|\nabla\cdot \Hh^n\|^2_{L^2(D)}\big].
	\end{align*}
	By using the fact that $\Delta\overline{W}_1^n$ and $\Delta W_2^n$ are independent of $\mathcal{F}_{t_n}$, and substituting \eqref{div11} and \eqref{div22} into $A_{3}$ and $A_{4}$, respectively, it yields 
	\begin{align*}
		&\mathbb{E}\big[A_{3}+A_{4}\big]\\
		&=2\mathbb{E}\big[\|\bm{\lambda}_2\cdot\nabla(\Delta W_2^n)\|^2_{L^2(D)}\big]\\
		&\quad-\frac{\lambda_{1}}{2}\mathbb{E}\big[\langle\bm{\lambda}_2\cdot\nabla(\Delta W_2^n),\Delta\overline{W}_1^n(\nabla\cdot \Hh^{n+1})+\Hh^{n+1}\cdot\nabla(\Delta\overline{W}_1^n)\rangle_{L^2(D)}\big]\\
		&\quad+\frac{\lambda_{1}}{2}\mathbb{E}\big[\langle\bm{\lambda}_2\cdot\nabla(\Delta W_2^n),\Delta\overline{W}_1^n(\nabla\cdot \Ee^{n+1})+\Ee^{n+1}\cdot\nabla(\Delta\overline{W}_1^n)\rangle_{L^2(D)}\big]\\
		&\leq\frac{\sigma_0}{16}\Delta t\mathbb{E}\big[\|\nabla\cdot \Ee^{n+1}\|_{L^2(D)}^2+\|\nabla\cdot \Hh^{n+1}\|_{L^2(D)}^2\big]+C\Delta t
	\end{align*}
	in view of the Sobolev embedding $H^{\gamma}(D)\hookrightarrow L^{\infty}(D)$ for $\gamma>\frac{3}{2}$, the Young inequality and Proposition \ref{uniformH}.
	
	For terms $A_5$ and $A_6$, we note that
	\begin{align*}
		&\nabla\cdot\big[(\Hh^{n+1}+e^{-\sigma\Delta t}\Hh^n)\Delta\overline{W}_1^n\big]\\
		&=\Delta\overline{W}_1^n\big(\nabla\cdot \Hh^{n+1}+e^{-\sigma\Delta t}(\nabla\cdot \Hh^n)\big)+
		\Hh^{n+1}\cdot\nabla(\Delta\overline{W}_1^n)\\
		&\quad+ e^{-\sigma\Delta t}\Hh^n\cdot\nabla(\Delta\overline{W}_1^n)+
		\Delta\overline{W}_1^n(\nabla e^{-\sigma\Delta t})\cdot \Hh^n
	\end{align*}
	and
	\begin{align*}
		&\nabla\cdot\big[(\Ee^{n+1}+e^{-\sigma\Delta t}\Ee^n)\Delta\overline{W}_1^n\big]\\
		&=
		\Delta\overline{W}_1^n\big(\nabla\cdot \Ee^{n+1}+ e^{-\sigma\Delta t}(\nabla\cdot \Ee^n)\big)+
		\Ee^{n+1}\cdot\nabla(\Delta\overline{W}_1^n)\\
		&\quad+ e^{-\sigma\Delta t}\Ee^n\cdot\nabla(\Delta\overline{W}_1^n)+
		\Delta\overline{W}_1^n (\nabla e^{-\sigma\Delta t})\cdot\Ee^n.
	\end{align*}
	Therefore,
	\begin{align*}
		\mathbb{E}[A_5+A_6]=&-\frac{\lambda_1}{2}\mathbb{E}\Big[\langle \Hh^{n+1}\cdot\nabla(\Delta\overline{W}_1^n),\nabla\cdot \Ee^{n+1}+e^{-\sigma\Delta t}(\nabla\cdot \Ee^n)\rangle_{L^2(D)}\Big]\\
		&-\frac{\lambda_1}{2}\mathbb{E}\Big[\langle e^{-\sigma\Delta t}\Hh^n\cdot\nabla(\Delta\overline{W}_1^n)+\Delta\overline{W}_1^n \Hh^n\cdot(\nabla e^{-\sigma\Delta t}),\nabla\cdot \Ee^{n+1}\rangle_{L^2(D)}\Big]\\
		&+\frac{\lambda_1}{2}\mathbb{E}\Big[\langle \Ee^{n+1}\cdot\nabla(\Delta\overline{W}_1^n),\nabla\cdot \Hh^{n+1}+e^{-\sigma\Delta t}(\nabla\cdot \Hh^n)\rangle_{L^2(D)}\Big]\\
		&+\frac{\lambda_1}{2}\mathbb{E}\Big[\langle e^{-\sigma\Delta t}\Ee^n\cdot\nabla(\Delta\overline{W}_1^n)+\Delta\overline{W}_1^n \Ee^n\cdot(\nabla e^{-\sigma\Delta t}),\nabla\cdot \Hh^{n+1}\rangle_{L^2(D)}\Big]\\
		=:&A_{5,1}+A_{5,2}+A_{6,1}+A_{6,2}
	\end{align*}
	due to the fact that $\Delta\overline{W}_1^n$ and $\Delta W_2^n$ are independent of $\mathcal{F}_{t_n}$.
	First, we consider the term $A_{5,1}$,
	\begin{align*}
		A_{5,1}=&-\frac{\lambda_1}{2}\mathbb{E}\Big[\langle \Hh^{n+1}\cdot\nabla(\Delta\overline{W}_1^n),\nabla\cdot \Ee^{n+1}-e^{-\sigma\Delta t}(\nabla\cdot \Ee^n)\rangle_{L^2(D)}\Big]\\
		&\hphantom{=}-\lambda_{1}\mathbb{E}\Big[\langle (\Hh^{n+1}-e^{-\sigma\Delta t}\Hh^n)\cdot\nabla(\Delta\overline{W}_1^n),e^{-\sigma\Delta t}(\nabla\cdot \Ee^n)\rangle_{L^2(D)}\Big]\\
		=&:A_{5,1,1}+A_{5,1,2}.
	\end{align*}
	Substituting \eqref{div11} into the term $A_{5,1,1}$ and using the Sobolev embedding $H^{\gamma}(D)\hookrightarrow L^{\infty}(D)$ for $\gamma>\frac{3}{2}$ lead to
	\begin{align*}
		A&_{5,1,1}
		=-\frac{\lambda_{1}}{2}\mathbb{E}\Big[\langle \Hh^{n+1}\cdot\nabla(\Delta\overline{W}_1^n),(\nabla e^{-\sigma\Delta t})\cdot \Ee^n+\bm{\lambda}_2\cdot\nabla(\Delta W_2^n)\rangle_{L^2(D)}\Big]\\
		&+\frac{\lambda_{1}^2}{4}\mathbb{E}\Big[\langle \Hh^{n+1}\cdot(\nabla\Delta\overline{W}_1^n),\Delta\overline{W}_1^n(\nabla\cdot \Hh^{n+1})\rangle_{L^2(D)}+\| \Hh^{n+1}\cdot\nabla(\Delta\overline{W}_1^n)\|^2_{L^2(D)}\Big]\\
		&+\frac{\lambda_{1}^2}{4}\mathbb{E}\Big[\Big\langle \Hh^{n+1}\cdot\nabla(\Delta\overline{W}_1^n),\Delta\overline{W}_1^ne^{-\sigma\Delta t}(\nabla\cdot \Hh^n)\\
		&\qquad\qquad\qquad+\Big(e^{-\sigma\Delta t}\nabla(\Delta \overline{W}_1^n)+\Delta\overline{W}_1^n(\nabla e^{-\sigma\Delta t})\Big)\cdot \Hh^n\Big\rangle_{L^2(D)}\Big]\\
		\leq&\,C\mathbb{E}\Big[\|\Delta\overline{W}_1^n\|_{H^{1+\gamma}(D)}\|\Hh^{n+1}\|_{L^2(D)^3}\Big(\Delta t\|\Ee^n\|_{L^2(D)^3}+\|\Delta W_2^n\|_{H^{1}(D)}\Big)\Big]\\
		&+C\mathbb{E}\Big[\|\Delta\overline{W}_1^n\|^2_{H^{1+\gamma}(D)}\|\Hh^{n+1}\|_{L^2(D)^3}\Big(\|\Hh^{n+1}\|_{L^2(D)^3}+\|\nabla\cdot \Hh^{n+1}\|_{L^2(D)}\Big)\Big]\\
		&+C\mathbb{E}\Big[\|\Delta\overline{W}_1^n\|^2_{H^{1+\gamma}(D)}\|\Hh^{n+1}\|_{L^2(D)^3}\Big((1+\Delta t)\|\Hh^n\|_{L^2(D)^3}+e^{-\sigma_0\Delta t}\|\nabla\cdot \Hh^n\|_{L^2(D)}\Big)\Big]\\
		\leq&\,\frac{\sigma_0}{16}\Delta t\mathbb{E}\Big[\|\nabla\cdot \Hh^{n+1}\|_{L^2(D)}^2\Big]+\frac{\sigma_0}{16}\Delta te^{-2\sigma_0\Delta t}\mathbb{E}\Big[\|\nabla\cdot \Hh^n\|^2_{L^2(D)}\Big]+C\Delta t,
	\end{align*}
	where in the last step we use the Young inequality and Proposition \ref{uniformH}.
	We substitute \eqref{div2} into $A_{5,1,2}$ and use a similar argument as $A_{5,1,1}$ to get
	\begin{align*}
		A_{5,1,2}
		&\leq\frac{\sigma_0}{20}\Delta t\mathbb{E}\Big[\|\nabla\times \Ee^{n+1}\|^2_{L^2(D)^3}\Big]\\
		&\quad+(C\Delta t+\frac{\sigma_0}{8})\Delta te^{-2\sigma_0\Delta t}\mathbb{E}\Big[\|\nabla\cdot \Ee^n\|^2_{L^2(D)}\Big]+C\Delta t.
	\end{align*}
	Combining the estimates of $A_{5,1,1}$ and $A_{5,1,2}$, it holds that
	\begin{align*}
		A_{5,1}\leq&\,\frac{\sigma_0}{20}\Delta t\mathbb{E}\Big[\|\nabla\times \Ee^{n+1}\|^2_{L^2(D)^3}\Big]+(C\Delta t+\frac{\sigma_0}{8})\Delta te^{-2\sigma_0\Delta t}\mathbb{E}\Big[\|\nabla\cdot \Ee^n\|^2_{L^2(D)}\Big]\\
		&+\frac{\sigma_0}{16}\Delta t\mathbb{E}\Big[\|\nabla\cdot \Hh^{n+1}\|_{L^2(D)}^2\Big]+\frac{\sigma_0}{16}\Delta te^{-2\sigma_0\Delta t}\mathbb{E}\Big[\|\nabla\cdot \Hh^n\|^2_{L^2(D)}\Big]+C\Delta t.
	\end{align*}
	Similar to the estimate of $A_{5,1,1}$, for the term $A_{5,2}$ we obtain
	\begin{align*}
		A_{5,2}&=-\frac{\lambda_{1}}{2}\mathbb{E}\Big[\langle e^{-\sigma\Delta t}\Hh^n\cdot\nabla(\Delta \overline{W}_1^n),\nabla\cdot \Ee^{n+1}-e^{-\sigma\Delta t}\nabla\cdot \Ee^n\rangle_{L^2(D)}\Big]\\
		&\quad-\frac{\lambda_{1}}{2}\mathbb{E}\Big[\langle\Delta\overline{W}_1^n(\nabla e^{-\sigma\Delta t})\cdot \Hh^n,\nabla\cdot \Ee^{n+1}\rangle_{L^2(D)}\Big]\\
		&\leq\frac{\sigma_0}{16}\Delta t\mathbb{E}\Big[\|\nabla\cdot \Hh^{n+1}||_{L^2(D)}^2\Big]+\frac{\sigma_0}{16}\Delta te^{-2\sigma_0\Delta t}\mathbb{E}\Big[\|\nabla\cdot \Hh^n\|_{L^2(D)}^2\Big]+C\Delta t.
	\end{align*}
	Since terms $A_{6,1}$ and $A_{6,2}$ can be similarly estimated as terms $A_{5,1}$ and $A_{5,2}$ respectively, 
	one gets
	\begin{align*}
		\mathbb{E}[A_5+A_6]&\leq\frac{\sigma_0}{20}\Delta t\mathbb{E}\Big[\|Mu^{n+1}\|^2_{\H}\Big]\\
		&\quad+(C\Delta t+\frac{\sigma_0}{8})\Delta te^{-2\sigma_0\Delta t}\mathbb{E}\Big[\|\nabla\cdot \Ee^n\|^2_{L^2(D)}+\|\nabla\cdot \Hh^n\|^2_{L^2(D)}\Big]\\
		&\quad+\frac{\sigma_0}{8}\Delta t\mathbb{E}\Big[\|\nabla\cdot \Ee^{n+1}\|_{L^2(D)}^2+\|\nabla\cdot \Hh^{n+1}\|_{L^2(D)}^2\Big].
	\end{align*}
	
	Combining the estimates of $A_1$--$A_6$, we arrive at
	\begin{equation*}\label{divp=1}
		\begin{split}
			&\mathbb{E}\Big[\|\nabla\cdot \Hh^{n+1}\|^2_{L^2(D)}+\|\nabla\cdot \Ee^{n+1}\|^2_{L^2(D)}\Big]\\
			&\leq e^{-2\sigma_0\Delta t}\mathbb{E}\Big[\|\nabla\cdot \Ee^n\|^2_{L^2(D)}+\|\nabla\cdot \Hh^n\|^2_{L^2(D)}\Big]\\
			&\quad+\frac{\sigma_0}{4}\Delta t\mathbb{E}\Big[\|\nabla\cdot \Ee^{n+1}\|^2_{L^2(D)}+\|\nabla\cdot \Hh^{n+1}\|^2_{L^2(D)}\Big]\\
			&\quad+(C\Delta t+\frac{3\sigma_0}{16})\Delta te^{-2\sigma_0\Delta t}\mathbb{E}\Big[\|\nabla\cdot \Ee^n\|^2_{L^2(D)}+\|\nabla\cdot \Hh^n\|^2_{L^2(D)}\Big]\\
			&\quad+\frac{\sigma_0}{20}\Delta t\mathbb{E}\Big[\|Mu^{n+1}\|_{\H}^2\Big]+C\Delta t.
		\end{split}
	\end{equation*}
	
	{\em Step 2. Estimate of} $\mathbb{E}\Big[\|Mu^{n}\|^2_{\H}\Big]$.
	
	We apply $\langle\cdot,M(u^{n+1}-e^{-\sigma\Delta t}u^n)\rangle_{\H}$ on both sides of \eqref{ns} and get
	\begin{equation}\label{ytu}
		\begin{split}
			&\| M(u^{n+1}\|_{\H}^2-\|M(e^{-\sigma\Delta t}u^n)\|^2_{\H}\\
			&=-\frac{\lambda_{1}}{\Delta t}\langle J(u^{n+1}+e^{-\sigma\Delta t}u^n)\Delta\overline{W}_1^n,M(u^{n+1}-e^{-\sigma\Delta t}u^n)\rangle_{\H}
			\\
			&\quad-\frac{2}{\Delta t}\langle\widetilde{{\bm{\lambda}}}_2\Delta W_2^n,M(u^{n+1}-e^{-\sigma\Delta t}u^n)\rangle_{\H}.
		\end{split}
	\end{equation}
	By the skew-adjointness of the Maxwell operator $M$, we substitute \eqref{ns} into the two terms on the right side of \eqref{ytu} and take the expectation to obtain
	\begin{align*}
		\mathbb{E}\Big[\|Mu^{n+1}\|^2_{\H}\Big]=\,&\mathbb{E}\Big[\|M(e^{-\sigma\Delta t}u^n)\|_{\H}^2\Big]+\mathbb{E}\Big[\langle M(\widetilde{{\bm{\lambda}}}_2\Delta W_2^n),M(u^{n+1}+e^{-\sigma\Delta t}u^n)\rangle_{\H}\Big]\notag\\
		&+\frac{\lambda_{1}}{2}\mathbb{E}\Big[\langle M(J(u^{n+1}+e^{-\sigma\Delta t}u^n)\Delta\overline{W}_1^n),M(u^{n+1}+e^{-\sigma\Delta t}u^n)\rangle_{\H}\Big]\notag\\
		=&:B_1+B_2+B_3.
	\end{align*}
	For the term $B_1$, we note that
	$M(e^{-\sigma\Delta t}u^n)=e^{-\sigma\Delta t}Mu^n+R_{\sigma}^{\Delta t}u^n,$
	where
	$$R^{\Delta t}_\sigma:=\left(          
	\begin{array}{ccc}  
		0&(\nabla e^{-\sigma\Delta t})\times\\
		-(\nabla e^{-\sigma\Delta t})\times&0\\
	\end{array}
	\right).$$ 
	Since
	\begin{align*}
		\|R_\sigma^{\Delta t} u^n\|_{\H}\leq 2\Delta te^{-\sigma_0\Delta t}\|\sigma\|_{W^{1,\infty}(D)}\|u^n\|_{\H}\leq C\Delta te^{-\sigma_0\Delta t}\|u^n\|_{\H},
	\end{align*}
	we derive that
	\begin{align*}
		B_1
		&=\mathbb{E}\Big[\|e^{-\sigma\Delta t}Mu^n\|^2_{\H}\Big]+\mathbb{E}\Big[\|R_\sigma^{\Delta t} u^n\|_{\H}^2\Big]+2\mathbb{E}\Big[\langle e^{-\sigma\Delta t}Mu^n,R_\sigma u^n\rangle_{\H}\Big]\\
		&\leq  e^{-2\sigma_0\Delta t}\mathbb{E}\Big[\|Mu^n\|_{\H}^2\Big]+\frac{\sigma_0}{16}\Delta te^{-2\sigma_0\Delta t}\mathbb{E}\Big[\|Mu^n\|^2_{\H}\Big]+C\Delta t,
	\end{align*}
	where we use the Young inequality and Proposition \ref{uniformH}.
	
	Using the fact that $\Delta W_2^n$ is independent of $\mathcal{F}_{t_n}$, the skew-adjointness of $M$ and substituting \eqref{ns} into the term $B_2$, we get
	\begin{align*}
		B_2&=\mathbb{E}\Big[\langle M(\widetilde{{\bm{\lambda}}}_2\Delta W_2^n),M(u^{n+1}-e^{-\sigma\Delta t}u^n)\rangle_{\H}\Big]\\
		&=-\frac{\Delta t}{2}\mathbb{E}\Big[\langle M^2(\widetilde{{\bm{\lambda}}}_2\Delta W_2^n),Mu^{n+1}\rangle_{\H}\Big]+\mathbb{E}\Big[\|M(\widetilde{{\bm{\lambda}}}_2\Delta W_2^n)\|^2_{\H}\Big]\\
		&\quad-\frac{\lambda_{1}}{2}\mathbb{E}\Big[\langle M^2(\widetilde{{\bm{\lambda}}}_2\Delta W_2^n),Ju^{n+1}\Delta\overline{W}_1^n\rangle_{\H}\Big]\\
		&\leq\frac{\sigma_0}{20}\Delta t\mathbb{E}\Big[\|Mu^{n+1}\|^2_{\H}\Big]+C\Delta t
	\end{align*}
	due the Young inequality, Sobolev embedding $H^{\gamma}(D)\hookrightarrow L^{\infty}(D)$ for $\gamma>\frac{3}{2}$ and Proposition \ref{uniformH}.
	
	Let $H_W^1:=\left(          
	\begin{array}{ccc}  
		(\nabla(\Delta \overline{W}_1^n))\times&0\\
		0&(\nabla(\Delta \overline{W}_1^n))\times\\
	\end{array}
	\right)$. For the term $B_3$, we have
	\begin{align*}
		B_3
		&=\frac{\lambda_{1}}{2}\mathbb{E}\Big[\langle J\Delta\overline{W}_1^nM(u^{n+1}+e^{-\sigma\Delta t}u^n),M(u^{n+1}+e^{-\sigma\Delta t}u^n)\rangle_{\H}\Big]\\
		&\hphantom{=}+\frac{\lambda_{1}}{2}\mathbb{E}\Big[\langle H_W^1(u^{n+1}+e^{-\sigma\Delta t}u^n),M(u^{n+1}+e^{-\sigma\Delta t}u^n)\rangle_{\H}\Big]\\
		&=\frac{\lambda_{1}}{2}\mathbb{E}\Big[\langle H_W^1(u^{n+1}-e^{-\sigma\Delta t}u^n), M(u^{n+1}+e^{-\sigma\Delta t}u^n)\rangle_{\H}\Big]\\
		&\hphantom{==}+\lambda_{1}\mathbb{E}\Big[\langle H_W^1e^{-\sigma\Delta t}u^n,M(u^{n+1}-e^{-\sigma\Delta t}u^n)\rangle_{\H}\Big]\\
		&=:B_{3,1}+B_{3,2}.
	\end{align*}
	For the term $B_{3,1}$, we substitute \eqref{ns} into it and obtain
	\begin{align*}
		B_{3,1}=\,&\frac{\lambda_{1}\Delta t}{4}\mathbb{E}\Big[\langle H_W^1M(u^{n+1}+e^{-\sigma\Delta t}u^n),M(u^{n+1}+e^{-\sigma\Delta t}u^n)\rangle_{\H}\Big]\\
		&+\frac{\lambda_{1}^2}{4}\mathbb{E}\Big[\langle H_W^1J(u^{n+1}+e^{-\sigma\Delta t}u^n)\Delta\overline{W}_1^n,M(u^{n+1}+e^{-\sigma\Delta t}u^n)\rangle_{\H}\Big]\\
		&+\frac{\lambda_{1}}{2}\mathbb{E}\Big[\langle H_W^1\widetilde{{\bm{\lambda}}}_2\Delta W_2^n,M(u^{n+1}+e^{-\sigma\Delta t}u^n)\rangle_{\H}\Big]\\
		=&:B_{3,1,1}+B_{3,1,2}+B_{3,1,3}.
	\end{align*}
	It follows from the Sobolev embedding $H^{\gamma}(D)\hookrightarrow L^{\infty}(D)$ for $\gamma>\frac{3}{2}$, the Young inequality and Proposition \ref{uniformH} that 
	\begin{align*}
		B_{3,1,1}
		&\leq C\Delta t\mathbb{E}\Big[\|\Delta\overline{W}_1^n\|_{H^{1+\gamma}(D)}\|Mu^{n+1}\|_{\H}^2\Big]\\
		&\quad+C\Delta t\mathbb{E}\Big[\|\Delta\overline{W}_1^n\|_{H^{1+\gamma}(D)}\|Mu^{n+1}\|_{\H}\Big(e^{-\sigma_0\Delta t}\|Mu^n\|_{\H}+C\Delta te^{-\sigma_0\Delta t}\|u^n\|_{\H}\Big)\Big].
	\end{align*}
	The condition $\sum_{i=1}^{\infty}\|Q_1^{\frac{1}{2}}e_i\|_{H^{1+\gamma}(D)}<\infty$ implies $\|\Delta \overline{W}_1^n\|\leq C\Delta t^{\frac{1}{2}}A_{\Delta t}$, which yields
	\begin{align*}
		B_{3,1,1}&\leq C\Delta t^{\frac{3}{2}}A_{\Delta t}\mathbb{E}\Big[\|Mu^{n+1}||^2_{\H}\Big]+\frac{\sigma_0}{20}\Delta t\mathbb{E}\Big[\|Mu^{n+1}||^2_{\H}\Big]\\
		&\quad+C\Delta te^{-2\sigma_0\Delta t}\mathbb{E}\Big[\|\Delta \overline{W}_1^n\|^2_{H^{1+\gamma}(D)}\|Mu^n\|^2_{\H}\Big]\\
		&\quad+C\Delta t^3\mathbb{E}\Big[\|\Delta \overline{W}^n_1\|^2_{H^{1+\gamma}(D)}\|u^n\|^2_{\H}\Big]\\
		&\leq (C\Delta t^{\frac{1}{2}}A_{\Delta t}+\frac{\sigma_0}{20})\Delta t\mathbb{E}\Big[\|Mu^{n+1}\|^2_{\H}\Big]+C\Delta t^2e^{-2\sigma_0\Delta t}\mathbb{E}\Big[\|Mu^{n}\|_{\H}^2\Big]+C\Delta t^4.
	\end{align*}
	For terms $B_{3,1,2}$ and $B_{3,1,3}$, we have
	$$B_{3,1,2}+B_{3,1,3}\leq \frac{\sigma_0}{20}\Delta t\mathbb{E}[\|Mu^{n+1}\|_{\H}^2]+\frac{\sigma_0}{16}\Delta te^{-2\sigma_0\Delta t}\mathbb{E}[\|Mu^n\|^2_{\H}]+C\Delta t.$$
	Hence, 
	\begin{align*}
		B_{3,1}&\leq(C\Delta t^{\frac{1}{2}}A_{\Delta t}+\frac{\sigma_0}{10})\Delta t\mathbb{E}\Big[\|Mu^{n+1}\|^2_{\H}\Big]\\
		&\quad+(C\Delta t+\frac{\sigma_0}{16})\Delta te^{-2\sigma_0\Delta t}\mathbb{E}\Big[\|Mu^n\|_{\H}^2\Big]+C\Delta t.
	\end{align*}
	
	For the term $B_{3,2}$, we use the skew-adjointness of $M$ and then substitute \eqref{ns} into it to obtain
	\begin{align*}
		B_{3,2}&=-\lambda_{1}\mathbb{E}\Big[\langle e^{-\sigma\Delta t} M(H_W^1u^n),u^{n+1}-e^{-\sigma\Delta t}u^n\rangle_{\H}\Big]-\lambda_{1}\mathbb{E}\Big[\langle  R_\sigma^{\Delta t}(H_W^1u^n),u^{n+1}\rangle_{\H}\Big].\\
		&=-\frac{\lambda_{1}\Delta t}{2}\mathbb{E}\Big[\langle e^{-\sigma\Delta t} M(H_W^1u^n),Mu^{n+1}\rangle_{\H}\Big]\\
		&\quad-\frac{\lambda_{1}^2}{2}\mathbb{E}\Big[\langle e^{-\sigma\Delta t} M(H_W^1u^n),J(u^{n+1}+e^{-\sigma\Delta t}u^n)\Delta\overline{W}_1^n\rangle_{\H}\Big]\\
		&\quad-\lambda_{1}\mathbb{E}\Big[\langle  R_\sigma^{\Delta t}(H_W^1u^n),u^{n+1}\rangle_{\H}\Big].
	\end{align*}
	Notice that
	\begin{align*}
		M(H_W^1u^n)=&\left(          
		\begin{array}{ccc}  
			(\nabla\cdot \Hh^n)\nabla(\Delta\overline{W}_1^n)\\
			-(\nabla\cdot \Ee^n)\nabla(\Delta\overline{W}_1^n)\\
		\end{array}
		\right)
		+\left(          
		\begin{array}{ccc}  
			(\Hh^n\cdot\nabla)\nabla(\Delta\overline{W}_1^n)\\
			-(\Ee^n\cdot\nabla)\nabla(\Delta\overline{W}_1^n)\\
		\end{array}
		\right)\\
		&+(\nabla\cdot(\nabla(\Delta\overline{W}_1^n)))Ju^n
		+ (\nabla(\Delta\overline{W}_1^n)\cdot\nabla)Ju^n,
	\end{align*}
	which leads to
	\begin{equation*}
		\|M(H_W^1u^n)\|^2_{\H}
		\leq C\|\Delta\overline{W}_1^n\|^2_{H^{2+\gamma}(D)}\Big(\|\nabla\cdot \Hh^n\|^2_{L^2(D)}+\|\nabla\cdot \Ee^n\|^2_{L^2(D)}+\|u^n\|^2_{\H}+\|Mu^n\|_{\H}^2\Big)
	\end{equation*}
	due to the Sobolev embedding $H^{\gamma}(D)\hookrightarrow L^{\infty}(D)$ for $\gamma>\frac{3}{2}$ and the Young inequality.
	Thus, one gets
	\begin{align*}
		B_{3,2}
		&\leq\big(C\Delta t^{\frac{1}{2}}+\frac{\sigma_0}{16}\big)\Delta te^{-2\sigma_0\Delta t}\mathbb{E}\Big[\|\nabla\cdot \Ee^n\|^2_{L^2(D)}+\|\nabla\cdot \Hh^n\|^2_{L^2(D)}\Big]\\
		&\quad+\frac{\sigma_0}{20}\Delta t\mathbb{E}\Big[\|Mu^{n+1}\|^2_{\H}\Big]
		+\big(C\Delta t+\frac{\sigma_0}{16}\big)\Delta te^{-2\sigma_0\Delta t}\mathbb{E}\Big[\|Mu^n\|^2_{\H}\Big]+C\Delta t.
	\end{align*}
	Combining the estimates of $B_{3,1}$ and $B_{3,2}$, it yields
	\begin{align*}
		B_3\leq \,&\big(C\Delta t^{\frac{1}{2}}A_{\Delta t}+\frac{3\sigma_0}{20}\big)\Delta t\mathbb{E}\Big[\|Mu^{n+1}\|^2_{\H}\Big]+\big(C\Delta t+\frac{\sigma_0}{8}\big)\Delta te^{-2\sigma_0\Delta t}\mathbb{E}\Big[\|Mu^n\|^2_{\H}\Big]\\
		&+\big(C\Delta t^{\frac{1}{2}}+\frac{\sigma_0}{16}\big)\Delta te^{-2\sigma_0\Delta t}\mathbb{E}\Big[\|\nabla\cdot \Ee^n\|^2_{L^2(D)}+\|\nabla\cdot \Hh^n\|^2_{L^2(D)}\Big]+C\Delta t.
	\end{align*}
	Putting all estimates of $B_1$--$B_3$ together, we arrive at
	\begin{equation*}
		\begin{split}
			\mathbb{E}\big[\|Mu^{n+1}\|_{\H}^2\big]\leq\,&e^{-2\sigma_0\Delta t}\mathbb{E}\big[\|Mu^{n}\|^2_{\H}\big]+\big(\frac{3\sigma_0}{16}+C\Delta t\big)\Delta te^{-2\sigma_0\Delta t}\mathbb{E}\big[\|Mu^n\|^2_{\H}\big]\\
			&+\big(C\Delta t^{\frac{1}{2}}A_{\Delta t}+\frac{\sigma_0}{5}\big)\Delta t\mathbb{E}\big[\|Mu^{n+1}\|^2_{\H}\big]+C\Delta t\\
			&+\big(C\Delta t^{\frac{1}{2}}+\frac{\sigma_0}{16}\big)\Delta te^{-2\sigma_0\Delta t}\mathbb{E}\big[\|\nabla\cdot \Ee^n\|^2_{L^2(D)}+\|\nabla\cdot \Hh^n\|^2_{L^2(D)}\big].
		\end{split}
	\end{equation*}
	
	{\em Step 3. Proof of \eqref{div}.}
	
	Combining {\em Step 1} and {\em Step 2}, it holds that
	\begin{align*}
		&\mathbb{E}\Big[\|Mu^{n+1}\|^2_{\H}+\|\nabla\cdot \Hh^{n+1}\|^2_{L^2(D)}+\|\nabla\cdot \Ee^{n+1}\|^2_{L^2(D)}\Big]\\
		&\leq e^{-2\sigma_0\Delta t}\mathbb{E}\Big[\|Mu^n\|^2_{\H}+\|\nabla\cdot \Hh^n\|^2_{L^2(D)}+\|\nabla\cdot \Ee^n\|^2_{L^2(D)}\Big]\\
		&\hphantom{=}+\big(C\Delta t^{\frac{1}{2}}+\frac{\sigma_0}{4}\big)\Delta te^{-2\sigma_0\Delta t}\mathbb{E}\Big[\|Mu^n\|^2_{\H}+\|\nabla\cdot \Hh^n\|^2_{L^2(D)}+\|\nabla\cdot \Ee^n\|^2_{L^2(D)}\Big]\\
		&\hphantom{=}+\big(C\Delta t^{\frac{1}{2}}A_{\Delta t}+\frac{\sigma_0}{4}\big)\Delta t\mathbb{E}\Big[\|Mu^{n+1}\|^2_{\H}+\|\nabla\cdot \Hh^{n+1}\|^2_{L^2(D)}+\|\nabla\cdot \Ee^{n+1}\|^2_{L^2(D)}\Big]\\
		&\hphantom{=}+C\Delta t.
	\end{align*}
	Since $\lim_{\Delta t\rightarrow0}\Delta t^{\frac{1}{2}}A_{\Delta t}=0$, there exists a $\Delta t^*>0$ such that for all $\Delta t\in(0,\Delta t^*]$, we have
	$$C\big(\Delta t^{\frac{1}{2}}+\Delta t^{\frac{1}{2}}A_{\Delta t}(1+\sigma_0\Delta t)\big)+\frac{\sigma_0^2}{4}\Delta t\leq \frac{\sigma_0}{2},$$
	which implies that
	$$\frac{1+(C\Delta t^{\frac{1}{2}}+\frac{\sigma_0}{4})\Delta t}{1-(C\Delta t^{\frac{1}{2}}A_{\Delta t}+\frac{\sigma_0}{4})\Delta t}\leq 1+\sigma_0\Delta t\leq e^{\sigma_0\Delta t}.$$ 
	By the Gronwall inequality, we finish the proof.
\end{proof}

For the fourth moment estimates of the curl and divergence of the numerical solution $u^n$, by multiplying $\|\nabla\cdot \Ee^{n+1}\|^2_{L^2(D)}+\|\nabla\cdot \Hh^{n+1}\|^2_{L^2(D)}$ and $\|Mu^{n+1}\|^2_{\H}$ on both sides of \eqref{uioo} and \eqref{ytu}, respectively, we can derive the following result. The proof is similar to that of Lemma \ref{uniformdivM}.
\begin{coro}\label{uniformdivMp2}
	Set $\gamma>\frac{3}{2}$. Let Assumption \ref{ass} hold with $\gamma_1:=2+\gamma$ and $\gamma_2\geq 2$ and let $\sum_{i=1}^{\infty}\|Q_1^{\frac{1}{2}}e_i\|_{H^{1+\gamma}(D)}$
	$<\infty$, and $u_0\in L^4(\Omega,H^1(D)^6)$.   
	There exist positive constants $C_7$ and $C_8$ such that for sufficiently small $\Delta t>0$,
	\begin{equation*}
		\mathbb{E}\big[\|u^n\|^4_{H^1(D)^6}\big]\leq C_7e^{-3\sigma_0t}\mathbb{E}\big[\|u_0\|^4_{H^1(D)^6}\big]+C_8,
	\end{equation*}	
	where $C_7$ depends on $|D|$, and $C_8$ depends on $\sigma_0,C_{4,1},C_{4,2},\lambda_{1},\widetilde{{\bm{\lambda}}}_2,\mathbb{E}\big[\|u_0\|^4_{\H}\big],\|Q_1^{\frac{1}{2}}\|_{\mathcal{L}_2^{\gamma_1}}$ and $\|Q_2^{\frac{1}{2}}\|_{\mathcal{L}_2^{\gamma_2}}$.
\end{coro}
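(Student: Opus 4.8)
The plan is to mirror the three-step structure of Lemma \ref{uniformdivM} but to raise every estimate to the fourth moment by testing the second-moment recursions against the corresponding squared quantity, exactly as the case $p=2$ of Proposition \ref{uniformH} was obtained from the case $p=1$. Write $D^n:=\|\nabla\cdot\Ee^n\|^2_{L^2(D)}+\|\nabla\cdot\Hh^n\|^2_{L^2(D)}$ and recall from Lemma \ref{uniformdivM} the divergence recursion \eqref{uioo}, which has the form $D^{n+1}-c=\sum_{k=1}^6A_k$ with $c:=\|e^{-\sigma\Delta t}\nabla\cdot\Ee^n\|^2_{L^2(D)}+\|e^{-\sigma\Delta t}\nabla\cdot\Hh^n\|^2_{L^2(D)}$, together with the curl identity \eqref{ytu}. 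The starting point is the elementary identity
\begin{equation*}
a\,(a-c)=\tfrac12 a^2-\tfrac12 c^2+\tfrac12 (a-c)^2,
\end{equation*}
which, applied with $a=D^{n+1}$, shows that multiplying \eqref{uioo} by $D^{n+1}$ produces on the left the telescoping expression $\tfrac12\mathbb{E}[(D^{n+1})^2]-\tfrac12\mathbb{E}[c^2]$ plus a nonnegative remainder $\tfrac12\mathbb{E}[(D^{n+1}-c)^2]$; the latter is used to absorb the single most dangerous source term (the analogue of term $II$ in Proposition \ref{uniformH}), exactly as there. The bound $c\le e^{-2\sigma_0\Delta t}D^n$, which holds because $\sigma\ge\sigma_0$, preserves the decaying factor.

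The bulk of the work is estimating $\mathbb{E}[A_k\,D^{n+1}]$ and, for the curl, the expectation of the product of the right-hand side of \eqref{ytu} with $\|Mu^{n+1}\|^2_{\H}$. Each such product is a cubic or quartic expression in the solution and the noise increments; I would bound it by repeated use of Hölder's inequality, the Sobolev embedding $H^{\gamma}(D)\hookrightarrow L^{\infty}(D)$ for $\gamma>\tfrac32$, the deterministic bound $\|\Delta\overline{W}_1^n\|\le C\Delta t^{1/2}A_{\Delta t}$ together with the noise moment estimates \eqref{2ptruncate} and \eqref{wfzq}, and crucially the uniform fourth-moment bound $\sup_n\mathbb{E}[\|u^n\|^4_{\H}]<\infty$ from Proposition \ref{uniformH} (case $p=2$), available since $u_0\in L^4(\Omega,H^1(D)^6)\subset L^4(\Omega,\H)$. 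The independence of $\Delta\overline{W}_1^n,\Delta W_2^n$ from $\mathcal F_{t_n}$ is used to discard the cross terms that are linear and mean-zero in the increments, after which each Young splitting is arranged so that the $n{+}1$ factors collect a coefficient of the form $(C\Delta t^{1/2}A_{\Delta t}+\varepsilon\sigma_0)\Delta t$ in front of $\mathbb{E}[\|Mu^{n+1}\|^4_{\H}]$ and $\mathbb{E}[(D^{n+1})^2]$, while the $n$ factors carry $e^{-2\sigma_0\Delta t}$.

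Combining the divergence and curl estimates as in \emph{Step 3} of Lemma \ref{uniformdivM} and setting $G^n:=\|Mu^n\|^2_{\H}+D^n$, I expect to arrive at a recursion of the form
\begin{equation*}
\mathbb{E}[(G^{n+1})^2]\le\Big(1+\big(C\Delta t^{1/2}+\tfrac{\sigma_0}{4}\big)\Delta t\Big)e^{-4\sigma_0\Delta t}\mathbb{E}[(G^n)^2]+\big(C\Delta t^{1/2}A_{\Delta t}+\tfrac{\sigma_0}{4}\big)\Delta t\,\mathbb{E}[(G^{n+1})^2]+C\Delta t.
\end{equation*}
Here the factor $e^{-4\sigma_0\Delta t}$, rather than $e^{-2\sigma_0\Delta t}$, appears because squaring the recursion squares the per-step contraction $e^{-2\sigma_0\Delta t}$; since $\Delta t^{1/2}A_{\Delta t}\to0$, for sufficiently small $\Delta t$ the ratio of the $n{+}1$ and the $n$ coefficients is bounded by $1+\sigma_0\Delta t\le e^{\sigma_0\Delta t}$, so the net per-step factor is $e^{-4\sigma_0\Delta t}e^{\sigma_0\Delta t}=e^{-3\sigma_0\Delta t}$, which after the discrete Gronwall inequality yields the decay $e^{-3\sigma_0 t}$ and the constants $C_7,C_8$. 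Finally, the norm equivalence $\|u^n\|^2_{H^1(D)^6}\le C(\|u^n\|^2_{\H}+G^n)$ used in \emph{Step 3} of Lemma \ref{uniformdivM} gives $\|u^n\|^4_{H^1(D)^6}\le C(\|u^n\|^4_{\H}+(G^n)^2)$, and combining the bound on $\mathbb{E}[(G^n)^2]$ with the $L^4(\Omega,\H)$ estimate of Proposition \ref{uniformH} yields the claim. The main obstacle is precisely the curl-divergence coupling flagged in the introduction: the noise $\Delta\overline{W}_1^n$ transports regularity between $\|Mu\|_{\H}$ and $\nabla\cdot\Ee,\nabla\cdot\Hh$, so neither quantity closes on its own and the two fourth-moment estimates must be summed before the absorbing Young coefficients can be made uniformly small; controlling the products that mix $\|Mu^{n+1}\|^2_{\H}$ with $D^n$, which require the $H^{2+\gamma}(D)$-regularity of the increment entering $M(H_W^1u^n)$ as in the term $B_{3,2}$, is the most delicate part.
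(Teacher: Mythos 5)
Your proposal is correct and takes essentially the same route as the paper, whose proof of this corollary is precisely the prescription you follow: multiply \eqref{uioo} by $\|\nabla\cdot \Ee^{n+1}\|^2_{L^2(D)}+\|\nabla\cdot \Hh^{n+1}\|^2_{L^2(D)}$ and \eqref{ytu} by $\|Mu^{n+1}\|^2_{\H}$, then repeat the estimates of Lemma \ref{uniformdivM} using the $p=2$ mechanics of Proposition \ref{uniformH} (telescoping identity, absorption of the mean-zero cross terms by the nonnegative remainder, summation of the coupled curl and divergence recursions). Your derivation of the net per-step contraction $e^{-4\sigma_0\Delta t}e^{\sigma_0\Delta t}=e^{-3\sigma_0\Delta t}$, and hence the $e^{-3\sigma_0 t}$ decay with constants depending as stated, is consistent with the paper's claim.
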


Let $\{P_n\}_{n\in\mathbb{N}}$ be the Markov transition semigroup associated to the numerical solution $\{u^n\}_{n\in\mathbb{N}}$.	
Based on Lemma \ref{uniformdivM}, the ergodicity of $\{u^n\}_{n\in\mathbb{N}}$ and the convergence of $P^*_n\pi$ to the numerical invariant measure in the $L^2$-Wasserstein distance are similar to that of Proposition \ref{SMEErgo}, which are stated below.

\begin{theo}\label{hbf}
	Under the conditions in Lemma \ref{uniformdivM}, the following statements hold.\\
	{\rm(i)}
	The numerical solution $\{u^n\}_{n\in\mathbb{N}}$ of \eqref{ns} has a unique invariant measure $\pi^{\Delta t}\in\mathcal{P}_2(\H)$ for sufficiently small $\Delta t>0$. Thus $\{u^n\}_{n\in\mathbb{N}}$ is ergodic. Moreover, $\{u^n\}_{n\in\mathbb{N}}$ is exponentially mixing.\\
	{\rm (ii)} For any distribution $\pi\in\mathcal{P}_2(\H)$, 
	\begin{equation*}
		\mathcal{W}_2(P^*_n\pi,\pi^{\Delta t})\leq e^{-\sigma_0t_n}\mathcal{W}_2(\pi,\pi^{\Delta t}).
	\end{equation*}
\end{theo}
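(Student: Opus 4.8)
The plan is to mirror the proof of Proposition~\ref{SMEErgo}, replacing the continuous-time contraction estimate \eqref{mixing} by its one-step discrete analogue. First I would take two numerical solutions $\{u^n\}$ and $\{\widetilde u^n\}$ generated by \eqref{ns} from initial data $u_0$ and $\widetilde u_0$ driven by the same noise, and set $v^n:=u^n-\widetilde u^n$. Subtracting the two instances of \eqref{ns}, the additive forcing $\widetilde{\bm\lambda}_2\Delta W_2^n$ cancels, so $v^n$ obeys a purely linear recursion; testing it against $v^{n+1}+e^{-\sigma\Delta t}v^n$ and writing $w:=\tfrac12(v^{n+1}+e^{-\sigma\Delta t}v^n)$, the Maxwell contribution $\langle Mw,2w\rangle$ vanishes by skew-adjointness of $M$, and the multiplicative term $\langle Jw\,\Delta\overline W_1^n,2w\rangle$ vanishes pointwise because $J$ is skew-symmetric, so $(Jw)\cdot w=0$ a.e. This yields the pathwise identity $\|v^{n+1}\|_{\H}^2=\|e^{-\sigma\Delta t}v^n\|_{\H}^2$, and since $\sigma\geq\sigma_0$ forces $\|e^{-\sigma\Delta t}v^n\|_{\H}^2\leq e^{-2\sigma_0\Delta t}\|v^n\|_{\H}^2$, iterating produces the discrete contraction
\[
\E\big[\|u^n-\widetilde u^n\|_{\H}^2\big]\leq e^{-2\sigma_0 t_n}\E\big[\|u_0-\widetilde u_0\|_{\H}^2\big],
\]
the exact counterpart of \eqref{mixing}.

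With this contraction in hand, the remaining structure follows Proposition~\ref{SMEErgo} verbatim. For part (i), Lemma~\ref{uniformdivM} supplies the uniform-in-$n$ bound $\sup_n\E[\|u^n\|_{H^1(D)^6}^2]<\infty$, so $\|\cdot\|_{H^1(D)^6}$ serves as a Lyapunov function; since $H^1(D)^6$ embeds compactly in $\H$, the laws $\{P^*_n\delta_{u_0}\}$ are tight, and as the contraction also gives the Feller (indeed Wasserstein-Lipschitz) property of $P_n$, the Krylov--Bogoliubov theorem yields an invariant measure $\pi^{\Delta t}$. Uniqueness, ergodicity, and exponential mixing then come from the discrete contraction together with Harris' theorem, exactly as in the continuous case, while Proposition~\ref{uniformH} and \cite[Proposition 4.24]{hairer2006ergodic} give $\int_{\H}\|\mathbf r\|_{\H}^2\pi^{\Delta t}(\dd\mathbf r)<\infty$, i.e. $\pi^{\Delta t}\in\mathcal P_2(\H)$. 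For part (ii) I would couple $P^*_n\delta_{u_0}$ and $P^*_n\delta_{\widetilde u_0}$ by the joint law of $(u^n,\widetilde u^n)$, bound $\mathcal W_2(P^*_n\delta_{u_0},P^*_n\delta_{\widetilde u_0})\leq e^{-\sigma_0 t_n}\|u_0-\widetilde u_0\|_{\H}$ via the contraction, and pass to general $\pi\in\mathcal P_2(\H)$ through the convexity of the $L^2$-Wasserstein distance, reproducing Proposition~\ref{SMEErgo}(ii) with $t$ replaced by $t_n$.

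The main obstacle is the one-step contraction computation rather than the ergodic-theoretic machinery. One must confirm that the specific modification of the midpoint rule in \eqref{ns}, in particular the symmetric placement of the damping factor $e^{-\sigma\Delta t}$ and the evaluation of the multiplicative noise at the midpoint $w$, is precisely what makes both the Maxwell and the truncated multiplicative noise terms drop out, leaving a clean deterministic decay with no remainder to absorb. Care is also needed because $e^{-\sigma\Delta t}$ is spatially inhomogeneous, so the inequality $\|e^{-\sigma\Delta t}v^n\|_{\H}\leq e^{-\sigma_0\Delta t}\|v^n\|_{\H}$ must be read inside the spatial integral. Once these points are checked, no truncation error from $\Delta\overline W_1^n$ enters the difference estimate at all, and the contraction holds for every $\Delta t>0$ without a smallness restriction.
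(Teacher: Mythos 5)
Your proposal is correct and follows essentially the same route as the paper: the paper likewise subtracts the two copies of \eqref{ns} (so the additive noise cancels), tests against $(u^{n}-\widetilde{u}^{n})+e^{-\sigma\Delta t}(u^{n-1}-\widetilde{u}^{n-1})$ so that both the Maxwell term and the truncated multiplicative term vanish by skew-adjointness of $M$ and skew-symmetry of $J$, obtains the contraction $\mathbb{E}\big[\|u^{n}-\widetilde{u}^{n}\|_{\H}^2\big]\leq e^{-2\sigma_0 t_{n}}\mathbb{E}\big[\|u_0-\widetilde{u}_0\|_{\H}^2\big]$, and then invokes the same Lyapunov/Krylov--Bogoliubov/Harris and Wasserstein-convexity arguments as in Proposition \ref{SMEErgo}. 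Your additional observations (the identity is in fact pathwise, and the spatially varying factor $e^{-\sigma\Delta t}$ must be bounded inside the integral) are accurate refinements of the same argument.
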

\begin{proof}
	Notice that $u^{n}-\widetilde{u}^{n}$ solves
	\begin{align*}
		(u^{n}-\widetilde{u}^{n})-e^{-\sigma\Delta t}(u^{n-1}-\widetilde{u}^{n-1})=&\Delta tM\frac{(u^{n}-\widetilde{u}^{n})+e^{-\sigma\Delta t}(u^{n-1}-\widetilde{u}^{n-1})}{2}\notag\\
		&+\lambda_{1}J\frac{(u^{n}-\widetilde{u}^{n})+e^{-\sigma\Delta t}(u^{n-1}-\widetilde{u}^{n-1})}{2}\Delta\overline{W}^{n-1}_1.
	\end{align*}	
	We apply $\langle\cdot,(u^{n}-\widetilde{u}^{n})+e^{-\sigma\Delta t}(u^{n-1}-\widetilde{u}^{n-1})\rangle_{\H}$ to both sides of the above equation and take expectation to get
	\begin{equation*}\label{ditui1}
		\mathbb{E}\big[\|u^{n}-\widetilde{u}^{n}\|_{\H}^2\big]\leq e^{-2\sigma_0\Delta t}\mathbb{E}\big[\|u^{n-1}-\widetilde{u}^{n-1}\|_{\H}^2\big]\leq \cdots\leq e^{-2\sigma_0 t_{n}}\mathbb{E}\big[\|u_0-\widetilde{u}_0\|_{\H}^2\big],
	\end{equation*}
	by which assertions (i) and (ii) can be obtained similarly to Proposition \ref{SMEErgo}. Thus we finish the proof.
\end{proof}

\subsubsection{Stochastic conformal multi-symplecticity}
Now we turn to the stochastic conformal multi-symplecticity of the temporal semi-discretization $\eqref{ns}$. Let
\begin{equation*}
	\delta_t^{\sigma}u^n=\frac{u^{n+1}-e^{-\sigma\Delta t}u^n}{\Delta t},\quad A_t^{\sigma}u^n=\frac{u^{n+1}+e^{-\sigma\Delta t}u^n}{2}.
\end{equation*}
Then \eqref{ns} can be transformed into the following compact form
\begin{align*}
	&F\delta_t^\sigma u^n+K_1\partial_x A_t^{\sigma}u^n+K_2\partial_y A_t^{\sigma}u^n+K_3\partial_z A_t^{\sigma}u^n\\
	&=\nabla_uS_1(A_t^{\sigma}u^n)\frac{\Delta\overline{W}_1^n}{\Delta t}+\nabla_uS_2(A_t^\sigma u^n)\frac{\Delta W_2^n}{\Delta t}.
\end{align*}

Similar to \cite[Theorem 2.2]{jiang2013stochastic}, we can obtain the following result.
\begin{prop}
	The temporal semi-discretization \eqref{ns} possesses the stochastic conformal multi-symplectic conservation law
	\begin{equation*}
		\delta_t^{2\sigma}\omega^n+\partial_x\kappa_1^n+\partial_y\kappa_2^n+\partial_z\kappa_3^n=0\qquad\mathbb{P}\text{-}a.s.,
	\end{equation*}
	that is,
	\begin{align*}
		\int_{z_0}^{z_1}&\int_{y_0}^{y_1}\int_{x_0}^{x_1}\omega^{n+1}(x,y,z)\dd x\dd y\dd z-\int_{z_0}^{z_1}\int_{y_0}^{y_1}\int_{x_0}^{x_1}e^{-2\sigma\Delta t}\omega^{n}(x,y,z)\dd x\dd y\dd z\\
		&+\Delta t\int_{z_0}^{z_1}\int_{y_0}^{y_1}\kappa_1^{n}(x_1,y,z)\dd y\dd z-\Delta t\int_{z_0}^{z_1}\int_{y_0}^{y_1}\kappa_1^n(x_0,y,z)\dd y\dd z\\
		&+\Delta t\int_{z_0}^{z_1}\int_{x_0}^{x_1}\kappa_2^{n}(x,y_1,z)\dd x\dd z-\Delta t\int_{z_0}^{z_1}\int_{x_0}^{x_1}\kappa_2^n(x,y_0,z)\dd x\dd z\\
		&+\Delta t\int_{y_0}^{y_1}\int_{x_0}^{x_1}\kappa_3^{n}(x,y,z_1)\dd x\dd y-\Delta t\int_{y_0}^{y_1}\int_{x_0}^{x_1}\kappa_3^n(x,y,z_0)\dd x\dd y=0,
	\end{align*}
	where $\omega^n=\frac{1}{2}du^n\wedge Fdu^n,\kappa_s^n=\frac{1}{2}(dA^\sigma_tu^n)\wedge K_sd(A^\sigma_tu^n),$ $s=1,2,3$ are differential 2-forms associated with the skew-symmetric matrices $F$ and $K_s$, respectively,  and
	$(x_{0},x_{1})\times(y_{0},y_{1})\times(z_{0},z_{1})$
	is the local domain of $u^n(x,y,z)$.
\end{prop}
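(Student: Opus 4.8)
The plan is to carry out the discrete variational calculus underlying multi-symplecticity, adapted to the conformal (damped) setting. First I would apply the exterior derivative $\dd$ to the compact form of \eqref{ns}, regarding $\dd u^n$ and $\dd u^{n+1}$ as vector-valued one-forms on phase space. Since $\dd$ commutes with $\partial_x,\partial_y,\partial_z$ and with the linear discrete operators $\delta_t^\sigma$ and $A_t^\sigma$, this produces the discrete variational equation
\begin{equation*}
F\,\dd(\delta_t^\sigma u^n)+\sum_{s=1}^{3}K_s\,\partial_s\,\dd(A_t^\sigma u^n)=\nabla_u^2S_1(A_t^\sigma u^n)\,\dd(A_t^\sigma u^n)\frac{\Delta\overline{W}_1^n}{\Delta t}+\nabla_u^2S_2(A_t^\sigma u^n)\,\dd(A_t^\sigma u^n)\frac{\Delta W_2^n}{\Delta t},
\end{equation*}
where $\partial_1,\partial_2,\partial_3$ abbreviate $\partial_x,\partial_y,\partial_z$. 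The decisive step is then to wedge this identity on the left with $\dd(A_t^\sigma u^n)$.

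The two algebraic facts I would rely on are: for any vector-valued one-form $a$ and symmetric matrix $C$, $a\wedge Ca=0$; and for a skew-symmetric matrix $B$ and one-forms $a,b$, $a\wedge Bb=b\wedge Ba$. The first fact disposes of the right-hand side at once: since $\nabla_u^2 S_1=\lambda_1 I$ and $\nabla_u^2 S_2=0$ are both symmetric, wedging with $\dd(A_t^\sigma u^n)$ annihilates the stochastic forcing, so the noise contributes nothing to the conservation law. This is the conceptual reason the scheme remains multi-symplectic despite the randomness.

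For the spatial terms, the skew-symmetry of $K_s$ together with the Leibniz rule yields $\partial_s\kappa_s^n=\dd(A_t^\sigma u^n)\wedge K_s\,\partial_s\,\dd(A_t^\sigma u^n)$, so the three spatial contributions assemble precisely into $\partial_x\kappa_1^n+\partial_y\kappa_2^n+\partial_z\kappa_3^n$. For the temporal term I would expand $\dd(A_t^\sigma u^n)=\tfrac12(\dd u^{n+1}+e^{-\sigma\Delta t}\dd u^n)$ and $\dd(\delta_t^\sigma u^n)=\tfrac1{\Delta t}(\dd u^{n+1}-e^{-\sigma\Delta t}\dd u^n)$ and compute $\dd(A_t^\sigma u^n)\wedge F\,\dd(\delta_t^\sigma u^n)$; invoking $a\wedge Fb=b\wedge Fa$, the two cross terms $\dd u^{n+1}\wedge F(e^{-\sigma\Delta t}\dd u^n)$ cancel, while the diagonal terms give $\tfrac1{\Delta t}\big(\omega^{n+1}-e^{-2\sigma\Delta t}\omega^n\big)=\delta_t^{2\sigma}\omega^n$. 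Collecting the pieces yields the local law $\delta_t^{2\sigma}\omega^n+\partial_x\kappa_1^n+\partial_y\kappa_2^n+\partial_z\kappa_3^n=0$, and integrating the spatial divergences over the box $(x_0,x_1)\times(y_0,y_1)\times(z_0,z_1)$ via the fundamental theorem of calculus converts them into the stated boundary integrals.

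The step I expect to be the main obstacle is the temporal bookkeeping: one must verify that the conformal factor emerges as $e^{-2\sigma\Delta t}$ rather than $e^{-\sigma\Delta t}$ and that the cross terms cancel exactly, which hinges on pairing the skew-symmetry $a\wedge Fb=b\wedge Fa$ with the precise definitions of $A_t^\sigma$ and $\delta_t^\sigma$. Everything else is routine once the two wedge-product identities are secured, and the overall argument parallels \cite[Theorem 2.2]{jiang2013stochastic} with the damping $\sigma$ carried through.
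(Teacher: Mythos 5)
Your proposal is correct and follows essentially the same route as the paper, which proves this proposition by simply invoking the standard wedge-product argument of \cite[Theorem 2.2]{jiang2013stochastic}: differentiate the compact form, wedge with $\dd(A_t^\sigma u^n)$, kill the noise terms via symmetry of the Hessians of $S_1,S_2$, and use skew-symmetry of $F,K_s$ to produce $\delta_t^{2\sigma}\omega^n$ and $\partial_s\kappa_s^n$. Your explicit verification of the cross-term cancellation and the emergence of the factor $e^{-2\sigma\Delta t}$ fills in exactly the details the paper leaves to the cited reference.
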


\subsection{Error analysis of the temporal semi-discretization}
In this section, we study the mean-square convergence order of the temporal semi-discretization \eqref{ns}. To this end,
we rewrite \eqref{ns} into the following form
\begin{equation}\label{semi}
	u^{n+1}=\hat{S}_{\Delta t}u^n+\lambda_{1}T_{\Delta t}JA_t^{\sigma}u^n\Delta\overline{W}_1^n+T_{\Delta t}\widetilde{{\bm{\lambda}}}_2\Delta W_2^n,\quad \hphantom{=}n\geq0,
\end{equation}
where $$\hat{S}_{\Delta t}=\Big(I-\frac{\Delta t}{2}M\Big)^{-1}\Big(I+\frac{\Delta t}{2}M\Big)e^{-\sigma\Delta t},\quad T_{\Delta t}=\Big(I-\frac{\Delta t}{2}M\Big)^{-1}.$$

In order to derive the error estimate of \eqref{ns}, we need to introduce the following lemma, whose proof is given in Appendix A.

\begin{lemm}\label{opee}
	Let $\hat{S}(t):=e^{t(M-\sigma I)}$, $t\geq0$. There exist positive constants $C$ such that\\
	{\rm (1)} $\|\hat{S}(t)\|_{\mathcal{L}(\H,\H)}\leq e^{-\sigma_0t}$ and $\|(\hat{S}_{\Delta t})^n\|_{\mathcal{L}(\H,\H)}\leq e^{-\sigma_0n\Delta t}$.\\
	{\rm (2)} $\|\hat{S}(t_n)-(\hat{S}_{\Delta t})^n\|_{\mathcal{L}(D(M),\mathbb{H})}\leq Ce^{-\sigma_0n\Delta t/2}\Delta t^{1/2}$.\\
	{\rm (3)} $\|\hat{S}(t)-I\|_{\mathcal{L}(D(M),\H)}\leq Ct\quad\forall t\geq0$ and $\|\hat{S}(t)-e^{-\sigma\Delta t}\|_{\mathcal{L}(D(M),\H)}\leq C\Delta t\quad\forall\, t\in[0,\Delta t].$\\
	{\rm (4)} $\|T_{\Delta t}\|_{\mathcal{L}(\H,\H)}\leq 1$ and $\|I-T_{\Delta t}\|_{\mathcal{L}(\mathcal{D}(M),\H)}\leq C\Delta t$.\\
	{\rm (5)} $\|\hat{S}(t_n-r)-(\hat{S}_{\Delta t})^{n-k-1}T_{\Delta t}\|_{\mathcal{L}(D(M),\H)}\leq Ce^{-\sigma_0(n-k-1)\Delta t/2}\Delta t^{1/2}\quad\forall\,r\in[t_{k},t_{k+1}]$, $k=0,\ldots,n-1.$
\end{lemm}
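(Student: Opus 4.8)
The plan is to treat the five estimates in increasing order of difficulty, reducing everything to the skew-adjointness of $M$, the lower bound $\sigma\ge\sigma_0$, and a short interpolation argument. For the operator-norm bounds in (1) and (4) I would use only the algebra of $M$. Since $M$ is skew-adjoint on $\H$, $\langle Mu,u\rangle_{\H}=0$, so for any $\Delta t$
$$\|(I\pm\tfrac{\Delta t}{2}M)u\|_{\H}^2=\|u\|_{\H}^2+\tfrac{\Delta t^2}{4}\|Mu\|_{\H}^2.$$
This makes the Cayley factor $(I-\frac{\Delta t}{2}M)^{-1}(I+\frac{\Delta t}{2}M)$ an isometry and gives $\|T_{\Delta t}\|_{\mathcal L(\H,\H)}\le 1$; multiplying by the multiplier $e^{-\sigma\Delta t}$, whose norm is $\le e^{-\sigma_0\Delta t}$ because $\sigma\ge\sigma_0$, yields $\|(\hat S_{\Delta t})^n\|_{\mathcal L(\H,\H)}\le e^{-\sigma_0 n\Delta t}$. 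The continuous bound follows from $\frac{\dd}{\dd t}\|\hat S(t)v\|_{\H}^2=-2\langle\sigma\hat S(t)v,\hat S(t)v\rangle_{\H}\le-2\sigma_0\|\hat S(t)v\|_{\H}^2$ and Gronwall. The second half of (4) is the identity $I-T_{\Delta t}=-\frac{\Delta t}{2}T_{\Delta t}M$, bounded by $C\Delta t\|v\|_{\mathcal D(M)}$.

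For (3) I would use the Duhamel representation $\hat S(t)v-v=\int_0^t(M-\sigma)\hat S(s)v\,\dd s$; since $M-\sigma$ commutes with its semigroup and $\|(M-\sigma)v\|_{\H}\le C\|v\|_{\mathcal D(M)}$, this is $\le Ct\|v\|_{\mathcal D(M)}$. Comparison with $e^{-\sigma\Delta t}$ costs only $\|(I-e^{-\sigma\Delta t})v\|_{\H}\le\|\sigma\|_{L^\infty}\Delta t\,\|v\|_{\H}$ via $1-e^{-x}\le x$, giving $\|\hat S(t)-e^{-\sigma\Delta t}\|_{\mathcal L(\mathcal D(M),\H)}\le C\Delta t$ on $[0,\Delta t]$.

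The crux is (2). I would telescope, using $\hat S(t_n)=\hat S(\Delta t)^n$,
$$\hat S(t_n)-(\hat S_{\Delta t})^n=\sum_{k=0}^{n-1}(\hat S_{\Delta t})^{\,n-1-k}\big[\hat S(\Delta t)-\hat S_{\Delta t}\big]\hat S(t_k).$$
A Taylor expansion shows both $\hat S(\Delta t)$ and $\hat S_{\Delta t}$ equal $I+\Delta t(M-\sigma)+O(\Delta t^2)$, so the first-order term of the one-step error cancels, and controlling the quadratic remainders by $(M-\sigma)^2$ gives the second-order consistency $\|\hat S(\Delta t)-\hat S_{\Delta t}\|_{\mathcal L(\mathcal D(M^2),\H)}\le C\Delta t^2$. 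Inserting this into the telescoped sum, together with the $\mathcal D(M^2)$-stability of $\hat S$ (the higher-regularity analogue of Proposition \ref{obv}) and the decay from (1), the resulting geometric series gives $\|\hat S(t_n)-(\hat S_{\Delta t})^n\|_{\mathcal L(\mathcal D(M^2),\H)}\le Ce^{-\sigma_0 n\Delta t/2}\Delta t$, while (1) furnishes the trivial order-zero bound $\|\hat S(t_n)-(\hat S_{\Delta t})^n\|_{\mathcal L(\H,\H)}\le Ce^{-\sigma_0 n\Delta t}$. The half order is then recovered by interpolation: since $\mathcal D(M)=[\H,\mathcal D(M^2)]_{1/2}$,
$$\|\hat S(t_n)-(\hat S_{\Delta t})^n\|_{\mathcal L(\mathcal D(M),\H)}\le\big(Ce^{-\sigma_0 n\Delta t}\big)^{1/2}\big(Ce^{-\sigma_0 n\Delta t/2}\Delta t\big)^{1/2}\le Ce^{-\sigma_0 n\Delta t/2}\Delta t^{1/2}.$$
I expect the main obstacle to be the rigorous second-order consistency estimate: $M$ and the multiplier $\sigma$ do not commute (their commutator $[\sigma,M]v=((\nabla\sigma)\times\Hh,-(\nabla\sigma)\times\Ee)^{\top}$ is a bounded zeroth-order operator), so the quadratic remainders of the two flows must be organized carefully, and one must separately establish the $\mathcal D(M^2)$ a priori stability of the continuous semigroup.

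Finally, (5) reduces to the previous items. For $r\in[t_k,t_{k+1}]$ I split
$$\hat S(t_n-r)-(\hat S_{\Delta t})^{n-k-1}T_{\Delta t}=\big[\hat S(t_n-r)-\hat S(t_{n-k-1})\big]+\big[\hat S(t_{n-k-1})-(\hat S_{\Delta t})^{n-k-1}\big]+(\hat S_{\Delta t})^{n-k-1}\big[I-T_{\Delta t}\big].$$
The first term equals $[\hat S(t_n-r-t_{n-k-1})-I]\hat S(t_{n-k-1})$ with exponent in $[0,\Delta t]$, hence is $\le C\Delta t\,e^{-\sigma_0(n-k-1)\Delta t/2}$ by (3) and the $\mathcal D(M)$-stability of $\hat S$; the middle term is exactly (2) with $n$ replaced by $n-k-1$; the last term is $\le C\Delta t\,e^{-\sigma_0(n-k-1)\Delta t}$ by (1) and (4). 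The dominant contribution is the middle $\Delta t^{1/2}$ term, which gives the stated bound.
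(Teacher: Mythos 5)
Your proposal is correct, but on the central item (2) it takes a genuinely different route from the paper. For (1), (3), (4) your arguments are essentially the paper's (indeed your Gronwall/dissipativity proof of the continuous bound in (1) is more robust than the paper's factorization through $e^{tM}e^{-\sigma t}$, which is only formal when $\sigma$ is non-constant), and in (5) you use exactly the paper's three-term splitting, except that the paper writes the first term as $\hat{S}(t_n-t_{k+1})(\hat{S}(t_{k+1}-r)-I)$ so that only the $\mathcal{L}(\H,\H)$-decay of $\hat S$ is needed, whereas your ordering additionally invokes $\mathcal{D}(M)$-stability of $\hat S$ (which is available). The real divergence is (2): the paper never touches $\mathcal{D}(M^2)$ or interpolation; it establishes exponentially decaying $\mathcal{D}(M)$-bounds for both the continuous and discrete flows (its Step 1), writes the error $\widetilde e_n=\widetilde u(t_n)-\widetilde u^n$ as a midpoint-type recursion with a local defect $\xi^{n+1}$, pairs the defect against the error so each step contributes $O(\Delta t^2e^{-\sigma_0 n\Delta t})$ to $\|\widetilde e_{n+1}\|^2_{\H}$, and sums the geometric series to get $\|\widetilde e_n\|^2_{\H}\le C\Delta t\,e^{-\sigma_0 n\Delta t}$; the half order thus appears directly from an energy estimate, using only one derivative of the data and only $\sigma\in W^{1,\infty}(D)$. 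Your Lax-type route (telescoping, second-order one-step consistency on $\mathcal{D}(M^2)$, interpolation $\mathcal{D}(M)=[\H,\mathcal{D}(M^2)]_{1/2}$, legitimate here since $M$ is skew-adjoint so the spectral calculus applies) also works and yields, as a byproduct, a first-order $O(\Delta t)$ bound on smoother data; but you should be explicit about how to close the step you flag as the main obstacle, namely the $\mathcal{D}(M^2)$-stability of $\hat S(t)$. A direct energy estimate on $\|M^2\hat S(t)v\|_{\H}$ meets the commutator $[M^2,\sigma]$, which costs second derivatives of $\sigma$ and hence more than the paper's standing assumption $\sigma\in W^{1,\infty}(D)$. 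The clean fix is to work with $A:=M-\sigma$ throughout: $A$ commutes with its own semigroup, so $\|A^j\hat S(t)v\|_{\H}=\|\hat S(t)A^jv\|_{\H}\le e^{-\sigma_0 t}\|A^jv\|_{\H}$ for $j=0,1,2$, and since multiplication by $\sigma\in W^{1,\infty}(D)$ maps $\mathcal{D}(M)$ into itself (it preserves the tangential trace condition), one has $\mathcal{D}(A^2)=\mathcal{D}(M^2)$ with equivalent graph norms; the same device organizes your quadratic remainders, the first-order terms $\Delta t(M-\sigma)v$ cancelling exactly despite the non-commutativity.
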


The error of \eqref{ns} is estimated in the following theorem.
\begin{theo}\label{order}
	Under the conditions in Corollary \ref{uniformdivMp2}, if $F_{Q_1}\in W^{1,\infty}(D)$, there exists a positive constant $C_9$ such that for sufficiently small $\Delta t>0$,
	\begin{equation*}
		\max_{n\geq 0}\big(\mathbb{E}\big[\|u(t_n)-u^n\|_{\H}^2\big]\big)^{1/2}\leq C_9\Delta t^{1/2},
	\end{equation*}
	where $C_9$ depends on $\sigma_0,\lambda_{1},\widetilde{{\bm{\lambda}}}_2,C_1$--$C_3,C_{4,1},C_{4,2},C_{5}$--$C_{8},\mathbb{E}\big[\|u_0\|^4_{H^1(D)^6}\big]$,
	$\|Q_1^{\frac{1}{2}}\|_{\mathcal{L}_2^{\gamma_1}}$, $\|Q_2^{\frac{1}{2}}\|_{\mathcal{L}_2^{\gamma_2}}$,
	$\|\sigma\|_{W^{1,\infty}(D)}$ and $\|F_{Q_1}\|_{W^{1,\infty}(D)}$.
\end{theo}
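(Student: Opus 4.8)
The plan is to pass to the mild (variation-of-constants) formulation of both the exact and the discrete solutions with respect to the damped semigroup $\hat S(t)=e^{t(M-\sigma I)}$, and then to estimate the error $e^n:=u(t_n)-u^n$ termwise, exploiting the exponentially decaying operator bounds of Lemma~\ref{opee}. Writing the solution of \eqref{SEE_I} as
$$u(t_n)=\hat S(t_n)u_0-\tfrac{\lambda_1^2}{2}\int_0^{t_n}\hat S(t_n-s)F_{Q_1}u(s)\,\dd s+\lambda_1\int_0^{t_n}\hat S(t_n-s)Ju(s)\,\dd W_1(s)+\int_0^{t_n}\hat S(t_n-s)\widetilde{\bm{\lambda}}_2\,\dd W_2(s),$$
and iterating \eqref{semi} to obtain
$$u^n=(\hat S_{\Delta t})^n u_0+\sum_{k=0}^{n-1}(\hat S_{\Delta t})^{n-1-k}\Big(\lambda_1 T_{\Delta t}JA_t^\sigma u^k\,\Delta\overline{W}_1^k+T_{\Delta t}\widetilde{\bm{\lambda}}_2\,\Delta W_2^k\Big),$$
I would split $e^n$ into an initial-data part, a $W_2$-part, and a combined drift-and-$W_1$ part, and bound each in $L^2(\Omega,\H)$ uniformly in $n$.

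The initial-data and $W_2$-contributions are the routine ones. For the first, Lemma~\ref{opee}(2) gives $\|(\hat S(t_n)-(\hat S_{\Delta t})^n)u_0\|_{\H}\le Ce^{-\sigma_0 n\Delta t/2}\Delta t^{1/2}\|u_0\|_{\mathcal D(M)}$, and the $H^1$-regularity of $u_0$ together with the boundary conditions place $u_0$ in $\mathcal D(M)$, so this is $O(\Delta t^{1/2})$ in mean square. For the second, I rewrite $\Delta W_2^k=\int_{t_k}^{t_{k+1}}\dd W_2(s)$, so the $W_2$-error becomes $\sum_k\int_{t_k}^{t_{k+1}}(\hat S(t_n-s)-(\hat S_{\Delta t})^{n-1-k}T_{\Delta t})\widetilde{\bm{\lambda}}_2\,\dd W_2(s)$; the It\^o isometry, Lemma~\ref{opee}(5) in the $\mathcal L(\mathcal D(M),\H)$-norm, the assumption $\gamma_2\ge1$ (so that $\widetilde{\bm{\lambda}}_2 Q_2^{1/2}$ maps into $\mathcal D(M)$), and the geometric summation $\sum_{k=0}^{n-1}e^{-\sigma_0(n-1-k)\Delta t}\le(1-e^{-\sigma_0\Delta t})^{-1}\le C\Delta t^{-1}$ then yield an $O(\Delta t)$ bound uniformly in $n$. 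The decay factors from Lemma~\ref{opee} are precisely what convert the $O(n)$ many local errors into a time-uniform estimate.

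The heart of the proof is the combined drift-and-$W_1$ term, where the explicit It\^o correction $-\tfrac{\lambda_1^2}{2}F_{Q_1}u$ of \eqref{SEE_I} must be matched against the correction that the midpoint average $A_t^\sigma u^k$ generates. The key algebraic step is to insert the scheme into the average, writing $A_t^\sigma u^k=e^{-\sigma\Delta t}u^k+\tfrac12(\Delta t MA_t^\sigma u^k+\lambda_1 JA_t^\sigma u^k\Delta\overline{W}_1^k+\widetilde{\bm{\lambda}}_2\Delta W_2^k)$, so that $\lambda_1 JA_t^\sigma u^k\Delta\overline{W}_1^k$ produces $-\tfrac{\lambda_1^2}{2}A_t^\sigma u^k(\Delta\overline{W}_1^k)^2$ via $J^2=-I$; since $\mathbb{E}[(\Delta\overline{W}_1^k)^2\mid\mathcal F_{t_k}]\approx\Delta t\,F_{Q_1}$, this reproduces the exact drift up to a higher-order remainder controlled by \eqref{wfzq} and the fourth-moment regularity of Corollary~\ref{uniformdivMp2}. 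The genuinely stochastic part of the $W_1$-error I treat as in the $W_2$-case, after replacing $\Delta\overline{W}_1^k$ by $\Delta W_1^k$ through \eqref{2ptruncate}: I swap $\hat S(t_n-s)$ for the discrete operator via Lemma~\ref{opee}(5), and replace the integrand $Ju(s)$ by $Ju^k$ using the splitting $u(s)-u^k=(u(s)-u(t_k))+(u(t_k)-u^k)+(u^k-A_t^\sigma u^k)$, where the temporal H\"older-$\tfrac12$ regularity of $u$ (from Proposition~\ref{obv}) bounds the first piece, $u(t_k)-u^k=e^k$ feeds the recursion, and $u^k-A_t^\sigma u^k=O(\Delta t^{1/2})$ in $L^2(\Omega,\H)$ because of the stochastic increments; the It\^o isometry together with the $\mathcal D(M)$- and $H^1$-regularity bounds (Propositions~\ref{obv},~\ref{uniformH} and Corollary~\ref{uniformdivMp2}) and $\gamma_1=2+\gamma>\tfrac52$ render each summand summable.

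Collecting all contributions produces an inequality of the form $\mathbb{E}\|e^n\|_{\H}^2\le C\Delta t+C\Delta t\sum_{k=0}^{n-1}e^{-\sigma_0(n-1-k)\Delta t}\mathbb{E}\|e^k\|_{\H}^2$, to which the discrete Gronwall lemma applies, giving $\max_n\mathbb{E}\|e^n\|_{\H}^2\le C\Delta t$ and hence the order $\tfrac12$. I expect the main obstacle to be the drift-and-$W_1$ analysis: isolating the implicit It\^o correction generated by $A_t^\sigma u^k$ (which is \emph{not} $\mathcal F_{t_k}$-measurable, since it depends on $u^{k+1}$), showing the leftover after matching the exact drift is $O(\Delta t^{1/2})$, and simultaneously keeping every estimate weighted by the decay factors of Lemma~\ref{opee} so that the final bound is uniform in $n$. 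This is exactly where the higher-moment uniform regularity of the exact solution (Proposition~\ref{obv}) and of the numerical solution (Corollary~\ref{uniformdivMp2}), together with the truncation estimates \eqref{2ptruncate}--\eqref{wfzq}, become indispensable.
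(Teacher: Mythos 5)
Your proposal follows essentially the same route as the paper's proof: the same comparison of the mild solution with the iterated discrete formula, the same three-way split into initial-data, $W_2$, and drift-plus-$W_1$ parts, the same key algebraic step of substituting the scheme into $A_t^{\sigma}u^k$ so that $J^2=-I$ produces the implicit It\^o correction matching $-\tfrac12\lambda_1^2F_{Q_1}u$ (the paper realizes the remainder as a double It\^o integral, which is exactly the martingale part of your conditional-expectation argument), and the same exponentially weighted Gronwall closure using Lemma~\ref{opee}, the truncation bounds \eqref{2ptruncate}--\eqref{wfzq}, and the moment estimates of Propositions~\ref{uniformH},~\ref{obv} and Corollary~\ref{uniformdivMp2}. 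The only item you leave implicit is the cross term $\Delta\overline{W}_1^k\Delta W_2^k$ arising from the substitution (the paper's $A_{4,k}$), which is handled routinely by independence of the two noises.
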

\begin{proof}
	For $n\in\mathbb{N}$, we note that the mild solution of \eqref{SMEs with PML} is
	\begin{equation}\label{ewe}
		\begin{split}
			u(t_n)=&\hat{S}(t_n)u_0-\frac{1}{2}\lambda_{1}^2\int_{0}^{t_n}\hat{S}(t_n-s)F_{Q_1}u(s)\dd s\\
			&\quad+\lambda_{1}\int_{0}^{t_n}\hat{S}(t_n-s)Ju(r)\dd W_1(s)+\int_{0}^{t_n}\hat{S}(t_n-s)\widetilde{{\bm{\lambda}}}_2\dd W_2(s).
		\end{split}
	\end{equation}
	From \eqref{semi}, we have
	\begin{equation}\label{erw}
		u^n=\big(\hat{S}_{\Delta t}\big)^nu_0+\lambda_{1}\sum_{k=0}^{n-1}\big(\hat{S}_{\Delta t}\big)^{n-k-1}T_{\Delta t}JA_t^{\sigma}u^k\Delta\overline{W}_1^k+\sum_{k=0}^{n-1}\big(\hat{S}_{\Delta t}\big)^{n-1-k}T_{\Delta t}\widetilde{{\bm{\lambda}}}_2\Delta W_2^k.
	\end{equation}
	Let $e_n:=u(t_n)-u^n$. Subtracting \eqref{erw} from \eqref{ewe}, it yields
	\begin{align*}
		e_n=:I+II+III
	\end{align*}
	with
	\begin{align*}
		I=\,&\big[\hat{S}(t_n)-(\hat{S}_{\Delta t})^n\big]u_0,\\
		II=&\int_{0}^{t_n}\hat{S}(t_n-r)\widetilde{{\bm{\lambda}}}_2\dd W_2(r)-\sum_{k=0}^{n-1}\big(\hat{S}_{\Delta t}\big)^{n-1-k}T_{\Delta t}\widetilde{{\bm{\lambda}}}_2\Delta W_2^k,\\
		III=\,&\lambda_{1}\int_{0}^{t_n}\hat{S}(t_n-r)Ju(r)\dd W_1(r)-\frac{1}{2}\lambda_{1}^2\int_{0}^{t_n}\hat{S}(t_n-r)F_{Q_1}u(r)\dd r\\
		&\quad-\lambda_{1}\sum_{k=0}^{n-1}\big(\hat{S}_{\Delta t}\big)^{n-1-k}T_{\Delta t}JA_t^{\sigma}u^{k}\Delta \overline{W}_1^k.
	\end{align*}
	
	By Lemma \ref{opee}(2), we have $$\sup_{n\geq 0}\mathbb{E}\big[\|I\|^2_{\H}\big]\leq C\Delta t.$$
	
	Using the It\^o isometry and Lemma \ref{opee}(5), it holds that
	\begin{align*}
		\mathbb{E}\big[\|II\|_{\H}^2\big]&=\mathbb{E}\bigg[\Big\|\sum_{k=0}^{n-1}\int_{t_k}^{t_{k+1}}(\hat{S}(t_n-r)-(\hat{S}_{\Delta t})^{n-k-1}T_{\Delta t})\widetilde{{\bm{\lambda}}}_2\dd W_2(r)\Big\|_{\H}^2\bigg]\\
		&=\sum_{k=0}^{n-1}\int_{t_k}^{t_{k+1}}\big\|(\hat{S}(t_n-r)-(\hat{S}_{\Delta t})^{n-k-1}T_{\Delta t})\widetilde{{\bm{\lambda}}}_2 Q_2^{\frac{1}{2}}\big\|_{HS(L^2(D),\H)}^2\dd r
		\\
		&\leq \sum_{k=0}^{n-1}Ce^{-\sigma_0(n-k-1)\Delta t}\Delta t^2=C\Delta t,
	\end{align*}	
	where in the last step we use the fact $\frac{1-e^{-\sigma_0n\Delta t}}{1-e^{-\sigma_0\Delta t}}\leq \frac{1}{\sigma_0\Delta t}$.
	
	For the third term $III$, we substitute $\eqref{ns}$ into it and obtain
	\begin{align*}
		III=&\sum_{k=0}^{n-1}\Big[\lambda_{1}\int_{t_k}^{t_{k+1}}\hat{S}(t_n-r)Ju(r)\dd W_1(r)-\frac{1}{2}\lambda_{1}^2\int_{t_k}^{t_{k+1}}\hat{S}(t_n-r)F_{Q_1}u(r)\dd r\\
		&\qquad-\lambda_{1}(\hat{S}_{\Delta t})^{n-1-k}T_{\Delta t}JA_t^{\sigma}u^k\Delta \overline{W}_1^k\Big]=:\sum_{i=1}^{4}\sum_{k=0}^{n-1}A_{i,k},
	\end{align*}	
	where
	\begin{align*}
		A_{1,k}&=\frac{\lambda_{1}^2}{2}\Big[\big(\hat{S}_{\Delta t}\big)^{n-1-k}T_{\Delta t}e^{-\sigma\Delta t}u^k(\Delta W_1^k)^2-\int_{t_k}^{t_{k+1}}\hat{S}(t_n-r)F_{Q_1}u(r)\dd r\Big],\\
		A_{2,k}&=\lambda_{1}\big(\hat{S}_{\Delta t}\big)^{n-1-k}T_{\Delta t}Je^{-\sigma\Delta t}u^k\big(\Delta W_1^k-\Delta\overline{W}_1^k\big)\\
		&\quad-\frac{\lambda_{1}\Delta t}{4}\big(\hat{S}_{\Delta t}\big)^{n-1-k}T_{\Delta t}JM(u^{k+1}+e^{-\sigma\Delta t}u^k)\Delta\overline{W}_1^k\\
		&\quad+\frac{\lambda_{1}^2}{4}\big(\hat{S}_{\Delta t}\big)^{n-1-k}T_{\Delta t}\big(u^{k+1}-e^{-\sigma\Delta t}u^k\big)(\Delta\overline{W}_1^k)^2\\
		&\quad+\frac{\lambda_{1}^2}{2}\big(\hat{S}_{\Delta t}\big)^{n-1-k}T_{\Delta t}e^{-\sigma\Delta t}u^k\big[(\Delta\overline{W}_1^k)^2-(\Delta W_1^k)^2\big],\\
		A_{3,k}&=\lambda_{1}\int_{t_k}^{t_{k+1}}\Big(\hat{S}(t_n-r)Ju(r)-\big(\hat{S}_{\Delta t}\big)^{n-1-k}T_{\Delta t}Je^{-\sigma\Delta t}u^k\Big)\dd W_1(r),\\
		A_{4,k}&=-\frac{\lambda_{1}}{2}\big(\hat{S}_{\Delta t}\big)^{n-1-k}T_{\Delta t}J\widetilde{{\bm{\lambda}}}_2\Delta\overline{W}_1^k\Delta W_2^k.
	\end{align*}
	
	(i) {\em Estimate of the term $\sum_{k=0}^{n-1}A_{1,k}$.}
	
	Notice that
	\begin{align*}
		A_{1,k}&=\lambda_{1}^2\int_{t_k}^{t_{k+1}}\int_{t_k}^{r}\big(\hat{S}_{\Delta t}\big)^{n-1-k}T_{\Delta t}e^{-\sigma\Delta t}u^k\dd W_1(\rho)\dd W_1(r)\\
		&\qquad\quad+\frac{\lambda_{1}^2}{2}\Big[\big(\hat{S}_{\Delta t}\big)^{n-1-k}T_{\Delta t}F_{Q_1}e^{-\sigma\Delta t}u^k\Delta t-\int_{t_k}^{t_{k+1}}\hat{S}(t_n-r)F_{Q_1}u(r)\dd r\Big]\\
		&=\lambda_{1}^2\int_{t_k}^{t_{k+1}}\int_{t_k}^{r}\big(\hat{S}_{\Delta t}\big)^{n-1-k}T_{\Delta t}e^{-\sigma\Delta t}u^k\dd W_1(\rho)\dd W_1(r)\\
		&\quad+\frac{\lambda_{1}^2}{2}\int_{t_k}^{t_{k+1}}\Big[\big(\hat{S}_{\Delta t}\big)^{n-1-k}T_{\Delta t}-\hat{S}(t_n-r)\Big](F_{Q_1}e^{-\sigma\Delta t}u^k)\dd r\\
		&\quad-\frac{\lambda_{1}^2}{2}\int_{t_k}^{t_{k+1}}\hat{S}(t_n-r)\big(F_{Q_1}e^{-\sigma\Delta t}e_k\big)\dd r\\
		&\quad-\frac{\lambda_1^2}{2}\int_{t_k}^{t_{k+1}}\hat{S}(t_n-r)F_{Q_1}\big(u(r)-e^{-\sigma\Delta t}u(t_k)\big)\dd r\\
		&=:A_{1,k,1}+A_{1,k,2}+A_{1,k,3}+A_{1,k,4}.
	\end{align*}
	
	For the term $A_{1,k,1}$, it follows from the It\^o isometry, Sobolev embedding $H^{\gamma}(D)\hookrightarrow L^{\infty}(D)$ for $\gamma>\frac{3}{2}$, Lemma \ref{opee}(1)(4) and Proposition \ref{uniformH} that 
	\begin{align*}
		\mathbb{E}\big[\|A_{1,k,1}\|_{\H}^2\big]
		&\leq C\int_{t_k}^{t_{k+1}}\int_{t_k}^{r}\mathbb{E}\big[\|(\hat{S}_{\Delta t})^{n-k-1}T_{\Delta t}e^{-\sigma\Delta t}u^k Q_1^{\frac{1}{2}}\|^2_{HS(L^2(D),\H)}\big]\dd\rho\dd r\\
		&\leq C\int_{t_k}^{t_{k+1}}\int_{t_k}^{r}\mathbb{E}\big[\|(\hat{S}_{\Delta t})^{n-k-1}T_{\Delta t}e^{-\sigma\Delta t}u^k\|^2_{\H}\big]\dd\rho\dd r\\
		&\leq Ce^{-2\sigma_0(n-k)\Delta t}\Delta t^2,
	\end{align*}
	which implies 
	$$\mathbb{E}\Big[\Big\|\sum_{k=0}^{n-1}A_{1,k,1}\Big\|_{\H}^2\Big]=\sum_{k=0}^{n-1}\mathbb{E}\Big[\Big\|A_{1,k,1}\Big\|_{\H}^2\Big]\leq C\Delta t.$$
	
	For the term $A_{1,k,2}$, we use Proposition \ref{uniformH} and Lemma \ref{opee}(5) to obtain
	\begin{align*}
		&\mathbb{E}\big[\|A_{1,k,2}\|^2_{\H}\big]\\
		&\leq C\Delta t\int_{t_k}^{t_{k+1}}\|(\hat{S}_{\Delta t})^{n-k-1}T_{\Delta t}-\hat{S}(t_n-r)\|^2_{\mathcal{L}(\mathcal{D}(M),\H)}\mathbb{E}\big[\|F_{Q_1}e^{-\sigma\Delta t}u^k||^2_{\mathcal{D}(M)}\big]\dd r\\
		&\leq Ce^{-\sigma_0(n-k)}\Delta t^3,
	\end{align*}
	which leads to
	\begin{align*}
		&\mathbb{E}\Big[\Big\|\sum_{k=0}^{n-1}A_{1,k,2}\Big\|^2_{\H}\Big]=\sum_{i,j=0}^{n-1}\mathbb{E}\big[\langle A_{1,i,2},A_{1,j,2}\rangle_{\H}\big]\\
		&\leq \sum_{i,j=0}^{n-1}\Big(\mathbb{E}\big[\|A_{1,i,2}\|_{\H}^2\big]\Big)^{\frac{1}{2}}\Big(\mathbb{E}\big[\|A_{1,j,2}\|_{\H}^2\big]\Big)^{\frac{1}{2}}
		=\Bigg(\sum_{k=0}^{n-1}\Big(\mathbb{E}\big[\|A_{1,k,2}\|_{\H}^2\big]\Big)^{\frac{1}{2}}\Bigg)^2\\
		&=C\bigg(\Delta t\sum_{k=0}^{n-1}e^{-\sigma_0(n-k)\Delta t/2}\bigg)^2\Delta t\leq C\Delta t.
	\end{align*}

	For the term $A_{1,k,3}$, the H\"older inequality and Lemma \ref{opee}(1) imply
	\begin{align*}
		\mathbb{E}\big[\|A_{1,k,3}\|^2_{\H}\big]&\leq C\Delta t\int_{t_k}^{t_{k+1}}\mathbb{E}\big[\big\|\hat{S}(t_n-r)\big(F_{Q_1}e^{-\sigma\Delta t}e_k\big)\big\|^2_{\H}\big]\dd r\\
		&\leq Ce^{-2\sigma_0(n-k)\Delta t}\Delta t^2\mathbb{E}\big[\|e_k\|^2_{\H}\big],
	\end{align*}
	by which we get
	\begin{align*}
		&\mathbb{E}\Big[\Big\|\sum_{k=0}^{n-1}A_{1,k,3}\Big\|_{\H}^2\Big]\leq \Bigg(\sum_{k=0}^{n-1}\Big(\mathbb{E}\big[\|A_{1,k,3}\|_{\H}^2\big]\Big)^{\frac{1}{2}}\Bigg)^2\\
		&\leq \Bigg(\sum_{k=0}^{n-1}Ce^{-\sigma_0(n-k)\Delta t}\Delta t\Big(\mathbb{E}\big[\|e_k\|_{\H}^2\big]\Big)^{\frac{1}{2}}\Bigg)^2\\
		&\leq C\Bigg(\sum_{k=0}^{n-1}\Big(\Delta t^{\frac{1}{2}}e^{-\sigma_0(n-k)\Delta t/2}\Big)\Big(\Delta t^{\frac{1}{2}}e^{-\sigma_0(n-k)\Delta t/2}\big(\mathbb{E}\big[\|e_k\|_{\H}^2\big]\big)^{\frac{1}{2}}\Big)\Bigg)^2\\
		&\leq C\Big(\sum_{k=0}^{n-1}\Delta te^{-\sigma_0(n-k)\Delta t}\Big)\Big(\sum_{k=0}^{n-1}\Delta te^{-\sigma_0(n-k)\Delta t}\mathbb{E}\big[\|e_k\|^2_{\H}\big]\Big)\\
		&\leq C\Delta t\sum_{k=0}^{n-1}e^{-\sigma_0(n-k)\Delta t}\mathbb{E}\big[\|e_k\|^2_{\H}\big].
	\end{align*}
	
	For $r\in[t_k,t_{k+1}]$, by \eqref{ewe} we have
	\begin{equation}\label{bnb}
		\begin{split}
			&\mathbb{E}\big[\|u(r)-e^{-\sigma\Delta t}u(t_k)\|^2_{\H}\big]\\
			&\leq C\big\|\hat{S}(r-t_k)-e^{-\sigma\Delta t}\big\|^2_{\mathcal{L}(D(M),\H)}\mathbb{E}\big[\|u(t_k)\|^2_{D(M)}\big]\\
			&\quad+C\Delta t\int_{t_k}^{r}\mathbb{E}\big[\|\hat{S}(r-\rho)F_{Q_1}u(\rho)\|^2_{\H}\big]\dd\rho\\
			&\quad+C\int_{t_k}^{r}\mathbb{E}\big[\|\hat{S}(r-\rho)Ju(\rho) Q_1^{\frac{1}{2}}\|^2_{HS(L^2(D),\H)}\big]\dd\rho\\
			&\quad+C\int_{t_k}^{r}\mathbb{E}\big[\|\hat{S}(r-\rho)\widetilde{{\bm{\lambda}}}_2 Q_2^{\frac{1}{2}}\|^2_{HS(L^2(D),\H)}\big]\dd\rho\\
			&\leq C\Delta t,
		\end{split}
	\end{equation}
	where we use the It\^o isometry, Lemma \ref{opee}(1)(3) and Proposition \ref{H-boundedness}.
	Therefore, for the term $A_{1,k,4}$, one obtains
	\begin{align*}
		\mathbb{E}\big[\|A_{1,k,4}\|^2_{\H}\big]&\leq C\Delta te^{-2\sigma_0(n-k)\Delta t}\int_{t_k}^{t_{k+1}}\mathbb{E}\big[\|u(r)-e^{-\sigma\Delta t}u(t_k)\|^2_{\H}\big]\dd r\\
		&\leq Ce^{-2\sigma_0(n-k)\Delta t}\Delta t^3,
	\end{align*}	
	from which we have
	\begin{align*}
		&\mathbb{E}\Big[\Big\|\sum_{k=1}^{n-1}A_{1,k,4}\Big\|^2_{\H}\Big]\leq C\Delta t.
	\end{align*}
	Combining the above estimates of $A_{1,k,1}$--$A_{1,k,4}$, it holds that
	\begin{align*}
		\mathbb{E}\Big[\Big\|\sum_{k=0}^{n-1}A_{1,k}\Big\|^2_{\H}\Big]
		\leq C\Delta t+C\Delta t\sum_{k=0}^{n-1}e^{-\sigma_0(n-k)\Delta t}\mathbb{E}\big[\|e_k\|_{\H}^2\big].
	\end{align*}
	
	(ii) {\em Estimate of the term $\sum_{k=0}^{n-1}A_{2,k}$.}
	
	Substituting \eqref{ns} into the third term of $A_{2,k}$ and Lemma \ref{opee}(1)(4), we have
	\begin{align*}
		\mathbb{E}\big[\|A_{2,k}\|^2_{\H}\big]
		&\leq Ce^{-2\sigma_0(n-k)\Delta t}\mathbb{E}\big[\|u^k\|^2_{\H}\|\Delta W_1^k-\Delta\overline{W}_1^k\|^2_{H^{\gamma}(D)}\big]\\
		&\hphantom{=}+C\Delta t^2e^{-2\sigma_0(n-k)\Delta t}\mathbb{E}\big[\|M(u^{k+1}+e^{-\sigma\Delta t}u^k)\|^2_{\H}\|\Delta\overline{W}_1^k\|^2_{H^{\gamma}(D)}\big]\\
		&\hphantom{=}+Ce^{-2\sigma_0(n-k)\Delta t}\mathbb{E}\big[\|(u^{k+1}+e^{-\sigma\Delta t}u^k)(\Delta\overline{W}_1^k)^3\|_{\H}^2\big]\\
		&\hphantom{=}+Ce^{-2\sigma_0(n-k)\Delta t}\mathbb{E}\big[\|\widetilde{{\bm{\lambda}}}_2\Delta W_2^k(\Delta\overline{W}_1^k)^2\|^2_{\H}\big]\\
		&\hphantom{=}+Ce^{-2\sigma(n-k)\Delta t}\mathbb{E}\big[\|u^k\|_{\H}^2\|(\Delta\overline{W}_1^k)^2-(\Delta W_1^k)^2\|_{H^{\gamma}(D)}^2\big],
	\end{align*}
	which along with Proposition \ref{uniformH}, Lemma \ref{uniformdivM}, Corollary \ref{uniformdivMp2} and \eqref{wfzq}, leads to
	$$\mathbb{E}\Big[\Big\|\sum_{k=0}^{n-1}A_{2,k}\Big\|^2_{\H}\Big]\leq C\Delta t.$$

	(iii) {\em Estimate of the term $\sum_{k=0}^{n-1}A_{3,k}$.}
	
	We use the  It\^o isometry, $\eqref{bnb}$, Lemma \ref{regularity in H1} and Lemma \ref{opee}(1)(4)(5) to obtain
	\begin{align*}
		&\mathbb{E}\big[\|A_{3,k}\|^2_{\H}\big]\\
		&=\lambda_{1}^2\int_{t_{k}}^{t_{k+1}}\mathbb{E}\big[\big\|\big(\hat{S}(t_n-r)Ju(r)-\big(\hat{S}_{\Delta t}\big)^{n-1-k}T_{\Delta t}Je^{-\sigma\Delta t}u^k\big) Q_1^{\frac{1}{2}}\big\|^2_{HS(L^2(D),\H)}\big]\dd r\\
		&\leq C\int_{t_k}^{t_{k+1}}\mathbb{E}\big[\big\|\hat{S}(t_n-r)Ju(r)-\big(\hat{S}_{\Delta t}\big)^{n-k-1}T_{\Delta t}Ju(r)\big\|^2_{\H}\big]\dd r\\
		&\quad+C\int_{t_{k}}^{t_{k+1}}\mathbb{E}\big[\big\|\big(\hat{S}_{\Delta t}\big)^{n-k-1}T_{\Delta t}Ju(r)-\big(\hat{S}_{\Delta t}\big)^{n-k-1}T_{\Delta t}Je^{-\sigma\Delta t}u^k\big\|_{\H}^2\big]\dd r\\
		&\leq C\int_{t_k}^{t_{k+1}}\big\|\hat{S}(t_n-r)-\big(\hat{S}_{\Delta t}\big)^{n-k-1}T_{\Delta t}\big\|^2_{\mathcal{L}(\mathcal{D}(M),\H)}\mathbb{E}\big[\|u(r)\|_{\mathcal{D}(M)}^2\big]\dd r\\
		&\quad+C\int_{t_k}^{t_{k+1}}\big\|(\hat{S}_{\Delta t})^{n-1-k}T_{\Delta t}\big\|_{\mathcal{L}(\H,\H)}^2\mathbb{E}\big[\big\|u(r)-e^{-\sigma\Delta t}u(t_k)\big\|^2+\big\|e^{-\sigma\Delta t}e_k\big\|^2_{\H}\big]\dd r\\
		&\leq Ce^{-\sigma_0(n-k)\Delta t}\Delta t\Big(\Delta t+\mathbb{E}\big[\|e_k\|^2\big]\Big),
	\end{align*}
	by which 
	\begin{align*}
		\mathbb{E}\bigg[\Big\|\sum_{k=0}^{n-1}A_{3,k}\Big\|_{\H}^2\bigg]=\sum_{k=0}^{n-1}\mathbb{E}\big[\|A_{3,k}\|^2_{\H}\big]\leq C\Delta t+C\Delta t\sum_{k=0}^{n-1}e^{-\sigma_0(n-k)\Delta t}\mathbb{E}\big[\|e_k\|^2_{\H}\big].
	\end{align*}
	
	(iv) {\em Estimate of the term $\sum_{k=0}^{n-1}A_{4,k}$.}
	
	By the independence of increments of $W_1$ and $W_2$, we have 
	\begin{align*}
		&\mathbb{E}\Big[\Big\|\sum_{k=0}^{n-1}A_{4,k}\Big\|_{\H}^2\Big]=\sum_{k=0}^{n-1}\mathbb{E}\big[\|A_{4,k}\|_{\H}^2\big]\\
		&\leq C\sum_{k=0}^{n-1}e^{-\sigma_0(n-k-1)}\Delta t\mathbb{E}\big[\|\widetilde{{\bm{\lambda}}}_2\Delta W_2^k\Delta\overline{W}_1^k\|^2_{\H}\big]\leq C\Delta t.
	\end{align*}
	
	Combining (i)--(iv), it yields
	$$\mathbb{E}\big[\|III\|_{\H}^2\big]\leq C\Delta t+C\Delta t\sum_{k=0}^{n-1}e^{-\sigma_0(n-k)\Delta t}\mathbb{E}\big[\|e_k\|_{\H}^2\big].$$
	Altogether, we conclude that
	$$\mathbb{E}\big[\|e_n\|_{\H}^2\big]\leq C\Delta t\exp\Big(\sum_{k=0}^{n-1}\Delta te^{-\sigma_0(n-k)\Delta t}\Big)\leq C\Delta t$$
	due to the Gronwall inequality.
	Thus we finish the proof.
\end{proof}	
Similar to Proposition \ref{SMEErgo}(ii), the error of the invariant measure between the exact solution and the numerical solution in $L^2$-Wasserstein distance can be  estimated via Theorem \ref{order}.
\begin{coro}
	Under the conditions in Lemma \ref{regularity in H1} and Theorem \ref{order}, there exists a postive constant $C_{10}$ such that
	\begin{equation*}
		\mathcal{W}_2(\pi^{*},\pi^{\Delta t})\leq C_{10}\Delta t^{\frac{1}{2}}.
	\end{equation*}
\end{coro}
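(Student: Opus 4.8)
The plan is to deduce this estimate from the strong convergence rate of Theorem~\ref{order} together with the two exponential contraction estimates already established, namely Proposition~\ref{SMEErgo}(ii) for the exact flow and Theorem~\ref{hbf}(ii) for the numerical flow, by inserting the time-$t_n$ laws started from a common initial datum and letting $n\to\infty$. First I would fix a deterministic initial value $u_0\in H^1(D)^6$, so that the hypotheses of Theorem~\ref{order} (hence of Corollary~\ref{uniformdivMp2}) are met, and drive both the exact solution $u(t_n)$ and the numerical solution $u^n$ by the \emph{same} Wiener processes $W_1,W_2$ from $u_0$. Writing $P^*_{t_n}\delta_{u_0}$ for the law of $u(t_n)$ and $(P_n)^*\delta_{u_0}$ for the law of $u^n$, the triangle inequality for $\mathcal{W}_2$ gives
\begin{align*}
	\mathcal{W}_2(\pi^*,\pi^{\Delta t})
	\leq \mathcal{W}_2\big(\pi^*,P^*_{t_n}\delta_{u_0}\big)
	+\mathcal{W}_2\big(P^*_{t_n}\delta_{u_0},(P_n)^*\delta_{u_0}\big)
	+\mathcal{W}_2\big((P_n)^*\delta_{u_0},\pi^{\Delta t}\big).
\end{align*}

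Next I would bound the three pieces separately. The outer two decay exponentially: Proposition~\ref{SMEErgo}(ii) yields $\mathcal{W}_2(P^*_{t_n}\delta_{u_0},\pi^*)\leq e^{-\sigma_0 t_n}\mathcal{W}_2(\delta_{u_0},\pi^*)$, and Theorem~\ref{hbf}(ii) yields $\mathcal{W}_2((P_n)^*\delta_{u_0},\pi^{\Delta t})\leq e^{-\sigma_0 t_n}\mathcal{W}_2(\delta_{u_0},\pi^{\Delta t})$, where both $\mathcal{W}_2$-factors on the right are finite since $\pi^*,\pi^{\Delta t}\in\mathcal{P}_2(\H)$ (Proposition~\ref{SMEErgo}(i), Theorem~\ref{hbf}(i)) and $\delta_{u_0}\in\mathcal{P}_2(\H)$. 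For the middle piece, the joint law of the synchronously coupled pair $(u(t_n),u^n)$ is an admissible coupling of $P^*_{t_n}\delta_{u_0}$ and $(P_n)^*\delta_{u_0}$, so by the definition of the $L^2$-Wasserstein distance and Theorem~\ref{order},
\begin{align*}
	\mathcal{W}_2\big(P^*_{t_n}\delta_{u_0},(P_n)^*\delta_{u_0}\big)
	\leq \big(\mathbb{E}\big[\|u(t_n)-u^n\|_{\H}^2\big]\big)^{1/2}\leq C_9\Delta t^{1/2}.
\end{align*}

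Collecting these bounds gives, for every $n$,
\begin{align*}
	\mathcal{W}_2(\pi^*,\pi^{\Delta t})\leq e^{-\sigma_0 t_n}\big(\mathcal{W}_2(\delta_{u_0},\pi^*)+\mathcal{W}_2(\delta_{u_0},\pi^{\Delta t})\big)+C_9\Delta t^{1/2}.
\end{align*}
The left-hand side is independent of $n$, while for fixed $\Delta t$ one has $t_n=n\Delta t\to\infty$ as $n\to\infty$; letting $n\to\infty$ therefore annihilates the first term and leaves $\mathcal{W}_2(\pi^*,\pi^{\Delta t})\leq C_{10}\Delta t^{1/2}$ with $C_{10}=C_9$.

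The argument is short and essentially mirrors Proposition~\ref{SMEErgo}(ii), so I do not expect a serious computational obstacle; rather, the points that need care are structural. The chief one is checking that the coupling used in Theorem~\ref{order} — the exact and numerical solutions sharing the same noise from the same $u_0$ — is indeed an admissible coupling of the two time-$t_n$ marginals, so that the strong error genuinely dominates the Wasserstein distance of the middle term. The second is that the limiting step $n\to\infty$ is legitimate: this requires the auxiliary distances $\mathcal{W}_2(\delta_{u_0},\pi^*)$ and $\mathcal{W}_2(\delta_{u_0},\pi^{\Delta t})$ to be finite (guaranteed by $\pi^*,\pi^{\Delta t}\in\mathcal{P}_2(\H)$) and, crucially, that the constant $C_9$ in Theorem~\ref{order} is uniform in $n$, which is exactly what the $\max_{n\geq0}$ there provides; otherwise an $n$-dependent bound could blow up before the exponential factor kills the transient terms.
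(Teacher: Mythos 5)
Your proof is correct, but it follows a slightly different route than the paper. The paper uses a two-term decomposition that exploits invariance on both sides: writing $\pi^{\Delta t}=P^*_n\pi^{\Delta t}$ and $\pi^*=P^*_{t_n}\pi^*$, it bounds
\begin{align*}
\mathcal{W}_2(\pi^*,\pi^{\Delta t})\leq \mathcal{W}_2(P^*_{n}\pi^{\Delta t},P^*_{t_n}\pi^{\Delta t})+\mathcal{W}_2(P^*_{t_n}\pi^{\Delta t},P^*_{t_n}\pi^{*})\leq C_9\Delta t^{1/2}+e^{-\sigma_0 t_n}\mathcal{W}_2(\pi^*,\pi^{\Delta t}),
\end{align*}
and lets $n\to\infty$; only the contraction of the \emph{exact} flow (Proposition~\ref{SMEErgo}(ii)) is used, and the strong error bound of Theorem~\ref{order} is invoked with initial data distributed according to $\pi^{\Delta t}$. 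You instead anchor both dynamics at a fixed deterministic $u_0\in H^1(D)^6$ and use a three-term decomposition, paying two exponentially decaying terms (one via Proposition~\ref{SMEErgo}(ii), one via Theorem~\ref{hbf}(ii)) plus the synchronous-coupling strong error in the middle. Each version buys something: the paper's argument is shorter and does not need the numerical flow's contraction, but it tacitly requires that initial data sampled from $\pi^{\Delta t}$ satisfy the moment hypotheses of Theorem~\ref{order} (namely $L^4(\Omega,H^1(D)^6)$, with a bound uniform in $\Delta t$ so that $C_9$ does not degenerate) — a point the paper does not verify, though it follows from Corollary~\ref{uniformdivMp2} with some extra work. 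Your version sidesteps that entirely, since a deterministic $H^1$ datum trivially fulfills the hypotheses and the resulting constant is manifestly independent of $\Delta t$; the price is the extra appeal to Theorem~\ref{hbf}(ii), which is available in the paper, so your argument is complete as stated.
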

\begin{proof}
	By Proposition \ref{SMEErgo} and Theorem \ref{order}, we have
	\begin{align*}
		\mathcal{W}_2(\pi^*,\pi^{\Delta t})&\leq\mathcal{W}_2(P^*_{n}\pi^{\Delta t},P^*_{t_n}\pi^{\Delta t})+ \mathcal{W}_2(P^*_{t_n}\pi^{\Delta t},P^*_{t_n}\pi^{*})\\
		&\leq C_9\Delta t^{\frac{1}{2}}+e^{-\sigma_0t_n}\mathcal{W}_2(\pi^*,\pi^{\Delta t}),
	\end{align*}
	which leads to the assertion by letting $n\rightarrow\infty$.
\end{proof}

\section{Ergodic full discretizations}
This section focuses on the study of ergodic full discretizations for \eqref{SMEs with PML} which are based on the further discretizations on the temporal semi-discretization by the dG method and the finite difference method in the spatial direction, respectively. 

\subsection{Ergodic dG full discretization}
We first apply the dG method to discretize \eqref{ns} in space and obtain a dG full discretization for \eqref{SMEs with PML}. For the dG full discretization, we prove the ergodicity based on the analysis of the uniform boundedness of the numerical solution in $L^2(\Omega,\H)$. Moreover, the mean-square convergence order of the dG full discretization in both temporal and spatial directions is shown. As a result, the $L^2$-Wasserstein distance between the numerical invariant measure and the exact one is estimated.

To this end, we introduce some basic notations and properties of the dG method. Let $\mathcal{T}_h=\{K\}$ be a simplicial, shape- and contact-regular mesh of the domain $D$ consisting of elements $K$, i.e., $D=\bigcup K$. The index $h$ refers to the maximum diameter of all elements of $\mathcal{T}_h$. We denote the restriction of a function $v$ to an element $K$ by $v_K:=v|_K$. The dG space with respect to the mesh $\mathcal{T}_h$ is taken to be the set of piecewise linear functions, i.e., $\H_h:=\{v_h\in L^2(D):v_h|_K\in\mathbb{P}_1(K)\}^6$, where $\mathbb{P}_1(K)$ denotes the set of continuous piecewise polynomials of degree up to $1$. The set of faces is denoted by $\mathcal{G}_h=\mathcal{G}_h^{\text{int}}\cup\mathcal{G}_h^{\text{ext}}$, where $\mathcal{G}_h^{\text{int}}$ and $\mathcal{G}_h^{\text{ext}}$ consist of all interior and all exterior faces, repsectively. We denote the unit normal of a face $F\in\mathcal{G}_h^{\text{int}}$ by $\mathbf{n}_F$, where the orientation of $\mathbf{n}_F$ is fixed once and forever for each interior face. For a face $F\in \mathcal{G}_h^{\text{ext}}$, $\mathbf{n}_F$ is the outward normal vector. Jumps of $v_h$ on an interior face $F$ with normal vector $\mathbf{n}_F$ pointing from $K$ to $K_F$ are defined as 
$[[v_h]]_F:=(v_{K_F})|_F-(v_K)|_F.$
Note that the sign of the jump on face $F$ is fixed by the direction of the normal vector $\mathbf{n}_F$. Define the broken Sobolev spaces by $$H^k(\mathcal{T}_h):=\{v\in L^2(D):v_K\in H^k(K)\text{ for all }K\in\mathcal{T}_h\},\quad k\in\mathbb{N},$$ 
with seminorm and norm being $|v|^2_{H^k(\mathcal{T}_h)}:=\sum_{K\in\mathcal{T}_h}|v|^2_{H^k(K)}$ and $\|v\|^2_{H^k(\mathcal{T}_h)}:=\sum_{j=0}^{k}|v|^2_{H^j(\mathcal{T}_h)}$, respectively.  Let $\pi_h:\H\rightarrow\H_h$ be the $L^2$-orthogonal projection operator on $\H_h$, where the projection acts componentwise for vector fields. For the projection operator $\pi_h$, we have
\begin{align}\label{touying}
	\|\pi_hv\|_{\H}\leq \|v\|_{\H}\qquad \forall v\in\H
\end{align}
and 
\begin{align}\label{gza}
	\langle v-\pi_hv,u_h\rangle_{\H}=0\qquad \forall u_h\in \H_h.
\end{align}
Moreover, for all $v\in H^1(\mathcal{T}_h)^6$, it holds that
\begin{align}\label{touyingwucha}
	\|v-\pi_hv\|_{\H}\leq Ch|v|_{H^1(\mathcal{T}_h)^6}
\end{align}
and 
\begin{align}\label{wdx}
	\sum_{F\in\mathcal{G}_h}\|v-\pi_hv\|^2_{L^2(F)^6}\leq Ch|v|^2_{H^1(\mathcal{T}_h)^6}.
\end{align}
For the parameter $\sigma$, we give the following assumption in this subsection.
\begin{assumption}
	Suppose that $\sigma\equiv\sigma_0$ is constant on $D$.
\end{assumption}


Now we are in the position to propose the following dG full discretization:
\begin{align}\label{dgfull}
	u^{n+1}_h-e^{-\sigma_0\Delta t}u^n_h&=\frac{\Delta t}{2}\big(M_hu^{n+1}_h+e^{-\sigma_0\Delta t}M_hu^n_h\big)\notag\\
	&\quad+\frac{\lambda_1}{2}\pi_h\Big[J\big(u^{n+1}_h+e^{-\sigma_0\Delta t}u^n_h\big)\Delta\overline{W}_1^n\Big]+\pi_h(\widetilde{{\bm{\lambda}}}_2\Delta W_2^n),
\end{align}
where the discrete Maxwell operator $M_h:\H_h\rightarrow\H_h$ is given as
\begin{align*}
	&\langle M_hu_h,v_h\rangle_{\H}:=\sum_{K\in\mathcal{T}_h}\Big(\langle\nabla\times\mathbf{H}_h,\psi_h\rangle_{L^2(K)^3}-\langle\nabla\times\mathbf{E}_h,\phi_h\rangle_{L^2(K)^3}\Big)\\
	&+\frac{1}{2}\sum_{F\in\mathcal{G}_h^{\text{int}}}\Big(\langle\mathbf{n}_F\times[[\mathbf{H}_h]]_F,\psi_K+\psi_{K_F}\rangle_{L^2(F)^3}-\langle\mathbf{n}_F\times[[\mathbf{E}_h]]_F,\phi_K+\phi_{K_F}\rangle_{L^2(F)^3}\\
	&\qquad\qquad\qquad-\langle\mathbf{n}_F\times[[\mathbf{E}_h]]_F,\mathbf{n}_F\times[[\psi_h]]_F\rangle_{L^2(F)^3}\\
	&\qquad\qquad\qquad-\langle\mathbf{n}_F\times[[\mathbf{H}_h]]_F,\mathbf{n}_F\times[[\phi_h]]_F\rangle_{L^2(F)^3}\Big)\\
	&+\sum_{F\in\mathcal{G}_h^{\text{ext}}}\Big(\langle\mathbf{n}_F\times\mathbf{E}_h,\phi_h\rangle_{L^2(F)^3}-\langle\mathbf{n}_F\times\mathbf{E}_h,\mathbf{n}_F\times\psi_h\rangle_{L^2(F)^3}\Big)
\end{align*}
with $u_h=(\mathbf{E}_h^\top,\mathbf{H}_h^{\top})^{\top},v_h=(\psi_h^{\top},\phi_h^{\top})^{\top}\in \H_h.$
\begin{prop}\label{efs}
	Let Assumption \ref{ass} hold with $\gamma_1\geq 1$ and $\gamma_2\geq 1$, 
	and let $Q_2^{\frac{1}{2}}\in HS(L^2(D),H_0^{\gamma_2}(D))$
	and $u_0\in L^2(\Omega,\H)$. There exists a constant $C_{11}$ independent of time and $h$ such that for sufficiently small $\Delta t>0$,
	$$\sup_{n\geq0}\mathbb{E}\big[\|u^n_h\|_{\H}^2\big]\leq C_{11},$$
	where the positive constant $C_{11}$ depends on $\sigma_0,\|Q_1^{\frac{1}{2}}\|_{\mathcal{L}_2^{\gamma_1}}, \|Q_2^{\frac{1}{2}}\|_{\mathcal{L}_2^{\gamma_2}},\mathbb{E}\big[\|u_0\|_{\H}^2\big],\lambda_1$ and $\widetilde{{\bm{\lambda}}}_2$.
\end{prop}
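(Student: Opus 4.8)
The plan is to mirror the energy argument used for the temporal semi-discretization in Proposition \ref{uniformH} (case $p=1$), with the skew-adjointness of $M$ replaced by the \emph{dissipativity} of the discrete Maxwell operator $M_h$, and with the stability and orthogonality of the projection $\pi_h$ doing the bookkeeping that keeps every constant independent of $h$. The three structural facts I would isolate first are: the pointwise identity $(Jv)\cdot v=0$ for all $v\in\R^6$ (so the multiplicative term drops out), the projection identities \eqref{touying}--\eqref{gza}, and the discrete dissipativity
\[
\langle M_h v_h,v_h\rangle_{\H}\le 0\qquad\forall\,v_h\in\H_h,
\]
which follows from elementwise integration by parts: the volume and central-flux contributions cancel and only the nonpositive penalty (and exterior-face) contributions survive. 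These are the discrete replacements for the facts ``$M$ skew-adjoint'' and ``$J$ skew-symmetric'' used in the semi-discrete proof.

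First I would test \eqref{dgfull} against $w:=u_h^{n+1}+e^{-\sigma_0\Delta t}u_h^n\in\H_h$. The left-hand side gives $\|u_h^{n+1}\|_{\H}^2-e^{-2\sigma_0\Delta t}\|u_h^n\|_{\H}^2$; the $M_h$-term equals $\tfrac{\Delta t}{2}\langle M_hw,w\rangle_{\H}\le 0$ and is discarded; the multiplicative term vanishes since $\langle\pi_h[Jw\,\Delta\overline W_1^n],w\rangle_{\H}=\langle Jw\,\Delta\overline W_1^n,w\rangle_{\H}=\int_D (Jw\cdot w)\,\Delta\overline W_1^n\,\dd\mathbf x=0$ by \eqref{gza} and the skew-symmetry of $J$; and the additive term reduces to $\langle\widetilde{\bm\lambda}_2\Delta W_2^n,w\rangle_{\H}$, again by \eqref{gza}. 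This produces the exact discrete analogue of the key inequality in the proof of Proposition \ref{uniformH},
\[
\|u_h^{n+1}\|_{\H}^2-e^{-2\sigma_0\Delta t}\|u_h^n\|_{\H}^2\le\langle\widetilde{\bm\lambda}_2\Delta W_2^n,\,u_h^{n+1}+e^{-\sigma_0\Delta t}u_h^n\rangle_{\H}.
\]
Taking expectations and using that $\Delta W_2^n$ is centered and independent of $\mathcal F_{t_n}$ removes the $e^{-\sigma_0\Delta t}u_h^n$ contribution, so that after inserting \eqref{dgfull} for $u_h^{n+1}-e^{-\sigma_0\Delta t}u_h^n$ I am left with three terms: an $M_h$-term $\tfrac{\Delta t}{2}\mathbb E[\langle\widetilde{\bm\lambda}_2\Delta W_2^n,M_h w\rangle_{\H}]$, a mixed-noise term, and a pure additive term. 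The last two are routine: \eqref{touying} gives $\langle\pi_h(\widetilde{\bm\lambda}_2\Delta W_2^n),\widetilde{\bm\lambda}_2\Delta W_2^n\rangle_{\H}=\|\pi_h(\widetilde{\bm\lambda}_2\Delta W_2^n)\|_{\H}^2\le C\Delta t$, and for the mixed term I would use $\langle\xi,\pi_h\eta\rangle_{\H}\le\|\xi\|_{\H}\|\eta\|_{\H}$, the embedding $H^{\gamma_1}(D)\hookrightarrow L^\infty(D)$ to get $\|Jw\,\Delta\overline W_1^n\|_{\H}\le\|w\|_{\H}\|\Delta\overline W_1^n\|_{H^{\gamma_1}(D)}$, the independence of $W_1$ and $W_2$, and Young's inequality, reproducing the bound $\tfrac{\sigma_0}{8}\Delta t\,\mathbb E[\|u_h^{n+1}\|_{\H}^2]+C\Delta t$ with $C$ independent of $h$ precisely because $\pi_h$ is an $\H$-contraction.

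The hard part is the $M_h$-term. After discarding the $M_he^{-\sigma_0\Delta t}u_h^n$ piece (zero expectation, by independence), it becomes $\tfrac{\Delta t}{2}\mathbb E[\langle\pi_h(\widetilde{\bm\lambda}_2\Delta W_2^n),M_hu_h^{n+1}\rangle_{\H}]$, legitimately inserting $\pi_h$ since $M_hu_h^{n+1}\in\H_h$. I cannot use the inverse inequality $\|M_hu_h^{n+1}\|_{\H}\lesssim h^{-1}\|u_h^{n+1}\|_{\H}$, so instead I would move $M_h$ onto the smooth factor through the $\H_h$-adjoint $M_h^{*}$ (defined by $\langle M_h\phi,\psi\rangle_{\H}=\langle\phi,M_h^{*}\psi\rangle_{\H}$ on $\H_h$), rewriting the term as $\tfrac{\Delta t}{2}\mathbb E[\langle M_h^{*}\pi_h(\widetilde{\bm\lambda}_2\Delta W_2^n),u_h^{n+1}\rangle_{\H}]$ and applying Young. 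Everything then reduces to the uniform-in-$h$ stability estimate $\|M_h^{*}\pi_h\xi\|_{\H}\le C\|\xi\|_{H^1(D)^6}$ for smooth $\xi$ respecting the boundary conditions built into $M_h$, which, together with $\gamma_2\ge1$ and $Q_2^{1/2}\in HS(L^2(D),H_0^{\gamma_2}(D))$, yields $\mathbb E[\|M_h^{*}\pi_h(\widetilde{\bm\lambda}_2\Delta W_2^n)\|_{\H}^2]=\Delta t\sum_k\eta_k^{(2)}\|M_h^{*}\pi_h(\widetilde{\bm\lambda}_2q_k)\|_{\H}^2\le C\|Q_2^{1/2}\|_{\mathcal L_2^{\gamma_2}}^2\,\Delta t$. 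Establishing this consistency/stability bound for $M_h^{*}$---controlling both the consistent curl part and the penalty part acting on $\pi_h\xi$, the $H_0^{\gamma_2}$ hypothesis being exactly what renders the exterior-face terms harmless---is the genuine obstacle; I expect it to be recorded as a separate (appendix-type) lemma about $M_h$. Granting it, the $M_h$-term is $\le\tfrac{\sigma_0}{8}\Delta t\,\mathbb E[\|u_h^{n+1}\|_{\H}^2]+C\Delta t$, and collecting all contributions gives
\[
\mathbb E[\|u_h^{n+1}\|_{\H}^2]\le e^{-2\sigma_0\Delta t}\mathbb E[\|u_h^n\|_{\H}^2]+\tfrac{\sigma_0}{4}\Delta t\,\mathbb E[\|u_h^{n+1}\|_{\H}^2]+C\Delta t.
\]
For $\Delta t$ small enough that $(1-\sigma_0\Delta t/4)^{-1}e^{-2\sigma_0\Delta t}\le e^{-\sigma_0\Delta t}$, a discrete Gronwall argument exactly as in Proposition \ref{uniformH} then yields $\sup_{n\ge0}\mathbb E[\|u_h^n\|_{\H}^2]\le C_{11}$ with $C_{11}$ independent of $n$ and $h$, which is the claim.
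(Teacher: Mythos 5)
Your proposal reproduces the paper's proof essentially step by step: the same pairing with $u_h^{n+1}+e^{-\sigma_0\Delta t}u_h^n$, the same use of the dissipativity of $M_h$ and of the orthogonality \eqref{gza} to discard the $M_h$- and multiplicative-noise terms, the same passage to expectations via independence of $\Delta W_2^n$ and re-substitution of \eqref{dgfull}, and the same Young--Gronwall conclusion. In particular, your treatment of the hard term $\tfrac{\Delta t}{2}\mathbb{E}\big[\langle\widetilde{\bm{\lambda}}_2\Delta W_2^n,M_hu_h^{n+1}\rangle_{\H}\big]$ by moving $M_h$ onto the smooth factor and requiring the uniform-in-$h$ bound $\|M_h^{*}\pi_h\xi\|_{\H}\le C\|\xi\|_{H^1(D)^6}$ is precisely the content of the result the paper invokes at that point, namely \cite[Lemma A.4]{IM2015}; deferring it to a separate lemma is legitimate, and you correctly identified that the hypothesis $Q_2^{\frac12}\in HS(L^2(D),H_0^{\gamma_2}(D))$ is what makes the exterior-face contributions harmless.

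One step, however, fails under the stated hypotheses. For the mixed-noise term you invoke the embedding $H^{\gamma_1}(D)\hookrightarrow L^{\infty}(D)$ to write $\|Jw\,\Delta\overline{W}_1^n\|_{\H}\le\|w\|_{\H}\|\Delta\overline{W}_1^n\|_{H^{\gamma_1}(D)}$. The proposition only assumes $\gamma_1\ge1$, and in three dimensions $H^{1}(D)\not\hookrightarrow L^{\infty}(D)$; that embedding requires $\gamma_1>\tfrac32$, i.e., strictly more regularity of $Q_1$ than is hypothesized. Since the dG iterate $w$ carries no integrability beyond $L^2$ that is uniform in $h$, the Cauchy--Schwarz pairing you chose genuinely needs the noise in $L^{\infty}$, so this step cannot be salvaged as written. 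The repair is the one the paper uses for its term $A_2$ (and already in \eqref{yyy2} for the semi-discretization): move $\pi_h$ onto the $W_2$ factor and apply the H\"older inequality with exponents $(4,4,2)$, so that only $\|\Delta W_2^n\|_{L^4(D)}\,\|\Delta\overline{W}_1^n\|_{L^4(D)}\,\|u_h^{n+1}\|_{\H}$ appears, then use $H^{1}(D)\hookrightarrow L^{4}(D)$ (valid in 3D), the mutual independence of $W_1$ and $W_2$, and the Young inequality; this yields the bound $\tfrac{\sigma_0}{8}\Delta t\,\mathbb{E}\big[\|u_h^{n+1}\|_{\H}^2\big]+C\Delta t$ with constants independent of $h$, exactly as in the paper. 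With this replacement your argument coincides with the paper's proof of Proposition \ref{efs}.
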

\begin{proof}
	We apply $\langle\cdot,u^{n+1}_h+e^{-\sigma_0\Delta t}u^n_h\rangle_{\H}$ on both sides of \eqref{dgfull} and take the expectation to get
	\begin{equation}\label{ygb}
		\begin{split}
			&\mathbb{E}\big[\|u^{n+1}_h\|^2_{\H}\big]-e^{-2\sigma_0\Delta t}\mathbb{E}\big[\|u^n_h\|^2_{\H}\big]\\
			&=\frac{\Delta t}{2}\mathbb{E}\big[\langle M_h(u^{n+1}_h+e^{-\sigma_0\Delta t}u^n_h),u^{n+1}_h+e^{-\sigma_0\Delta t}u^n_h\rangle_{\H}\big]\\
			&\quad+\frac{\lambda_{1}}{2}\mathbb{E}\big[\langle \pi_h(J(u^{n+1}_h+e^{-\sigma_0\Delta t}u^n_h)\Delta\overline{W}_1^n),u^{n+1}_h+e^{-\sigma_0\Delta t}u^n_h\rangle_{\H}\big]\\
			&\quad+\mathbb{E}\big[\langle \pi_h(\widetilde{{\bm{\lambda}}}_2\Delta W_2^n),u^{n+1}_h+e^{-\sigma_0\Delta t}u^n_h\rangle_{\H}\big]\\
			&\leq \mathbb{E}\big[\langle \pi_h(\widetilde{{\bm{\lambda}}}_2\Delta W_2^n),u^{n+1}_h+e^{-\sigma_0\Delta t}u^n_h\rangle_{\H}\big],
		\end{split}
	\end{equation}
	where in the last step we use the dissipative property of $M_h$ (see e.g., \cite[Proposition 4.4(ii)]{CCC2021}) and \eqref{gza}. By using the fact that $\Delta W_2^n$ is independent of $\mathcal{F}_{t_n}$, we substitute \eqref{dgfull} into \eqref{ygb} to obtain
	\begin{align*}
		&\mathbb{E}\big[\|u^{n+1}_h\|^2_{\H}\big]-e^{-2\sigma_0\Delta t}\mathbb{E}\big[\|u^n_h\|^2_{\H}\big]\leq\mathbb{E}\big[\langle \widetilde{{\bm{\lambda}}}_2\Delta W_2^n,u^{n+1}_h-e^{-\sigma_0\Delta t}u^n_h\rangle_{\H}\big]\\
		&=\frac{\Delta t}{2}\mathbb{E}\big[\langle \widetilde{{\bm{\lambda}}}_2\Delta W_2^n,M_hu^{n+1}_h\rangle_{\H}\big]+\mathbb{E}\big[\langle \widetilde{{\bm{\lambda}}}_2\Delta W_2^n,\frac{\lambda_{1}}{2}\pi_h(Ju^{n+1}_h\Delta\overline{W}_1^n)+\pi_h(\widetilde{{\bm{\lambda}}}_2\Delta W_2^n)\rangle_{\H}\big]\\
		&=:A_{1}+A_{2}.
	\end{align*}
	For the term $A_1$,we use \cite[Lemma A.4]{IM2015} to obtain
	\begin{align*}
		A_1&=\frac{\Delta t}{2}\mathbb{E}\big[\langle M_hu_h^{n+1},\pi_h(\widetilde{\bm{\lambda}}_2\Delta W_2^n)\rangle_{\H}\big]
		\leq \frac{\sigma_0}{8}\Delta t\mathbb{E}\big[\|u^{n+1}_h\|^2_{\H}\big]+C\Delta t^2.
	\end{align*}
	For the term $A_2$, we have
	\begin{align*}
		A_2
		&\leq C\mathbb{E}\Big[\|\Delta W_2^n\|_{L^4(D)}\|\Delta\overline{W}_1^n\|_{L^4(D)}\|u_h^{n+1}\|_{\H}\Big]+\mathbb{E}\big[\|\widetilde{{\bm{\lambda}}}_2\Delta W_2^n\|^2_{\H}\big]\\
		&\leq \frac{\sigma_0}{8}\Delta t\mathbb{E}\big[\|u^{n+1}\|_{\H}^2\big]+C\Delta t
	\end{align*}
	in view of \eqref{touying} and the Sobolev embedding $L^{4}(D)\hookrightarrow H^{1}(D)$.
	
	Combining $A_1$ and $A_2$, we have
	\begin{align*}
		\mathbb{E}\big[\|u^{n+1}_h\|^2_{\H}\big]\leq e^{-2\sigma_0\Delta t}\mathbb{E}\big[\|u^n_h\|_{\H}^2\big]+\frac{\sigma_0}{4}\Delta t\mathbb{E}\big[\|u^{n+1}_h\|_{\H}^2\big]+C\Delta t.
	\end{align*}
	Then the Gronwall inequality implies the assertion for any $\Delta t\in(0,\frac{3}{\sigma_0}]$.
\end{proof}

We denote by $\{P_{h}^n\}_{n\in\mathbb{N}}$ the Markov transition semigroup associated to the numerical solution $\{u^{n}_h\}_{n\in\mathbb{N}}$.	
The following proposition gives the ergodicity of $\{u^n_h\}_{n\in\mathbb{N}}$ and the convergence of $(P_h^n)^*\pi$ towards the numerical invariant measure in the $L^2$-Wasserstein distance.

\begin{theo}
	Under the conditions in Proposition \ref{efs}, the following statements hold.\\
	{\rm (i)}
	The numerical solution $\{u^n_h\}_{n\in\mathbb{N}}$ of \eqref{dgfull} has a unique invariant measure $\pi^{\Delta t,h}\in\mathcal{P}_2(\H)$ for sufficiently small $\Delta t>0$. Thus $\{u^n_h\}_{n\in\mathbb{N}}$ is ergodic. Moreover, $\{u^n_h\}_{n\in\mathbb{N}}$ is exponentially mixing.\\
	{\rm (ii)} For any distribution $\pi\in\mathcal{P}_2(\H)$, 
	\begin{equation*}
		\mathcal{W}_2((P_h^n)^*\pi,\pi^{\Delta t,h})\leq e^{-\sigma_0t_n}\mathcal{W}_2(\pi,\pi^{\Delta t,h}).
	\end{equation*}
\end{theo}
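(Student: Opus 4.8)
The plan is to follow the template already used for Theorem \ref{hbf} and Proposition \ref{SMEErgo}: first establish a pathwise contraction for the difference of two dG solutions driven by the same noise, then deduce existence, uniqueness, exponential mixing, and the Wasserstein contraction from it. A welcome simplification is that $\H_h$ is finite-dimensional, so the uniform second-moment bound of Proposition \ref{efs} already yields tightness and no $H^1$-type Lyapunov estimate is required, in contrast to the continuous and semi-discrete cases.

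For the contraction, let $\{u^n_h\}$ and $\{\widetilde{u}^n_h\}$ solve \eqref{dgfull} with initial data $u_0$ and $\widetilde{u}_0$, and set $w^{n+1}:=(u^{n+1}_h-\widetilde{u}^{n+1}_h)+e^{-\sigma_0\Delta t}(u^n_h-\widetilde{u}^n_h)$. Subtracting the two equations cancels the common source term $\pi_h(\widetilde{{\bm{\lambda}}}_2\Delta W_2^n)$, and testing with $\langle\,\cdot\,,w^{n+1}\rangle_{\H}$ gives
\begin{equation*}
	\|u^{n+1}_h-\widetilde{u}^{n+1}_h\|_{\H}^2-e^{-2\sigma_0\Delta t}\|u^n_h-\widetilde{u}^n_h\|_{\H}^2=\frac{\Delta t}{2}\langle M_hw^{n+1},w^{n+1}\rangle_{\H}+\frac{\lambda_1}{2}\langle\pi_h(Jw^{n+1}\Delta\overline{W}_1^n),w^{n+1}\rangle_{\H}.
\end{equation*}
The first term on the right is nonpositive by the dissipativity of $M_h$ (see \cite[Proposition 4.4(ii)]{CCC2021}); for the second, the projection identity \eqref{gza} removes $\pi_h$, and the pointwise identity $(Jw^{n+1})\cdot w^{n+1}=0$ coming from the skew-symmetry of $J$ forces $\langle Jw^{n+1}\Delta\overline{W}_1^n,w^{n+1}\rangle_{\H}=0$. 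Thus the bound is deterministic and pathwise, $\|u^{n+1}_h-\widetilde{u}^{n+1}_h\|_{\H}^2\leq e^{-2\sigma_0\Delta t}\|u^n_h-\widetilde{u}^n_h\|_{\H}^2$, and iterating and taking expectation yields
\begin{equation*}
	\mathbb{E}\big[\|u^n_h-\widetilde{u}^n_h\|_{\H}^2\big]\leq e^{-2\sigma_0t_n}\mathbb{E}\big[\|u_0-\widetilde{u}_0\|_{\H}^2\big].
\end{equation*}

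With this contraction in hand, the remaining arguments are essentially copies of Proposition \ref{SMEErgo}. For existence I invoke the Krylov--Bogoliubov theorem: since $\H_h$ is finite-dimensional, the level sets $\{v\in\H_h:\|v\|_{\H}\leq\alpha\}$ are compact in $\H$, so the uniform bound $\sup_n\mathbb{E}[\|u^n_h\|_{\H}^2]\leq C_{11}$ of Proposition \ref{efs} makes the time-averaged laws tight, producing an invariant measure $\pi^{\Delta t,h}$ that satisfies $\int_{\H}\|x\|_{\H}^2\,\pi^{\Delta t,h}(\dd x)\leq C_{11}$ and hence lies in $\mathcal{P}_2(\H)$. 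Uniqueness and exponential mixing follow from the contraction exactly as in Proposition \ref{SMEErgo}(i), and part (ii) follows from the single-point bound $\mathcal{W}_2((P_h^n)^*\delta_{u_0},(P_h^n)^*\delta_{\widetilde{u}_0})\leq e^{-\sigma_0t_n}\|u_0-\widetilde{u}_0\|_{\H}$, the convexity of $\mathcal{W}_2$, and the invariance $(P_h^n)^*\pi^{\Delta t,h}=\pi^{\Delta t,h}$, as in Proposition \ref{SMEErgo}(ii).

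The only genuinely nonroutine point is verifying that the right-hand side of the tested difference equation is nonpositive, and the expected obstacle is making the multiplicative-noise term vanish: this hinges on combining the $L^2$-orthogonality \eqref{gza} of the dG projection with the pointwise skew-symmetry of $J$, while the dissipativity of the discrete operator $M_h$ disposes of the operator term. Once these are confirmed the contraction is exact, and the rest reduces to the already-established ergodicity machinery.
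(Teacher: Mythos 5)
Your proof follows essentially the same route as the paper's: the paper likewise derives the contraction by testing the difference equation with $(u^n_h-\widetilde{u}^n_h)+e^{-\sigma_0\Delta t}(u^{n-1}_h-\widetilde{u}^{n-1}_h)$ and invoking the dissipative property of $M_h$ (the multiplicative-noise term vanishing via \eqref{gza} and the skew-symmetry of $J$, exactly as you argue), and then transfers the existence, uniqueness, mixing, and Wasserstein arguments from Theorem \ref{hbf} and Proposition \ref{SMEErgo}. Your explicit remarks—that the contraction is in fact pathwise, and that the finite-dimensionality of $\H_h$ supplies the compactness needed for Krylov--Bogoliubov in place of an $H^1$-type Lyapunov bound—are correct elaborations of details the paper leaves implicit, not a different method.
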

\begin{proof}
	The proof is similar to that of Theorem \ref{hbf}. The main difference lies in the proof of  the continuous dependence of the solution on the initial data.
	Since $u^{n}-\widetilde{u}^{n}$ solves
	\begin{align*}
		&(u^{n}_h-\widetilde{u}_h^{n})-e^{-\sigma_0\Delta t}(u_h^{n-1}-\widetilde{u}_h^{n-1})\notag\\
		&=\Delta tM_h\frac{(u_h^{n}-\widetilde{u}_h^{n})+e^{-\sigma_0\Delta t}(u_h^{n-1}-\widetilde{u}_h^{n-1})}{2}\notag\\
		&\quad+\lambda_{1}\pi_h\Big[J\frac{(u_h^{n}-\widetilde{u}_h^{n})+e^{-\sigma_0\Delta t}(u_h^{n-1}-\widetilde{u}_h^{n-1})}{2}\Delta\overline{W}^{n-1}_1\Big],
	\end{align*}	
	we apply $\langle\cdot,(u_h^{n}-\widetilde{u}_h^{n})+e^{-\sigma_0\Delta t}(u_h^{n-1}-\widetilde{u}_h^{n-1})\rangle_{\H}$ and take the expectation to get
	\begin{equation*}
		\mathbb{E}\big[\|u_h^{n}-\widetilde{u}_h^{n}\|_{\H}^2\big]\leq e^{-2\sigma_0\Delta t}\mathbb{E}\big[\|u_h^{n-1}-\widetilde{u}_h^{n-1}\|_{\H}^2\big]\leq \cdots\leq e^{-2\sigma_0 t_{n}}\mathbb{E}\big[\|u^0_h-\widetilde{u}^0_h\|_{\H}^2\big]
	\end{equation*}
	due to the dissipative property of $M_h$ given in \cite[Proposition 4.4(ii)]{CCC2021}.
\end{proof}

\subsubsection{Error analysis of the dG full discretization}
In this subsection, we aim to give the mean-square convergence analysis of the dG full discretization \eqref{dgfull}. Let $\widehat{S}_{h,\Delta t}:=(I-\frac{\Delta t}{2}M_h)^{-1}(I+\frac{\Delta t}{2}M_h)e^{-\sigma_0\Delta t}$, $T_{h,\Delta t}:=(I-\frac{\Delta t}{2}M_h)^{-1}\pi_h$, then the dG full discretization can be rewritten as
\begin{align*}
	u^{n}_h&=(\widehat{S}_{h,\Delta t})^nu^0_h+\lambda_{1}\sum_{k=0}^{n-1}(\widehat{S}_{h,\Delta t})^{n-k-1}T_{h,\Delta t}(JA_t^{\sigma_0}u^k_h\Delta\overline{W}_1^k)\\
	&\quad+\sum_{k=0}^{n-1}(\widehat{S}_{h,\Delta t})^{n-k-1}T_{h,\Delta t}\widetilde{{\bm{\lambda}}}_2\Delta W_2^k
\end{align*}
for $n\in\mathbb{N}_+$.

In order to estimate the mean-square error of \eqref{dgfull}, we need to introduce the following lemma, whose proof is given in Appendix B.
\begin{lemm}\label{ooq}
	Let $v=(v_1^{\top},v_2^{\top})^{\top}\in H^1(D)^6$ with $\mathbf{n}\times v_1|_{\partial D}=0$ and $\mathbf{n}\cdot v_2|_{\partial D}=0$. There exist positive constants $C$ such that\\
	{\rm (1)} $\|\widehat{S}_{h,\Delta t}\|_{\mathcal{L}(\H_h,\H_h)}\leq e^{-\sigma_0\Delta t}$ \text{and} $\|T_{h,\Delta t}\|_{\mathcal{L}(\H,\H_h)}\leq 1.$\\
	{\rm (2)}
	$\|(\widehat{S}_{\Delta t})^nv\|_{H^1(D)^6}\leq Ce^{-\sigma_0n\Delta t/2}\|v\|_{H^1(D)^6}.$\\
	{\rm (3)} $\|(\pi_h(\widehat{S}_{\Delta t})^n-(\widehat{S}_{h,\Delta t})^n\pi_h)v\|_{\H}\leq Ch^{\frac{1}{2}}e^{-\sigma_0n\Delta t/2}\|v\|_{H^1(D)^6}$.\\
	{\rm (4)} $\|(\widehat{S}(t_n)-(\widehat{S}_{h,\Delta t})^n\pi_h)v\|_{\H}\leq Ce^{-\sigma_0n\Delta t/2}(\Delta t^{\frac{1}{2}}+h^{\frac{1}{2}})\|v\|_{H^1(D)^6}$.\\
	{\rm (5)} $\|(\widehat{S}(t_n-r)-(\widehat{S}_{h,\Delta t})^{n-k-1}T_{h,\Delta t})v\|_{\H}\leq Ce^{-\sigma_0(n-k-1)\Delta t/2}(\Delta t^{\frac{1}{2}}+h^{\frac{1}{2}})\|v\|_{H^1(D)^6}$, for $r\in[t_{k},t_{k+1}]$ and $k=0,1,\cdots,n-1$.	
\end{lemm}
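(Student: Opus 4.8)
The plan is to establish (1) and (2) directly from the algebraic structure, and then to reduce (4) and (5) to (1)--(3) by splitting, so that the genuine work sits in (3). For (1) I would use only the dissipativity $\langle M_hv_h,v_h\rangle_{\H}\le0$ on $\H_h$ (\cite[Proposition 4.4(ii)]{CCC2021}): writing $w=(I-\tfrac{\Delta t}{2}M_h)^{-1}(I+\tfrac{\Delta t}{2}M_h)v_h$ gives $w-v_h=\tfrac{\Delta t}{2}M_h(w+v_h)$, and pairing with $w+v_h$ yields $\|w\|_{\H}^2-\|v_h\|_{\H}^2\le0$; together with the factor $e^{-\sigma_0\Delta t}$ this gives the first bound, while the same pairing for $(I-\tfrac{\Delta t}{2}M_h)^{-1}$ combined with $\|\pi_h\|\le1$ from \eqref{touying} gives $\|T_{h,\Delta t}\|\le1$.

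For (2) the key is that $\widehat{S}_{\Delta t}=e^{-\sigma_0\Delta t}\mathcal C_{\Delta t}$ with $\mathcal C_{\Delta t}=(I-\tfrac{\Delta t}{2}M)^{-1}(I+\tfrac{\Delta t}{2}M)$ a rational function of the skew-adjoint $M$, hence unitary on $\H$ and commuting with $M$. Therefore $\|(\widehat{S}_{\Delta t})^nv\|_{\H}\le e^{-\sigma_0 n\Delta t}\|v\|_{\H}$ and $\|M(\widehat{S}_{\Delta t})^nv\|_{\H}=\|(\widehat{S}_{\Delta t})^nMv\|_{\H}\le e^{-\sigma_0 n\Delta t}\|Mv\|_{\H}$. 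Taking $\nabla\cdot$ of the two component equations of $\mathcal C_{\Delta t}v=w$ and using $\nabla\cdot(\nabla\times\,\cdot)=0$ shows the divergence is reproduced exactly, so $\|\nabla\cdot(\widehat{S}_{\Delta t})^nv\|$ decays at the same rate, while a short boundary computation (using $\mathbf n\cdot(\nabla\times\Ee)=0$ whenever $\mathbf n\times\Ee=0$) shows $\mathbf n\times\Ee=0$ and $\mathbf n\cdot\Hh=0$ persist. The $H^1(D)^6$ equivalence already invoked in the proof of Lemma \ref{regularity in H1} then controls the full $H^1$-norm, giving (2), in fact with the faster rate $e^{-\sigma_0 n\Delta t}$.

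The crux is (3), which I would attack by discrete Duhamel,
\begin{equation*}
\pi_h(\widehat{S}_{\Delta t})^n-(\widehat{S}_{h,\Delta t})^n\pi_h=\sum_{k=0}^{n-1}(\widehat{S}_{h,\Delta t})^{n-k-1}\big(\pi_h\widehat{S}_{\Delta t}-\widehat{S}_{h,\Delta t}\pi_h\big)(\widehat{S}_{\Delta t})^{k}.
\end{equation*}
The decisive point is that the one-step defect carries an explicit factor $\Delta t$: from $(I-\tfrac{\Delta t}{2}M)^{-1}(I+\tfrac{\Delta t}{2}M)=2(I-\tfrac{\Delta t}{2}M)^{-1}-I$ one gets
\begin{equation*}
\pi_h\widehat{S}_{\Delta t}-\widehat{S}_{h,\Delta t}\pi_h=2e^{-\sigma_0\Delta t}\big[\pi_h(I-\tfrac{\Delta t}{2}M)^{-1}-(I-\tfrac{\Delta t}{2}M_h)^{-1}\pi_h\big].
\end{equation*}
Setting $z=(I-\tfrac{\Delta t}{2}M)^{-1}w$ and $z_h=(I-\tfrac{\Delta t}{2}M_h)^{-1}\pi_hw$, the form $a_h(\cdot,\cdot)=\langle\cdot,\cdot\rangle_{\H}-\tfrac{\Delta t}{2}\langle M_h\cdot,\cdot\rangle_{\H}$ is coercive with constant $1$ (dissipativity again), and Galerkin orthogonality gives $a_h(\pi_hz-z_h,\phi_h)=\tfrac{\Delta t}{2}b_h(z-\pi_hz,\phi_h)$, with $b_h$ the consistent dG form behind $M_h$ and $[[z]]=0$. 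Estimating $z-\pi_hz$ on elements and faces through \eqref{touyingwucha}--\eqref{wdx}, controlling $\phi_h$ by inverse inequalities, and using $\|z\|_{H^1(D)^6}\le C\|w\|_{H^1(D)^6}$ (valid uniformly in $\Delta t$ since $Mz=(I-\tfrac{\Delta t}{2}M)^{-1}Mw$ and $\nabla\cdot z=\nabla\cdot w$), I expect the defect bound $\|\pi_h\widehat{S}_{\Delta t}w-\widehat{S}_{h,\Delta t}\pi_hw\|_{\H}\le C\Delta t\,h^{1/2}\|w\|_{H^1(D)^6}$. Inserting this with $w=(\widehat{S}_{\Delta t})^kv$, using (1), (2) and
\begin{equation*}
\sum_{k=0}^{n-1}e^{-\sigma_0(n-k-1)\Delta t}e^{-\sigma_0 k\Delta t}=n\,e^{-\sigma_0(n-1)\Delta t},\qquad n\Delta t\,e^{-\sigma_0 n\Delta t}\le\Big(\sup_{t\ge0}te^{-\sigma_0t/2}\Big)e^{-\sigma_0 n\Delta t/2},
\end{equation*}
collapses the sum to $Ch^{1/2}e^{-\sigma_0 n\Delta t/2}\|v\|_{H^1(D)^6}$. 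The step I expect to be the main obstacle is precisely this uniform-in-$\Delta t$ dG defect estimate with the $\Delta t$ extracted, because a bound without that factor would leave the useless $h^{1/2}/\Delta t$ after summation.

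Finally (4) and (5) follow by splitting and the triangle inequality. For (4) I would write $\widehat{S}(t_n)-(\widehat{S}_{h,\Delta t})^n\pi_h$ as the sum of the temporal error $\widehat{S}(t_n)-(\widehat{S}_{\Delta t})^n$ (Lemma \ref{opee}(2), contributing $\Delta t^{1/2}$), the projection error $(I-\pi_h)(\widehat{S}_{\Delta t})^n$ (bounded by \eqref{touyingwucha} and (2), contributing $h$), and the term governed by (3) (contributing $h^{1/2}$); all three decay like $e^{-\sigma_0 n\Delta t/2}$. For (5), with $m=n-k-1$ and $t_n-r\in[t_m,t_{m+1}]$, I would combine Lemma \ref{opee}(5) for $\widehat{S}(t_n-r)-(\widehat{S}_{\Delta t})^mT_{\Delta t}$ with the spatial comparison of $(\widehat{S}_{\Delta t})^mT_{\Delta t}$ and $(\widehat{S}_{h,\Delta t})^mT_{h,\Delta t}$, the latter split into a projection error, an application of (3) to $T_{\Delta t}v$, and the single resolvent defect $\pi_hT_{\Delta t}-T_{h,\Delta t}$ already estimated within (3); the $H^1$-boundedness $\|T_{\Delta t}v\|_{H^1(D)^6}\le C\|v\|_{H^1(D)^6}$ then yields the claim.
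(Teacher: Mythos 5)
Your parts (1), (2), (4) and (5) are essentially sound and close to the paper's own route: (1) from dissipativity of $M_h$, (2) from the exact reproduction of the divergence and persistence of the boundary conditions (indeed, since $\sigma\equiv\sigma_0$ is constant in this subsection your sharper rate $e^{-\sigma_0 n\Delta t}$ is legitimate), and (4)--(5) by the same three-way splittings the paper uses, together with the one-step resolvent defect $\pi_h-T_{h,\Delta t}$. However, all of this hangs on (3), and there your proposal has a genuine gap --- precisely at the step you yourself flag as the obstacle. The one-step defect bound
\begin{equation*}
\|\pi_h\widehat{S}_{\Delta t}w-\widehat{S}_{h,\Delta t}\pi_hw\|_{\H}\leq C\,\Delta t\,h^{1/2}\|w\|_{H^1(D)^6}
\end{equation*}
is not obtainable by Galerkin orthogonality, projection estimates and inverse inequalities, and I do not believe it is true. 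Writing $e=\pi_hz-z_h$ with $z=(I-\tfrac{\Delta t}{2}M)^{-1}w$, the consistency term $\tfrac{\Delta t}{2}\langle M_h(z-\pi_hz),\phi_h\rangle$ reduces to face terms; if you control $\phi_h$ on faces by the inverse/trace inequality $\|\phi_h\|_{L^2(F)}\leq Ch^{-1/2}\|\phi_h\|_{L^2(K)}$, the factor $h^{-1/2}$ exactly cancels the $h^{1/2}$ from \eqref{wdx}, leaving $\|e\|_{\H}\leq C\Delta t\|w\|_{H^1(D)^6}$ with no power of $h$ at all. The only way to keep the $h^{1/2}$ is to test with $e$ itself and absorb its face terms into the dG dissipation $-\langle M_he,e\rangle$ (the jump seminorm), but then Young's inequality forces you to spend half of the $\Delta t$, yielding $\|e\|_{\H}\leq C(\Delta t\,h)^{1/2}\|w\|_{H^1(D)^6}$. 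Feeding either bound into your discrete Duhamel sum of $\approx(\sigma_0\Delta t)^{-1}$ effective terms gives $C\min\bigl(1,\sqrt{h/\Delta t}\bigr)$, which does not imply the required $Ch^{1/2}$ uniformly in $\Delta t$ and $h$ (take $h=\Delta t$: your sum gives $O(1)$, the lemma claims $O(h^{1/2})$).

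The reason the paper succeeds where the Duhamel-plus-triangle-inequality argument cannot is that it never sums one-step errors in norm. It writes the $n$-step error recursion for $e_h^n=(\widehat{S}_{h,\Delta t})^n\pi_hv-\pi_h(\widehat{S}_{\Delta t})^nv$, pairs with $e_h^{n+1}+e^{-\sigma_0\Delta t}e_h^n$, absorbs the face terms of the discrete error into the dG dissipation via \cite[Proposition 4.4(ii)--(iii)]{CCC2021}, and bounds the projection face terms by \eqref{wdx} and part (2). The defect then enters the \emph{squared} norm at each step as $O(h\Delta t\,e^{-\sigma_0 n\Delta t})$, and the geometric summation produces $\|e_h^n\|^2_{\H}\leq Ch\,e^{-\sigma_0 n\Delta t}\|v\|^2_{H^1(D)^6}$, i.e.\ $O(h^{1/2})$ in norm. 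Mean-square (energy) accumulation is quadratically better than accumulation by the triangle inequality; your plan loses exactly this factor. To repair your proof you should abandon the telescoping identity in (3) and run the energy estimate on the full recursion, as the paper does.
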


\begin{theo}\label{dgorder}
	Let $Q_1^{\frac{1}{2}}\in HS(L^2(D),H_0^{\gamma_1}(D))$, $Q_2^{\frac{1}{2}}\in HS(L^2(D),H_0^{\gamma_2}(D))$ hold with $\gamma_1:=1+\gamma>\frac52$ and $\gamma_2\geq1$, and  let $u_0\in L^4(\Omega,H^1(D)^6)$ and $F_{Q_1}\in W^{1,\infty}(D)$. There exists a positive constant $C_{12}$ independent of $\Delta t$ and $h$ such that
	$$\sup_{n\geq 0}\big(\mathbb{E}[\|u^n_h-u(t_n)\|^2_{\H}]\big)^{\frac{1}{2}}\leq C_{12}(\Delta t^{\frac{1}{2}}+h^{\frac{1}{2}}),$$
	where the positive constant $C_{12}$ depends on $|D|$, $\|F_{Q_1}\|_{W^{1,\infty}(D)}$,
	$C_1,$
	$\mathbb{E}\big[\|u_0\|^4_{H^1(D)^6}\big],$
	$ \lambda_{1},$$\widetilde{{\bm{\lambda}}}_2,$
	$\|Q_1^{\frac{1}{2}}\|_{\mathcal{L}_2^{\gamma_1}}$ and $\|Q_2^{\frac{1}{2}}\|_{\mathcal{L}_2^{\gamma_2}}$. 		
\end{theo}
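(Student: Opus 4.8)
The plan is to mirror the mild-solution error decomposition from the proof of Theorem~\ref{order}, but to organize it around an \emph{auxiliary process} so that every operator-error estimate from Lemma~\ref{ooq} is applied to the exact solution (which is $H^1(D)^6$-regular uniformly in time by Lemma~\ref{regularity in H1}), while the merely $\H$-bounded numerical solution (Proposition~\ref{efs}) is acted on only by the contractive discrete operators of Lemma~\ref{ooq}(1). I would fix $u^0_h=\pi_h u_0$, write the It\^o mild form \eqref{SEE_I} of $u(t_n)$ and the telescoped mild form of $u^n_h$, and introduce
\[
\bar u^n := (\widehat S_{h,\Delta t})^n \pi_h u_0 - \tfrac{\lambda_1^2}{2}\sum_{k=0}^{n-1}(\widehat S_{h,\Delta t})^{n-1-k}T_{h,\Delta t}\!\int_{t_k}^{t_{k+1}}\!F_{Q_1}u(r)\,\dd r + \lambda_1\sum_{k=0}^{n-1}(\widehat S_{h,\Delta t})^{n-1-k}T_{h,\Delta t}\!\int_{t_k}^{t_{k+1}}\!Ju(r)\,\dd W_1(r) + \sum_{k=0}^{n-1}(\widehat S_{h,\Delta t})^{n-1-k}T_{h,\Delta t}\!\int_{t_k}^{t_{k+1}}\!\widetilde{\bm\lambda}_2\,\dd W_2(r),
\]
which replaces the exact semigroup $\hat S(t_n-r)$ by the fully discrete propagator but keeps the exact, $H^1$-regular integrand $u(r)$. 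Then I split $e_n = u(t_n)-u^n_h = (u(t_n)-\bar u^n)+(\bar u^n - u^n_h)$.

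For $u(t_n)-\bar u^n$ I would bound the initial term by Lemma~\ref{ooq}(4) and each convolution difference by the It\^o isometry together with Lemma~\ref{ooq}(5). Here the integrands $Ju(r)Q_1^{\frac12}q_j$, $F_{Q_1}u(r)$ and $\widetilde{\bm\lambda}_2 Q_2^{\frac12}q_j$ all lie in $H^1(D)^6$ (using $u(r)\in H^1(D)^6$ from Lemma~\ref{regularity in H1}, $F_{Q_1}\in W^{1,\infty}(D)$, the embedding $H^{\gamma_1}(D)\hookrightarrow W^{1,\infty}(D)$ for $\gamma_1>\tfrac52$, and $Q_2^{\frac12}\in HS(L^2(D),H_0^{\gamma_2}(D))$ with $\gamma_2\ge1$), so the operator factor $Ce^{-\sigma_0(n-1-k)\Delta t/2}(\Delta t^{\frac12}+h^{\frac12})$ is legitimate; summing the geometric weights against $\Delta t$ yields $\mathbb E[\|u(t_n)-\bar u^n\|_\H^2]\le C(\Delta t+h)$. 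This is the step that exploits the $H^1$-regularity of the exact solution.

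For $\bar u^n - u^n_h$ the $W_2$-convolutions cancel identically, and what remains is $\lambda_1\sum_k(\widehat S_{h,\Delta t})^{n-1-k}T_{h,\Delta t}\big[\int_{t_k}^{t_{k+1}}Ju(r)\,\dd W_1(r)-JA_t^{\sigma_0}u^k_h\,\Delta\overline W_1^k\big]$ together with the $F_{Q_1}$-drift of $\bar u^n$. Applying the $\H$-contractivity of Lemma~\ref{ooq}(1) reduces matters to local estimates of the integrand differences, which I decompose exactly as the terms $A_{1,k}$--$A_{4,k}$ in the proof of Theorem~\ref{order}: the It\^o--Stratonovich correction, the noise truncation handled by \eqref{2ptruncate}--\eqref{wfzq}, the midpoint term $A_t^{\sigma_0}u^k_h$ expanded through \eqref{dgfull} so that $\Delta t\,M_h(u^{k+1}_h+e^{-\sigma_0\Delta t}u^k_h)$ is rewritten via \eqref{dgfull} into $\H$-bounded quantities (thereby avoiding the $O(h^{-1})$ norm of $M_h$), and the splitting $u(r)-e^{-\sigma_0\Delta t}u^k_h=(u(r)-e^{-\sigma_0\Delta t}u(t_k))+e^{-\sigma_0\Delta t}e_k$, whose first piece is $O(\Delta t)$ by \eqref{bnb} and whose second piece produces the recursive term. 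Collecting everything gives $\mathbb E[\|e_n\|_\H^2]\le C(\Delta t+h)+C\Delta t\sum_{k=0}^{n-1}e^{-\sigma_0(n-k)\Delta t}\mathbb E[\|e_k\|_\H^2]$, and a discrete Gronwall inequality closes the estimate uniformly in $n$.

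The main obstacle is precisely the low regularity flagged in the introduction: the discrete operators in Lemma~\ref{ooq}(4)--(5) produce the order $\Delta t^{\frac12}+h^{\frac12}$ only when fed an $H^1(D)^6$ argument, yet $u^n_h$ is controlled only in $\H$ (Proposition~\ref{efs}) and $M_h$ carries an $h$-dependent norm. The auxiliary process $\bar u^n$ is what resolves this: it routes all operator-error estimates through the exact solution $u(r)$ and confines the numerical solution to contractive $\H$-bounds, so that no $H^1$-control of $u^n_h$ is ever required. The $L^2$-Wasserstein estimate for $\pi^{\Delta t,h}$ then follows from this theorem exactly as in the corollary after Theorem~\ref{order}.
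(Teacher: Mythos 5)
Your first split, the estimate of $u(t_n)-\bar u^n$, is sound: every operator difference there acts on the exact solution, so Lemma \ref{ooq}(4)(5), the It\^o isometry and Lemma \ref{regularity in H1} indeed give $C(\Delta t+h)$. The genuine gap is in your second split, the mild-form, term-by-term estimate of $\bar u^n-u^n_h$. Because your auxiliary process keeps the exact It\^o integrals, the whole comparison between $\lambda_1\int_{t_k}^{t_{k+1}}Ju(r)\,\dd W_1(r)-\tfrac{\lambda_1^2}{2}\int_{t_k}^{t_{k+1}}F_{Q_1}u(r)\,\dd r$ and the scheme's midpoint noise term $\lambda_1 JA_t^{\sigma_0}u^k_h\Delta\overline W_1^k$ must be carried out against the \emph{numerical} solution. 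Mimicking $A_{2,k}$ from Theorem \ref{order} produces the term $\tfrac{\lambda_1\Delta t}{4}(\widehat S_{h,\Delta t})^{n-1-k}T_{h,\Delta t}\big[JM_h(u^{k+1}_h+e^{-\sigma_0\Delta t}u^k_h)\Delta\overline W_1^k\big]$; in the semi-discrete proof the corresponding term was controlled by the uniform $H^1(D)^6$ fourth-moment bounds of Corollary \ref{uniformdivMp2}, but no analogue exists for $u^k_h$: Proposition \ref{efs} gives only $\H$-bounds, and $M_h$ on $\H_h$ carries an $h$-dependent (inverse-estimate) norm. Your proposed fix --- rewriting $\Delta t\,M_h(u^{k+1}_h+e^{-\sigma_0\Delta t}u^k_h)$ via \eqref{dgfull} --- is circular: the scheme converts it back into the one-step increment $u^{k+1}_h-e^{-\sigma_0\Delta t}u^k_h$ plus noise terms, and that increment contains exactly the same $M_h$ term; it is of size $O(\Delta t/h)+O(\Delta t^{1/2})$ in $L^2(\Omega,\H)$, not $O(\Delta t^{1/2})$. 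Moreover, these increment-times-noise terms (and likewise the terms where $e^{full}_{k+1}$ multiplies $\Delta\overline W_1^k$) are not martingale differences, since $u^{k+1}_h$ depends on $\Delta\overline W_1^k$, so summing them over $k$ against the damped propagator loses a factor $\Delta t^{-1}$. Consequently the recursion you claim, $\mathbb{E}[\|e_n\|^2_\H]\le C(\Delta t+h)+C\Delta t\sum_{k}e^{-\sigma_0(n-k)\Delta t}\mathbb{E}[\|e_k\|^2_\H]$, cannot be derived along this route: the crucial prefactor $\Delta t$ in front of the error sum is exactly what fails.

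This is the ``low regularity of the numerical solution'' obstruction, and the paper resolves it with a differently built auxiliary process: $\widetilde u^n_h$ in \eqref{jhg} inserts the midpoint of the \emph{exact} solution, $JA^{\sigma_0}_tu(t_k)\Delta\overline W_1^k$, into the discrete noise term. Then (Step 1) the entire time-discretization comparison of the noise is performed on the $H^1$-regular exact solution, including the weighted estimates \eqref{ee1} with factors $\|\Delta\overline W_1^n\|^{2p}_{H^\gamma(D)}$, and (Step 2) the remaining difference $\widetilde e_n=u^n_h-\widetilde u^n_h$ is never expanded in mild form at all: it satisfies the one-step recursion \eqref{gf}, and the energy estimate removes the dangerous terms structurally --- $M_h$ acts dissipatively on $\widetilde e$, and the $J$-noise self-interaction $\langle J(\widetilde e_{n+1}+e^{-\sigma_0\Delta t}\widetilde e_n)\Delta\overline W_1^n,\widetilde e_{n+1}+e^{-\sigma_0\Delta t}\widetilde e_n\rangle_{\H}$ vanishes by skew-symmetry, leaving only $\widehat e$-forcing, which is then handled by independence of $\Delta\overline W_1^n$ from $\mathcal F_{t_n}$ and the Step 1 bounds. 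If you wish to keep your $\bar u^n$, you must replace your second split by such an energy/dissipativity argument for $d_n=u^n_h-\bar u^n$ (the $J(d_{n+1}+e^{-\sigma_0\Delta t}d_n)\Delta\overline W_1^n$ part then cancels by skew-symmetry, and the surviving forcing compares the scheme's noise term built from $\bar u$ with the exact It\^o integral), and you will additionally need weighted versions of your first estimate, $\mathbb{E}\big[\|u(t_n)-\bar u^n\|^2_{\H}\|\Delta\overline W_1^n\|^{2p}_{H^{\gamma}(D)}\big]\le C(\Delta t+h)\Delta t^p$, exactly as in the paper's Step 1. As written, the mild-form summation over the numerical solution cannot be closed.
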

\begin{proof}
	For $n\in\mathbb{N}_+$, we introduce an auxiliary process
	\begin{align}\label{jhg}
		\widetilde{u}^{n}_h&=\big(\widehat{S}_{h,\Delta t}\big)^nu^0_h+\lambda_{1}\sum_{k=0}^{n-1}\big(\widehat{S}_{h,\Delta t}\big)^{n-k-1}T_{h,\Delta t}\big(JA_t^{\sigma_0}u(t_k)\Delta\overline{W}_1^k\big)\notag\\
		&\quad+\sum_{k=0}^{n-1}\big(\widehat{S}_{h,\Delta t}\big)^{n-k-1}T_{h,\Delta t}\big(\widetilde{{\bm{\lambda}}}_2\Delta W_2^k\big),
	\end{align}
	that is,
	\begin{align}\label{oneauxi}
		\widetilde{u}^{n+1}_h-e^{-\sigma_0\Delta t}\widetilde{u}^n_h&=\frac{\Delta t}{2}\big(M_h\widetilde{u}_h^{n+1}+e^{-\sigma_0\Delta t}M_h\widetilde{u}_h^n\big)\notag\\
		&\quad+\frac{\lambda_{1}}{2}\pi_h\Big[J\big(u(t_{n+1})+e^{-\sigma_0\Delta t}u(t_n)\big)\Delta\overline{W}_1^n\Big]+\pi_h\big(\widetilde{\bm{\lambda}}_2\Delta W_2^n\big).
	\end{align}
	Let $e^{full}_n:=u^n_h-u(t_n)=\widetilde{e}_n+\widehat{e}_n$, where $\widetilde{e}_n:=u^n_h-\widetilde{u}^n_h$ and $\widehat{e}_n:=\widetilde{u}^n_h-u(t_n)$.
	
	{\em Step 1. Estimate of $\,\mathbb{E}\big[\|\widehat{e}_{n+1}\|^2_{\H}\|\Delta \overline{W}_1^n\|^{2p}_{H^{\gamma}(D)}\big],\,p=0,1,2$.}
	
	We use $\widehat{e}_{n+1}=\widetilde{u}^{n+1}_h-u(t_{n+1})$, \eqref{ewe} and \eqref{jhg} to obtain
	\begin{align*}
		&\mathbb{E}\big[\|\widehat{e}_{n+1}\|^2_{\H}\|\Delta \overline{W}_1^n\|^{2p}_{H^{\gamma}(D)}\big]\\
		&\leq 3\mathbb{E}\big[\big\|\big((\widehat{S}_{h,\Delta t})^{n+1}\pi_h-\widehat{S}(t_{n+1})\big)u_0\big\|^2_{\H}\big]\mathbb{E}\big[\|\Delta\overline{W}_1^n\|^{2p}_{H^{\gamma}(D)}\big]\\
		&\quad+3\mathbb{E}\Big[\Big\|\sum_{k=0}^{n}(\widehat{S}_{h,\Delta t})^{n-k}T_{h,\Delta t}(\widetilde{{\bm{\lambda}}}_2\Delta W_2^k)\\
		&\qquad\qquad\qquad-\int_{0}^{t_{n+1}}\widehat{S}(t_{n+1}-r)\widetilde{{\bm{\lambda}}}_2\dd W_2(r)\Big\|^2_{\H}\Big]\mathbb{E}\big[\|\Delta\overline{W}_1^n\|^{2p}_{H^{\gamma}(D)}\big]\\
		&\quad+3\mathbb{E}\Big[\Big\|\lambda_1\sum_{k=0}^{n}\big(\widehat{S}_{h,\Delta t}\big)^{n-k}T_{h,\Delta t}\big(JA^{\sigma_0}_tu(t_k)\Delta\overline{W}_1^k\big)\\
		&\qquad\qquad\qquad-\lambda_{1}\int_{0}^{t_{n+1}}\widehat{S}(t_{n+1}-r)Ju(r)\dd W_1(r)\\
		&\quad\quad\quad\qquad\quad+\frac{1}{2}\lambda_{1}^2\int_{0}^{t_{n+1}}\widehat{S}(t_{n+1}-r)\big(F_{Q_1}u(r)\big)\dd r\Big\|_{\H}^2\|\Delta\overline{W}_1^n\|^{2p}_{H^{\gamma}(D)}\Big]\\
		&=:I_{1}+I_{2}+I_{3}.
	\end{align*}
	For the term $I_{1}$, Lemma \ref{ooq}(4) yields
	\begin{align*}
		I_{1}\leq C\big(\Delta t+h\big)\Delta t^p.
	\end{align*}
	For the term $I_{2}$, it follows from Lemma \ref{opee}(5) and $Q_2^{\frac{1}{2}}\in HS(L^2(D),H_0^{\gamma_2}(D))$ that
	\begin{align*}
		I_{2}&\leq C\Delta t^p\sum_{k=0}^{n}\int_{t_k}^{t_{k+1}}\big\|(\widehat{S}(t_{n+1}-r)-(\widehat{S}_{h,\Delta t})^{n-k}T_{h,\Delta t})\widetilde{{\bm{\lambda}}}_2Q_2^{\frac{1}{2}}\big\|^2_{HS(L^2(D),\H)}\dd r\\
		&\leq C\Delta t^p\sum_{k=0}^{n}e^{-\sigma_0(n+1-k)\Delta t}\big(\Delta t+h\big)\Delta t\leq C\Delta t^p\big(\Delta t+h\big).
	\end{align*}
	For the term $I_{3}$, notice that 
	\begin{align*}
		&\lambda_{1}\sum_{k=0}^{n}\big(\widehat{S}_{h,\Delta t}\big)^{n-k}T_{h,\Delta t}\big(JA^{\sigma_0}_tu(t_k)\Delta\overline{W}_1^k\big)-\lambda_{1}\int_{0}^{t_{n+1}}\widehat{S}(t_{n+1}-r)Ju(r)\dd W_1(r)\\
		&\qquad\qquad+\frac{1}{2}\lambda_{1}^2\int_{0}^{t_{n+1}}\widehat{S}(t_{n+1}-r)\big(F_{Q_1}u(r)\big)\dd r\\
		&=\sum_{k=0}^{n}\Big[\lambda_{1}\big(\widehat{S}_{h,\Delta t}\big)^{n-k}T_{h,\Delta t}\big(Je^{-\sigma_0\Delta t}u(t_k)\big(\Delta \overline{W}_1^k-\Delta W_1^k\big)\big)\Big]\\
		&\quad+\sum_{k=0}^{n}\Big[\lambda_{1}\int_{t_k}^{t_{k+1}}\Big[\big(\widehat{S}_{h,\Delta t}\big)^{n-k}T_{h,\Delta t}J\big(e^{-\sigma_0\Delta t}u(t_k)\big)-\widehat{S}(t_{n+1}-r)Ju(r)\Big]\dd W_1(r)\Big]\\
		&\quad+\sum_{k=0}^{n}\Big[\lambda_{1}(\widehat{S}_{h,\Delta t})^{n-k}T_{h,\Delta t}\Big(J\frac{u(t_{k+1})-e^{-\sigma_0\Delta t}u(t_k)}{2}\Delta\overline{W}_1^k\Big)\\
		&\qquad\qquad\qquad+\frac{1}{2}\lambda_{1}^2\int_{t_k}^{t_{k+1}}\widehat{S}(t_{n+1}-r)\big(F_{Q_1}u(r)\big)\dd r\Big]\\
		&=:\sum_{k=0}^{n}A_{1,k}+\sum_{k=0}^{n}A_{2,k}+\sum_{k=0}^{n}A_{3,k}.
	\end{align*}
	Therefore,
	\begin{align*}
		I_{3}&\leq C\mathbb{E}\Big[\Big\|\sum_{k=0}^{n}A_{1,k}\|^2_{\H}\Big\|\Delta\overline{W}_1^n\|^{2p}_{H^{\gamma}(D)}\Big]+C\mathbb{E}\Big[\Big\|\sum_{k=0}^{n}A_{2,k}\Big\|^2_{\H}\|\Delta\overline{W}_1^n\|^{2p}_{H^{\gamma}(D)}\Big]\\
		&\qquad+C\mathbb{E}\Big[\Big\|\sum_{k=0}^{n}A_{3,k}\Big\|_{\H}^2\|\Delta\overline{W}_1^n\|^{2p}_{H^{\gamma}(D)}\Big]\\
		&=:I_{3,1}+I_{3,2}+I_{3,3}.
	\end{align*}
	
	For the term $I_{3,1}$, Lemma \ref{ooq}(1), Proposition \ref{obv}, Sobolev embedding $H^{\gamma}\hookrightarrow L^{\infty}(D)$ for $\gamma>\frac{3}{2}$ and \eqref{2ptruncate} yield 
	\begin{align*}
		\mathbb{E}\big[\|A_{1,k}\|^4_{\H}\big]&\leq Ce^{-4\sigma_0(n-k)\Delta t}\mathbb{E}\big[\|u(t_k)\|^4_{\H}\big]\mathbb{E}\big[\|\Delta\overline{W}_1^k-\Delta W_1^k\|^4_{H^{\gamma}(D)}\big]\\
		&\leq Ce^{-4\sigma_0(n-k)\Delta t}\Delta t^6,
	\end{align*}
	which implies 
	\begin{align*}
		&\mathbb{E}\Big[\Big\|\sum_{k=0}^{n}A_{1,k}\Big\|_{\H}^4\Big]
		\leq\Big(\sum_{k=0}^{n}\big(\mathbb{E}\big[\|A_{1,k}\|_{\H}^4\big]\big)^{\frac{1}{4}}\Big)^4\leq \Big(\sum_{k=0}^{n}Ce^{-\sigma_0(n-k)\Delta t}\Delta t^{\frac{3}{2}}\Big)^4\leq C\Delta t^2.
	\end{align*}	
	Hence 
	$$I_{3,1}\leq C\Big(\mathbb{E}\Big[\Big\|\sum_{k=0}^{n}A_{1,k}\Big\|_{\H}^4\Big]\Big)^{\frac{1}{2}}\Big(\mathbb{E}\big[\|\Delta\overline{W}_1^n\|_{H^{\gamma}(D)}^{4p}\big]\Big)^{\frac{1}{2}}\leq C\Delta t^{1+p}.$$
	
	For the term $I_{3,2}$, notice that
	\begin{align*}
		&\mathbb{E}\Big[\Big\|\sum_{k=0}^{n-1}A_{2,k}\Big\|^2_{\H}\Big]\\
		&\leq C\sum_{k=0}^{n-1}\mathbb{E}\int_{t_k}^{t_{k+1}}\sum_{i=1}^{\infty}\big\|\big[\big(\widehat{S}_{h,\Delta t}\big)^{n-k}T_{h,\Delta t}J\big(e^{-\sigma_0\Delta t}u(t_k)\big)-\widehat{S}(t_{n+1}-r)Ju(r)\big]Q_1^{\frac{1}{2}}q_i\big\|_{\H}^2\dd r\\
		&\leq C\bigg(\sum_{k=0}^{n-1}\mathbb{E}\int_{t_k}^{t_{k+1}}\Big[ \sum_{i=1}^{\infty}\big\|\big(\widehat{S}_{h,\Delta t}\big)^{n-k}T_{h,\Delta t}J\big(\big(e^{-\sigma_0\Delta t}u(t_k)-u(r)\big)Q_1^{\frac{1}{2}}q_i\big)\big\|_{\H}^2\\
		&\qquad\qquad\qquad\qquad\quad+\sum_{i=1}^{\infty}\big\|\big(\big(\widehat{S}_{h,\Delta t}\big)^{n-k}T_{h,\Delta t}-\widehat{S}(t_{n+1}-r)\big)J\big(u(r)Q_1^{\frac{1}{2}}q_i\big)\big\|^2_{\H}\Big]\dd r\bigg)\\
		&\leq C\sum_{k=0}^{n-1}e^{-(n-k)\sigma_0\Delta t}(\Delta t+h)\Delta t\leq C(\Delta t+h),
	\end{align*}
	where we use \eqref{bnb}, Lemma \ref{regularity in H1}, the It\^o isometry, Sobolev embedding $H^{\gamma}(D)\hookrightarrow L^{\infty}(D)$ for $\gamma>\frac{3}{2}$ and Lemma \ref{ooq}(1)(5). Similar to the estimate of \eqref{bnb}, for $r\in[t_n,t_{n+1}]$, we have
	\begin{align}\label{wfz}
		\mathbb{E}\big[\|u(r)-e^{-\sigma_0\Delta t}u(t_n)\|_{\H}^{2p}\big]\leq C\Delta t^{p}.
	\end{align}
	By utilizing the Burkholder--Davis--Gundy-type inequality for stochastic integrals, \eqref{wfz}, Proposition \ref{obv}, Lemma \ref{opee}(1) and Lemma \ref{ooq}(1), it yields that
	\begin{align*}
		\mathbb{E}\big[\|A_{2,n}\|_{\H}^4\big]\leq C\Delta t^2.
	\end{align*}
	Consequently,
	\begin{align*}
		I_{3,2}&\leq C\mathbb{E}\Big[\|\sum_{k=0}^{n-1}A_{2,k}\|^2_{\H}\Big]\mathbb{E}\big[\|\Delta\overline{W}_1^n\|^{2p}_{H^{\gamma}(D)}\big]+C\Big(\mathbb{E}\big[\|A_{2,n}\|^4_{\H}\big]\Big)^{\frac{1}{2}}\Big(\mathbb{E}\big[\|\Delta\overline{W}_1^n\|^{4p}_{H^{\gamma}(D)}\big]\Big)^{\frac{1}{2}}\\
		&\leq C(\Delta t+h)\Delta t^p.
	\end{align*}
	
	For the term $I_{3,3}$, we substitute \eqref{ewe} into $A_{3,k}$ and further split it to obtain 
	\begin{align*}
		A_{3,k}:=A_{3,k,1}+A_{3,k,2}+A_{3,k,3}+A_{3,k,4}+A_{3,k,5},
	\end{align*}
	where
	{\small	\begin{align*}
			A_{3,k,1}&=\frac{\lambda_{1}}{2}(\widehat{S}_{h,\Delta t})^{n-k}T_{h,\Delta t}\Big[J\big(\big(\widehat{S}(\Delta t)-e^{\sigma_0\Delta t}\big)u(t_k)\big)\Delta\overline{W}_1^k\Big]\\
			&\quad-\frac{\lambda_1^3}{4}(\widehat{S}_{h,\Delta t})^{n-k}T_{h,\Delta t}\Big[J\Big(\int_{t_k}^{t_{k+1}}\widehat{S}(t_{k+1}-r)\big(F_{Q_1}u(r)\big)\dd r\Big)\Delta\overline{W}_1^k\Big]\\
			&\quad+\frac{\lambda_{1}^2}{2}(\widehat{S}_{h,\Delta t})^{n-k}T_{h,\Delta t}\Big[J\Big(\int_{t_k}^{t_{k+1}}\widehat{S}(t_{k+1}-r)J\big(u(r)-e^{-\sigma_0\Delta t}u(t_k)\big)\dd W_1(r)\Big)\Delta\overline{W}_1^k\Big]\\
			&\quad+\frac{\lambda_{1}^2}{2}(\widehat{S}_{h,\Delta t})^{n-k}T_{h,\Delta t}\Big[J\Big(\int_{t_k}^{t_{k+1}}\widehat{S}(t_{k+1}-r)J\big(e^{-\sigma_0\Delta t}u(t_k)\big)\dd W_1(r)\Big)\big(\Delta\overline{W}_1^k-\Delta W_1^k\big)\Big],\\
			A_{3,k,2}&=\frac{\lambda_{1}^2}{2}\big(\widehat{S}_{h,\Delta t}\big)^{n-k}T_{h,\Delta t}\Big[J\Big(\int_{t_k}^{t_{k+1}}\big(\widehat{S}(t_{k+1}-r)-I\big)J\big(e^{-\sigma_0\Delta t}u(t_k)\big)\dd W_1(r)\Big)\Delta W_1^k\Big]\\
			&\quad-\frac{\lambda_1^2}{2}\big(\widehat{S}_{h,\Delta t}\big)^{n-k}T_{h,\Delta t}\int_{t_k}^{t_{k+1}}F_{Q_1}\big(e^{-\sigma_0\Delta t}u(t_k)-u(r)\big)\dd r\\
			&\quad+\frac{\lambda_{1}^2}{2}\int_{t_k}^{t_{k+1}}\big(\widehat{S}(t_{n+1}-r)-(\widehat{S}_{h,\Delta t})^{n-k}T_{h,\Delta t}\big)\big(F_{Q_1}\big(u(r)-e^{-\sigma_0\Delta t}u(t_k)\big)\big)\dd r,\\
			A_{3,k,3}&=\frac{\lambda_{1}}{2}(\widehat{S}_{h,\Delta t})^{n-k}T_{h,\Delta t}\Big[J\Big(\int_{t_k}^{t_{k+1}}\widehat{S}(t_{k+1}-r)\widetilde{{\bm{\lambda}}}_2\dd W_2(r)\Big)\Delta\overline{W}_1^k\Big],\\
			A_{3,k,4}&=-\lambda_{1}^2(\widehat{S}_{h,\Delta t})^{n-k}T_{h,\Delta t}\Big(\int_{t_k}^{t_{k+1}}\int_{t_k}^{r}e^{-\sigma_0\Delta t}u(t_k)\dd W_1(\rho)\dd W_1(r)\Big),\\
			A_{3,k,5}&=\frac{\lambda_{1}^2}{2}\int_{t_k}^{t_{k+1}}\Big(\widehat{S}(t_{n+1}-r)-(\widehat{S}_{h,\Delta t})^{n-k}T_{h,\Delta t}\Big)\big(F_{Q_1}e^{-\sigma_0\Delta t}u(t_k)\big)\dd r.
	\end{align*}}

	For the term $A_{3,k,1}$, it follows from Proposition \ref{obv}, Sobolev embedding $H^{\gamma}\hookrightarrow L^{\infty}(D)$ for $\gamma>\frac{3}{2}$, Lemma \ref{opee}(1)(3), Lemma \ref{ooq}(1), \eqref{wfz}, the Burkholder--Davis--Gundy-type inequality and \eqref{2ptruncate} that
	{\small\begin{align*}
			&\mathbb{E}\big[\|A_{3,k,1}\|^2_{\H}\big]\\
			&\leq Ce^{-2(n-k)\sigma_0\Delta t}\Delta t^2\mathbb{E}\big[\|u(t_k)\|^2_{D(M)}\big]\mathbb{E}\big[\|\Delta\overline{W}_1^k\|_{H^{\gamma}(D)}^2\big]\\
			&\quad+Ce^{-2(n-k)\sigma_0\Delta t}\mathbb{E}\Big[\Big\|\int_{t_k}^{t_{k+1}}\widehat{S}(t_{k+1}-r)(F_{Q_1}u(r))\dd r\Big\|_{\H}^2\|\Delta\overline{W}_1^k\|^2_{H^{\gamma}(D)}\Big]\\
			&\quad+Ce^{-2(n-k)\sigma_0\Delta t}\mathbb{E}\Big[\Big\|\int_{t_k}^{t_{k+1}}\widehat{S}(t_{k+1}-r)J(u(r)-e^{-\sigma_0\Delta t}u(t_k))\dd W_1(r)\Big\|^2_{\H}\|\Delta\overline{W}_1^k\|^2_{H^{\gamma}(D)}\Big]\\
			&\quad+Ce^{-2(n-k)\sigma_0\Delta t}\mathbb{E}\Big[\Big\|\int_{t_k}^{t_{k+1}}\widehat{S}(t_{k+1}-r)J(e^{-\sigma_0\Delta t}u(t_k))\dd W_1(r)\Big\|^2_{\H}\|\Delta\overline{W}_1^k-\Delta W_1^k\|^2_{H^{\gamma}(D)}\Big]\\
			&\leq Ce^{-2(n-k)\sigma_0\Delta t}\Delta t^3.
	\end{align*}}
	Similarly, we have
	\begin{align*}
		\mathbb{E}\big[\|A_{3,n,1}\|_{\H}^2\|\Delta\overline{W}_1^n\|^{2p}_{H^{\gamma}(D)}\big]\leq C\Delta t^{3+p}.
	\end{align*}
	Therefore,
	\begin{align*}
		\mathbb{E}\Big[\Big\|\sum_{k=0}^{n}A_{3,k,1}\Big\|^2_{\H}\|\Delta\overline{W}_1^n\|^{2p}_{H^{\gamma}(D)}\Big]\leq C\Delta t^{1+p}.
	\end{align*}
	
	For the term $A_{3,k,2}$, Lemma \ref{opee}(3), \eqref{wfz} and the Burkholder--Davis--Gundy-type inequality yield
	\begin{align*}
		&\mathbb{E}[\|A_{3,k,2}\|^4_{\H}]\\
		&\leq Ce^{-4\sigma_0(n-k)\Delta t}\mathbb{E}\Big[\Big\|\int_{t_k}^{t_{k+1}}\Big(\widehat{S}(t_{k+1}-r)-I\Big)J(e^{-\sigma_0\Delta t}u(t_k))\dd W_1(r)\Big\|^4_{\H}\|\Delta W_1\|^4_{H^{\gamma}(D)}\Big]\\
		&\quad+Ce^{-4(n-k)\sigma_0\Delta t}\mathbb{E}\Big[\Big\|\int_{t_k}^{t_{k+1}}F_{Q_1}(u(r)-e^{-\sigma_0\Delta t}u(t_k))\dd r\Big\|_{\H}^4\Big]\\
		&\quad+C\Delta t^3\int_{t_k}^{t_{k+1}}e^{-4\sigma_0(n-k)\Delta t}\mathbb{E}[\|F_{Q_1}(u(r)-e^{-\sigma_0\Delta t}u(t_k))\|_{\H}^4]\dd r\\
		&\leq Ce^{-4(n-k)\sigma_0\Delta t}\Delta t^6,
	\end{align*}
	which implies that
	\begin{align*}
		&\mathbb{E}\Big[\Big\|\sum_{k=0}^{n}A_{3,k,2}\Big\|^2_{\H}\|\Delta\overline{W}_1^n\|^{2p}_{H^{\gamma}(D)}\Big]\\
		&\leq \Big(\mathbb{E}\Big[\Big\|\sum_{k=0}^{n}A_{3,k,2}\Big\|_{\H}^{4}\Big]\Big)^{\frac{1}{2}}\Big(\mathbb{E}\big[\|\Delta\overline{W}_1^n\|^{4p}_{H^{\gamma}(D)}\big]\Big)^{\frac{1}{2}}\leq C\Delta t^{1+p}.
	\end{align*}
	
	For the term $A_{3,k,3}$, notice that
	\begin{align*}
		\mathbb{E}\big[\|A_{3,k,3}\|_{\H}^4\big]&\leq Ce^{-4(n-k)\sigma_0\Delta t}\mathbb{E}\Big[\Big\|\int_{t_k}^{t_{k+1}}\widehat{S}(t_{k+1}-r)\widetilde{{\bm{\lambda}}}_2\dd W_2(r)\Big\|^4_{\H}\Big]\mathbb{E}\big[\|\Delta\overline{W}_1^k\|^4_{H^{\gamma}(D)}\big]\\
		&\leq Ce^{-4(n-k)\sigma_0\Delta t}\Delta t^4,
	\end{align*}
	which leads to
	\begin{align*}
		&\mathbb{E}\Big[\Big\|\sum_{k=0}^{n}A_{3,k,3}\Big\|^4_{\H}\Big]
		\leq C\sum_{i=0}^{n}\mathbb{E}\big[\|A_{3,i,3}\|_{\H}^4\big]+C\sum_{i\neq j}^{n}\mathbb{E}\big[\|A_{3,i,3}\|_{\H}^2\|A_{3,j,3}\|_{\H}^2\big]\\
		&\leq C\Big( \sum_{i=0}^{n}\big(\mathbb{E}\big[\|A_{3,i,3}\|_{\H}^4\big]\big)^{\frac{1}{2}}\Big)^2
		\leq C\Big(\sum_{k=0}^{n}e^{-2(n-k)\sigma_0\Delta t}\Delta t^2\Big)^2\leq C\Delta t^2.
	\end{align*}
	Hence 
	\begin{align*}
		&\mathbb{E}\Big[\Big\|\sum_{k=0}^{n}A_{3,k,3}\Big\|^2_{\H}\|\Delta\overline{W}_1^n\|^{2p}_{H^{\gamma}(D)}\Big]\\
		&\leq \Big(\mathbb{E}\Big[\Big\|\sum_{k=0}^{n}A_{3,k,3}\Big\|_{\H}^4\Big]\Big)^{\frac{1}{2}}\Big(\mathbb{E}\big[\|\Delta\overline{W}_1^n\|^{4p}_{H^{\gamma}(D)}\big]\Big)^{\frac{1}{2}}\leq C\Delta t^{1+p}.
	\end{align*}
	
	For the term $A_{3,k,4}$, it holds that
	\begin{align*}
		\mathbb{E}\Big[\Big\|\sum_{k=0}^{n-1}A_{3,k,4}\Big\|^2_{\H}\Big]&\leq C\Big(\sum_{k=0}^{n-1}\int_{t_k}^{t_{k+1}}e^{-2\sigma_0(n-k)\Delta t}\mathbb{E}\Big[\Big\|\int_{t_k}^{r}e^{-\sigma_0\Delta t}u(t_k)\dd W_1(\rho)\Big\|^2_{\H}\Big]\dd r\Big)\\
		&\leq C\sum_{k=0}^{n-1}e^{-2\sigma_0(n-k)\Delta t}\Delta t^2\leq C\Delta t
	\end{align*}
	and
	\begin{align*}
		\mathbb{E}\big[\|A_{3,n,4}\|_{\H}^4\big]&\leq C\mathbb{E}\Big[\Big(\int_{t_k}^{t_{k+1}}\Big\|\int_{t_k}^{r}e^{-\sigma_0\Delta t}u(t_k)\dd W_1(\rho)\Big\|_{\H}^2\dd r\Big)^2\Big]\\
		&\leq C\Delta t\int_{t_k}^{t_{k+1}}\mathbb{E}\Big[\Big\|\int_{t_k}^{r}u(t_k)\dd W_1(\rho)\Big\|^4_{\H}\Big]\dd r\leq C\Delta t^4,
	\end{align*}
	which imply that
	\begin{align*}
		\mathbb{E}\Big[\Big\|\sum_{k=0}^{n}A_{3,k,4}\Big\|^2_{\H}\|\Delta\overline{W}_1^n\|^{2p}_{H^{\gamma}(D)}\Big]\leq C\Delta t^{1+p}.
	\end{align*}
	
	For the term $A_{3,k,5}$, notice that
	\begin{align*}
		&\mathbb{E}\big[\|A_{3,k,5}\|_{\H}^2\big]\\
		&\leq C\Delta t\int_{t_k}^{t_{k+1}}\mathbb{E}\big[\|(\widehat{S}(t_{n+1}-r)-(S_{h,\Delta t})^{n-k}T_{h,\Delta t})(F_{Q_1}e^{-\sigma_0\Delta t}u(t_k))\|_{\H}^2\big]\dd r\\
		&\leq C\Delta t^2(\Delta t+h)e^{-\sigma_0(n-k)\Delta t},
	\end{align*}
	which yields
	\begin{align*}
		\mathbb{E}\Big[\Big\|\sum_{k=0}^{n}A_{3,k,5}\Big\|^2_{\H}\|\Delta\overline{W}_1^n\|^{2p}_{H^{\gamma}(D)}\Big]\leq C(\Delta t+h)\Delta t^p.
	\end{align*}
	Hence 
	\begin{align*}
		I_{3,3}\leq C\sum_{i=1}^{5}\mathbb{E}\Big[\Big\|\sum_{k=0}^{n}A_{3,k,i}\Big\|^2_{\H}\|\Delta\overline{W}_1^n\|^{2p}_{H^{\gamma}(D)}\Big]\leq C(\Delta t+h)\Delta t^p.
	\end{align*}
	Combining $I_{3,1}$--$I_{3,3}$, we have $I_{3}\leq C(\Delta t+h)\Delta t^p$. Further, we have 
	\begin{align}\label{ee1}
		\mathbb{E}\big[\|\widehat{e}_{n+1}\|^2_{\H}\|\Delta \overline{W}_1^n\|^{2p}_{H^{\gamma}(D)}\big]\leq C(\Delta t+h)\Delta t^{p}.
	\end{align}
	
	{\em Step 2. Estimate of $\sup_{n\in\mathbb{N}}\mathbb{E}\big[\|\widetilde{e}_n\|^2_{\H}\big]$.}
	
	Subtracting \eqref{oneauxi} from \eqref{dgfull} leads to
	\begin{align}\label{gf}
		\widetilde{e}_{n+1}-e^{-\sigma_0\Delta t}\widetilde{e}_n=\frac{\Delta t}{2}M_h(\widetilde{e}_{n+1}+e^{-\sigma_0\Delta t}\widetilde{e}_n)+\frac{\lambda_{1}}{2}\pi_h\big[J(e^{full}_{n+1}+e^{-\sigma_0\Delta t}e^{full}_n)\Delta \overline{W}_1^n\big],
	\end{align}
	which is equivalent to 
	\begin{align}\label{ufnz}
		\widetilde{e}_{n+1}=\widehat{S}_{h,\Delta t}\widetilde{e}_n+\frac{\lambda_{1}}{2}T_{h,\Delta t}\big[J(e_{n+1}^{full}+e^{-\sigma_0\Delta t}e^{full}_n)\Delta\overline{W}_1^n\big].
	\end{align}
	We apply $\langle\cdot,\widetilde{e}_{n+1}+e^{-\sigma_0\Delta t}\widetilde{e}_n\rangle_{\H}$ to both sides of \eqref{gf} and take the expectation to get
	\begin{align*}
		&\mathbb{E}\big[\|\widetilde{e}_{n+1}\|^2_{\H}\big]\\
		&\leq e^{-2\sigma_0\Delta t}\mathbb{E}\big[\|\widetilde{e}_n\|^2_{\H}\big]+\frac{\lambda_{1}}{2}\mathbb{E}\big[\langle J(\widehat{e}_{n+1}+e^{-\sigma_0\Delta t}\widehat{e}_n)\Delta\overline{W}_1^n,\widetilde{e}_{n+1}+e^{-\sigma_0\Delta t}\widetilde{e}_n\rangle_{\H}\big]
	\end{align*}
	due to the dissipative property of $M_h$ in \cite[Proposition 4.4(ii)]{CCC2021}.
	
	By \eqref{ufnz} and using the fact that $\Delta\overline{W}_1^n$ is independent of $\mathcal{F}_{t_n}$, we obtain
	\begin{align*}
		&\frac{\lambda_{1}}{2}\mathbb{E}\big[\langle J(\widehat{e}_{n+1}+e^{-\sigma_0\Delta t}\widehat{e}_n)\Delta\overline{W}_1^n,\widetilde{e}_{n+1}+e^{-\sigma_0\Delta t}\widetilde{e}_n\rangle_{\H}\big]\\
		&=\frac{\lambda_{1}}{2}\mathbb{E}\big[\langle J(\widehat{e}_{n+1}+e^{-\sigma_0\Delta t}\widehat{e}_n)\Delta\overline{W}_1^n,\widetilde{e}_{n+1}-e^{-\sigma_0\Delta t}\widetilde{e}_n\rangle_{\H}\big]\\
		&\quad+\lambda_{1}\mathbb{E}\big[\langle J(\widehat{e}_{n+1}-e^{-\sigma_0\Delta t}\widehat{e}_n)\Delta\overline{W}_1^n,e^{-\sigma_0\Delta t}\widetilde{e}_n\rangle_{\H}\big]\\
		&\quad+2\lambda_{1}\mathbb{E}\big[\langle Je^{-\sigma_0\Delta t}\widehat{e}_n\Delta\overline{W}_1^n,e^{-\sigma_0\Delta t}\widetilde{e}_n\rangle_{\H}\big]\\
		&=\frac{\lambda_{1}}{2}\mathbb{E}\big[\langle J(\widehat{e}_{n+1}-e^{-\sigma_0\Delta t}\widehat{e}_n)\Delta\overline{W}_1^n,(\widehat{S}_{h,\Delta t}+e^{-\sigma_0\Delta t})\widetilde{e}_n\rangle_{\H}\big]\\
		&\quad+\frac{\lambda_1^2}{4}\mathbb{E}\big[\langle J(\widehat{e}_{n+1}+e^{-\sigma_0\Delta t}\widehat{e}_n)\Delta\overline{W}_1^n,T_{h,\Delta t}(J(e_{n+1}^{full}+e^{-\sigma_0\Delta t}e_n^{full})\Delta\overline{W}_1^n)\rangle_{\H}\big]\\
		&=:I_4+I_5.
	\end{align*}
	Noticing that
	{\small
		\begin{align*}
			&\widehat{e}_{n+1}-e^{-\sigma_0\Delta t}\widehat{e}_n\\
			&=\big(\widehat{S}_{h,\Delta t}-e^{-\sigma_0\Delta t}\big)\big(\widehat{S}_{h,\Delta t}\big)^n\pi_hu_0-\big(\widehat{S}(\Delta t)-e^{-\sigma_0\Delta t}\big)\widehat{S}(t_n)u_0\\
			&\quad+\int_{t_n}^{t_{n+1}}\big(T_{h,\Delta t}-\widehat{S}(t_{n+1}-r)\big)\widetilde{{\bm{\lambda}}}_2\dd W_2(r)\\
			&\quad+\sum_{k=0}^{n-1}\int_{t_k}^{t_{k+1}}\Big[\big(\widehat{S}_{h,\Delta t}-e^{-\sigma_0\Delta t}\big)\big(\widehat{S}_{h,\Delta t}\big)^{n-1-k}T_{h,\Delta t}-\big(\widehat{S}(\Delta t)-e^{-\sigma_0\Delta t}\big)\widehat{S}(t_n-r)\Big]\widetilde{{\bm{\lambda}}}_2\dd W_2(r)\\
			&\quad+\lambda_{1}T_{h,\Delta t}\big(JA^{\sigma_0}_tu(t_n)\Delta\overline{W}_1^n\big)-\lambda_{1}\int_{t_n}^{t_{n+1}}\widehat{S}(t_{n+1}-r)Ju(r)\dd W_1(r)\\
			&\quad+\frac{1}{2}\lambda_{1}^2\int_{t_n}^{t_{n+1}}\widehat{S}(t_{n+1}-r)\big(F_{Q_1}u(r)\big)\dd r\\
			&\quad+\lambda_{1}\big(\widehat{S}_{h,\Delta t}-e^{-\sigma_0\Delta t}\big)\sum_{k=0}^{n-1}\big(\widehat{S}_{h,\Delta t}\big)^{n-k-1}T_{h,\Delta t}\big(JA^{\sigma_0}_tu(t_k)\Delta\overline{W}_1^k\big)\\
			&\quad-\lambda_{1}\big(\widehat{S}(\Delta t)-e^{{-\sigma_0\Delta t}}\big)\int_{0}^{t_{n}}\widehat{S}(t_n-r)Ju(r)\dd W_1(r)\\
			&\quad+\frac{1}{2}\lambda_{1}^2\big(\widehat{S}(\Delta t)-e^{-\sigma_0\Delta t}\big)\int_{0}^{t_{n}}\widehat{S}(t_n-r)\big(F_{Q_1}u(r)\big)\dd r,
	\end{align*}}
	and using the fact that $\Delta\overline{W}_1^n$ is independent of $\mathcal{F}_{t_n}$ and $W_2(t)$, we get 
	\begin{align*}
		I_4&=\frac{\lambda_{1}}{2}\mathbb{E}\Big[\langle J\Big(\lambda_{1}T_{h,\Delta t}(JA^{\sigma_0}_tu(t_n)\Delta\overline{W}_1^n)-\lambda_{1}\int_{t_n}^{t_{n+1}}\widehat{S}(t_{n+1}-r)Ju(r)\dd W_1(r)\\
		&\qquad\qquad\quad+\frac{\lambda_{1}^2}{2}\int_{t_n}^{t_{n+1}}\widehat{S}(t_{n+1}-r)(F_{Q_1}u(r))\dd r\Big)\Delta\overline{W}_1^n,(\widehat{S}_{h,\Delta t}+e^{-\sigma_0\Delta t})\widetilde{e}_n\rangle_{\H}\Big]\\
		&=\frac{\lambda_{1}^2}{2}\mathbb{E}\Big[\langle J\big(T_{h,\Delta t}\big[JA^{\sigma_0}_tu(t_n)(\Delta\overline{W}_1^n-\Delta W_1^n)\big]\big)\Delta\overline{W}_1^n,\big(\widehat{S}_{h,\Delta t}+e^{-\sigma_0\Delta t}\big)\widetilde{e}_n\rangle_{\H}\Big]\\
		&\quad+\frac{\lambda_{1}^2}{2}\mathbb{E}\Big[\langle J\int_{t_n}^{t_{n+1}}\big[T_{h,\Delta t}(Je^{-\sigma_0\Delta t}u(t_n))\\
		&\qquad\qquad\qquad\qquad\qquad-\widehat{S}(t_{n+1}-r)Ju(r)\big]\dd W_1(r)\Delta\overline{W}_1^n,\big(\widehat{S}_{h,\Delta t}+e^{-\sigma_0\Delta t}\big)\widetilde{e}_n\rangle_{\H}\Big]\\
		&\quad+\frac{\lambda_{1}^2}{2}\mathbb{E}\Big[\langle J\big[T_{h,\Delta t}\big(J\frac{u(t_{n+1})-e^{-\sigma_0\Delta t}u(t_n)}{2}\Delta W_1^n\big)\big]\Delta\overline{W}_1^n,\big(\widehat{S}_{h,\Delta t}+e^{-\sigma_0\Delta t}\big)\widetilde{e}_n\rangle_{\H}\Big]\\
		&\quad+\frac{\lambda_{1}^3}{4}\mathbb{E}\Big[\langle J\int_{t_n}^{t_{n+1}}\widehat{S}(t_{n+1}-r)(F_{Q_1}u(r))\dd r\Delta\overline{W}_1^n,\big(\widehat{S}_{h,\Delta t}+e^{-\sigma_0\Delta t}\big)\widetilde{e}_n\rangle_{\H}\Big]\\
		&=:I_{4,1}+I_{4,2}+I_{4,3}+I_{4,4}.
	\end{align*}
	
	For the term $I_{4,1}$, it holds that 
	\begin{align*}
		I_{4,1}&\leq C\mathbb{E}\big[\|A^{\sigma_0}_tu(t_n)\|_{\H}\|\Delta\overline{W}_1^n-\Delta W_1^n\|_{H^{\gamma}(D)}\|\Delta\overline{W}_1^n\|_{H^{\gamma}(D)}e^{-\sigma_0\Delta t}\|\widetilde{e}_n\|_{\H}\big]\\
		&\leq \frac{\sigma_0}{16}e^{-2\sigma_0\Delta t}\Delta t\mathbb{E}\big[\|\widetilde{e}_n\|_{\H}^2\big]+C\Delta t^2
	\end{align*}
	due to Proposition \ref{obv}, the Sobolev embedding $H^{\gamma}(D)\hookrightarrow L^{\infty}(D)$ for $\gamma>\frac{3}{2}$, Lemma \ref{ooq}(1) and \eqref{2ptruncate}.
	
	For the term $I_{4,2}$, we use \eqref{bnb}, Lemma \ref{ooq}(1)(5), $Q_1^{\frac{1}{2}}\in HS(L^2(D),H^{\gamma_1}_0(D))$, Lemma \ref{regularity in H1} and Sobolev embedding $H^{\gamma}(D)\hookrightarrow L^\infty(D)$ for $\gamma>\frac{3}{2}$ to obtain
	\begin{align*}
		I_{4,2}&\leq C\mathbb{E}\Big[\Big\|\int_{t_n}^{t_{n+1}}\big[T_{h,\Delta t}\big(Je^{-\sigma_0\Delta t}u(t_n)\big)\\
		&\qquad\qquad\qquad\qquad-\widehat{S}(t_{n+1}-r)\big(Ju(r)\big)\big]\dd W_1(r)\Big\|_{\H}\|\Delta\overline{W}_1^n\|_{H^{\gamma}(D)}e^{-\sigma_0\Delta t}\|\widetilde{e}_n\|_{\H}\Big]\\
		&\leq C\mathbb{E}\Big[\Big\|\int_{t_n}^{t_{n+1}}\big[T_{h,\Delta t}J\big(e^{-\sigma_0\Delta t}u(t_n)-u(r)\big)\\
		&\qquad\qquad\qquad\qquad+\big(T_{h,\Delta t}-\widehat{S}(t_{n+1}-r)\big)Ju(r)\big]\dd W_1(r)\Big\|^2_{\H}\Big]\\
		&\quad+\frac{\sigma_0}{16}\Delta te^{-2\sigma_0\Delta t}\mathbb{E}\big[\|\widetilde{e}_n\|^2_{\H}\big]\\
		&\leq C\mathbb{E}\Big[\int_{t_n}^{t_{n+1}}\sum_{k=1}^{\infty}\big\|T_{h,\Delta t}J\big(u(r)-e^{-\sigma_0\Delta t}u(t_n)\big)Q_1^{\frac{1}{2}}q_{k}\big\|_{\H}^2\dd r\Big]\\
		&\quad+C\mathbb{E}\Big[\int_{t_n}^{t_{n+1}}\sum_{k=1}^{\infty}\big\|\big(\widehat{S}(t_{n+1}-r)-T_{h,\Delta t}\big)Ju(r)Q_1^{\frac{1}{2}}q_{k}\big\|^2_{\H}\dd r\Big]\\
		&\quad+\frac{\sigma_0}{16}\Delta te^{-2\sigma_0\Delta t}\mathbb{E}\big[\|\widetilde{e}_n\|^2_{\H}\big]\\
		&\leq C(\Delta t+h)\Delta t+\frac{\sigma_0}{16}\Delta te^{-2\sigma_0\Delta t}\mathbb{E}\big[\|\widetilde{e}_n\|_{\H}^2\big].
	\end{align*}
	
	For the term $I_{4,3}$, Lemma \ref{ooq}(1), \eqref{wfz} and Sobolev embedding $H^{\gamma}(D)\hookrightarrow L^{\infty}(D)$ for $\gamma>\frac{3}{2}$ yield
	\begin{align*}
		I_{4,3}&\leq C\mathbb{E}\big[\|u(t_{n+1})-e^{-\sigma_0\Delta t}u(t_n)\|_{\H}\|\Delta\overline{W}_1^n\|_{H^{\gamma}(D)}\|\Delta W_1^n\|_{H^{\gamma}(D)}e^{-\sigma_0\Delta t}\|\widetilde{e}_n\|_{\H}\big]\\
		&\leq \frac{\sigma_0}{16}\Delta t e^{-2\sigma_0\Delta t}\mathbb{E}\big[\|\widetilde{e}_n\|^2_{\H}\big]\\
		&\quad+\frac{C}{\Delta t}\mathbb{E}\big[\|u(t_{n+1})-e^{-\sigma_0\Delta t}u(t_n)\|^2_{\H}\|\Delta\overline{W}_1^n\|^2_{H^{\gamma}(D)}\|\Delta W_1^n\|^2_{H^{\gamma}(D)}\big]\\
		&\leq \frac{\sigma_0}{16}\Delta t e^{-2\sigma_0\Delta t}\mathbb{E}\big[\|\widetilde{e}_n\|^2_{\H}\big]+C\Delta t^2.
	\end{align*}
	
	For the term $I_{4,4}$, one has
	\begin{align*}
		I_{4,4}&\leq C\mathbb{E}\Big[\Big\|\int_{t_n}^{t_{n+1}}\widehat{S}(t_{n+1}-r)(F_{Q_1}u(r))\dd r\Big\|_{\H}\|\Delta\overline{W}_1^n\|_{H^{\gamma}(D)}e^{-\sigma_0\Delta t}\|\widetilde{e}_n\|_{\H}\Big]\\
		&\leq \frac{\sigma_0}{16}\Delta te^{-2\sigma_0\Delta t}\mathbb{E}\big[\|\widetilde{e}_n\|^2_{\H}\big]+C\Delta t^2,
	\end{align*}
	where we use Sobolev embedding $H^{\gamma}(D)\hookrightarrow L^{\infty}(D)$ for $\gamma>\frac{3}{2}$, Lemma \ref{opee}(1), Lemma \ref{ooq}(1) and Proposition \ref{H-boundedness}.
	Altogether,
	\begin{align*}
		I_4\leq \frac{\sigma_0}{4}\Delta te^{-2\sigma_0\Delta t}\mathbb{E}\big[\|\widetilde{e}_n\|^2_{\H}\big]+C\Delta t(\Delta t+h).
	\end{align*}

	For the term $I_5$, we use \eqref{ee1} to obtain
	\begin{align*}
		I_5&\leq C\mathbb{E}\big[\|\widehat{e}_{n+1}+e^{-\sigma_0\Delta t}\widehat{e}_n\|_{\H}\|\Delta\overline{W}_1^n\|^2_{H^{\gamma}(D)}\|\widetilde{e}_{n+1}+\widehat{e}_{n+1}+e^{-\sigma_0\Delta t}\widetilde{e}_n+e^{-\sigma_0\Delta t}\widehat{e}_n\|_{\H}\big]\\
		&\leq C\mathbb{E}\big[\|\widehat{e}_{n+1}+e^{-\sigma_0\Delta t}\widehat{e}_n\|_{\H}^2\|\Delta\overline{W}_1^n\|^2_{H^{\gamma}(D)}\big]+\frac{\sigma_0}{4}\Delta t\mathbb{E}\big[\|\widetilde{e}_{n+1}\|^2_{\H}\big]\\
		&\quad+\frac{\sigma_0}{4}\Delta te^{-2\sigma_0\Delta t}\mathbb{E}\big[\|\widetilde{e}_n\|^2_{\H}\big]
		+\frac{C}{\Delta t}\mathbb{E}\big[\|\widehat{e}_{n+1}+e^{-\sigma_0\Delta t}\widehat{e}_n\|^2_{\H}\|\Delta\overline{W}_1^n\|^4_{H^{\gamma}(D)}\big]\\
		&\leq \frac{\sigma_0}{4}\Delta te^{-2\sigma_0\Delta t}\mathbb{E}\big[\|\widetilde{e}_n\|^2_{\H}\big]+\frac{\sigma_0}{4}\Delta t\mathbb{E}\big[\|\widetilde{e}_{n+1}\|_{\H}^2\big]+C\Delta t\big(\Delta t+h\big).
	\end{align*}
	
	Combining $I_4$ and $I_5$, we have
	\begin{align*}
		\mathbb{E}\big[\|\widetilde{e}_{n+1}\|_{\H}^2\big]&\leq e^{-2\sigma_0\Delta t}\mathbb{E}\big[\|\widetilde{e}_n\|_{\H}^2\big]+\frac{\sigma_0}{2}\Delta te^{-2\sigma_0\Delta t}\mathbb{E}\big[\|\widetilde{e}_n\|_{\H}^2\big]\\
		&\quad+\frac{\sigma_0}{4} \Delta t\mathbb{E}\big[\|\widetilde{e}_{n+1}\|_{\H}^2\big]+C\Delta t\big(\Delta t+h\big).
	\end{align*}
	There exists a $\Delta t^*:=\frac{1}{\sigma_0}$ such that for any $\Delta t\in(0,\Delta t^*]$,
	$$\sup_{n\in\mathbb{N}}\mathbb{E}\big[\|\widetilde{e}_n\|^2_{\H}\big]\leq C\big(\Delta t+h\big).$$
	Therefore, it concludes that
	\begin{align*}
		\sup_{n\in\mathbb{N}}\mathbb{E}\big[\|e^{full}_n\|^2_{\H}\big]\leq 2\sup_{n\in\mathbb{N}}\mathbb{E}\big[\|\widetilde{e}_n\|_{\H}^2\big]+2\sup_{n\in\mathbb{N}}\mathbb{E}\big[\|\widehat{e}_n\|^2_{\H}\big]\leq C\big(\Delta t+h\big).
	\end{align*}
	The proof is thus finished.
\end{proof}

Similar to Proposition \ref{SMEErgo}(ii), we state the following corollary.
\begin{coro}
	Under the conditions in Proposition \ref{efs} and Theorem \ref{dgorder}, there exists a postive constant $C_{13}$ such that
	\begin{equation*}
		\mathcal{W}_2(\pi^{*},\pi^{\Delta t,h})\leq C_{13}(\Delta t^{\frac{1}{2}}+h^{\frac{1}{2}}).
	\end{equation*}
\end{coro}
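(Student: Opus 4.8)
The plan is to mirror the argument used for the temporal semi-discretization in the corollary following Theorem \ref{order}, replacing the temporal strong error by the full strong error of Theorem \ref{dgorder} and retaining the exact-flow contraction of Proposition \ref{SMEErgo}(ii). Write $P^*_{t}$ for the adjoint of the exact transition semigroup and $(P_h^n)^*$ for the adjoint of the dG transition semigroup. Both measures are invariant for their respective dynamics, i.e. $P^*_{t}\pi^*=\pi^*$ for all $t\geq0$ and $(P_h^n)^*\pi^{\Delta t,h}=\pi^{\Delta t,h}$ for all $n$. Hence, for any fixed $n$, inserting $\pi^{\Delta t,h}=(P_h^n)^*\pi^{\Delta t,h}$ and $\pi^*=P^*_{t_n}\pi^*$ and applying the triangle inequality for $\mathcal{W}_2$ through the intermediate point $P^*_{t_n}\pi^{\Delta t,h}$ gives
\begin{equation*}
	\mathcal{W}_2(\pi^*,\pi^{\Delta t,h})\leq \mathcal{W}_2\big((P_h^n)^*\pi^{\Delta t,h},P^*_{t_n}\pi^{\Delta t,h}\big)+\mathcal{W}_2\big(P^*_{t_n}\pi^{\Delta t,h},P^*_{t_n}\pi^*\big).
\end{equation*}

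First I would bound the second term by the exact-flow contraction: Proposition \ref{SMEErgo}(ii) with $\pi=\pi^{\Delta t,h}$ yields $\mathcal{W}_2(P^*_{t_n}\pi^{\Delta t,h},P^*_{t_n}\pi^*)\leq e^{-\sigma_0t_n}\mathcal{W}_2(\pi^{\Delta t,h},\pi^*)$. For the first term I would couple the dG solution $u_h^n$ and the exact solution $u(t_n)$ through a common initial datum $\xi$ distributed according to $\pi^{\Delta t,h}$; since $\mathcal{W}_2$ is the infimum of $L^2$ transport costs over couplings, this particular coupling gives $\mathcal{W}_2((P_h^n)^*\pi^{\Delta t,h},P^*_{t_n}\pi^{\Delta t,h})\leq (\mathbb{E}[\|u_h^n-u(t_n)\|_{\H}^2])^{1/2}$, and the uniform-in-$n$ bound of Theorem \ref{dgorder} controls the right-hand side by $C_{12}(\Delta t^{1/2}+h^{1/2})$. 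Combining the two estimates produces
\begin{equation*}
	\mathcal{W}_2(\pi^*,\pi^{\Delta t,h})\leq C_{12}(\Delta t^{1/2}+h^{1/2})+e^{-\sigma_0t_n}\mathcal{W}_2(\pi^*,\pi^{\Delta t,h}),
\end{equation*}
and letting $n\to\infty$, so that $e^{-\sigma_0t_n}\to0$, absorbs the last term and leaves $\mathcal{W}_2(\pi^*,\pi^{\Delta t,h})\leq C_{13}(\Delta t^{1/2}+h^{1/2})$ with $C_{13}=C_{12}$.

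The main obstacle lies in justifying the first term rigorously: Theorem \ref{dgorder} is stated for initial data in $L^4(\Omega,H^1(D)^6)$, and the constant $C_{12}$ depends on $\mathbb{E}[\|u_0\|_{H^1(D)^6}^4]$, whereas a priori $\pi^{\Delta t,h}$ is only known to lie in $\mathcal{P}_2(\H)$. The delicate point is therefore to ensure that the initial law $\pi^{\Delta t,h}$ carries the regularity and moment bounds needed to apply the error estimate with a constant that does not degenerate as $h\to0$. I expect this to be handled by exploiting that, at fixed $h$, the measure $\pi^{\Delta t,h}$ is supported on the finite-dimensional space $\H_h$, so the coupling argument can be carried out on genuinely regular data, together with the uniform moment control of Proposition \ref{efs}; one then passes to the limit $n\to\infty$ at fixed $\Delta t$ and $h$, the $h$-independence of $C_{12}$ guaranteeing the claimed rate.
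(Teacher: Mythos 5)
Your proposal follows essentially the same route as the paper's proof: the identical triangle-inequality decomposition through the intermediate measure $P^*_{t_n}\pi^{\Delta t,h}$, the same contraction bound from Proposition \ref{SMEErgo}(ii), the same coupling/strong-error bound via Theorem \ref{dgorder}, and the same passage to the limit $n\rightarrow\infty$ to absorb the $e^{-\sigma_0 t_n}$ term. The obstacle you flag at the end is a genuine subtlety (the measure $\pi^{\Delta t,h}$ is supported on $\H_h$, whose elements are discontinuous and hence not in $H^1(D)^6$, and Proposition \ref{efs} only provides second moments in $\H$ rather than the fourth $H^1(D)^6$-moments that the constant $C_{12}$ in Theorem \ref{dgorder} requires), but the paper's own two-line proof applies Theorem \ref{dgorder} with initial law $\pi^{\Delta t,h}$ without addressing this either, so it is not a point of divergence between your argument and the paper's.
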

\begin{proof}
	By Proposition \ref{SMEErgo} and Theorem \ref{dgorder}, we have
	\begin{align*}
		\mathcal{W}_2(\pi^*,\pi^{\Delta t,h})&\leq\mathcal{W}_2((P^{n}_h)^*\pi^{\Delta t,h},P^*_{t_n}\pi^{\Delta t,h})+ \mathcal{W}_2(P^*_{t_n}\pi^{\Delta t,h},P^*_{t_n}\pi^{*})\\
		&\leq C\big(\Delta t^{\frac{1}{2}}+h^{\frac{1}{2}}\big)+e^{-\sigma_0t_n}\mathcal{W}_2(\pi^*,\pi^{\Delta t,h}),
	\end{align*}
	which gives the desired result by letting $n\rightarrow\infty$. 
\end{proof}

\subsection{Ergodic FD full discretization}	
For the temporal semi-discretization \eqref{ns}, we can apply many kinds of numerical methods to discretize the spatial direction to obtain full discretizations. In this part, we use a finite difference (FD) method to discretize the temporal semi-discretization in space.

We introduce a uniform partition with $\Delta x$, $\Delta y$ and $\Delta z$ being the mesh sizes in $x$, $y$ and $z$ directions,
respectively. For $i=0,1,\ldots,I_x$, $j=0,1,\ldots,J_y$ and $k=0,1,\ldots,K_z$, we define $x_{i}=x_{L}+i\Delta x$, $y_{j}=y_{L}+j\Delta y$,
$z_{k}=z_{L}+k\Delta z$, and $Z_{i,j,k}^{n}$ is the approximation of $Z(t,x,y,z)$ at node $(t_n,x_i,y_j,z_k)$ and let $\sigma_{i,j,k}=\sigma(x_i,y_j,z_k)$.
Denote 
{\small\begin{align*}
		\delta_xZ_{i,j,k}^{n}=\frac{Z_{i+1,j,k}^{n}-Z_{i,j,k}^n}{\Delta x},\quad
		\delta_yZ_{i,j,k}^{n}=\frac{Z_{i,j+1,k}^{n}-Z_{i,j,k}^n}{\Delta y},\quad
		\delta_zZ_{i,j,k}^{n}=\frac{Z_{i,j,k+1}^{n}-Z_{i,j,k}^n}{\Delta z}.
\end{align*}}

We now propose the following full discretization for \eqref{SMEs with PML} by applying the midpoint method to \eqref{ns} in the spatial direction:
\begin{subequations}\label{s1}
	\begin{align}
		\delta^{\sigma_{i,j,k}}_t(E_1)_{\bar{i},\bar{j},\bar{k}}^{n}&=\delta_{y}A_t^{\sigma_{i,j,k}}(H_3)_{\bar{i},j,\bar{k}}^{n}-\delta_{z}A_t^{\sigma_{i,j,k}}(H_2)_{\bar{i},\bar{j},k}^{n}\notag\\
		&\quad-\lambda_1 A_t^{\sigma_{i,j,k}}(H_1)_{\bar{i},\bar{j},\bar{k}}^{n}(\dot{\overline{W}}_1)_{\bar{i},\bar{j},\bar{k}}^n
		+\lambda_2^{(1)}(\dot{W}_2)_{\bar{i},\bar{j},\bar{k}}^n,\label{sub1}\\
		\delta^{\sigma_{i,j,k}}_t(E_2)_{\bar{i},\bar{j},\bar{k}}^{n}&=\delta_{z}A_t^{\sigma_{i,j,k}}(H_1)_{\bar{i},\bar{j},k}^{n}-\delta_{x}A_t^{\sigma_{i,j,k}}(H_3)_{i,\bar{j},\bar{k}}^{n}\notag\\
		&\quad-\lambda_1 A_t^{\sigma_{i,j,k}}(H_2)_{\bar{i},\bar{j},\bar{k}}^{n}(\dot{\overline{W}}_1)_{\bar{i},\bar{j},\bar{k}}^n
		+\lambda_2^{(2)}(\dot{W}_2)_{\bar{i},\bar{j},\bar{k}}^n,\label{sub2}\\
		\delta^{\sigma_{i,j,k}}_t(E_3)_{\bar{i},\bar{j},\bar{k}}^{n}&=\delta_{x}A_t^{\sigma_{i,j,k}}(H_2)_{i,\bar{j},\bar{k}}^{n}-\delta_{y}A_t^{\sigma_{i,j,k}}(H_1)_{\bar{i},j ,\bar{k}}^{n}\notag\\
		&\quad-\lambda_1 A_t^{\sigma_{i,j,k}}(H_3)_{\bar{i},\bar{j},\bar{k}}^{n}(\dot{\overline{W}}_1)_{\bar{i},\bar{j},\bar{k}}^n
		+\lambda_2^{(3)}(\dot{W}_2)_{\bar{i},\bar{j},\bar{k}}^n,\label{sub3}\\
		\delta^{\sigma_{i,j,k}}_t(H_1)_{\bar{i},\bar{j},\bar{k}}^{n}&=\delta_{z}A_t^{\sigma_{i,j,k}}(E_2)_{\bar{i},\bar{j},k}^{n}-\delta_{y}A_t^{\sigma_{i,j,k}}(E_3)_{\bar{i},j ,\bar{k}}^{n}\notag\\
		&\quad+\lambda_1 A_t^{\sigma_{i,j,k}}(E_1)_{\bar{i},\bar{j},\bar{k}}^{n}(\dot{\overline{W}}_1)_{\bar{i},\bar{j},\bar{k}}^n
		+\lambda_2^{(1)}(\dot{W}_2)_{\bar{i},\bar{j},\bar{k}}^n,\label{sub4}\\
		\delta^{\sigma_{i,j,k}}_t(H_2)_{\bar{i},\bar{j},\bar{k}}^{n}&=\delta_{x}A_t^{\sigma_{i,j,k}}(E_3)_{i,\bar{j},\bar{k}}^{n}-\delta_{z}A_t^{\sigma_{i,j,k}}(E_1)_{\bar{i},\bar{j} ,k}^{n}\notag\\
		&\quad+\lambda_1 A_t^{\sigma_{i,j,k}}(E_2)_{\bar{i},\bar{j},\bar{k}}^{n}(\dot{\overline{W}}_1)_{\bar{i},\bar{j},\bar{k}}^n
		+\lambda_2^{(2)}(\dot{W}_2)_{\bar{i},\bar{j},\bar{k}}^n,\label{sub5}\\
		\delta^{\sigma_{i,j,k}}_t(H_3)_{\bar{i},\bar{j},\bar{k}}^{n}&=\delta_{y}A_t^{\sigma_{i,j,k}}(E_1)_{\bar{i},j,\bar{k}}^{n}-\delta_{x}A_t^{\sigma_{i,j,k}}(E_2)_{i,\bar{j} ,\bar{k}}^{n}\notag\\
		&\quad+\lambda_1 A_t^{\sigma_{i,j,k}}(E_3)_{\bar{i},\bar{j},\bar{k}}^{n}(\dot{\overline{W}}_1)_{\bar{i},\bar{j},\bar{k}}^n
		+\lambda_2^{(3)}(\dot{W}_2)_{\bar{i},\bar{j},\bar{k}}^n,\label{sub6}
	\end{align}
\end{subequations}
where $\bar{i}=i+\frac{1}{2}$, $\bar{j}=j+\frac{1}{2}$, $\bar{k}=k+\frac{1}{2}$ and
\begin{align*}
	(\dot{\overline{W}}_{1})_{\bar{i},\bar{j},\bar{k}}^{n}&:=\frac{(\Delta \overline{W}_{1})_{\bar{i},\bar{j},\bar{k}}^{n}}{\Delta t}=\frac{\overline{W}_{1}(t_{n+1},x_{\bar{i}},y_{\bar{j}},z_{\bar{k}})-\overline{W}_{1}(t_{n},x_{\bar{i}},y_{\bar{j}},z_{\bar{k}})}{\Delta t},\\
	(\dot{W}_{2})_{\bar{i},\bar{j},\bar{k}}^{n}&:=\frac{(\Delta W_{2})_{\bar{i},\bar{j},\bar{k}}^{n}}{\Delta t}=\frac{W_{2}(t_{n+1},x_{\bar{i}},y_{\bar{j}},z_{\bar{k}})-W_{2}(t_{n},x_{\bar{i}},y_{\bar{j}},z_{\bar{k}})}{\Delta t}.
\end{align*}

Denote the discrete energy by 
\begin{align}\label{disave}
	\Phi(t_{n}):=\Delta x\Delta y\Delta z\sum_{i,j,k}\Big(|\Ee_{\bar{i},\bar{j},\bar{k}}^{n}|^{2}+|\Hh_{\bar{i},\bar{j},\bar{k}}^{n}|^{2}\Big),\quad n\in\mathbb{N}.
\end{align}
Similarly to \cite[Theorem 3.2]{CHZ2016}, we obtain the following discrete energy evolution law for \eqref{sub1}--\eqref{sub6}:
\begin{align}\label{energy1}
	\Phi(t_{n+1})&=\Delta x\Delta y\Delta z\sum_{i,j,k}e^{-2\sigma_{i,j,k}\Delta t}\Big(|\Ee_{\bar{i},\bar{j},\bar{k}}^{n}|^{2}+|\Hh_{\bar{i},\bar{j},\bar{k}}^{n}|^{2}\Big)\notag\\
	&\quad+2\Delta x\Delta y\Delta z\sum_{i,j,k}\Big[\Upsilon^{n}_{\bar{i},\bar{j},\bar{k}}
	(\Delta W_{2})_{\bar{i},\bar{j},\bar{k}}^{n}\Big]
\end{align}
under the periodic boundary condition.
Here,
$\Upsilon^{n}_{\bar{i},\bar{j},\bar{k}}:=\widetilde{{\bm{\lambda}}}_2\cdot A_t^{\sigma_{i,j,k}}\big((\Ee_{\bar{i},\bar{j},\bar{k}}^{n})^\top,(\Hh_{\bar{i},\bar{j},\bar{k}}^{n})^\top\big)^\top.$

We now investigate the ergodicity and stochastic multi-symplecticity of \eqref{s1}.
\subsubsection{Ergodicity} To get the ergodicity, we first establish the uniform boundedness of the averaged discrete energy in the following proposition.
\begin{prop}\label{full_bound}
	Assume that ${\bf E}_0,{\bf H}_0 \in L^2(\Omega; L^2(D)^3)$, $q_m\in C^{1}(D), m\in\mathbb{N}$ and let Assumption \ref{ass} hold with $\gamma_1\geq\gamma$ and $\gamma_2\geq 1+\gamma$ for $\gamma>\frac{3}{2}$. Then there exists a positive constant $\Delta t^*$ such that when $\Delta t\in (0,\Delta t^* ]$, the averaged discrete energy is uniformly bounded under the periodic boundary condition, i.e.,
	\begin{equation*}
		{\mathbb E}\big[\Phi(t_{n})\big]\leq e^{-\sigma_0 n\Delta t}{\mathbb E}\big[\Phi(t_{0})\big]+C_{14},\quad n\in\mathbb{N},
	\end{equation*}
	where the positive constant $C_{14}$ depends on $\sigma_0,\lambda_{1},\widetilde{{\bm{\lambda}}}_2,|D|$, $\|Q_1^{\frac{1}{2}}\|_{\mathcal{L}_2^{\gamma_1}}$ and  $\|Q^{\frac{1}{2}}_2\|_{\mathcal{L}_2^{\gamma_2}}.$
\end{prop}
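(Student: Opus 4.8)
The plan is to work directly from the discrete energy evolution law \eqref{energy1}, which already encodes the conservative cancellation of the discrete curl contributions and of the Stratonovich $\lambda_1 J$ noise under the periodic boundary condition, so that only the source contribution must be controlled. Taking expectation in \eqref{energy1} and using $\sigma_{i,j,k}\ge\sigma_0$, the first sum is bounded by $e^{-2\sigma_0\Delta t}\mathbb{E}[\Phi(t_n)]$, and it remains to estimate the cross term $R_n:=2\Delta x\Delta y\Delta z\sum_{i,j,k}\mathbb{E}\big[\Upsilon^n_{\bar i,\bar j,\bar k}(\Delta W_2)^n_{\bar i,\bar j,\bar k}\big]$. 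Since $\Upsilon^n=\tfrac12\widetilde{{\bm{\lambda}}}_2\cdot(u^{n+1}+e^{-\sigma_{i,j,k}\Delta t}u^n)$ at the grid nodes, and $e^{-\sigma_{i,j,k}\Delta t}u^n$ is $\mathcal{F}_{t_n}$-measurable while $(\Delta W_2)^n$ is centred and independent of $\mathcal{F}_{t_n}$, the $u^n$-part contributes nothing in expectation; thus $R_n=\Delta x\Delta y\Delta z\sum_{i,j,k}\mathbb{E}\big[\widetilde{{\bm{\lambda}}}_2\cdot(u^{n+1}-e^{-\sigma_{i,j,k}\Delta t}u^n)_{\bar i,\bar j,\bar k}(\Delta W_2)^n_{\bar i,\bar j,\bar k}\big]$, the exact discrete analogue of the passage \eqref{tyt}--\eqref{yyy} in the proof of Proposition \ref{uniformH}.

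The next step is to substitute the scheme \eqref{s1} for $u^{n+1}-e^{-\sigma_{i,j,k}\Delta t}u^n$, which splits $R_n$ into three groups: a discrete-curl term proportional to $\Delta t\,\mathbb{E}[\widetilde{{\bm{\lambda}}}_2\cdot(\text{discrete }M)(A_t^{\sigma}u^n)\,\Delta W_2^n]$, a multiplicative-noise cross term proportional to $\lambda_1\,\mathbb{E}[\widetilde{{\bm{\lambda}}}_2\cdot JA_t^{\sigma}u^n\,\Delta\overline{W}_1^n\,\Delta W_2^n]$, and the pure source term proportional to $\mathbb{E}[|\widetilde{{\bm{\lambda}}}_2|^2|\Delta W_2^n|^2]$ summed over the grid. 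In both cross groups the part of $A_t^{\sigma}u^n$ carrying $e^{-\sigma_{i,j,k}\Delta t}u^n$ again drops out: $u^n$ is $\mathcal{F}_{t_n}$-measurable, and $\Delta\overline{W}_1^n$, $\Delta W_2^n$ are mean-zero and mutually independent (hence $\mathbb{E}[\Delta\overline{W}_1^n\Delta W_2^n]=0$), so only the implicit $u^{n+1}$-contributions survive, precisely as only $u^{n+1}$ appears in \eqref{yyy}.

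For the surviving curl term I would use the discrete summation-by-parts and skew-symmetry of the staggered discrete Maxwell operator under periodic boundary conditions (the same identity underlying \eqref{energy1}) to transfer the difference operators onto $\widetilde{{\bm{\lambda}}}_2\Delta W_2^n$, and then the discrete Young inequality, producing $\tfrac{\sigma_0}{8}\Delta t\,\mathbb{E}[\Phi(t_{n+1})]+C\Delta t^2$; this is where the hypotheses $\gamma_2\ge 1+\gamma$ and $q_m\in C^1(D)$ enter, since one needs uniform control of discrete first differences of $\widetilde{{\bm{\lambda}}}_2\Delta W_2^n$, with the grid sum converging through $\|Q_2^{\frac12}\|_{\mathcal{L}_2^{\gamma_2}}$. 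For the multiplicative cross term, the independence of $W_1,W_2$ together with $\mathbb{E}\big[\|\Delta\overline{W}_1^n\|^2\|\Delta W_2^n\|^2\big]\le C\Delta t^2$ (in the relevant discrete norms) and the discrete Sobolev embedding guaranteed by $\gamma_1\ge\gamma>\tfrac32$ give $\tfrac{\sigma_0}{8}\Delta t\,\mathbb{E}[\Phi(t_{n+1})]+C\Delta t$, exactly mirroring \eqref{yyy2}. The pure source term yields the leading $C\Delta t$ remainder, with the constant governed by $|\widetilde{{\bm{\lambda}}}_2|$ and ${\rm tr}(Q_2)\le\|Q_2^{\frac12}\|^2_{\mathcal{L}_2^{\gamma_2}}$.

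Collecting these bounds gives the recursion $\mathbb{E}[\Phi(t_{n+1})]\le e^{-2\sigma_0\Delta t}\mathbb{E}[\Phi(t_n)]+\tfrac{\sigma_0}{4}\Delta t\,\mathbb{E}[\Phi(t_{n+1})]+C\Delta t$, and absorbing the last implicit term yields $\mathbb{E}[\Phi(t_{n+1})]\le \frac{e^{-2\sigma_0\Delta t}}{1-\frac{\sigma_0}{4}\Delta t}\mathbb{E}[\Phi(t_n)]+C\Delta t$; since $\frac{1}{1-\frac{\sigma_0}{4}\Delta t}\le e^{\sigma_0\Delta t}$ for all sufficiently small $\Delta t$, the prefactor is at most $e^{-\sigma_0\Delta t}$, and a discrete Gronwall (geometric-series) argument delivers $\mathbb{E}[\Phi(t_n)]\le e^{-\sigma_0 n\Delta t}\mathbb{E}[\Phi(t_0)]+C_{14}$ with $C_{14}$ the sum of the $C\Delta t$ remainders. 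I expect the main obstacle to be not the overall architecture, which closely parallels Proposition \ref{uniformH}, but the purely grid-level functional-analytic bookkeeping: establishing the discrete skew-symmetry and summation-by-parts identities on the staggered mesh under periodic boundary conditions, the discrete Sobolev and Hölder inequalities, and the uniform-in-$h$ regularity of the noise increments $\Delta\overline{W}_1^n$ and $\Delta W_2^n$ in discrete $H^\gamma$- and $H^{1+\gamma}$-type norms, all carried out at the discrete level rather than through the continuous operators used in the earlier sections.
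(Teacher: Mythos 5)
Your proposal is correct and takes essentially the same approach as the paper's proof: start from the discrete energy evolution law \eqref{energy1}, reduce the noise cross term using that the increments are centred and independent of $\mathcal{F}_{t_n}$ (the paper's \eqref{hnh}), substitute the scheme \eqref{s1}, control the resulting curl, multiplicative and source contributions via discrete summation by parts, Young's inequality and the $C^1$/Sobolev estimates on $q_m$ (exactly where $q_m\in C^1(D)$ and $\gamma_2\geq 1+\gamma$ enter), and close with a discrete Gronwall argument. Your only deviation --- additionally discarding the $e^{-\sigma_{i,j,k}\Delta t}u^n$ portions of $A_t^{\sigma_{i,j,k}}u^n$ in the cross terms by the same independence/mean-zero argument --- is valid and merely removes the $\tfrac{\sigma_0}{4}\Delta t\,e^{-2\sigma_0\Delta t}\mathbb{E}[\Phi(t_n)]$ term that the paper instead retains and absorbs via Young's inequality.
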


\begin{proof}
	For the first term on the right side of \eqref{energy1}, we have 
	\begin{align}\label{efz}
		\Delta x\Delta y\Delta z\sum_{i,j,k}e^{-2\sigma_{i,j,k}\Delta t}\big(|{\textbf{E}}_{\bar{i},\bar{j},\bar{k}}^{n}|^{2}+|{\textbf{H}}_{\bar{i},\bar{j},\bar{k}}^{n}|^{2}\big)\leq e^{-2\sigma_0\Delta t}\Phi(t_{n}).
	\end{align}
	For the second term on the right side of \eqref{energy1}, which contains six sub-terms, the estimate is more technique. We consider the first sub-term. It holds that
	{\small\begin{equation}\label{hnh}
			\begin{split}
				&{\mathbb E}\bigg[\sum_{i,j,k}
				A_t^{\sigma_{i,j,k}}(E_1)_{\bar{i},\bar{j},\bar{k}}^{n}(\Delta W_{2})_{\bar{i},\bar{j},\bar{k}}^{n}\bigg]
				=\frac12{\mathbb E}\Bigg[\sum_{i,j,k}
				\Big[(E_1)_{\bar{i},\bar{j},\bar{k}}^{n+1}-e^{-\sigma_{i,j,k}\Delta t}(E_1)_{\bar{i},\bar{j},\bar{k}}^{n}\Big](\Delta W_{2})_{\bar{i},\bar{j},\bar{k}}^{n}\Bigg].
			\end{split}
	\end{equation}}
	Substituting \eqref{sub1} into \eqref{hnh}, we obtain
	\begin{align}\label{eq2.9}
		&{\mathbb E}\Big[\sum_{i,j,k}
		A_t^{\sigma_{i,j,k}}(E_1)_{\bar{i},\bar{j},\bar{k}}^{n}(\Delta W_{2})_{\bar{i},\bar{j},\bar{k}}^{n}\Big]\notag\\
		&=\frac12{\mathbb E}\Big[\sum_{i,j,k}
		\Big[-\Delta t
		A_t^{\sigma_{i,j,k}}(H_3)_{\bar{i},\bar{j},\bar{k}}^{n}\delta_{y}(\Delta W_{2})_{\bar{i},j,\bar{k}}^{n}+\Delta tA_t^{\sigma_{i,j,k}}(H_2)_{\bar{i},\bar{j},\bar{k}}^{n}\delta_{z}(\Delta W_{2})_{\bar{i},\bar{j},k}^{n}\Big]\Big]\notag\\
		&\quad+\frac12{\mathbb E}\Big[\sum_{i,j,k}
		\Big[-\lambda_1 A_t^{\sigma_{i,j,k}}(H_1)_{\bar{i},\bar{j},\bar{k}}^{n}(\Delta W_{1})_{\bar{i},\bar{j},\bar{k}}^n(\Delta W_{2})_{\bar{i},\bar{j},\bar{k}}^{n}
		+\lambda_2^{(1)}\big[(\Delta W_{2})_{\bar{i},\bar{j},\bar{k}}^n\big]^2
		\Big]\Big],
	\end{align}
	where we use the fact  
	\begin{align*}
		&\hphantom{=}\sum_{i,j,k}
		\delta_{y}A_t^{\sigma_{i,j,k}}(H_3)_{\bar{i},j,\bar{k}}^{n}(\Delta W_{2})_{\bar{i},\bar{j},\bar{k}}^n
		=-\sum_{i,j,k}A_t^{\sigma_{i,j,k}}(H_3)_{\bar{i},\bar{j},\bar{k}}^{n}\delta_{y}(\Delta W_{2})_{\bar{i},j,\bar{k}}^n
	\end{align*}
	and
	\begin{align*}
		&\hphantom{=}\sum_{i,j,k}
		\delta_{z}A_t^{\sigma_{i,j,k}}(H_2)_{\bar{i},\bar{j},k}^{n}(\Delta W_{2})_{\bar{i},\bar{j},\bar{k}}^n
		=-\sum_{i,j,k}A_t^{\sigma_{i,j,k}}(H_2)_{\bar{i},\bar{j},\bar{k}}^{n}\delta_{z}(\Delta W_{2})_{\bar{i},\bar{j},k}^n
	\end{align*}
	due to the periodic boundary condition. 
	Notice that 
	\begin{align}\label{hg1}
		&\Delta x\Delta y\Delta z\sum_{i,j,k}\mathbb{E}\big[|(\Delta W_2)^n_{\bar{i},\bar{j},\bar{k}}|^2\big]
		\notag\\
		&=\Delta x\Delta y\Delta z\sum_{i,j,k}\mathbb{E}\Big[\Big|\sum_{m=1}^{\infty}\sqrt{\eta_{m}^{(2)}}q_m(x_{\bar{i}},y_{\bar{j}},z_{\bar{k}})(\beta_m^{(2)}(t_{n+1})-\beta_m^{(2)}(t_n))\Big|^2\Big]\notag\\
		&\leq \Delta x\Delta y\Delta z\Delta t\sum_{i,j,k}\sum_{m=1}^{\infty}\eta_{m}^{(2)}\|q_m\|_{L^{\infty}(D)}^2\\
		&\leq C|D|\Delta t\sum_{m=1}^{\infty}\eta_{m}^{(2)}\|q_m\|_{H^{\gamma}(D)}^2=C\Delta t\notag
	\end{align}
	and similarly
	\begin{align}\label{hg2}
		&\Delta x\Delta y\Delta z\sum_{i,j,k}\Big(\mathbb{E}\Big[\big|(\Delta W_1)^n_{\bar{i},\bar{j},\bar{k}}\big|^2\Big]\mathbb{E}\Big[\big|(\Delta W_2)^n_{\bar{i},\bar{j},\bar{k}}\big|^2\Big]\Big)
		\leq C\Delta t^2,
	\end{align}
	where we use the Sobolev embedding $H^{\gamma}(D)\hookrightarrow L^{\infty}(D)$ for $\gamma>\frac{3}{2}$.
	Thus, for \eqref{eq2.9}, the H\"older inequality, the Young inequality and \eqref{hg1}--\eqref{hg2} lead to
	{\small	\begin{align*}
			&\Delta x\Delta y\Delta z\lambda_2^{(1)}{\mathbb E}\Big[\sum_{i,j,k}
			A_t^{\sigma_{i,j,k}}(E_1)_{\bar{i},\bar{j},\bar{k}}^{n}(\Delta W_{2})_{\bar{i},\bar{j},\bar{k}}^{n}\Big]\\
			&\leq \frac{\sigma_0}{8}\Delta t\Delta x\Delta y\Delta z{\mathbb E}\Big[\sum_{i,j,k}\Big[\big|A_t^{\sigma_{i,j,k}}(H_3)_{\bar{i},\bar{j},\bar{k}}^{n}\big|^2+\big|A_t^{\sigma_{i,j,k}}(H_2)_{\bar{i},\bar{j},\bar{k}}^{n}\big|^2
			\Big]\Big]\\
			&\quad+\frac{\big|\lambda_2^{(1)}\big|^2}{2\sigma_0}\Delta t\Delta x\Delta y\Delta z{\mathbb E}\Big[\sum_{i,j,k}\Big[\big|\delta_{y}(\Delta W_{2})_{\bar{i},j,\bar{k}}^n\big|^2+\big|\delta_{z}(\Delta W_{2})_{\bar{i},\bar{j},k}^n\big|^2\Big]\Big]\\
			&\quad+\frac12\Delta x\Delta y\Delta z\big|\lambda_2^{(1)}\big|\sum_{i,j,k}{\mathbb E}\Big[\big|(\Delta W_{2})_{\bar{i},\bar{j},\bar{k}}^{n}\big|^2\Big]+\frac{\sigma_0}{4}\Delta t\Delta x\Delta y\Delta z{\mathbb E}\Big[\sum_{i,j,k}\big|A_t^{\sigma_{i,j,k}}(H_1)_{\bar{i},\bar{j},\bar{k}}^{n}\big|^2\Big]\\
			&\quad+\frac{\big|\lambda_1\lambda_2^{(1)}\big|^2}{4\sigma_0\Delta t}\Delta x\Delta y\Delta z
			\sum_{i,j,k}{\mathbb E}\Big[\big|(\Delta W_{1})_{\bar{i},\bar{j},\bar{k}}^n\big|^2\Big]{\mathbb E}\Big[\big|(\Delta W_{2})_{\bar{i},\bar{j},\bar{k}}^{n}\big|^2\Big]\\
			&\leq \frac{\sigma_0}{4} \Delta t\Delta x\Delta y\Delta z{\mathbb E}\Big[\sum_{i,j,k}
			\Big[ \frac14 \big|(H_3)_{\bar{i},\bar{j},\bar{k}}^{n+1}\big|^2+ \frac14 |(H_2)_{\bar{i},\bar{j},\bar{k}}^{n+1}\big|^2+\frac12 |(H_1)_{\bar{i},\bar{j},\bar{k}}^{n+1}\big|^2\Big]\Big]\\
			&\hphantom{=}+\frac{\sigma_0}{4}\Delta t\Delta x\Delta y\Delta z{\mathbb E}\Big[\sum_{i,j,k}e^{-2\sigma_{i,j,k}\Delta t}
			\Big[ \frac14 \big|(H_3)_{\bar{i},\bar{j},\bar{k}}^{n}\big|^2+ \frac14 |(H_2)_{\bar{i},\bar{j},\bar{k}}^{n}|^2+\frac12 \big|(H_1)_{\bar{i},\bar{j},\bar{k}}^{n}\big|^2\Big]\Big]\\
			&\hphantom{=}+\frac{\big|\lambda_2^{(1)}\big|^2}{2\sigma_0}\Delta t\Delta x\Delta y\Delta z{\mathbb E}\Big[\sum_{i,j,k}\Big[\big|\delta_{y}(\Delta W_{2})_{\bar{i},j,\bar{k}}^n\big|^2+\big|\delta_{z}(\Delta W_{2})_{\bar{i},\bar{j},k}^n\big|^2\Big]\Big]+C\Delta t.
	\end{align*}}
	Applying the similar approach to estimate the other five sub-terms on the second term on the right side of \eqref{energy1}, we get
	
	\begin{align*}
		&\Delta x\Delta y\Delta z\sum_{i,j,k}\mathbb{E}\Big[\Upsilon^{n}_{\bar{i},\bar{j},\bar{k}}
		(\Delta W_{2})_{\bar{i},\bar{j},\bar{k}}^{n}\Big]\\
		&\leq \frac{\sigma_0}{4}\Delta t {\mathbb E}[\Phi(t_{n+1})]+\frac{\sigma_0}{4} \Delta t e^{-2\sigma_0\Delta t}{\mathbb E}[\Phi(t_{n})]+C\Delta t\\
		&\quad+\frac{|{\bm\lambda_2}|^2}{\sigma_0}\Delta t\Delta x\Delta y\Delta z{\mathbb E}\Big[\sum_{i,j,k}\Big[|\delta_{x}(\Delta W_{2})_{i,\bar{j},\bar{k}}^n|^2+
		|\delta_{y}(\Delta W_{2})_{\bar{i},j,\bar{k}}^n|^2
		+|\delta_{z}(\Delta W_{2})_{\bar{i},\bar{j},k}^n|^2\Big]\Big].
	\end{align*}
	It follows from the Sobolev embedding $H^{\gamma}\hookrightarrow L^{\infty}(D)$ for $\gamma>\frac{3}{2}$ that
	\begin{align*}
		&{\mathbb E}\Big[\sum_{i,j,k}\big|\delta_{x}(\Delta W_{2})_{i,\bar{j},\bar{k}}^n\big|^2\bigg]\\
		&=\sum_{i,j,k}{\mathbb E}\Big[\Big|
		\sum_{m=1}^{\infty}\sqrt{\eta_m^{(2)}}\frac{q_{m}(x_{i+1},y_{\bar{j}},z_{\bar{k}})-q_{m}(x_i,y_{\bar{j}},z_{\bar{k}})}{\Delta x}\Big(\beta_m^{(2)}(t_{n+1})-\beta_m^{(2)}(t_{n})\Big)\Big|^2\Big]\\
		&=\Delta t\sum_{i,j,k}
		\sum_{m=1}^{\infty}\eta_m^{(2)}\big|\partial_xq_{m}(x_i+\theta_i\Delta x,y_{\bar{j}},z_{\bar{k}})\big|^2\leq \Delta t\sum_{i,j,k}\sum_{m=1}^{\infty}\eta_{m}^{(2)}\|q_m\|^2_{W^{1,\infty}(D)}\\
		&\leq C\Delta t\sum_{i,j,k}\sum_{m=1}^{\infty}\eta_{m}^{(2)}\|q_m\|^2_{H^{\gamma_2}(D)}\leq C\Delta t I_xJ_yK_z,
	\end{align*}
	where $\theta_i\in[0,1]$. Therefore, we get
	\begin{align*}
		&\frac{|{\bm\lambda_2}|^2}{4\alpha}\Delta t\Delta x\Delta y\Delta z{\mathbb E}\Big[\sum_{i,j,k}|\delta_{x}(\Delta W_{2})_{i,\bar{j},\bar{k}}^n|^2\Big]
		\leq C\Delta t^2.
	\end{align*}
	Estimating terms concerning $|\delta_{y}(\Delta W_2)^n_{\bar{i},j,\bar{k}}|^2$ and $|\delta_{z}(\Delta W_2)^n_{\bar{i},\bar{j},k}|^2$ similarly, we conclude that
	{\small\begin{align}\label{wmd}
			\Delta x\Delta y\Delta z\sum_{i,j,k}\mathbb{E}\Big[\Upsilon^{n}_{\bar{i},\bar{j},\bar{k}}
			(\Delta W_{2})_{\bar{i},\bar{j},\bar{k}}^{n}\Big]\leq \frac{\sigma_0}{4} \Delta t {\mathbb E}[\Phi(t_{n+1})]+\frac{\sigma_0}{4}\Delta t  e^{-2\sigma_0\Delta t}{\mathbb E}[\Phi(t_{n})]+C\Delta t.
	\end{align}}
	Consequently, combining \eqref{efz} and \eqref{wmd} yields 
	\begin{equation*}
		{\mathbb E}[\Phi(t_{n+1})]\leq e^{-2\sigma_0\Delta t}{\mathbb E}[\Phi(t_{n})]+\frac{\sigma_0}{4}\Delta t  {\mathbb E}[\Phi(t_{n+1})]+\frac{\sigma_0}{4}\Delta t e^{-2\sigma_0\Delta t}{\mathbb E}[\Phi(t_{n})]+C\Delta t.
	\end{equation*}
	By the Gronwall inequality, it can be shown that for any $\Delta t\in(0,\frac{2}{\sigma_0}]$, the conclusion of the proposition holds. 
\end{proof}

\begin{rema}
	Especially, if the orthonormal basis of $L^2(D)$ is chosen as
	$$q_m(x,y,z)=q_{m_1}(x)q_{m_2}(y)q_{m_3}(z),\quad m=(m_1,m_2,m_3)\in\mathbb{N}^3$$
	with
	$$q_{m_1}(x)=\sqrt{\frac{2}{x_R-x_L}}\sin\Big(\frac{2m_1\pi (x-x_L)}{x_R-x_L}\Big),$$ 
	and $q_{m_2}(y),q_{m_3}(z)$ being defined similarly, the assumption of Proposition \ref{full_bound} can be weaken as $\gamma_1\geq 0$ and $\gamma_2\geq 1$ since the condition $q_m\in C^1(D)$ is automatically satisfied in this case.
\end{rema}

Let 
\begin{equation*}
	\begin{split}
		U&^n=\Big((E_1)^n_{\bar{1},\bar{1},\bar{1}},(E_1)^n_{\bar{2},\bar{1},\bar{1}},\ldots,(E_1)^n_{\bar{I}_x,\bar{1},\bar{1}},(E_1)^n_{\bar{1},\bar{2},\bar{1}},\ldots,(E_1)^n_{\bar{I}_x,\bar{2},\bar{1}}\ldots,(E_1)^n_{\bar{I}_x,\bar{J}_y,\bar{K}_z},\\
		&~(E_2)^n_{\bar{1},\bar{1},\bar{1}},\ldots,(E_2)^n_{\bar{I}_x,\bar{J}_y,\bar{K}_z},(E_3)_{\bar{1},\bar{1},\bar{1}}^n,\ldots,(E_3)^n_{\bar{I}_x,\bar{J}_y,\bar{K}_z},(H_1)^n_{\bar{1},\bar{1},\bar{1}}\ldots,(H_3)^n_{\bar{I}_x,\bar{J}_y,\bar{K}_z}\Big)^\top,
	\end{split}
\end{equation*}
then the discrete energy $\Phi$ given by \eqref{disave} can be rewritten into
\begin{align}\label{disave2}
	\Phi(t_{n})=\Delta x\Delta y\Delta z |U^n|^2,\quad n\in\mathbb{N}.
\end{align}
By taking \eqref{disave2} as a Lyapunov function, the ergodicity of \eqref{s1} can be proved similar to that of Proposition \ref{SMEErgo}, which is stated below.
\begin{theo}
	Under the conditions in Proposition \ref{full_bound}, for sufficiently small $\Delta t>0$, the numerical solution $\{U^n\}_{n\in\mathbb{N}}$ of \eqref{s1} has a unique invariant measure $\pi$. Thus $\{U^n\}_{n\in\mathbb{N}}$ is ergodic. Moreover, $\{U^n\}_{n\in\mathbb{N}}$ is exponentially mixing.
\end{theo}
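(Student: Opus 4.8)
The plan is to regard the fully discrete scheme \eqref{s1} as a time-homogeneous Markov chain on the finite-dimensional state space $\mathbb{R}^{N}$ (with $N=6I_xJ_yK_z$ under the periodic boundary condition) and to reproduce the two-step argument of Proposition \ref{SMEErgo}: a Lyapunov/Krylov--Bogoliubov step for existence, and an $L^2$-contraction step for uniqueness and exponential mixing. First I would observe that $U^{n+1}$ is determined from $U^n$ together with the increments $\Delta\overline{W}_1^n$ and $\Delta W_2^n$, which are independent of $\mathcal{F}_{t_n}$; hence $\{U^n\}_{n\in\mathbb{N}}$ is Markovian and its transition semigroup is well defined once $\Delta t$ is small enough for \eqref{s1} to be uniquely solvable for $U^{n+1}$ (the scheme being linear in $U^{n+1}$ up to the scalar multiplicative factor, exactly as in the solvability argument behind Proposition \ref{uniformH}).

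For existence I would take $V(U):=|U|^{2}$, so that $\Phi(t_n)=\Delta x\,\Delta y\,\Delta z\,V(U^n)$ by \eqref{disave2}. Proposition \ref{full_bound} then furnishes $\sup_{n}\mathbb{E}[|U^n|^2]<\infty$, which is precisely the Lyapunov bound needed. Because the state space is finite-dimensional, the sublevel sets $\{|U|\le \alpha\}$ are compact, so the uniform second-moment bound makes the Ces\`aro averages of the laws of $U^n$ tight; the Krylov--Bogoliubov theorem cited in Proposition \ref{SMEErgo} then yields an invariant measure $\pi\in\mathcal{P}_2(\mathbb{R}^N)$.

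The heart of the argument is the contraction estimate. Let $U^n,\widetilde U^n$ solve \eqref{s1} with the same noise realization but different initial data, and set $V^n:=U^n-\widetilde U^n$. The additive source terms $\lambda_2^{(\ell)}(\dot W_2)$ cancel, so $V^n$ solves the homogeneous scheme driven only by the damped part, the discrete curl of $A_t^{\sigma_{i,j,k}}V^n$, and the multiplicative term $\lambda_1 J A_t^{\sigma_{i,j,k}}V^n(\dot{\overline W}_1)$. Testing this difference equation with the discrete $L^2$ inner product against $A_t^{\sigma_{i,j,k}}V^n$, the discrete-curl contribution vanishes by summation by parts under the periodic boundary condition (discrete skew-adjointness of the finite-difference Maxwell operator, the analogue of the skew-adjointness of $M$ used in \eqref{qwe}), and the multiplicative-noise contribution vanishes pointwise since $J$ is skew-symmetric and hence $\langle Jw,w\rangle=0$ at every node. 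What remains is the pure damping factor, giving the discrete analogue of \eqref{mixing},
\begin{equation*}
	\mathbb{E}\big[|U^n-\widetilde U^n|^2\big]\le e^{-2\sigma_0\Delta t}\,\mathbb{E}\big[|U^{n-1}-\widetilde U^{n-1}|^2\big]\le\cdots\le e^{-2\sigma_0 t_n}\,\mathbb{E}\big[|U^0-\widetilde U^0|^2\big].
\end{equation*}
From here uniqueness of $\pi$ and the exponential mixing estimate follow verbatim from the computation in Proposition \ref{SMEErgo}(i): the contraction is an $L^2$-Wasserstein contraction, forcing any two invariant measures to coincide, while integrating the error $\big|\mathbb{E}[\varphi(U^n)]-\pi(\varphi)\big|$ against $\pi$ and applying the H\"older inequality yields a bound of the form $(|U^0|+C)L_\varphi e^{-\sigma_0 t_n}$ for every bounded Lipschitz $\varphi$.

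I expect the only genuinely non-routine point to be the vanishing of the tested discrete-curl terms: one must verify that the staggered finite-difference stencils in \eqref{sub1}--\eqref{sub6}, combined with the $A_t^{\sigma_{i,j,k}}$ averaging, assemble into a skew-symmetric operator on the periodic grid, so that the energy identity for the difference system closes with no boundary remainder. This is exactly the discrete counterpart of the skew-adjointness of $M$ and is already implicit in the energy evolution law \eqref{energy1}; once it is recorded for $V^n$, all remaining steps are mechanical.
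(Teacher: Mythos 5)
Your proposal is correct and follows essentially the same route the paper intends: take the discrete energy \eqref{disave2} (equivalently $|U^n|^2$) as the Lyapunov function, invoke Proposition \ref{full_bound} for the uniform moment bound and Krylov--Bogoliubov for existence, and derive the one-step contraction $\mathbb{E}\big[|U^n-\widetilde U^n|^2\big]\leq e^{-2\sigma_0\Delta t}\,\mathbb{E}\big[|U^{n-1}-\widetilde U^{n-1}|^2\big]$ for the synchronously coupled difference system via the discrete skew-adjointness of the curl stencils under periodic boundary conditions and the pointwise skew-symmetry of $J$, exactly as in Theorem \ref{hbf} and Proposition \ref{SMEErgo}. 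Your observation that compactness of the Lyapunov level sets is automatic in the finite-dimensional state space, and that the cancellation of the tested curl and multiplicative-noise terms is already implicit in the energy law \eqref{energy1}, correctly fills in the details the paper leaves to the reader.
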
	

\subsubsection{Stochastic conformal multi-symplecticity}
Now we turn to the stochastic conformal multi-symplecticity of \eqref{s1}. By using the definitions of skew-symmetric matrices $F,K_s$, $s=1,2,3$, the full discretization \eqref{s1} can be read as
\begin{equation*}
	\begin{split}
		F\left(\delta_{t}^{\sigma_{i,j,k}}u_{\bar{i},\bar{j},\bar{k}}^{n}\right)+K_1\left(\delta_{x}A_{t}^{\sigma_{i,j,k}}u_{i,\bar{j},\bar{k}}^{n}\right)
		+K_2\left(\delta_{y}A_{t}^{\sigma_{i,j,k}}u_{\bar{i},j,\bar{k}}^{n}\right)
		+K_3\left(\delta_{z}A_{t}^{\sigma_{i,j,k}}u_{\bar{i},\bar{j},k}^{n}\right)\\
		=\nabla_u S_1\left(A_{t}^{\sigma_{i,j,k}}u_{\bar{i},\bar{j},\bar{k}}^{n}\right)(\dot{\overline{W}}_1)_{\bar{i},\bar{j},\bar{k}}^{n}+\nabla_u S_2\left(A_{t}^{\sigma_{i,j,k}}u_{\bar{i},\bar{j},\bar{k}}^{n}\right)(\dot{W}_2)_{\bar{i},\bar{j},\bar{k}}^{n}.
	\end{split}
\end{equation*}
Performing the wedge product on both sides of the above equation yields the following result.
\begin{theo}
	The full discretization \eqref{s1} preserves the discrete stochastic conformal multi-symplectic conservation law
	\begin{equation*}
		\begin{split}
			\delta_t^{2\sigma_{i,j,k}}\omega_{\bar{i},\bar{j},\bar{k}}^{n}+\delta_x (\kappa_1)^{n,\sigma_{i,j,k}}_{i,\bar{j},\bar{k}}+\delta_y (\kappa_2)^{n,\sigma_{i,j,k}}_{\bar{i},j,\bar{k}}+
			\delta_z (\kappa_3)^{n,\sigma_{i,j,k}}_{\bar{i},\bar{j},k}=0,
			\quad\mathbb{P}\text{-}a.s.,
		\end{split}
	\end{equation*}
	where $\omega^n_{i,j,k}=\frac12 du^{n}_{i,j,k}\wedge Fdu^{n}_{i,j,k}$, and $(\kappa_s)^{n,\sigma_{i,j,k}}_{i,j,k}=\frac12d(A_{t}^{\sigma_{i,j,k}}u^{n}_{i,j,k})\wedge K_s d(A_{t}^{\sigma_{i,j,k}}u^{n}_{i,j,k})$, $s=1,2,3.$ 
\end{theo}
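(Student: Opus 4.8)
The plan is to follow the variational bilinear-form argument used for the continuous law and for the temporal semi-discretization (cf. \cite[Theorem 2.2]{jiang2013stochastic}), now adapted to the staggered grid. First I would apply the exterior derivative $d$, taken in the phase space of the unknown field, to the compact Hamiltonian form of \eqref{s1}. Since $F$ and $K_s$ are constant matrices they pass through $d$, and the two potentials are low-degree: $S_1(u)=\tfrac{\lambda_1}{2}(|\Ee|^2+|\Hh|^2)$ is quadratic, so $\nabla_uS_1(u)=\lambda_1 u$ and $d\nabla_uS_1(A_t^{\sigma_{i,j,k}}u^n)=\lambda_1\,d(A_t^{\sigma_{i,j,k}}u^n)$, whereas $S_2(u)=\bm{\lambda}_2\cdot\Ee-\bm{\lambda}_2\cdot\Hh$ is affine, so $d\nabla_uS_2=0$ and the $W_2$-term drops out entirely. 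This yields the discrete variational identity
\begin{equation*}
	F\,d(\delta_t^{\sigma_{i,j,k}}u^n_{\bar i,\bar j,\bar k})+K_1\,d(\delta_x A_t^{\sigma_{i,j,k}}u^n_{i,\bar j,\bar k})+K_2\,d(\delta_y A_t^{\sigma_{i,j,k}}u^n_{\bar i,j,\bar k})+K_3\,d(\delta_z A_t^{\sigma_{i,j,k}}u^n_{\bar i,\bar j,k})=\lambda_1\,d(A_t^{\sigma_{i,j,k}}u^n_{\bar i,\bar j,\bar k})(\dot{\overline W}_1)^n_{\bar i,\bar j,\bar k}.
\end{equation*}

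Next I would take the wedge product of $d(A_t^{\sigma_{i,j,k}}u^n_{\bar i,\bar j,\bar k})$ with both sides. For the temporal term, writing $p=du^{n+1}$ and $q=e^{-\sigma_{i,j,k}\Delta t}du^n$ so that $d(A_t^{\sigma}u^n)=\tfrac12(p+q)$ and $d(\delta_t^{\sigma}u^n)=\tfrac1{\Delta t}(p-q)$, the skew-symmetry of $F$ gives $p\wedge Fq=q\wedge Fp$, whence the mixed terms cancel and
\begin{equation*}
	d(A_t^{\sigma}u^n_{\bar i,\bar j,\bar k})\wedge F\,d(\delta_t^{\sigma}u^n_{\bar i,\bar j,\bar k})=\frac{1}{2\Delta t}\big(p\wedge Fp-q\wedge Fq\big)=\frac{\omega^{n+1}_{\bar i,\bar j,\bar k}-e^{-2\sigma_{i,j,k}\Delta t}\omega^{n}_{\bar i,\bar j,\bar k}}{\Delta t}=\delta_t^{2\sigma_{i,j,k}}\omega^n_{\bar i,\bar j,\bar k}.
\end{equation*}
Each spatial term telescopes by the same mechanism: using the skew-symmetry of $K_s$ and the one-cell discrete product (summation-by-parts) rule, the wedge of the cell-centered value against $K_s$ applied to the centered difference reduces to $\delta_x(\kappa_1)^{n,\sigma_{i,j,k}}_{i,\bar j,\bar k}$, $\delta_y(\kappa_2)^{n,\sigma_{i,j,k}}_{\bar i,j,\bar k}$ and $\delta_z(\kappa_3)^{n,\sigma_{i,j,k}}_{\bar i,\bar j,k}$, respectively. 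Finally, the right-hand side vanishes: it equals $\lambda_1\,d(A_t^{\sigma}u^n)\wedge d(A_t^{\sigma}u^n)(\dot{\overline W}_1)^n$, and a form wedged against a symmetric (here, scalar $\lambda_1 I$) image of itself is zero. Assembling these four contributions produces exactly the stated conservation law $\mathbb{P}$-a.s.

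The main obstacle I anticipate is not the algebra of the skew-symmetric pairings, which is identical in spirit to the temporal computation, but the careful bookkeeping of the staggered half-integer indices. One must verify that the difference operators $\delta_x,\delta_y,\delta_z$ indeed relate the cell-centered value $A_t^{\sigma_{i,j,k}}u^n_{\bar i,\bar j,\bar k}$ to the neighboring nodal values in precisely the averaged form required for the discrete product rule, so that the spatial wedge products collapse onto the correct staggered locations $(i,\bar j,\bar k)$, $(\bar i,j,\bar k)$ and $(\bar i,\bar j,k)$ attached to $\kappa_1,\kappa_2,\kappa_3$. Once this index consistency is checked componentwise against \eqref{sub1}--\eqref{sub6}, the remaining steps are routine, and I would otherwise simply invoke the analogy with \cite[Theorem 2.2]{jiang2013stochastic}.
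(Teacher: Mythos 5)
Your proposal is correct and takes essentially the same route as the paper: the paper rewrites \eqref{s1} in the compact damped Hamiltonian form and then simply states that ``performing the wedge product on both sides'' yields the result, which is exactly the variational-equation-plus-wedge computation you carry out. The details you supply — linearity of $\nabla_u S_1$ so its differential is $\lambda_1\,d(A_t^{\sigma}u^n)$, the affine $S_2$ term dropping out, the skew-symmetry cancellation $p\wedge Fq=q\wedge Fp$ giving the temporal and spatial telescoping, and the staggered-index averaging identities — are precisely the steps the paper leaves implicit.
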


\appendix

\section{The Proof of Lemma \ref{opee}}
To derive the estimates of the operators, we introduce the following two deterministic systems:
\begin{equation}\label{system_1}
	\left\{
	\begin{split}
		&\dd \widetilde{u}(t)=(M-\sigma I)\widetilde{u}(t)\dd t,\quad t>0,\\
		&\widetilde{u}(0)=v,
	\end{split}		
	\right.
\end{equation}
and
\begin{equation}\label{system_2}
	\left\{
	\begin{split}
		&\dd \overline{u}(t)=-\sigma\overline{u}(t)\dd t,\quad t>0,\\
		&\overline{u}(0)=v.
	\end{split}		
	\right.
\end{equation}

(1)	
Notice that $\widetilde{u}(t)=e^{t(M-\sigma I)}v$, $t\geq0$. Then
\begin{align}\label{ghg}
	\|\widetilde{u}(t)\|_{\H}\leq \|e^{tM}\|_{\mathcal{L}(\H,\H)}\|e^{-\sigma I}v\|_{\H}\leq e^{-\sigma_0t}\|v\|_{\H},
\end{align}
which leads to the first assertion of (1).

Let $\widetilde{u}^n:=[(I-\frac{\Delta t}{2}M)^{-1}(I+\frac{\Delta t}{2}M)e^{-\sigma\Delta t}]^nv,n\in\mathbb{N}$. Then 
$$\widetilde{u}^n=(I-\frac{\Delta t}{2}M)^{-1}(I+\frac{\Delta t}{2}M)e^{-\sigma\Delta t}\widetilde{u}^{n-1}, \quad n\geq1$$ \\
implies 
\begin{equation}\label{re1}
	\widetilde{u}^n-e^{-\sigma\Delta t}\widetilde{u}^{n-1}=\frac{\Delta t}{2}M(\widetilde{u}^n+e^{-\sigma\Delta t}\widetilde{u}^{n-1}).
\end{equation}
We apply $\langle\cdot,\widetilde{u}^n+e^{-\sigma\Delta t}\widetilde{u}^{n-1}\rangle_{\H}$ on both sides of \eqref{re1} and get
$$\|\widetilde{u}^n\|^2_{\H}-\|e^{-\sigma\Delta t}\widetilde{u}^{n-1}\|_{\H}^2=0.$$
Then we have
$$\|\widetilde{u}^n\|_{\H}\leq e^{-\sigma_0\Delta t}\|\widetilde{u}^{n-1}\|_{\H}\leq\cdots\leq e^{-n\Delta t\sigma_0}\|\widetilde{u}^0\|_{\H},$$
which leads to the second assertion of $(1)$.

(2) To estimate $\|\hat{S}(t_n)-(\hat{S}_{\Delta t})^n\|_{\mathcal{L}(D(M),\mathbb{H})}$, we proceed now in two steps.

{\em Step 1. Estimate of $\,\|\widetilde{u}^n\|_{\mathcal{D}(M)}$} and $\|\widetilde{u}(t)\|_{\mathcal{D}(M)}$.

Applying $\langle\cdot,M^2(\widetilde{u}^n+e^{-\sigma\Delta t}\widetilde{u}^{n-1})\rangle_{\H}$ on both sides of \eqref{re1} obtains
$$\|M\widetilde{u}^n\|^2_{\H}-\|M(e^{-\sigma\Delta t}\widetilde{u}^n)\|^2_{\H}=0.$$
Hence
\begin{align*}
	&\|M\widetilde{u}^n\|_{\H}=\|M(e^{-\sigma\Delta t}\widetilde{u}^{n-1})\|_{\H}\\
	&\leq e^{-\sigma_0\Delta t}\|M\widetilde{u}^{n-1}\|_{\H}+2\|\nabla\sigma\|_{L^\infty(D)^3}\Delta te^{-\sigma_0\Delta t}\|\widetilde{u}^{n-1}\|_{\H}\\
	&\leq e^{-n\Delta t\sigma_0}\|M\widetilde{u}^0\|_{\H}+2\|\nabla\sigma\|_{L^\infty(D)^3}\Delta t\Big(\sum_{k=1}^{n}e^{-\sigma_0k\Delta t}e^{-(n-k)\Delta t\sigma_0}\|\widetilde{u}^0\|_{\H}\Big)\\
	&\leq Ce^{-\sigma_0n\Delta t/2}\|\widetilde{u}^0\|_{\mathcal{D}(M)},
\end{align*}
from which we get	
\begin{align}\label{esx}
	\|\widetilde{u}^n\|_{\mathcal{D}(M)}\leq  \Big(Ce^{-2\sigma_0n\Delta t}\|\widetilde{u}^0\|^2_{\H}+Ce^{-\sigma_0n\Delta t}\|\widetilde{u}^0\|^2_{\mathcal{D}(M)}\Big)^{\frac{1}{2}} \leq Ce^{-\sigma_0n\Delta t/2}\|\widetilde{u}^0\|_{\mathcal{D}(M)}.
\end{align}
It follows from
$$\frac{\partial}{\partial t}\|M\widetilde{u}(t)\|^2_{\H}=2\langle M\frac{\partial\widetilde{u}(t)}{\partial t},M\widetilde{u}(t)\rangle_{\mathbb{H}}=-2\langle M(\sigma \widetilde{u}(t)),M\widetilde{u}(t)\rangle_{\H}$$
that
\begin{align*}
	\dd \|M\widetilde{u}(t)\|_{\H}^2&=-2\langle \sigma M\widetilde{u}(t),M\widetilde{u}(t)\rangle_{\H}\dd t-2\Big\langle \begin{pmatrix} 0&\nabla\sigma\times \\
		-\nabla\sigma\times&0 \end{pmatrix}\widetilde{u}(t),M\widetilde{u}(t) \Big\rangle_{\H}\dd t\\
	&\leq -2\sigma_0\|M\widetilde{u}(t)\|_{\H}^2\dd t+4\|M\widetilde{u}(t)\|_{\H}\|\widetilde{u}(t)\|_{\H}\|\nabla\sigma\|_{L^{\infty}(D)^3}\dd t\\
	&\leq -\sigma_0\|M\widetilde{u}(t)\|^2_{\H}\dd t+Ce^{-2\sigma_0t}\|\widetilde{u}(0)\|^2_{\H}\dd t.
\end{align*}
By the Gronwall inequality, we get
\begin{align}\label{ghg1}
	\|M\widetilde{u}(t)\|^2_{\H}&\leq e^{-\sigma_0t}\|M\widetilde{u}(0)\|^2_{\H}+C\int_{0}^{t}e^{-\sigma_0(t-s)}e^{-2\sigma_0s}\|\widetilde{u}(0)\|^2_{\H}\dd s\notag\\
	&\leq Ce^{-\sigma_0t}\|\widetilde{u}(0)\|_{\mathcal{D}(M)}^2.
\end{align}	
Combining \eqref{ghg} and \eqref{ghg1} yields
$$\|\widetilde{u}(t)\|_{\mathcal{D}(M)}\leq Ce^{-\sigma_0t/2}\|\widetilde{u}(0)\|_{\mathcal{D}(M)}.$$

{\em Step 2. Estimate of $\,\|\widetilde{u}(t_{n})-\widetilde{u}^{n}\|_{\H}$}.

Let $\widetilde{e}_n:=\widetilde{u}(t_{n})-\widetilde{u}^{n}$, then
\begin{align}\label{qmx}
	\widetilde{e}_{n+1}-e^{-\sigma\Delta t}\widetilde{e}_n
	&=\big[\widetilde{u}(t_{n+1})-e^{-\sigma\Delta t}\widetilde{u}(t_n)\big]-\big[\widetilde{u}^{n+1}-e^{-\sigma\Delta t}\widetilde{u}^n\big]\notag\\
	&:=\xi^{n+1}+\frac{\Delta t}{2}M\big(\widetilde{e}_{n+1}+e^{-\sigma\Delta t}\widetilde{e}_n\big),
\end{align}
where $\xi^{n+1}:=\widetilde{u}(t_{n+1})-e^{-\sigma\Delta t}\widetilde{u}(t_n)-\frac{\Delta t}{2}M\big(\widetilde{u}(t_{n+1})+e^{-\sigma\Delta t}\widetilde{u}(t_n)\big)$.

Applying $\langle\cdot,\widetilde{e}_{n+1}+e^{-\sigma\Delta t}\widetilde{e}^n\rangle_{\mathbb{H}}$ on both sides of \eqref{qmx} yields
$$\|\widetilde{e}_{n+1}\|^2_{\mathbb{H}}=\|e^{-\sigma\Delta t}\widetilde{e}_n\|^2_{\mathbb{H}}+\langle\xi^{n+1},\widetilde{e}_{n+1}\rangle_{\mathbb{H}}+\langle\xi^{n+1},e^{-\sigma\Delta t}\widetilde{e}_n\rangle_{\mathbb{H}}.$$
Using the fact that $\widetilde{u}(t_{n+1})=e^{-\sigma\Delta t}\widetilde{u}(t_n)+\int_{t_n}^{t_{n+1}}e^{-\sigma(t_{n+1}-s)}M\widetilde{u}(s)\dd s$, we obtain
\begin{align*}
	\langle\xi^{n+1},\widetilde{e}_{n+1}\rangle_{\H}&=\frac{1}{2}\int_{t_n}^{t_{n+1}}\int_{s}^{t_{n+1}}\langle e^{-\sigma(t_{n+1}-r)}M\widetilde{u}(r),M\widetilde{e}_{n+1}\rangle_{\H}\dd r\dd s\\
	&\quad-\int_{t_n}^{t_{n+1}}\langle \widetilde{u}(s),R_{\sigma}^{t_{n+1}-s}\widetilde{e}_{n+1}\rangle_{\H}\dd s\\
	&\quad-\frac{1}{2}\int_{t_n}^{t_{n+1}}\int_{t_n}^{s}\langle e^{-\sigma(t_{n+1}-r)}M\widetilde{u}(r),M\widetilde{e}_{n+1}\rangle_{\H}\dd s\\
	&\leq \frac{1}{2}\int_{t_n}^{t_{n+1}}\int_{s}^{t_{n+1}}\|e^{-\sigma(t_{n+1}-r)}M\widetilde{u}(r)\|_{\H}\|M\widetilde{e}_{n+1}\|_{\H}\dd r\dd s\\
	&\quad+ \frac{1}{2}\int_{t_n}^{t_{n+1}}\int_{t_n}^{s}\|e^{-\sigma(t_{n+1}-r)}M\widetilde{u}(r)\|_{\H}\|M\widetilde{e}_{n+1}\|_{\H}\dd r\dd s\\
	&\quad+2e^{-\sigma_0\Delta t}\|\nabla\sigma\|_{L^\infty(D)^3}\int_{t_n}^{t_{n+1}}(t_{n+1}-s)\|\widetilde{u}(s)\|_{\H}\|\widetilde{e}_{n+1}\|_{\H}\dd s\\
	&\leq Ce^{-\sigma_0n\Delta t}\Delta t^2\|\widetilde{u}^0\|^2_{\mathcal{D}(M)}.
\end{align*}
Similarly, we have
$\langle\xi^{n+1},e^{-\sigma\Delta t}\widetilde{e}_n\rangle_{\H}\leq C\Delta t^2\|\widetilde{u}^0\|^2_{\mathcal{D}(M)}e^{-\sigma_0n\Delta t}.$
Thus,
\begin{align*}
	\|\widetilde{e}_n\|^2_{\H}
	&\leq e^{-2n\Delta t\sigma_0}\|\widetilde{e}_0\|^2_{\H}+C\Delta t^2e^{-\sigma_0(n-1)\Delta t}\|\widetilde{u}^0\|^2_{\mathcal{D}(M)}\frac{1-e^{-2\sigma_0n\Delta t}}{1-e^{-2\sigma_0\Delta t}}\\
	&\leq Ce^{-n\Delta t\sigma_0}\Delta t\|\widetilde{u}^0\|^2_{\mathcal{D}(M)},
\end{align*}
which gives the assertion.

(3) It follows from \eqref{system_1} that
\begin{align*}
	\|(\hat{S}(t)-I)v\|_{\H}&=\|\widetilde{u}(t)-v\|_{\H}\leq \int_{0}^{t}\|(M-\sigma I)\widetilde{u}(s)\|_{\H}\dd s\\
	&\leq (1+\|\sigma\|_{L^\infty(D)})\int_{0}^{t}\|\widetilde{u}(s)\|_{\mathcal{D}(M)}\dd s\leq Ct\|v\|_{\mathcal{D}(M)}\quad\forall\,v\in \mathcal{D}(M),
\end{align*}
which leads to the first assertion of (3).

For any $t\in [0,\Delta t]$, we combine \eqref{system_1}, \eqref{system_2} and {\em Step 1} to get
\begin{align*}
	&\big\|(e^{t(M-\sigma I)}-e^{-\sigma\Delta t})v\big\|_{\H}=\|\widetilde{u}(t)-\overline{u}(\Delta t)\|_{\H}\\
	&=\bigg\|\int_{0}^{t}M\widetilde{u}(s)\dd s+\int_{0}^{t}\sigma(\overline{u}(s)-\widetilde{u}(s))\dd s+\int_{t}^{\Delta t}\sigma\overline{u}(s)\dd s\bigg\|_{\H}\\
	&\leq C\Delta t\|v\|_{\mathcal{D}(M)},
\end{align*}
which yields the second assertion of (3).

(4) For the proof, see \cite[Lemma 3.5, Lemma 5.2]{CCC2021}.

(5) From (1)--(4), it holds that
\begin{align*}
	&\|\hat{S}(t_n-r)-(\hat{S}_{\Delta t})^{n-k-1}T_{\Delta t}\|_{\mathcal{L}(\mathcal{D}(M),\H)}\\
	&\leq\|\hat{S}(t_n-t_{k+1})(\hat{S}(t_{k+1}-r)-I)\|_{\mathcal{L}(\mathcal{D}(M),\H)}+\|\hat{S}(t_n-t_{k+1})-(\hat{S}_{\Delta t})^{n-k-1}\|_{\mathcal{L}(\mathcal{D}(M),\H)}\\
	&\qquad+\|(\hat{S}_{\Delta t})^{n-k-1}(I-T_{\Delta t})\|_{\mathcal{L}(\mathcal{D}(M),\H)}\\
	&\leq Ce^{-\sigma_0(n-k-1)\Delta t/2}\Delta t^{1/2}.
\end{align*}
Thus we finish the proof.

\section{The Proof of Lemma \ref{ooq}}
\begin{proof}
	(1) For the proof, see \cite[Lemma 5.2]{CCC2021}.\\
	(2) Let $q^{n+1}:=(\widehat{S}_{\Delta t})^{n+1}v$ with $q^n:=((q_1^n)^{\top},(q_2^n)^{\top})^{\top}$ and $q^0=v$. Then 
	\begin{align}\label{ujb}
		q^{n+1}-e^{-\sigma_0\Delta t}q^{n}=\frac{\Delta t}{2}\big(Mq^{n+1}+M(e^{-\sigma_0\Delta t}q^{n})\big),
	\end{align}
	which implies 
	\begin{align*}
		\nabla\cdot q_1^{n+1}=e^{-\sigma_0\Delta t}\nabla\cdot q^{n}_1,\qquad\nabla\cdot q^{n+1}_2=e^{-\sigma_0\Delta t}\nabla\cdot q_2^{n}.
	\end{align*}
	Hence we obtain 
	$$\|\nabla\cdot q^n_1\|^2_{L^2(D)}+\|\nabla\cdot q^n_2\|^2_{L^2(D)}=e^{-2n\sigma_0\Delta t}\big(\|\nabla\cdot v_1\|^2_{L^2(D)}+\|\nabla\cdot v_2\|^2_{L^2(D)}\big).$$
	Combining \eqref{esx} and using the fact that $f\in H({\rm curl},D)\cap H({\rm div},D)$ belongs to $H^1(D)^3$ if $\mathbf{n}\times f|_{\partial D}=0$ or $\mathbf{n}\cdot f|_{\partial D}=0$, we have
	\begin{align*}
		\|q^n\|_{H^1(D)^6}\leq Ce^{-\sigma_0n\Delta t/2}\|v\|_{H^1(D)^6}.
	\end{align*}
	(3) Let $p^n_h:=\big(\widehat{S}_{h,\Delta t}\big)^n\pi_hv$, that is, 
	\begin{equation*}
		\left\{
		\begin{split}
			&p_h^n=\big(I-\frac{\Delta t}{2}M_h\big)^{-1}\big(I+\frac{\Delta t}{2}M_h\big)e^{-\sigma_0\Delta t}p_h^{n-1}\\
			&p_h^0=\pi_hv,
		\end{split}		
		\right.
	\end{equation*}
	which yields
	\begin{align}\label{dhg}
		p^{n+1}_h-e^{-\sigma_0\Delta t}p_h^n=\frac{\Delta t}{2}\big(M_hp^{n+1}_h+e^{-\sigma_0\Delta t}M_hp^n_h\big).
	\end{align}
	By applying $\pi_h$ on both sides of \eqref{ujb} and using \cite[Proposition 4.4(i)]{CCC2021}, we obtain
	\begin{align}\label{rtg}
		\pi_hq^{n+1}-e^{\sigma_0\Delta t}\pi_hq^n=\frac{\Delta t}{2}\big(M_hq^{n+1}+e^{-\sigma_0\Delta t}M_hq^n\big).
	\end{align}
	
	Let $e_h^n:=p^n_h-\pi_hq^n$ and $e_\pi^n:=\pi_hq^n-q^n$. Subtracting \eqref{rtg} from \eqref{dhg} leads to
	\begin{equation}\label{wew}
		e^{n+1}_h-e^{-\sigma_0\Delta t}e^n_h=\frac{\Delta t}{2}\Big[M_h\big(e^{n+1}_h+e^{-\sigma_0\Delta t}e^n_h\big)+M_h\big(e^{n+1}_{\pi}+e^{-\sigma_0\Delta t}e^n_{\pi}\big)\Big].
	\end{equation}		
	We apply $\langle\cdot,e^{n+1}_h+e^{-\sigma_0\Delta t}e^n_h\rangle_{\H}$ on both sides of \eqref{wew} to obtain
	\begin{align*}
		\|e^{n+1}_h\|^2_{\H}-e^{-2\sigma_0\Delta t}\|e^n_h\|^2_{\H}&=\frac{\Delta t}{2}\langle M_h(e^{n+1}_h+e^{-\sigma_0\Delta t}e^n_h),e^{n+1}_h+e^{-\sigma_0\Delta t}e^n_h\rangle_{\H}\\
		&\quad+\frac{\Delta t}{2}\langle M_h(e^{n+1}_\pi+e^{-\sigma_0\Delta t}e^n_\pi),e^{n+1}_h+e^{-\sigma_0\Delta t}e^n_h\rangle_{\H}.
	\end{align*}	
	It follows from \cite[Proposition 4.4(iii)]{CCC2021} that
	\begin{align*}
		&\langle M_h(e^{n+1}_\pi+e^{-\sigma_0\Delta t}e^n_{\pi}),e^{n+1}_h+e^{-\sigma_0\Delta t}e^n_h\rangle_{\H}\\
		&=\frac{1}{2}\sum_{F\in\mathcal{G}_h^{\text{int}}}\bigg(\Big\langle (e^{n+1}_{\pi,\Hh}+e^{-\sigma_0\Delta t}e^n_{\pi,\Hh})|_{K_F}+(e^{n+1}_{\pi,\Hh}+e^{-\sigma_0\Delta t}e^n_{\pi,\Hh})|_{K}\\
		&\qquad\qquad-\mathbf{n}_F\times[[e^{n+1}_{\pi,\Ee}+e^{-\sigma_0\Delta t}e^n_{\pi,\Ee}]]_F,\mathbf{n}_F\times[[e^{n+1}_{h,\Ee}+e^{-\sigma_0\Delta t}e^n_{h,\Ee}]]_F\Big\rangle_{L^2(F)^3}\bigg)\\
		&\quad-\frac{1}{2}\sum_{F\in\mathcal{G}_h^{\text{int}}}\bigg(\Big\langle (e^{n+1}_{\pi,\Ee}+e^{-\sigma_0\Delta t}e^n_{\pi,\Ee})|_{K_F}+(e^{n+1}_{\pi,\Ee}+e^{-\sigma_0\Delta t}e^n_{\pi,\Ee})|_{K}\\
		&\qquad\qquad+\mathbf{n}_F\times[[e^{n+1}_{\pi,\Hh}+e^{-\sigma_0\Delta t}e^n_{\pi,\Hh}]]_F,\mathbf{n}_F\times[[e^{n+1}_{h,\Hh}+e^{-\sigma_0\Delta t}e^n_{h,\Hh}]]_F\Big\rangle_{L^2(F)^3}\bigg)\\
		&\quad-\sum_{F\in\mathcal{G}_h^{\text{ext}}}\bigg(\langle (e^{n+1}_{\pi,\Hh}+e^{-\sigma_0\Delta t}e^n_{\pi,\Hh}),\mathbf{n}_F\times(e^{n+1}_{h,\Ee}+e^{-\sigma_0\Delta t}e^n_{h,\Ee})\rangle_{L^2(F)^3}\\
		&\qquad\qquad+\langle \mathbf{n}_F\times(e^{n+1}_{\pi,\Ee}+e^{-\sigma_0\Delta t}e^n_{\pi,\Ee}),\mathbf{n}_F\times(e^{n+1}_{h,\Ee}+e^{-\sigma_0\Delta t}e^n_{h,\Ee})\rangle_{L^2(F)^3}\bigg),
	\end{align*}
	where $e^{n}_\pi=((e^n_{\pi,\Ee})^\top,(e^n_{\pi,\Hh})^\top)^\top$ and $e^{n}_h=((e^n_{h,\Ee})^\top,(e^n_{h,\Hh})^\top)^\top$.
	Then the Cauchy--Schwarz and Young inequalities lead to
	\begin{align*}
		&\langle M_h(e^{n+1}_\pi+e^{-\sigma_0\Delta t}e^n_{\pi}),e^{n+1}_h+e^{-\sigma_0\Delta t}e^n_h\rangle_{\H}\\
		&\leq \frac{1}{2}\sum_{F\in\mathcal{G}_h^{\text{ext}}}\|\mathbf{n}_F\times(e^{n+1}_{h,\Ee}+e^{-\sigma_0\Delta t}e^n_{h,\Ee})\|^2_{L^2(F)^3}\\
		&\quad+\frac{1}{4}\sum_{F\in\mathcal{G}_h^{\text{int}}}\Big(\|\mathbf{n}_F\times[[e^{n+1}_{h,\Hh}+e^{-\sigma_0\Delta t}e^n_{h,\Hh}]]_F\|^2_{L^2(F)^3}\\
		&\qquad\qquad\qquad\qquad+\|\mathbf{n}_F\times[[e^{n+1}_{h,\Ee}+e^{-\sigma_0\Delta t}e^n_{h,\Ee}]]_F\|^2_{L^2(F)^3}\Big)\\
		&\quad+\frac{1}{4}\sum_{F\in\mathcal{G}_h^{\text{int}}}\Big(\|(e^{n+1}_{\pi,\Hh}+e^{-\sigma_0\Delta t}e^n_{\pi,\Hh})|_{K_F}+(e^{n+1}_{\pi,\Hh}+e^{-\sigma_0\Delta t}e^n_{\pi,\Hh})|_{K}\\
		&\qquad\qquad\qquad\qquad
		-\mathbf{n}_F\times[[e^{n+1}_{\pi,\Ee}+e^{-\sigma_0\Delta t}e^n_{\pi,\Ee}]]_F\|^2_{L^2(F)^3}\Big)\\
		&\quad+\frac{1}{4}\sum_{F\in\mathcal{G}_h^{\text{int}}}\Big(\|(e^{n+1}_{\pi,\Ee}+e^{-\sigma_0\Delta t}e^n_{\pi,\Ee})|_{K_F}+(e^{n+1}_{\pi,\Ee}+e^{-\sigma_0\Delta t}e^n_{\pi,\Ee})|_{K}\\
		&\qquad\qquad\qquad\qquad+\mathbf{n}_F\times[[e^{n+1}_{\pi,\Hh}+e^{-\sigma_0\Delta t}e^n_{\pi,\Hh}]]_F\|^2_{L^2(F)^3}\Big)\\
		&\quad+\sum_{F\in\mathcal{G}_h^{\text{ext}}}\Big(\|e^{n+1}_{\pi,\Hh}+e^{-\sigma_0\Delta t}e^n_{\pi,\Hh}\|^2_{L^2(F)^3}+\|\mathbf{n}_F\times(e^{n+1}_{\pi,\Ee}+e^{-\sigma_0\Delta t}e^n_{\pi,\Ee})\|^2_{L^2(F)^3}\Big)\\
		&\leq -\frac{1}{2}\langle M_h(e^{n+1}_h+e^{-\sigma_0\Delta t}e^n_h),e^{n+1}_h+e^{-\sigma_0\Delta t}e^n_h\rangle_{\H}+Che^{-\sigma_0n\Delta t}\|v\|^2_{H^1(D)^6},
	\end{align*}
	where in the last step we use \cite[Proposition 4.4(ii)]{CCC2021}, \eqref{wdx} and the assertion (2). Hence, we have
	\begin{align*}
		\|e^{n+1}_h\|^2_{\H}&\leq e^{-2\sigma_0\Delta t}\|e^n_h\|_{\H}^2+Che^{-\sigma_0n\Delta t}\Delta t\|v\|^2_{H^1(D)^6}
		\leq Che^{-\sigma_0n\Delta t}\|v\|^2_{H^1(D)^6}.
	\end{align*}	
	(4) Let $p(t_n):=\widehat{S}(t_n)v$. Notice that
	\begin{align*}
		\|p(t_n)-p^n_h\|_{\H}&=\|(\widehat{S}(t_n)-(S_{h,\Delta t})^n\pi_h)v\|_{\H}\\
		&\leq \|(\widehat{S}(t_n)-(\widehat{S}_{\Delta t})^n)v\|_{\H}+\|(I-\pi_h)(\widehat{S}_{\Delta t})^nv\|_{\H}\\
		&\quad+\|(\pi_h(\widehat{S}_{\Delta t})^n-(\widehat{S}_{h,\Delta t})^n\pi_h)v\|_{\H}\\
		&\leq Ce^{-\sigma_0n\Delta t/2}\big(\Delta t^{1/2}+h^{1/2}\big)
	\end{align*}	
	due to Lemma \ref{opee}(2), \eqref{touyingwucha} and the assertion (3).\\	
	(5) We know that
	\begin{align*}
		\widehat{S}(t_n-r)-(\widehat{S}_{h,\Delta t})^{n-k-1}T_{h,\Delta t}&= \widehat{S}(t_n-t_{k+1})(\widehat{S}(t_{k+1}-r)-I)\\
		&\quad+\Big(\widehat{S}(t_n-t_{k+1})-(\widehat{S}_{h,\Delta t})^{n-k-1}\pi_h\Big)\\
		&\quad+(\widehat{S}_{h,\Delta t})^{n-k-1}(\pi_h-T_{h,\Delta t}).
	\end{align*}	
	By Lemma \ref{opee}(1)(3), one gets
	\begin{align*}
		&\|\widehat{S}(t_n-t_{k+1})(\widehat{S}(t_{k+1}-r)-I)v\|_{\H}\\
		&\leq \|\widehat{S}(t_n-t_{k+1})\|_{\mathcal{L}(\H,\H)}\|\widehat{S}(t_{k+1}-r)-I\|_{\mathcal{L}(\mathcal{D}(M),\H)}\|v\|_{\mathcal{D}(M)}\\
		&\leq C\Delta te^{-\sigma_0(n-k-1)\Delta t}\|v\|_{H^1(D)^6},
	\end{align*}	
	which by the assertion (4) leads to
	\begin{align*}
		\|(\widehat{S}(t_n-t_{k+1})-(\widehat{S}_{h,\Delta t})^{n-k-1}\pi_h)v\|_{\H}\leq Ce^{-\sigma_0(n-k-1)\Delta t/2}\big(\Delta t^{\frac{1}{2}}+h^{\frac{1}{2}}\big)\|v\|_{H^1(D)^6}.
	\end{align*}	
	
	Let $\widetilde{v}:=\big(I-(I-\frac{\Delta t}{2}M_h)^{-1}\big)\pi_hv$, namely,
	\begin{align}\label{rhg}
		\widetilde{v}-\frac{\Delta t}{2}M_h\widetilde{v}=-\frac{\Delta t}{2}M_h\pi_hv.
	\end{align}
	We apply $\langle\cdot,\widetilde{v}\rangle_{\H}$ on both sides of \eqref{rhg} to obtain
	\begin{align*}
		\|\widetilde{v}\|_{\H}^2\leq -\frac{\Delta t}{2}\langle M_h\pi_hv,\widetilde{v}\rangle_{\H}=-\frac{\Delta t}{2}\langle M_h\pi_hv,\pi_h\widetilde{v}\rangle_{\H}\leq \frac{\Delta t}{2}\Big(\frac{1}{2}\|\widetilde{v}\|_{\H}^2+C\|v\|^2_{H^1(D)^6}\Big)
	\end{align*}	
	due to \cite[Lemma A.4]{IM2015}. For sufficient small $\Delta t$, we have
	$$\|\widetilde{v}\|_{\H}\leq C\Delta t^{\frac{1}{2}}\|v\|_{H^1(D)^6}.$$
	Hence, it holds that
	\begin{align*}
		\|(\widehat{S}(t_n-r)-(\widehat{S}_{h,\Delta t})^{n-k-1}T_{h,\Delta t})v\|_{\H}\leq Ce^{-\sigma_0(n-k-1)\Delta t/2}\big(\Delta t^{\frac{1}{2}}+h^{\frac{1}{2}}\big)\|v\|_{H^1(D)^6}.
	\end{align*}
\end{proof}
\bibliographystyle{plain}
\bibliography{maxwell.bib}

\end{document}